\newtheorem{thm}{Theorem}[section] 
\newtheorem{lem}[thm]{Lemma}
\newtheorem{coro}[thm]{Corollary}
\newtheorem{prop}[thm]{Proposition}
\theoremstyle{definition}
\newtheorem{defn}[thm]{Definition}
\theoremstyle{remark}
\numberwithin{equation}{section}
\definecolor{esperance}{rgb}{0.0,0.5,0.0}
\newcommand{\R}{\mathbb{R}}
\newcommand{\X}{ \widetilde{X}}
\newcommand{\G}{\Gamma}
\newcommand\blfootnote[1]{%
  \begingroup
  \renewcommand\thefootnote{}\footnote{#1}%
  \addtocounter{footnote}{-1}%
  \endgroup
}
\newcommand{\cB}{\mathcal{B}}
\newcommand{\cE}{\mathcal{E}}
\newcommand{\cH}{\mathcal{H}}
\newcommand{\cM}{\mathcal{M}}
\newcommand{\cP}{\mathcal{P}}
\newcommand{\onto}{\xymatrix{\ar@{>>}[r]&}}
\begin{document}
\blfootnote{2020 Mathematics Subject Classification. Primary 37H05; Secondary 31C25, 31C35

Key words and phrases. General theory of random and stochastic dynamical systems, Dirichlet forms, Martin boundary theory}
\title{Martin boundary of Brownian motion on Gromov hyperbolic metric graphs}
\author{Soonki Hong}
\address{Department of Mathematical Sciences, Seoul National University, Seoul 151-747}
\email{soonki.hong@snu.ac.kr}
\author{Seonhee Lim}
\address{Department of Mathematical Sciences and Research Institute of Mathematics, Seoul National University, Seoul 151-747}
\email{slim@snu.ac.kr}

\maketitle
\begin{abstract} Let $\X$ be a locally finite Gromov hyperbolic graph whose Gromov boundary consists of infinitely many points and with a cocompact isometric action of a discrete group $\G$.
We show the uniform Ancona inequality for the Brownian motion which implies that the $\lambda$-Martin boundary coincides with the Gromov boundary for any $\lambda \in [0, \lambda_0],$ in particular at the bottom of the spectrum $\lambda_0$. \end{abstract}

\section{Introduction}
The theory of Brownian motion has been established as a central theme in mathematical physics and probability theory for a long time. Classically, Brownian motion is defined as a Wiener process, i.e. a continuous time Markov process of which the probability density $p(t,x,y)$ of going from $x$ to $y$ at time $t$ is given by the fundamental solution of the heat equation (see Definition \ref{Def2.6}). Thus the probability density $p(t,x,y)$ can be defined only when the Laplace operator is chosen. For example, the usual choice of Laplace operator on Riemmanian manifolds is Laplace-Beltrami operator. One branch of studies on Brownian motion has been developed through Dirichlet form (see \cite{F}, \cite{Si} and references therein and \cite{AR}).

Dirichlet form enables us to define the Laplacian without using partial derivatives, thus it is suitable to study the heat equation on the spaces that are not differentiable manifolds. Various authors have verified that Laplacians related to Dirichlet forms have properties similar to Laplace-Beltrami operators of manifolds. For example, the maximal principle \cite{W}, \cite{HK},  stochastic completeness  \cite{St1}, \cite{W}, \cite{KL}, the spectrum of Laplacian \cite{SW}, \cite{SW2}, \cite{HK}, and the existence of harmonic functions \cite{LSV} have been investigated. The theory of Brownian motions related to Dirichlet forms on metric spaces has also been studied (see \cite{EF}, \cite{Gr}, \cite{PS}, \cite{BSSW} \cite{St4}). 
 
Let $\X$ be a locally finite Gromov hyperbolic metric graph with Gromov boundary consisting of infinitely many points. Suppose that there is a geometric (i.e. proper and cocompact) isometric action of a discrete group $\G$. Note that $\X$ is not necessarily a tree.

 Using a Dirichlet form $\mathcal{E}$ whose domain is the Sobolev space $W^1(O)$ on a precompact open set $O \subset \X$, we define the graph version of the Laplacian $(\Delta,Dom_{\X}(\Delta))$ on the space $\X$ and as well as on any open set $O \subset \X$. 
 
 We recall the existence and the smoothness of the heat kernel $p(t,x,y)$ on $\X$. For the existence, we recall two properties, namely  the doubling property \eqref{2.2} and Poincar\'e inequality \eqref{2.3}. The authors of \cite{BSSW} proved that strip complexes including graphs satisfy these properties. Applying the results in \cite{St2, St3}, one obtains the heat kernel of $\X$. The smoothness of the heat kernel of $\X$ follows from \cite{BSSW}.

We then consider the $\lambda$-Green function $$G_\lambda(x,y):=\int_0^\infty e^{\lambda t} p(t,x,y)dt$$ for any two distinct points $x,y\in \X$. 
Let $\lambda_0$ be the bottom of the spectrum of Laplacian, which depends on the group $\Gamma$. The group $\Gamma$ is non-amenable if and only if the bottom of the spectrum of the Laplacian is positive (\cite{SW} Theorem 8.5). The $\lambda$-Green function converges for all $\lambda \in [0,\lambda_0)$. Using the result in \cite{LSV}, we show the existence of a positive $\lambda$-harmonic function on the graph for any $\lambda\in[0,\lambda_0]$.  Following the proof in \cite{LS}, we prove the convergence of the $\lambda_0$-Green function.

Our main result is the uniform Ancona inequality, which we call Ancona-Gou\"ezel inequality on hyperbolic graphs (see \cite{GL} and \cite{G} for random walks).

\begin{thm}\label{thm:1.1} Let $\X$ be a locally finite topologically complete Gromov hyperbolic metric graph with the Gromov boundary consisting of infinitely many points. Suppose that a group $\Gamma$ acts isometrically and geometrically on $\X$. Let $l_m$ be the minimal edge length of $\X$.  There exists a constant $C$ such that for all $\lambda \in [0,\lambda_0]$ and for three points $x,y$ and $z$ on the same geodesic $[x,z]$ with $d(x,y)\geq 1$ and $d(y,z)\geq 1$,
\begin{equation}\label{1.1.0}
C^{-1}G_\lambda(x,y)G_\lambda(y,z)\leq G_\lambda(x,z)\leq CG_\lambda(x,y)G_\lambda(y,z).
\end{equation}
\end{thm}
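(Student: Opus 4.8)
Because $\Gamma$ acts cocompactly, all the local estimates we use are uniform over $\X$: the two-sided Gaussian bounds for $p(t,x,y)$ furnished by the doubling property and the Poincar\'e inequality, the resulting \emph{uniform elliptic Harnack inequality} for functions $u$ with $(\Delta+\lambda)u=0$, $\lambda\in[0,\lambda_0]$, and uniform two-sided bounds for $G_\lambda(w,w')$ when $d(w,w')$ is bounded (the short-range part of these bounds involves the minimal edge length $l_m$; the convergence of the $\lambda_0$-Green function proved above, following \cite{LS}, is what makes them uniform up to $\lambda=\lambda_0$). A preliminary reduction uses this: if $d(x,y)\le R$, a Harnack chain of bounded length from $x$ to $y$ avoiding $z$ shows $G_\lambda(x,z)$ and $G_\lambda(y,z)$ are comparable, while $G_\lambda(x,y)$ is pinched between two positive constants, so \eqref{1.1.0} is immediate; likewise if $d(y,z)\le R$. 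We may therefore assume $d(x,y)\ge R$ and $d(y,z)\ge R$ for a large constant $R$, to be fixed later.

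The lower bound in \eqref{1.1.0} is elementary. Fix $\rho_0<1$ so that $B:=B(y,\rho_0)$ does not contain $z$, and let $\tau$ be the first hitting time of $\overline B$ by the Brownian motion. Since the process cannot reach $y$ before entering $\overline B$ while it can reach $z$, the strong Markov property for the process weighted by the multiplicative functional $e^{\lambda t}$ gives
\[
G_\lambda(x,z)\ge \mathbb{E}_x\!\big[e^{\lambda\tau}\,G_\lambda(X_\tau,z);\,\tau<\infty\big],\qquad
G_\lambda(x,y)= \mathbb{E}_x\!\big[e^{\lambda\tau}\,G_\lambda(X_\tau,y);\,\tau<\infty\big].
\]
On the sphere $S(y,\rho_0)$ the function $G_\lambda(\cdot,y)$ is pinched between two positive constants and $G_\lambda(\cdot,z)$ is comparable to $G_\lambda(y,z)$ by Harnack; inserting these bounds yields $G_\lambda(x,z)\ge c\,\mathbb{E}_x[e^{\lambda\tau};\tau<\infty]\,G_\lambda(y,z)\ge c'\,G_\lambda(x,y)\,G_\lambda(y,z)$.

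The upper bound is the Ancona--Gou\"ezel inequality; here hyperbolicity is essential. Besides the uniform Harnack inequality it rests on a \emph{uniform exponential decay of the Green function}: there are $C,\epsilon>0$, independent of $\lambda\in[0,\lambda_0]$, with $G_\lambda(w,w')\le C e^{-\epsilon\,d(w,w')}$. The positivity $\lambda_0>0$ needed here holds under our hypotheses, since a locally finite hyperbolic graph with infinite Gromov boundary carrying a cocompact $\Gamma$-action forces $\Gamma$ to be non-elementary, hence non-amenable; and the estimate is kept uniform up to $\lambda_0$ using the convergence of the $\lambda_0$-Green function. Granting this, we follow the scheme of \cite{GL,G}. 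Let $A(D)$ be the smallest constant such that $G_\lambda(x,z)\le A(D)\,G_\lambda(x,y)\,G_\lambda(y,z)$ for all $\lambda\in[0,\lambda_0]$ and all geodesic triples with $\min\{d(x,y),d(y,z)\}\ge D$; the reductions above and a crude first-passage estimate give $A(D)<\infty$ for $D\ge R$, and the point is that $\sup_{D}A(D)<\infty$. This follows from a self-improving inequality of the schematic form $A(D)\le C_0+\theta\,A(D')$ with a fixed $\theta<1$ and $D'$ comparable to but smaller than $D$, obtained as follows: writing $h=G_\lambda(\cdot,z)$, which is positive and $\lambda$-harmonic on a neighbourhood of $[x,y]\subset[x,z]$, one uses the $\delta$-thin-triangles inequality to produce a ``half-space'' past $y$ with a coarse cross-section at $y$ separating the $x$-side from the $z$-side, compares $h$ with $G_\lambda(\cdot,y)$ on it via the maximum principle for positive $\lambda$-harmonic functions (valid because such a function exists for every $\lambda\in[0,\lambda_0]$), and applies a last-exit decomposition at spheres around $y$ together with Harnack; the exponential decay is what turns this decomposition into a contraction. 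Iterating the recursion down to the fixed scale $R$ gives $\sup_D A(D)<\infty$, which is \eqref{1.1.0}.

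The main obstacle is to run the final step with constants \emph{uniform in $\lambda$ up to and including the bottom of the spectrum $\lambda_0$}: at $\lambda_0$ there is no spectral gap above, the $\lambda_0$-Green function is only barely convergent, and the classical Ancona estimates degenerate as $\lambda\uparrow\lambda_0$; keeping everything uniform is precisely the Ancona--Gou\"ezel refinement and forces a quantitative use of non-amenability and hyperbolicity together with the finiteness of $G_{\lambda_0}$. (Alternatively one proves \eqref{1.1.0} for each $\lambda<\lambda_0$ with $C$ independent of $\lambda$ and then lets $\lambda\uparrow\lambda_0$, using that $G_\lambda$ increases in $\lambda$, as $p(t,x,y)\ge0$, and that $G_{\lambda_0}<\infty$.) A secondary, geometric difficulty absent for trees is that removing a metric ball does not disconnect $\X$, so the heuristic ``every path from $x$ to $z$ passes near $y$'' is false and must be replaced throughout by coarse separation from thin triangles and by Green-function rather than path estimates; the continuous-time metric-graph setting adds bookkeeping on edge interiors versus vertices, which is absorbed into the uniform local estimates coming from the cocompact action and the constant $l_m$.
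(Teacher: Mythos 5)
Your lower-bound argument coincides with the paper's (Theorem~\ref{lAncona}): strong Markov property at the hitting time of a small ball around $y$, Harnack, and the uniform two-sided short-range bounds on $G_\lambda$; this part is fine and the uniformity in $\lambda$ comes for free.

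For the upper bound there is a genuine gap. You take as a prerequisite a uniform pointwise exponential decay $G_\lambda(w,w')\le Ce^{-\epsilon d(w,w')}$, valid up to $\lambda=\lambda_0$, and feed it into an Ancona-style self-improving recursion $A(D)\le C_0+\theta A(D')$. But that pointwise decay is not available at the outset: what the paper has in hand before Ancona is only $G_{\lambda_0}(x,y)\to 0$ at infinity (Lemma~\ref{Zero}) and the $\ell^2$ sphere bound $\sum_{w\in V_n(v)}G_{\lambda_0}^2(v,w)\le C$ (Proposition~\ref{volume}), proved by a bootstrap from the already-established lower Ancona bound and the tree approximation Lemma~\ref{Alpha}. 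Pointwise exponential decay at $\lambda_0$ is a downstream consequence of Ancona, so using it here is circular, and your fallback --- prove \eqref{1.1.0} for each $\lambda<\lambda_0$ with a $\lambda$-independent constant and then pass to the limit --- begs the same question, since producing that uniform constant is exactly the content of the theorem. The paper's actual mechanism is the super-exponential decay of the \emph{relative} Green function $G_{\lambda_0}(x,z:\overline{B(y,r)}^c)\le 2^{-e^{\epsilon r}}$ in Lemma~\ref{acnn} (the printed exponent $2^{e^{-\varepsilon r}}$ is evidently a sign typo, since it tends to $1$; the proof builds $N=\llcorner e^{\epsilon r}\lrcorner$ factors each contributing a factor $\tfrac12$, giving $2^{-N}$). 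This is proved not by a chain/contraction scheme near $y$ but by embedding $\X$ quasi-isometrically into $\mathbb{H}^n$, slicing $\overline{B(y,r)}^c$ into $N$ angular sectors $A_i$, writing the relative Green function via Proposition~\ref{smv} as an $N$-fold composition of transfer operators $L_i:\ell^2(\partial A_{i+1,r})\to\ell^2(\partial A_{i,r})$, and contracting $\|L_i\|_{op}$ by Cauchy--Schwarz using the $\ell^2$ bound of Proposition~\ref{volume}, the exponential decay of angular diameters of edges \eqref{3.17}, and a count of the group elements seen near $y$. Your sketch contains none of these ingredients, and it is precisely at the step where your recursion must contract at $\lambda=\lambda_0$ that it has nothing to contract with.
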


The most non-trivial part is the uniformity of the constant $C$ in the inequality \eqref{1.1.0} on $\lambda$, which implies the inequality \eqref{1.1.0} for $\lambda=\lambda_0$. The Brownian motion associated with a Dirichlet form on a graph is a Hunt process, in particular a strong Markov process. Using strong Markov properties of Brownian motion, we show that the relative $\lambda$-Green function $G_\lambda(x,z:B(x,r)^c)$ decays super-exponentially fast, from which the uniform Ancona-Gou\"ezel inequality follows, an idea due to S. Gou\"ezel \cite{G}. 
 
The $\lambda$-\emph{Martin kernel} $K_\lambda$ of $\X$ is defined as follows:
$$K_\lambda(x_0,x,y)=\frac{G_\lambda(x,y)}{G_\lambda(x_0,y)}.$$
The $\lambda$-\emph{Martin boundary} is the boundary of the image of the embedding defined by $y\mapsto K_\lambda(x_0,\cdot,y)$ on $\X$. Using Ancona-Gou\"ezel inequality for $\lambda \in [0, \lambda_0]$, we show the next main theorem.

 \begin{thm}\label{1.2}Let $\X$ be a locally finite complete Gromov hyperbolic metric graph with the Gromov boundary consisting of infinitely many points. Suppose that a group $\Gamma$ acts isometrically and geometrically on $\X$. The $\lambda$-Martin boundary of $\X$ coincides with the Gromov boundary for all $\lambda\in[0,\lambda_0]$.\end{thm}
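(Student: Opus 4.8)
The plan is to promote the uniform Ancona--Gou\"ezel inequality of Theorem~\ref{thm:1.1} to a $\Gamma$-equivariant homeomorphism between the Gromov compactification $\X\cup\partial\X$ and the $\lambda$-Martin compactification $\widehat{\X}_\lambda$, uniformly for $\lambda\in[0,\lambda_0]$. By the general theory of the Martin boundary it suffices to establish: \emph{(i)} for every $\xi\in\partial\X$ and every sequence $y_n\to\xi$ in the Gromov topology, the kernels $K_\lambda(x_0,\cdot,y_n)$ converge, uniformly on compact subsets of $\X$, to a positive $\lambda$-harmonic function $k^\lambda_\xi$ depending only on $\xi$; and \emph{(ii)} $\xi\mapsto k^\lambda_\xi$ is injective on $\partial\X$. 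Indeed, granting \emph{(i)} the map $y\mapsto K_\lambda(x_0,\cdot,y)$ extends to a continuous map $\Psi_\lambda\colon \X\cup\partial\X\to\widehat{\X}_\lambda$; its image is compact, hence closed, and contains the dense subset $\X$, so $\Psi_\lambda$ is onto and every $\lambda$-Martin boundary point is some $k^\lambda_\xi$ (here $k^\lambda_\xi\notin\Psi_\lambda(\X)$, since $k^\lambda_\xi$ is $\lambda$-harmonic on all of $\X$ whereas $K_\lambda(x_0,\cdot,y)$ is not harmonic at $y$); by \emph{(ii)} $\Psi_\lambda$ is injective, hence a continuous bijection from a compact space onto a Hausdorff space, i.e.\ a homeomorphism, and it is $\Gamma$-equivariant by construction.

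For \emph{(i)}, fix a geodesic ray $\gamma$ from $x_0$ to $\xi$ and set $w_k=\gamma(k)$. By $\delta$-hyperbolicity, for each compact $K\subset\X$ there is $D=D(\delta,K)$ such that, once $n$ is large, every geodesic $[x,y_n]$ with $x\in K$, as well as $[x_0,y_n]$, passes within $D$ of each $w_k$ with $k\le R_n$, where $R_n\to\infty$. Splitting $G_\lambda(x,y_n)$ at a point of $[x,y_n]$ within $D$ of $w_k$ and $G_\lambda(x_0,y_n)$ at a point of $[x_0,y_n]$ within $D$ of $w_k$ by Theorem~\ref{thm:1.1}, and then comparing these points to $w_k$ and to one another by the Harnack inequality --- whose constant is uniform over $\lambda\in[0,\lambda_0]$ thanks to the local regularity of the heat kernel of \cite{BSSW} and the convergence of the $\lambda_0$-Green function --- we obtain, for $x\in K$ and $k\le R_n$,
\[
K_\lambda(x_0,x,y_n)\ \asymp\ \frac{G_\lambda(x,w_k)}{G_\lambda(x_0,w_k)},
\]
with multiplicative constant independent of $n,k,x$ and $\lambda$. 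This makes $\{K_\lambda(x_0,\cdot,y_n)\}_n$ precompact in the topology of locally uniform convergence. To turn boundedness into convergence we invoke the strengthened form of the Ancona inequality underlying Theorem~\ref{thm:1.1}: the super-exponential decay of the relative Green functions $G_\lambda(x,z:B(\cdot,r)^c)$ yields a constant $\rho<1$ for which the defect in the Ancona decomposition of $G_\lambda$ along $\gamma$ stabilizes at geometric rate, so that $k\mapsto \log\!\bigl(G_\lambda(x,w_k)/G_\lambda(x_0,w_k)\bigr)$ is Cauchy, uniformly in $x\in K$ and $\lambda\in[0,\lambda_0]$, and likewise $\bigl|\log K_\lambda(x_0,x,y_n)-\log K_\lambda(x_0,x,y_m)\bigr|\le C\rho^{\min(R_n,R_m)}$. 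Hence $K_\lambda(x_0,\cdot,y_n)$ converges locally uniformly to $k^\lambda_\xi(x)=\lim_k G_\lambda(x,w_k)/G_\lambda(x_0,w_k)$, a limit depending neither on $\gamma$ nor on $(y_n)$; it is positive (Harnack) and $\lambda$-harmonic (harmonicity survives locally uniform limits).

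For \emph{(ii)}, let $\xi\neq\xi'$ and keep $\gamma,w_k$ as above. For $t\ge 1$ and $k\ge t+1$ the point $\gamma(t)$ lies on the geodesic $[x_0,w_k]$, so Theorem~\ref{thm:1.1} and Harnack give $k^\lambda_\xi(\gamma(t))\asymp G_\lambda(x_0,\gamma(t))^{-1}$, which tends to $\infty$ as $t\to\infty$. On the other hand, if $v_k\to\xi'$ then $\xi\neq\xi'$ forces the Gromov product $(\gamma(t)\cdot v_k)_{x_0}$ to stay bounded for $t,k$ large, so $x_0$ lies within bounded distance of a geodesic $[\gamma(t),v_k]$ between $\gamma(t)$ and $v_k$; applying Theorem~\ref{thm:1.1} and Harnack there gives $k^\lambda_{\xi'}(\gamma(t))\asymp G_\lambda(\gamma(t),x_0)$, which stays bounded (indeed tends to $0$) as $t\to\infty$. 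Thus $k^\lambda_\xi\neq k^\lambda_{\xi'}$. A further application of the Ancona inequality shows that each $k^\lambda_\xi$ is moreover a \emph{minimal} $\lambda$-harmonic function, so the $\lambda$-Martin boundary is entirely minimal and is $\Gamma$-equivariantly homeomorphic to $\partial\X$ for every $\lambda\in[0,\lambda_0]$.

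The principal obstacle is the quantitative input in \emph{(i)}: extracting from Theorem~\ref{thm:1.1} and the relative-Green-function estimates a genuinely exponential, $\lambda$-uniform rate for the stabilization of the Ancona decomposition of $G_\lambda$ along a geodesic, so that the a priori merely bounded ratios above become convergent. Two features make this more delicate than in the tree case: geodesics in $\X$ are only coarsely unique, so each use of Theorem~\ref{thm:1.1} must be paired with a Harnack step, and all constants --- in particular the Harnack constant --- must remain uniform up to and including the bottom of the spectrum $\lambda=\lambda_0$, which is exactly where the convergence of the $\lambda_0$-Green function and the heat-kernel regularity of \cite{BSSW} are needed.
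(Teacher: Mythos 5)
Your outline matches the paper's proof in every essential respect: pass from the uniform Ancona--Gou\"ezel inequality (Theorem~\ref{thm:1.1}) to a geometric-rate stabilization of the ratios $G_\lambda(x,y)/G_\lambda(x_0,y)$ along a geodesic to $\xi$, deduce that the Martin kernels along any Gromov-convergent sequence converge, then prove injectivity by comparing the behaviour of the limiting kernels along a ray and surjectivity by a compactness argument. Your injectivity argument (evaluating both $k^\lambda_\xi$ and $k^\lambda_{\xi'}$ along the ray $\gamma$ to $\xi$) is the same in substance as the paper's, which evaluates $K_\lambda(x_0,\cdot,\xi_2)$ along rays $g_1$ and $g_2$; your surjectivity argument via the closed image of a compact set replaces the paper's Arzel\`a--Ascoli extraction of a convergent geodesic ray, but the two are interchangeable.

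The genuine gaps are the two technical facts you invoke but do not prove. First, you correctly flag as the ``principal obstacle'' the passage from the multiplicative Ancona estimate to a genuinely geometric decay rate for the Cauchy differences $\bigl|\log K_\lambda(x_0,x,y_n)-\log K_\lambda(x_0,x,y_m)\bigr|$; in the paper this is exactly Theorem~\ref{MB1}, proved by a separate inductive argument: one sets $A(x,x',y,k)=\frac{G_\lambda(x,y)}{G_\lambda(x',y)}-\sum_{i=1}^k(1-\tfrac1C)^{i-1}\frac{G_\lambda(x,x_i)}{G_\lambda(x',x_i)}$ along the geodesic, shows by induction (using~\eqref{AC5}, \eqref{AC6} and Harnack) the two-sided bound $-(1-\tfrac1C)^k\,G_\lambda(x,y_k)/G_\lambda(x',y_k)\le A(x,x',y,k)\le (1-\tfrac1C)^k\,G_\lambda(x,y')/G_\lambda(x',y')$, and reads off~\eqref{strongancona}. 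Gesturing at ``super-exponential decay of the relative Green functions'' does not by itself produce this; the decay of the relative Green function (Lemma~\ref{acnn}) is what makes Theorem~\ref{rancona} uniform in $\lambda$, not what gives the $\rho^n$ rate, which requires the extra induction. Second, in step~\emph{(ii)} you assert that $G_\lambda(\gamma(t),x_0)\to0$ as $t\to\infty$ (and implicitly that $G_\lambda(x_0,\gamma(t))^{-1}\to\infty$), which is by no means automatic at $\lambda=\lambda_0$: this is the paper's Lemma~\ref{Zero}, proved by a careful strong-Markov decomposition showing that if $G_{\lambda_0}(x,y_n)\ge c>0$ along a sequence escaping to infinity then $G_{\lambda_0}(x,y')=\infty$. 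Your argument silently relies on this and needs it stated and proved. The minimality claim at the end is plausible and standard given the strong Ancona inequality, but it is not established in the paper and is not needed for the stated theorem.
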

Our motivation to prove Theorem \ref{1.2} is the local limit theorem, which is an important question in the study of Brownian motions: does there exist a function $c:\widetilde{X}\times\widetilde{X}\rightarrow \mathbb{R}$ such that for two distinct points $x,y \in \widetilde{X}$,
$$\lim_{t\rightarrow\infty}t^{3/2}e^{\lambda_0 t}p(t,x,y)=c(x,y)?$$
 
   The proof of the local limit theorem for random walks on hyperbolic groups or Brownian motions on Riemannian manifolds uses various strategies (see \cite{Bo}, \cite{GL}, \cite{G} and \cite{LL}). In particular, in \cite{LL}, one uses Gibbs measure associated to a pressure which is defined using $\lambda$-Green function.  We expect that Theorem~\ref{1.2} will enable us to apply thermodynamics formalism of the geodesic flow on a hyperbolic graph and Gibbs measures on the $\lambda_0$-Martin boundary.

The article is organized as follows. In Section \ref{2}, we recall the definition of the Laplacian and the existence of the heat kernel on graphs, which are based on results of \cite{St1, St2, BSSW}.
In Section \ref{sec:3}, we observe properties of positive $\lambda$-harmonic functions. Using these properties, we show that if the group $\Gamma$ is non-amenable, then the $\lambda_0$-Green function converges.
In Section \ref{4}, we first prove Ancona-Gou\"ezel inequality \eqref{1.1.0} (Theorem \ref{lAncona} and Theorem \ref{rancona}). Using the inequality \eqref{1.1.0}, we show Theorem \ref{1.2}.

 \section{Preliminaries: Laplacian and Heat kernel}\label{2}
 Let $(\X,d)=((V,E),d)$ be a locally finite connected topologically complete Gromov hyperbolic metric graph. Let $B(x,r)$ and $S(x,r)$ be the ball and the sphere of radius $r$ centered at $x$, respectively. Denote by $diam (A)$ the diameter of a subset $A$ of $\X$. Fix an orientation of the edges on $\X$. Denote by $i(e)$ and $t(e)$ the initial vertex and the terminal vertex of an edge $e$, respectively. Denote by $e^o$ the open edge of $e$ and denote by $l_e$ the length of  $e$. The Gromov boundary $\partial \X$ of $\X$ is the set of equivalence classes of the geodesic rays up to bounded Hausdorff distance. Suppose that the cardinality of $\partial \X$ is infinite.

  Let $\Gamma$ be a non-amenable discrete group. Suppose that $\G$ acts isometrically and geometrically (i.e. properly and cocompactly) on $\X$, with the quotient space $X= \X/\Gamma$.  Using the barycenter subdivision if necessary, we may assume that $\Gamma$ acts without inversions. Since $X$ is compact and locally finite, the lengths of the edges of $\X$ are bounded above and below. Denote $l_m=\underset{e\in E} \min\, l_e$ and $l_M=\underset{e\in E}\max\, l_e.$ 
Fix a connected fundamental domain $F$ of $\G$ in $\X$. We denote by $d_\Gamma$ the word distance of $\Gamma$  with respect to the generating set $S=\left\{g \in \Gamma|\overline{F} \cap g.\overline{F} \neq \phi \right\}$, which is finite (\cite{BH}, Proposition I.8.19). 
  
In this section, we define the Laplacian on $\X$ using Dirichlet forms and discuss the existence and the smoothness of the heat kernel on $\X$. 

\subsection{Dirichlet form and Laplacian on graphs}\label{sec:2.1}For a function $f$ on $\X,$ let $f|_e$ be its restriction on $e^o$, which we will often consider as a function on $(0,l_e)$. For an open set $O \subset \X$, define $O^o:=\underset{e\in E}\bigcup (e^o\cap O)=O \backslash V$. We consider the derivative $f'$ of $f$ as the function on $\X^o$ when the function $(f|_e)'$ on $e^o$ exists for all $e\in E$. 
\begin{defn}
Let $O$ be an open set. Denote by $C^\infty(O)$ the vector space of continuous functions whose restriction on $O^o$ satisfies the following property:
 for any edge $e$ intersecting $O$ and any integer $k>0$, the partial derivative $(f|_{e})^{(k)}$ is continuous on $e^o\cap O$ and 
\begin{equation}\label{a}
\sup_{} \{|(f|_{e})^{(k)}(x)|: x \in e^o\cap O\}< \infty .
\end{equation}
Let $C_{c}^\infty(O)$ be the space of compactly supported functions in $C^{\infty}(O)$.
\end{defn} 
 
 Given $f \in C^\infty(\X)$, any derivative of $f|_e$ on $e^o$ can be continuously extended to $e$. Note that the $k$-th derivative $(f|_e)^{(k)}(v)$ at a vertex $v$ depends on $e$.  
 
 Using Lebesgue measure $ds$ on $\mathbb{R}$, we define Lebesgue measure $\mu$ on $ \widetilde{X}$ as follows: for any measurable function $f$ on $\X$,
$$
\displaystyle \int_{ \widetilde{X}} fd\mu := \displaystyle \sum_{e \in E} \int_0^{l_e} f(e_s)ds,
$$
where $e_s$ is a point on $e$ with $d(i(e),e_s)=s$.

 Denote by $W^1(e^o)$ the subspace of $L^2(e^o,ds)$-functions whose first weak derivative is also in $L^2(e^o,ds)$. 
\begin{defn}\label{def:2.2} Let $O$ be a connected open subset of $\X$. 
\begin{enumerate}
\item[(1)] The \emph{Sobolev space} $W^1(O)$ is the set of functions such that
\begin{enumerate}
\item[(i)] for every $f \in W^1(O)$, $f|_{e^o\cap O} \in W^1(e^o\cap O,ds),$
\item[(ii)] $|| f'||_{L^2(O)}^2=\displaystyle \int_O || f'||^2d\mu < \infty.$
\end{enumerate}
 Denote $||f||_{W^1(O)}:=(||f||_{L^2(O)}+||f'||_{L^2(O)})^{\frac{1}{2}}.$ Let $W_0^1(O)$ be the closure of $C_c^\infty(O)$ in $W^1(O)$. The vector space $W^1_{loc}(O)$ is the space of functions $f$ such that for any compact set $K\subset O$, there exist a function $g\in W^1(O)$ with $g|_K=f|_K$. 

\item[(2)] A symmetric form $\mathcal{E}$ on $W^1(O)$ is defined, for all $f,g \in W^1(O)$, by
\begin{displaymath}
\displaystyle \mathcal{E}(f,g):=\int_Of'g'd\mu=\displaystyle \sum_{\substack{e\in E\\ e\cap O\neq \phi}}\int_{e\cap O} (f|_e)'(g|_e)' ds.
\end{displaymath}
\end{enumerate}
\end{defn}

 The symmetric form $\mathcal E$ on $W_0^1(O)$ is a strongly local regular Dirichlet form and $(\mathcal{E}, W_0^1(\X))$ coincides with $(\mathcal{E}, W^1(\X))$ (\cite{BSSW} Theorem 3.29 and 3.30). See the Appendix for definition.

An operator $(A,Dom(A))$ is \emph{non-negative definite} and \emph{self-adjoint} if for any $u\in{Dom(A)}$, $(Au,u)\geq 0$ and $A$ has a transpose operator $A^t$ such that $Av=A^t v$ for all $v\in Dom(A)$ and $Dom(A)=Dom(A^t)$.

\begin{defn}\label{Def2.6} Let $O$ be a precompact connected open set.

\begin{enumerate}\item[(1)] The domain $Dom_O(\Delta)$ of Laplacian $\Delta$ on an open set $O$ is the space of functions $f$ in $W^1_0({O})$ for which there exists a constant $C_f$ such that for all $g \in W^1_0({O}),$ $|\mathcal{E}(f,g)| \leq C_f ||g||_{L^2({O})}$. Denote $Dom(\Delta)=Dom_{ \widetilde{X}}(\Delta)$.
\item[(2)] Let $f\in Dom_O(\Delta)$. By Riesz representation theorem, there exists a unique function $h \in L^2(O)$ such that \begin{displaymath} \mathcal{E}(f,g) = \displaystyle  -\int_{O} h\,g\,d\mu.\end{displaymath}
We define Laplacian $\Delta f$ of $f$ to be the function $h$. 
The operator $-\Delta$ is a non-negative definite self-adjoint operator defined on $Dom_O(\Delta) \subset W^1_0(O)$. The spectrum of $-\Delta$ consists of non-negative real numbers.
\end{enumerate}
\end{defn}

\subsection{Heat kernel on graphs}\label{sec:2.3} In this section, we first define the heat kernel of a graph. Using the general theory of Dirichlet forms, we will obtain the existence and the smoothness of the heat kernel of $\X$.
\begin{defn}\label{Def2.6} 
The \emph{heat kernel} of a graph $\X$ is a fundamental solution of the heat equation i.e. a continuous function $p$ such that $\Delta_x p(t,x,y)= \frac{\partial}{\partial t} p(t,x,y) $ and $p(t,x,y) \to \delta_{x-y}$ as $t \to 0$.
\end{defn}
For our space $\X$, the heat kernel will be unique (see Theorem \ref{thm: 2.10}).
Note that $\Delta f = f''$ on each open edge.
The existence of the heat kernel was proved by K.T. Sturm for spaces satisfying the doubling property \eqref{2.2} and Poincar\'e inequality \eqref{2.3}, which were proved by \cite{BSSW} for strip complexes including locally finite metric graphs.
\begin{thm}\cite{St2, St3, BSSW}\label{hk}
Suppose that $\widetilde{X}$ is a locally finite topologically complete metric graph. Then there exists a non-negative function $p(t,x,y)$ satisfying the following:  
\begin{enumerate}
\item[(1)] $\displaystyle P_tf(x):=e^{\Delta t}f(x)=\int_{\X}p(t,x,y)f(y)d\mu(y)$ for any bounded function $f$ on $\X$.
\item[(2)] $p(t,x,y)=p(t,y,x).$ 
\item[(3)] $\displaystyle \int_{ \widetilde{X}}p(t,x,y)p(s,y,z)d\mu(y)=p(t+s,x,z).$
\item[(4)] The function $t\mapsto p(t,x,y)$ is in $C^\infty((0,\infty))$ for any $x,y\in X$.
\end{enumerate}
\end{thm}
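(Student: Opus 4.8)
The plan is to deduce all four properties from the general theory of Dirichlet forms, using that $(\mathcal{E}, W^1_0(\X))$ is a strongly local regular Dirichlet form on $L^2(\X,\mu)$ (as recalled after Definition \ref{def:2.2}), and that the metric measure space $(\X, d, \mu)$ satisfies the volume doubling property \eqref{2.2} and the scale-invariant Poincar\'e inequality \eqref{2.3}. The first step is to verify these two geometric hypotheses for $\X$: both follow from the work of \cite{BSSW} on strip complexes, of which a locally finite metric graph with edge lengths bounded above and below (which we have, since $X=\X/\Gamma$ is compact) is a special case; topological completeness guarantees that $(\X,d)$ is a complete, locally compact length space, so the heat-kernel machinery of Sturm \cite{St2, St3} applies. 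Given these inputs, Sturm's results yield a heat kernel $p(t,x,y)$ — the integral kernel of the strongly continuous semigroup $P_t = e^{\Delta t}$ on $L^2(\X,\mu)$ — which is jointly continuous in $(t,x,y)\in(0,\infty)\times\X\times\X$ and satisfies Gaussian-type upper and lower bounds; joint continuity in particular gives that $p$ is non-negative everywhere and finite.

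Next I would read off properties (1)--(3) from semigroup theory. Property (1) is the statement that $p(t,\cdot,\cdot)$ is the kernel of $P_t$; one first has it on $L^2$, and then extends to all bounded $f$ using the Gaussian upper bound (which makes $y\mapsto p(t,x,y)$ integrable, in fact $\int_\X p(t,x,y)\,d\mu(y)\le 1$ by stochastic completeness of $\X$, itself a consequence of the volume growth bound). Property (2), symmetry $p(t,x,y)=p(t,y,x)$, follows because $\mathcal{E}$ is a symmetric form, so each $P_t$ is a self-adjoint operator on $L^2(\X,\mu)$, and the integral kernel of a self-adjoint operator is symmetric (a.e., hence everywhere by continuity). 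Property (3), the Chapman--Kolmogorov identity, is precisely the semigroup law $P_{t+s}=P_t P_s$ rewritten in terms of kernels, again promoted from an a.e. identity to an everywhere identity by joint continuity of $p$.

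For property (4), the smoothness of $t\mapsto p(t,x,y)$ on $(0,\infty)$, I would invoke the local regularity theory for the heat equation on $\X$: away from $t=0$, $p(t,x,y)$ solves $\Delta_x p = \partial_t p$ with $\Delta$ acting as $f\mapsto f''$ on each open edge (with the appropriate Kirchhoff-type matching conditions at vertices built into the domain of $\Delta$), and the analyticity of the semigroup $e^{\Delta t}$ generated by the non-negative self-adjoint operator $-\Delta$ gives $t\mapsto P_t$ real-analytic, hence $C^\infty$, in operator norm on $(0,\infty)$; evaluating the analytic $L^2$-valued (indeed $C(\X)$-valued, by the kernel bounds and hypoellipticity along edges) map at the point $y$ yields $C^\infty$ dependence of $p(t,x,y)$ on $t$. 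The smoothness along edges and the matching at vertices is exactly the content of \cite{BSSW}, which I would cite rather than reprove. The main obstacle is not any single estimate but the bookkeeping of transferring the abstract $L^2$-level statements of Sturm's theory to the pointwise, everywhere-defined statements (1)--(4): every step relies on the Gaussian bounds and on joint continuity of $p$ to upgrade "$\mu$-a.e." to "for all $x,y$," and one must be careful that the doubling and Poincar\'e constants, and hence the heat-kernel bounds, are genuinely available in the metric-graph setting — which is precisely why the reference to \cite{BSSW} is essential.
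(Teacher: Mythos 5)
Your approach is correct and essentially reconstructs the reasoning that the paper leaves implicit when it states Theorem~\ref{hk} as a citation: verify doubling \eqref{2.2} and Poincar\'e \eqref{2.3} via \cite{BSSW}, then invoke Sturm's parabolic Harnack/heat-kernel theory \cite{St2,St3} to get a jointly continuous kernel with Gaussian bounds, from which (1)--(3) are semigroup-theoretic bookkeeping and (4) follows from analyticity of the self-adjoint semigroup. What the paper actually does, however, is carried out in the Appendix and is rather different in flavour: it constructs the Dirichlet heat kernel $p_O(t,x,y)$ on each precompact connected open $O$ explicitly via an eigenfunction expansion (Theorem~\ref{obasis} and equation~\eqref{A.2}), proves its properties (symmetry, Chapman--Kolmogorov, positivity, sub-Markovianity) directly using a maximum principle (Lemma~\ref{Maximum} and Proposition~\ref{Aa}), and then defines the global $p$ as the increasing limit $p=\lim_i p_{O_i}$ over an exhaustion (Definition~\ref{heatk}). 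Your abstract route is shorter and transfers most of the work to the black boxes in \cite{St2,St3,BSSW}, which is fine for Theorem~\ref{hk} in isolation; but the paper's constructive route is not an accident --- the Dirichlet kernels $p_O$ and the eigenpair $(\lambda_1^O,p_1^O)$ are used substantively later (e.g.\ Lemma~\ref{AC!!}, Corollary~\ref{g}, and the discussion before it), and your approach would not make those objects available. Two small inaccuracies to flag: joint continuity does not by itself give non-negativity of $p$ --- that comes from the Markovian property of the Dirichlet form (equivalently the positivity-preserving property of $P_t$, or from a Gaussian lower bound); and passing from analyticity of $t\mapsto e^{t\Delta}$ in operator norm to $C^\infty$ dependence of the scalar $p(t,x,y)$ requires an ultracontractivity or spectral-representation step (this is exactly what the eigenfunction expansion \eqref{A.2} supplies on precompact sets, and what \cite{BSSW} supplies globally), so you should not treat it as automatic.
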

By part (1) of the above theorem, $p(t,x,y)$ is the heat kernel.
Let us recall the doubling property \eqref{2.2} and Poincar\'e inequality \eqref{2.3}: for any compact set $K\subset \X$, there exist $r_K>0, C_K \geq 1, P_ K \geq 1$ such that for all $x\in K$ and $r\in(0,r_K)$, 
\begin{equation}\label{2.2}
\mu(B(x,2r))\leq C_K \mu(B(x,r)),
\end{equation}
and for any $f\in W^1(B(x,r))$, 
\begin{equation}\label{2.3}
\int_{B(x,r)}|f-f_{B(x,r)}|^2 d\mu=P_Kr^2\int_{B(x,r)}|f'|^2d\mu,
\end{equation}
where $f_{B(x,r)}=\frac{1}{B(x,r)}\int_{B(x,r)}fd\mu$. For our space $\X$, by compactness of $X$, the above properties are satisfied globally, i.e. the constants $r_K, C_K,P_K$ can be chosen independently of $K$. Under this condition, \cite{BSSW} showed the positivity of the heat kernel.
\begin{thm} \emph{(\cite{BSSW} Theorem 4.6)}\label{positive} Let $\X$ be a topologically complete graph. If there are global constants $r,C,P$ for \eqref{2.2} and \eqref{2.3}, then the heat kernel $p(t,x,y)$ is positive.
\end{thm}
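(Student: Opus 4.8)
The plan is to upgrade the non-negativity of $p$ recorded in Theorem~\ref{hk} to strict positivity by combining the parabolic Harnack inequality with the connectedness of $\X$. The first point is that, for a fixed $y\in\X$, the function $u(t,x):=p(t,x,y)$ is a non-negative weak solution of the heat equation $\partial_t u=\Delta u$ on $(0,\infty)\times\X$: by part~(1) of Theorem~\ref{hk} and the spectral calculus for the self-adjoint operator $-\Delta$ one has $\partial_t P_tf=\Delta P_tf$, while on each open edge $\Delta$ is just the second derivative and at the vertices $p$ satisfies the Kirchhoff transmission conditions built into the Dirichlet form $\cE$. The key analytic input is Sturm's equivalence theorem (\cite{St2,St3}): on a complete, locally compact metric measure space carrying a strongly local regular Dirichlet form, the conjunction of volume doubling and the $L^2$-Poincar\'e inequality is equivalent to the scale-invariant parabolic Harnack inequality and to two-sided Gaussian heat kernel bounds. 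Since the $\Gamma$-action is cocompact, the constants in \eqref{2.2}--\eqref{2.3} can be taken independent of the compact set, so these hypotheses hold with uniform constants at all scales below the global radius $r$, and one obtains the parabolic Harnack inequality on $\X$ for cylinders of radius $<r$ with a constant depending only on $\X$.

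Granting this, positivity follows quickly. In fact the Gaussian lower bound furnished by Sturm's theorem, of the shape $p(t,x,y)\ge c\,\mu(B(x,\sqrt t))^{-1}\exp(-Cd(x,y)^2/t)$, is already strictly positive, which settles the claim outright once the hypotheses are verified. If one prefers to avoid Gaussian estimates, I would argue by contradiction through the parabolic strong maximum principle, a standard consequence of the Harnack inequality: if $p(t_0,x_0,y_0)=0$ for some $t_0>0$, then $u(t,x):=p(t,x,y_0)$ is a non-negative solution attaining the minimal value $0$ at the interior point $(t_0,x_0)$, whence $u\equiv 0$ on $(0,t_0]\times\X$; connectedness of $\X$ enters here through the chaining of finitely many Harnack cylinders of radius $<r$ along a path from an arbitrary point to $x_0$. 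Then $p(t,x,y_0)=0$ for all $t\le t_0$ and all $x$, so by symmetry $P_tf(y_0)=\int_{\X}p(t,y_0,z)f(z)\,d\mu(z)=0$ for every bounded continuous $f\ge0$; choosing $f$ with $f(y_0)>0$ and letting $t\to0^+$ contradicts the defining approximate-identity property of the heat kernel ($p(t,\cdot,\cdot)\to\delta$ as $t\to0$). Hence $p(t,x,y)>0$ for all $t>0$ and all $x,y\in\X$.

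As a complementary, elementary check near the diagonal I would also invoke the monotonicity of heat kernels under restriction of the Dirichlet form: for any precompact open $U\subset\X$ one has $p_U\le p_{\X}$ on $U\times U$, and when $U$ is an open subinterval of a single edge, $p_U$ is the classical one-dimensional Gauss--Weierstrass kernel, strictly positive on the interior for every $t>0$; this already gives $p(t,x,y)>0$ whenever $x,y$ lie on a common open edge. The main obstacle is exactly the passage from such local positivity to the global statement: doubling and the Poincar\'e inequality are assumed only at small scales, so the Harnack inequality and the Gaussian lower bound are produced only for small radii, and reaching an arbitrary pair $(x,y)$ forces the chaining argument, whose one delicate point is crossing the vertices of $\X$, where $p$ solves transmission conditions rather than a genuine one-dimensional equation. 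It is here that $\Gamma$-cocompactness is essential: there are only finitely many isomorphism types of vertex neighbourhoods, so the Harnack constant can be taken uniform over all vertices, and this is precisely what makes the finite chain of Harnack estimates --- hence the propagation of positivity across the whole graph --- go through.
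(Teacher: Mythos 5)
The paper does not supply its own proof of this statement: it is quoted verbatim as \cite{BSSW} Theorem~4.6, and the present article simply invokes it. So there is no ``paper's proof'' to compare against, only the external source, and your reconstruction is a faithful account of how this class of positivity results is established in the literature. You correctly identify the engine, namely Sturm's equivalence (\cite{St2,St3}) between volume doubling plus the $L^2$-Poincar\'e inequality on the one hand and the scale-invariant parabolic Harnack inequality (and two-sided Gaussian bounds) on the other, valid for a strongly local regular Dirichlet form on a complete, locally compact metric measure space. You also flag the one genuine subtlety, that the hypotheses \eqref{2.2}--\eqref{2.3} are available only at small scales, so that positivity at scale $r$ must be propagated to arbitrary pairs $(x,y)$ by chaining Harnack cylinders, with cocompactness of the $\Gamma$-action supplying uniformity of the Harnack constant and connectedness of $\X$ supplying the chain. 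This is precisely the structure of the argument in \cite{BSSW}.

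Two remarks on details. First, once you invoke Sturm's Harnack inequality for the Dirichlet form $\cE$, the vertex transmission conditions are already built into the weak formulation, so the ``delicate point of crossing vertices'' is absorbed by the abstract machinery; it only becomes a separate issue in the elementary single-edge approach you mention as a sanity check. Second, in the maximum-principle variant, the step from ``$p(t,\cdot,y_0)\equiv 0$ for all $t\le t_0$'' to a contradiction deserves a little more care than the approximate-identity heuristic suggests, since $P_tf\to f$ a priori only in $L^2$. A cleaner finish is the semigroup identity $p(2t,y_0,y_0)=\int_{\X}p(t,y_0,z)^2\,d\mu(z)=0$, combined with the on-diagonal lower bound $p(2t,y_0,y_0)\ge c\,\mu(B(y_0,\sqrt{2t}))^{-1}>0$ coming from the same Gaussian estimates; alternatively, stochastic completeness (Corollary~\ref{Brooks}) gives $\int_{\X}p(t,y_0,z)\,d\mu(z)=1\ne 0$. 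With either patch the argument is complete and matches the intended route.
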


Another important property which we will use in the proof of Harnack inequality (Corollary \ref{G2}) is Kirchhoff law.
\begin{defn} \label{weaksol} Let $O$ be a connected open set of $\X$.
Define $D^\infty(O) \subset C^\infty(O)$ to be the subspace of functions $f$ that satisfies $(f|_e)^{(2k)}(v)=(f|_{e'})^{(2k)}(v)$ for all  integer $k>0$ and edges $e$ and $e'$ with $e\cap e'=\{v\}$ and Kirchhoff's law, i.e. for any vertex $v\in O$ and positive integer $k$, 
 \begin{equation}\label{Kirchhoff}
 \displaystyle \sum_{v=i(e)} (f|_e)^{(2k+1)}(v) =\sum_{v=t(e)} (f|_e)^{(2k+1)}(v).
 \end{equation}
\end{defn}

\begin{thm} \emph{(\cite{BSSW} Theorem 5.23 and Theorem 7.5)} The function $y\mapsto p(t,x,y)$ is positive and it is an element of $D^\infty(\X)$ for any $t\in \mathbb{R}$ and $x\in \X$.
\end{thm}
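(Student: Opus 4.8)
The plan is to treat the two assertions separately: positivity, which is almost immediate, and membership in $D^\infty(\X)$, which I would deduce from the smoothing of the heat semigroup together with the way $Dom(\Delta)$ encodes the vertex conditions. For positivity of $y\mapsto p(t,x,y)$: since $X=\X/\Gamma$ is compact and $\X$ is locally finite, the doubling property \eqref{2.2} and the Poincar\'e inequality \eqref{2.3} hold with constants independent of the compact set, so Theorem~\ref{positive} applies directly and gives $p(t,x,y)>0$ for all $t>0$ and $x,y\in\X$ (reading $t\in\R$ in the statement as $t>0$, where $p$ is defined).

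For the regularity I fix $t>0$, $x\in\X$ and set $f:=p(t,x,\cdot)$; the key intermediate claim is $f\in\bigcap_{k\ge0}Dom(\Delta^k)$. To get it, note first that by Theorem~\ref{hk}(2),(3),
\[
\int_\X p(s,x,y)^2\,d\mu(y)=\int_\X p(s,x,y)\,p(s,y,x)\,d\mu(y)=p(2s,x,x)<\infty
\]
for every $s>0$, so $p(s,x,\cdot)\in L^2(\X)$. Then, for any $0<s<t$, the identity of Theorem~\ref{hk}(3) combined with the symmetry of Theorem~\ref{hk}(2) rewrites $f$ as $f=P_s\big(p(t-s,x,\cdot)\big)=e^{s\Delta}u$ with $u:=p(t-s,x,\cdot)\in L^2(\X)$. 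Since $-\Delta$ is non-negative definite and self-adjoint, the spectral theorem shows that $e^{s\Delta}$ maps $L^2(\X)$ into $\bigcap_{k\ge0}Dom(\Delta^k)$ for every $s>0$, because $\lambda\mapsto\lambda^k e^{-s\lambda}$ is bounded on $[0,\infty)$. Hence $f\in\bigcap_k Dom(\Delta^k)$, so each $\Delta^k f$ lies in $Dom(\Delta)\subset W^1_0(\X)$ and is in particular continuous on $\X$.

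It then remains to read off the defining properties of $D^\infty(\X)$ from $f\in\bigcap_k Dom(\Delta^k)$, using that $\Delta$ acts on each open edge as $g\mapsto g''$, so that $(\Delta^k f)|_{e^o}=(f|_e)^{(2k)}$ for all $k$. First, an elliptic bootstrap on each edge --- $(f|_e)''=(\Delta f)|_e$ with continuous right-hand side, integrated twice and iterated --- shows that $f|_e$ extends to a $C^\infty$ function on the closed edge, hence all its derivatives are bounded on $e^o$; with continuity of $f$ on $\X$ this is exactly condition \eqref{a}, so $f\in C^\infty(\X)$. Second, for a vertex $v$ and $k\ge1$ one has $(f|_e)^{(2k)}(v)=(\Delta^k f)(v)$ for every edge $e\ni v$, and this value is independent of $e$ since $\Delta^k f$ is a single continuous function on $\X$; this is the matching of even derivatives at vertices. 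Third, for Kirchhoff's law \eqref{Kirchhoff} I would take $g\in C_c^\infty(\X)$ supported near a single vertex $v$, integrate $\mathcal{E}(\Delta^k f,g)$ by parts edge by edge, and compare with $\mathcal{E}(\Delta^k f,g)=-\int_\X(\Delta^{k+1}f)\,g\,d\mu$: the boundary terms at $v$ must cancel, which says precisely $\sum_{v=i(e)}(f|_e)^{(2k+1)}(v)=\sum_{v=t(e)}(f|_e)^{(2k+1)}(v)$. Thus $f\in D^\infty(\X)$.

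The step I expect to be the main obstacle is this last one: passing from membership in $Dom(\Delta)$ to the pointwise vertex identities requires first knowing that elements of $Dom(\Delta)$ restrict to $W^2$-functions on each open edge (so that the boundary traces of the first derivative exist and the edge-by-edge integration by parts is legitimate) and then bookkeeping edge orientations in the vertex sums. This local analysis at vertices is exactly the content of \cite[Theorems 5.23 and 7.5]{BSSW}, which I would cite; the positivity and the semigroup-smoothing arguments are comparatively routine.
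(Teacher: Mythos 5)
The paper does not prove this statement; it imports it wholesale by citing \cite{BSSW} Theorems 5.23 and 7.5, so there is no in-paper argument to compare against. Your reconstruction is correct and, as far as I can tell, complete at the level of a sketch. Positivity indeed follows from the paper's Theorem~\ref{positive}, once one notes that cocompactness of the $\Gamma$-action makes \eqref{2.2} and \eqref{2.3} hold globally; and the regularity claim follows from your chain: $p(s,x,\cdot)\in L^2$ via $p(2s,x,x)<\infty$, then spectral calculus shows $e^{s\Delta}$ maps $L^2(\X)$ into $\bigcap_{k}Dom(\Delta^k)$ since $\lambda\mapsto\lambda^k e^{-s\lambda}$ is bounded, then continuity of each $\Delta^k f$ (from $Dom(\Delta)\subset W^1_0(\X)$ and the one-dimensional Sobolev embedding) plus the edge-wise identity $(\Delta^k f)|_e=(f|_e)^{(2k)}$ bootstraps $f|_e$ to $C^\infty$ on closed edges, gives the even-derivative matching at vertices for free, and the integration-by-parts computation yields Kirchhoff's law.

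On the step you flag as the likely obstacle: it is not actually a gap in your argument. Your earlier steps already establish that $\Delta^k f$ restricts to a $C^\infty$ function on each closed edge, so the edge-by-edge integration by parts is legitimate without any further appeal to $W^2$-trace theory; you are in better shape than the general element of $Dom(\Delta)$. Worth noting also that the Kirchhoff derivation you propose is essentially the same calculation the paper itself performs in the proof of Lemma~\ref{bwws2} (equation \eqref{kh}) for $\lambda$-harmonic functions, applied there to $f$ and here to each $\Delta^k f$. So your route is not a genuinely different approach so much as an honest unpacking of a citation, consistent in spirit with the methods the paper uses elsewhere; what it buys is a self-contained proof that avoids reliance on the specific structure theorems of \cite{BSSW}, at the cost of invoking the spectral theorem for the semigroup smoothing.
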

  
The stochastic completeness of the heat kernel follows from the hyperbolicity of $\X$ and the cocompactness of the action of $\Gamma$  together with a result of K.T. Sturm.

\begin{thm}\emph{(\cite{St1} Theorem 4)}\label{thm: 2.10} If $\widetilde{X}$ is topologically complete and for all $x\in \widetilde{X}$,
 \begin{equation}\label{volumeg}\displaystyle \int_1^\infty \frac{r}{\ln \mu(B(x,r))}dr =\infty,\end{equation}
 then the solution of the bounded Cauchy problem on $(0,T)\times\widetilde{X}$ is unique. In particular, $$e^{t \Delta}1=\int_{\X}p(t,x,y)d\mu(y)=1.$$
 \end{thm}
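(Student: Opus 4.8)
The plan is to reduce the claim to a uniqueness statement for the heat equation, prove that statement by a weighted energy estimate of integrated--maximum--principle type, and then read off conservativeness. Uniqueness of the bounded Cauchy problem on $(0,T)\times\X$ follows from the assertion that the only bounded $u$ on $(0,T)\times\X$ with $\partial_t u=\Delta u$ and $u(0,\cdot)=0$ is $u\equiv0$ (apply it to the difference of two bounded solutions with the same initial data). Granting this, conservativeness follows at once: since $P_t=e^{t\Delta}$ is sub-Markovian, $v(t,x):=1-P_t1(x)=1-\int_\X p(t,x,y)\,d\mu(y)$ satisfies $0\le v\le1$, and by Theorem~\ref{hk}(1),(4) together with the fact that the constant function $1$ has zero derivative on every edge (so $\Delta1=0$), $v$ solves $\partial_t v=\Delta v$ with $v(0,\cdot)=0$; hence $v\equiv0$, which is exactly $e^{t\Delta}1=\int_\X p(t,x,y)\,d\mu(y)=1$. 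Thus everything reduces to the displayed uniqueness assertion, and this is where \eqref{volumeg} is used.

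Fix a basepoint $x$ and put $\rho(y):=d(x,y)$. Since $\rho$ is $1$-Lipschitz we have $\rho\in W^1_{loc}(\X)$ with $(\rho')^2\le1$ a.e.\ on each edge, and $\phi\circ\rho\in W^1_{loc}(\X)$ for every locally Lipschitz $\phi$; such functions, as well as bounded multipliers $e^{-\Psi(\rho(\cdot),t)}$ with $\Psi(\cdot,t)$ locally Lipschitz, are admissible in the Dirichlet form $\mathcal{E}$. Let $u$ be bounded with $u(0,\cdot)=0$, fix $\Psi(r,t)$ finite and nondecreasing in each variable, and set $\xi(y,t):=-\Psi(\rho(y),t)$. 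Using $\partial_t u=\Delta u$ in the weak form $\int(\Delta u)\varphi\,d\mu=-\mathcal{E}(u,\varphi)=-\int u'\varphi'\,d\mu$ with $\varphi=ue^{\xi}$, the edgewise product rule, and the elementary bound $2|uu'\xi'|\le2(u')^2+\tfrac12u^2(\xi')^2$, one obtains
\[
\frac{d}{dt}\int_\X u^2e^{\xi}\,d\mu\ \le\ \int_\X u^2e^{\xi}\Bigl(\partial_t\xi+\tfrac12(\xi')^2\Bigr)d\mu ,
\]
and since $(\xi')^2=\bigl(\partial_r\Psi(\rho,t)\bigr)^2(\rho')^2\le\bigl(\partial_r\Psi(\rho,t)\bigr)^2$ while $\partial_t\xi=-\partial_t\Psi(\rho,t)$, the right-hand side is $\le0$ as soon as
\begin{equation}\label{HJ}
\partial_t\Psi\ \ge\ \tfrac12(\partial_r\Psi)^2 .
\end{equation}
(The differentiation under the integral sign and the integration by parts are justified by exhausting $\X$ by compact subgraphs and using the decay of $e^{\xi}$.) Consequently, if \eqref{HJ} holds and $\int_1^\infty e^{-\Psi(r,0)}\,d\mu(B(x,r))<\infty$, then $t\mapsto\int_\X u^2(t,\cdot)e^{\xi(\cdot,t)}\,d\mu$ is finite and nonincreasing on $[0,T)$ (the multiplier $e^{\xi}$ is decreasing in $t$ by \eqref{HJ}); it vanishes at $t=0$, so $\int_\X u^2(\tau,\cdot)e^{-\Psi(\rho(\cdot),\tau)}\,d\mu=0$, hence $u(\tau,\cdot)=0$ a.e., for every $\tau\in(0,T)$. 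Therefore it suffices to construct, for \emph{some} $\tau>0$, a finite weight $\Psi$ on $[0,\infty)^2$, nondecreasing in each variable, satisfying \eqref{HJ} and $\int_1^\infty e^{-\Psi(r,0)}\,d\mu(B(x,r))<\infty$: then $u\equiv0$ on $(0,\tau)\times\X$, and iterating over consecutive time intervals of length $\tau$ (translating time each step so that the initial value is again $0$) gives $u\equiv0$ on $(0,T)\times\X$.

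The construction of $\Psi$ is the heart of the matter and, I expect, the only real obstacle; it is Sturm's contribution \cite{St1}. The Hamilton--Jacobi inequality \eqref{HJ} limits how fast $\Psi$ may grow in $r$ per unit growth in $t$; one produces an admissible $\Psi$ by flowing backward along the characteristics of $\partial_t\Psi=\tfrac12(\partial_r\Psi)^2$ (equivalently, via a Hopf--Lax formula, or by selecting radii $R_k\to\infty$ with matching time increments governed by the inverse function of $r\mapsto\mu(B(x,r))$ and summing the resulting elementary estimates), and the exact requirement for the resulting $\Psi$ to be simultaneously finite everywhere and integrable against the Stieltjes measure $d\mu(B(x,r))$ at $t=0$ is precisely $\int_1^\infty\frac{r\,dr}{\ln\mu(B(x,r))}=\infty$. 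I expect the delicate point to be this quantitative balancing of the ``spread'' allowed by \eqref{HJ} against the admissible volume growth, so that the integral criterion sits exactly at the borderline; everything else --- the reductions above, the admissibility of the test functions on the metric graph, the integration by parts, and the final identification $e^{t\Delta}1=\int_\X p(t,x,y)\,d\mu(y)$ via Theorem~\ref{hk} --- is routine in the Dirichlet-form framework, the doubling and Poincar\'e properties \eqref{2.2}--\eqref{2.3} being used only in the background for the existence and smoothness of $p$. Finally, \eqref{volumeg} is assumed for all $x$ and is in any case basepoint-independent, since $\mu(B(x,r))\le\mu(B(x',r+d(x,x')))$ makes the corresponding integrands comparable; hence the conclusion holds for every basepoint, completing the proof.
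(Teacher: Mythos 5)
The paper does not actually prove this statement; it is quoted from Sturm \cite{St1} (Theorem~4) and used as a black box to deduce Corollary~\ref{Brooks}, so there is no internal argument to compare against. Your overall plan — reduce conservativeness to uniqueness of bounded solutions via $v=1-P_t1$, test $\partial_t u=\Delta u$ against $ue^{\xi}$ with $\xi=-\Psi(\rho(\cdot),t)$, and close the energy estimate under the Hamilton--Jacobi constraint $\partial_t\Psi\ge\tfrac12(\partial_r\Psi)^2$ — is indeed the Grigor'yan--Sturm strategy that underlies Sturm's Theorem~4, and the computations you do carry out (the product rule, the Young inequality $2|uu'\xi'|\le 2(u')^2+\tfrac12u^2(\xi')^2$, the use of $|\rho'|\le1$) are correct.

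The gap is exactly where you flag it, but it is not peripheral: constructing a weight $\Psi$ that is (a) finite on $[0,\infty)\times[0,\tau]$ for some $\tau>0$, (b) nondecreasing in both variables and satisfying $\partial_t\Psi\ge\tfrac12(\partial_r\Psi)^2$, and (c) such that $\int_1^\infty e^{-\Psi(r,0)}\,d\mu(B(x,r))<\infty$, \emph{is} the theorem. Your description (characteristics of the eikonal equation, Hopf--Lax, "selecting radii with matching time increments") gestures at the right ingredients but is not a construction, and the integral condition \eqref{volumeg} sits on the borderline of when this is possible, so the balance is not routine. Moreover, a single global weight of this kind generically blows up at a finite time $T_0$ determined by its own radial growth, which is why the published proofs do not run one global estimate but proceed shell by shell: one bounds $\int_{B(R_{k+1})\setminus B(R_k)}u^2$ over a time increment $\tau_k$ by a Gaussian-in-$(R_{k+1}-R_k)^2/\tau_k$ factor against the volume $\mu(B(R_{k+1}))$, then optimizes the sequence $(R_k,\tau_k)$; the hypothesis $\int_1^\infty\frac{r}{\ln\mu(B(x,r))}\,dr=\infty$ is precisely the condition under which $\sum\tau_k$ can be made positive. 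That iteration is also what makes your "exhaust $\X$ by compact subgraphs" justification of the integration by parts honest, since it is the vehicle by which the spatial cutoffs are introduced and their error terms controlled. Until that machinery is written out, what you have is a correct reduction to, and correct setup of, the weighted-energy method — not a proof of Sturm's theorem.
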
 
\begin{coro}\label{Brooks} The heat kernel $p(t,x,y)$ is stochastically complete, i.e. 
\begin{displaymath}
 \int_{ \widetilde{X}}p(t,x,y)d\mu(y)=1.
\end{displaymath} 
\end{coro}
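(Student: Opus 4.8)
The plan is to obtain Corollary~\ref{Brooks} directly from Theorem~\ref{thm: 2.10}: since $\widetilde{X}$ is topologically complete, it is enough to verify the volume growth condition \eqref{volumeg}, i.e.
\[
\int_1^\infty \frac{r}{\ln \mu(B(x,r))}\,dr=\infty \qquad\text{for every } x\in\widetilde{X}.
\]
Since $\mu$ is $\Gamma$-invariant (the action is isometric and without inversions, and $\mu$ is the length measure) and replacing the base point changes the radius of a ball only by a bounded additive amount, it suffices to check this for one conveniently chosen $x$, say a point of the fixed fundamental domain $F$.

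The heart of the matter is an exponential upper bound $\mu(B(x,r))\leq C e^{cr}$. Fix $x$ and set $m_0:=\mu(\overline{F})$, which is finite because $\overline{F}$ is compact and $\mu(g.\overline{F})=m_0$ for every $g\in\Gamma$. By the \v{S}varc--Milnor lemma (see \cite{BH}, Proposition~I.8.19), the orbit map $g\mapsto g.x$ is a quasi-isometry $(\Gamma,d_\Gamma)\to(\widetilde{X},d)$, so there are constants $A\geq 1$, $B\geq 0$, independent of $r$, with
\[
g.\overline{F}\cap B(x,r)\neq\emptyset \ \Longrightarrow\ d_\Gamma(e,g)\leq Ar+B .
\]
Because $S$ is finite, there are at most $C_1 e^{c_1 r}$ such group elements $g$ (for suitable $C_1,c_1>0$), and since $\widetilde{X}=\bigcup_{g\in\Gamma} g.\overline{F}$ we obtain
\[
\mu(B(x,r))\ \leq\ m_0\cdot\card\{\,g\in\Gamma: g.\overline{F}\cap B(x,r)\neq\emptyset\,\}\ \leq\ m_0 C_1 e^{c_1 r}.
\]

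Consequently $\ln\mu(B(x,r))\leq c_1 r+\ln(m_0 C_1)$ for all $r$, while $\mu(B(x,r))\to\infty$ as $r\to\infty$ since $\widetilde{X}$ is unbounded; hence the integrand in \eqref{volumeg} is positive and well defined for all large $r$ and
\[
\frac{r}{\ln\mu(B(x,r))}\ \geq\ \frac{r}{c_1 r+\ln(m_0 C_1)}\ \longrightarrow\ \frac{1}{c_1}>0\quad(r\to\infty),
\]
so the integral diverges. Applying Theorem~\ref{thm: 2.10} then yields $e^{t\Delta}1=\int_{\widetilde{X}}p(t,x,y)\,d\mu(y)=1$, as claimed. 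The only genuinely non-formal step is the exponential volume bound, but granted the geometric $\Gamma$-action this is standard --- it is the \v{S}varc--Milnor lemma combined with the at-most-exponential growth of balls in a finitely generated group --- so I anticipate no real obstacle.
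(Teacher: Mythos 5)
Your proposal is correct and follows essentially the same route as the paper: reduce Corollary~\ref{Brooks} to Theorem~\ref{thm: 2.10} by establishing the exponential volume bound $\mu(B(x,r))\leq Ce^{cr}$, from which the divergence of the integral in \eqref{volumeg} is immediate. The paper states the volume bound in one line (citing cocompactness), whereas you supply the \v{S}varc--Milnor justification explicitly; that is a fuller account of the same argument, not a different one.
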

\begin{proof}  Since $\G$ acts cocompactly on a Gromov hyperbolic space $\X$, there is a constant $C$ such that $\mu(B(x,r))$ is bounded above by $e^{Cr}$, thus the equation \eqref{volumeg} holds.
\end{proof}
\section{Harnack inequality and $\lambda$-harmonic functions}\label{sec:3}
\subsection{Harnack inequality for graphs} \label{sec:3.1}
  In this section, we show the graph version of Harnack inequality, which is the analog of the result of Cheng and Yao (\cite{CY}). Denote the counting measure of a discrete subset $Y\subset \X$($A\subset \G$, resp.) by $|Y|$ ($|A |$, resp.). 
    \begin{defn} Let $O$ be an open subset of $ \widetilde{X}$. A function $f$ in $W_{loc}^1(O)$ is $\lambda$-\emph{harmonic} on $O$ if  $f$ is a weak solution of the following equation: for all $g\in W_c^1(O),$ 
\begin{equation}\label{weaksolution}
\mathcal{E}(f,g)=\lambda(f,g).
\end{equation}
\end{defn}
   \begin{lem}\label{bwws2}
Let $O$ be a precompact open subset of $ \widetilde{X}$. Any $\lambda$-harmonic function $f$ on $O$ is contained in $D^\infty(O)$.
\end{lem}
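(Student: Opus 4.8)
The plan is to establish elliptic regularity locally, edge by edge and then across vertices, for the weak equation $\mathcal{E}(f,g) = \lambda(f,g)$. First I would note that on each open edge $e^o$, the weak equation reads $\int (f|_e)' g' \, ds = \lambda \int (f|_e) g \, ds$ for all $g \in C_c^\infty(e^o)$, which says precisely that $(f|_e)'' = -\lambda (f|_e)$ in the distributional sense on the interval $(0, l_e)$. Since $f|_e \in W^1(e^o) \subset L^2_{loc}$, a standard bootstrap (the distributional identity $(f|_e)'' = -\lambda f|_e$ forces $(f|_e)'$ to have a weak derivative in $L^2_{loc}$, hence $f|_e \in W^2_{loc}$, and then iterating gives $f|_e \in C^\infty$) shows $f|_e$ is smooth on $e^o$; in fact $f|_e$ is a linear combination of $\cos(\sqrt\lambda\, s)$ and $\sin(\sqrt\lambda\, s)$ (or exponentials, or an affine function, according to the sign of $\lambda$), so all derivatives extend continuously to the closed edge $\bar e$ and the sup bounds \eqref{a} hold automatically. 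This gives $f \in C^\infty(O)$ in the sense of the paper once we check continuity at vertices, which is part of being in $W^1_{loc}(O)$ (elements of $W^1$ of a graph are continuous across vertices — the ambient space is a metric graph and $W^1$-functions are absolutely continuous along each edge and must match at vertices).

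Next I would extract the two vertex conditions defining $D^\infty(O)$. Testing the weak equation against $g \in C_c^\infty(O)$ supported near a single vertex $v$ and integrating by parts on each incident edge: $\mathcal{E}(f,g) = \sum_{e \ni v} \int_{e} (f|_e)'(g|_e)'\, ds = -\sum_{e \ni v} \int_e (f|_e)'' g \, ds + (\text{boundary terms at } v)$. The interior integrals combine with the right-hand side $\lambda(f,g)$ precisely because $(f|_e)'' = -\lambda f|_e$ on each $e^o$, so what survives is the collection of boundary terms at $v$, which must vanish for all admissible $g$. Writing the boundary terms carefully with the chosen orientations (terminal vertices contribute $+(f|_e)'(v) g(v)$, initial vertices contribute $-(f|_e)'(v) g(v)$, with the continuity of $g$ at $v$ meaning all these share the common value $g(v)$), one gets exactly Kirchhoff's law \eqref{Kirchhoff} for $k=0$: $\sum_{v = i(e)} (f|_e)'(v) = \sum_{v = t(e)} (f|_e)'(v)$. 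To get the higher-order conditions — both the matching $(f|_e)^{(2k)}(v) = (f|_{e'})^{(2k)}(v)$ and Kirchhoff for $k \geq 1$ — I would differentiate the edgewise eigenfunction equation: since $(f|_e)'' = -\lambda f|_e$ holds identically on each edge, we have $(f|_e)^{(2k)} = (-\lambda)^k (f|_e)$ and $(f|_e)^{(2k+1)} = (-\lambda)^k (f|_e)'$ as continuous functions on $\bar e$. Hence the second-derivative matching at $v$ is just $(-\lambda)^k$ times the continuity of $f$ at $v$ (which we already have), and the higher Kirchhoff relation is $(-\lambda)^k$ times the $k=0$ Kirchhoff relation we just derived; both follow for free.

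The main obstacle, and the step deserving the most care, is the vertex analysis: justifying the integration by parts rigorously at the level of $W^1$ regularity before smoothness is known, and correctly bookkeeping the orientation signs so that the surviving terms assemble into \eqref{Kirchhoff} rather than some other linear combination. One clean way around the a-priori-regularity worry is to first run the single-edge bootstrap to conclude $f|_e \in C^\infty(\bar e)$ for every edge (which uses only test functions supported in edge interiors and is genuinely interior regularity), and only then, with $f$ now known to be classically smooth on each closed edge, perform the integration by parts at vertices with honest classical boundary terms — at that point the vanishing of the boundary sum for all $g$ with $g(v)$ arbitrary is immediate and yields Kirchhoff. I should also remark that precompactness of $O$ is used only to ensure $f \in W^1$ (not merely $W^1_{loc}$) on a neighborhood of each point and that $O$ meets finitely many edges near each vertex, so that all the finite sums above make sense; local finiteness of $\X$ already guarantees the finiteness of the vertex stars. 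Assembling: $f$ is smooth on each closed edge with all derivatives bounded (so $f \in C^\infty(O)$), the even derivatives match across vertices, and Kirchhoff's law holds at every order, which is exactly the assertion $f \in D^\infty(O)$.
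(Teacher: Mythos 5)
Your proposal is correct and follows essentially the same route as the paper: on each open edge the weak equation reduces to the constant-coefficient ODE $(f|_e)''=-\lambda f|_e$, giving edgewise smoothness; testing against $g\in C_c^\infty(O)$ supported near a vertex and integrating by parts yields Kirchhoff's law at order $k=0$; and the identities $(f|_e)^{(2k)}=(-\lambda)^k f|_e$, $(f|_e)^{(2k+1)}=(-\lambda)^k(f|_e)'$ lift continuity and first-order Kirchhoff to all orders. Two small remarks worth making: your sign $(-\lambda)^k$ is actually the correct one under the paper's conventions ($\mathcal{E}(f,g)=-\int (\Delta f)g\,d\mu$ and $\mathcal{E}(f,g)=\lambda(f,g)$ force $\Delta f=f''=-\lambda f$, whereas the paper's line $(f|_e)^{(2k+1)}(v)=\lambda^k (f|_e)'(v)$ has a sign typo that is harmless because it scales both sides of the Kirchhoff and matching identities uniformly); and you make explicit the verification of the even-derivative matching condition $(f|_e)^{(2k)}(v)=(f|_{e'})^{(2k)}(v)$, which the paper's proof asserts only implicitly — a genuine, if modest, improvement in completeness.
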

\begin{proof} The restriction of $f$ on an open edge $e^o\cap O$ is smooth. Thus $\Delta f|_{e}(x)$ coincides with $(f|_{e})''(x)$. Suppose that the open set $O$ contains a vertex $v$. Choose $r>0$ with $B(v,r)\subset O$. Let $e_{\epsilon}$ be a point in $e$ satisfying $d(v,e_\epsilon)=\epsilon$. 
We obtain
\begin{equation}\label{smooth} 
\nonumber\left|(f|_e)'(e_{\epsilon_1})-(f|_e)'(e_{\epsilon_2})\right|=\left|\int_{\epsilon_1}^{\epsilon_2}(f|_e)''(e_s)ds\right|=\lambda \left|\int_{\epsilon_1}^{\epsilon_2}(f|_e)(e_s)ds\right|
\end{equation}
Since $f|_e$ is continuous, $(f|_e)'(e_{1/n})$ is a Cauchy sequence and $(f|_e)'(v)$ exists. By the integration by part and the definition of Laplacian, for all $g\in C^\infty_c(O)$,  
\begin{equation}\label{kh}
\begin{split}
 -&\int_{O} f'g' d\mu=\int_O  \Delta f g d\mu=\sum_{e\cap O\neq \phi} \int_{0}^{l_e} (f|_e)''(e_s) (g|_e)(e_s) ds\\
=&\sum_{e\cap O\neq \phi}((f|_e)'(t(e))g(t(e))-(f|_e)'(i(e))g(i(e)))-\sum_{e\cap O\neq \phi} \int_{0}^{l_e} (f|_e)'(e_s) (g|_e)'(e_s) ds\\
=& \sum_{v\in O} \left\{\sum_{\substack{e\in E\\ t(e)=v}} (f|_e)'(v)g(v)-\sum_{\substack{e\in E\\i(e)=v}}(f|_e)'(v)g(v)\right\}-\int_{O} f'g' d\mu.
\end{split}
\end{equation}
By \eqref{kh}, we have $\underset{v=t(e)} \sum(f|_e)'(v)-\underset{v=t(e)}\sum(f|_e)'(v)=0$ when the support of $g$ is in $B(v,r)$ and $g(v)=1$. Since $f$ is $\lambda$-harmonic, $(f|_e)^{(2k+1)}(v)=\lambda^k(f|_e)'(v)$ for any $k\geq 0$ and $v\in V$, and $f$ satisfies the Kirchhoff's law.
\end{proof}
\begin{prop}\label{gphar} Fix $r,l>0$ and $\lambda \geq 0$. Let $f$ be a positive $\lambda$-harmonic function in $C(\overline{B(x,r+l)})$. There exists a constant $r_* \in [r,r+l]$ such that
\begin{equation}\label{harnack}
\displaystyle \int_{B(x.r)} |(\log f(y))'|^2\,d\mu (y)\leq 2|\partial B(x,r_*)|/l.
\end{equation}
\end{prop}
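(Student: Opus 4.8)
The plan is to imitate the classical integral estimate for the logarithmic derivative of a positive harmonic function, using the Kirchhoff-law structure established in Lemma~\ref{bwws2}. Write $u=\log f$, which is well defined and smooth on each open edge since $f$ is positive and $\lambda$-harmonic, hence lies in $D^\infty(\overline{B(x,r+l)})$. On each open edge one computes $u''=(f''/f)-(f'/f)^2$, and since on each open edge $f''=\Delta f=\lambda f$ (away from vertices), this gives $u''=\lambda-(u')^2$, so that $(u')^2=\lambda-u''\le -u''$ because $\lambda\ge 0$. Thus it suffices to bound $-\int_{B(x,r)}u''\,d\mu$ from above; the point is that $-u''$ is, edgewise, the divergence-type term whose integral telescopes to boundary contributions plus a sum over interior vertices.

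First I would integrate $-u''$ over $B(x,\rho)$ for a radius $\rho\in[r,r+l]$. Integrating by parts on each edge and summing, the interior-vertex terms assemble into $\sum_{v\in B(x,\rho)}\big(\sum_{t(e)=v}u'(v)-\sum_{i(e)=v}u'(v)\big)$. Now $u'=f'/f$ and, because $f$ satisfies Kirchhoff's law \eqref{Kirchhoff} with $k=0$, we have $\sum_{t(e)=v}f'(v)=\sum_{i(e)=v}f'(v)$ for each vertex $v$; dividing by the common value $f(v)$ shows each interior-vertex term vanishes. Hence $\int_{B(x,\rho)}(u')^2\,d\mu\le -\int_{B(x,\rho)}u''\,d\mu$ equals a sum of boundary terms of the form $\pm u'(y)$ over the (finitely many) points $y\in S(x,\rho)$, each of which is bounded in absolute value by $\max_{y\in S(x,\rho)}|u'(y)|$, giving a bound of the shape $|\partial B(x,\rho)|\cdot\max_{y}|(\log f)'(y)|$. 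This is not yet the clean bound \eqref{harnack}; to remove the pointwise maximum we use an averaging argument over $\rho$.

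The averaging step is the heart of the argument and the place where the factor $1/l$ and the choice of $r_*$ enter. I would integrate the inequality over $\rho\in[r,r+l]$ and use that $B(x,r)\subset B(x,\rho)$ for every such $\rho$, so that $l\cdot\int_{B(x,r)}(u')^2\,d\mu\le\int_r^{r+l}\big(\text{boundary term at radius }\rho\big)\,d\rho$. The right-hand side, by the coarea/layer-cake structure of $\mu$ on the graph, is controlled by $\int_{B(x,r+l)\setminus B(x,r)}|u'|\,d\mu$, and then by Cauchy--Schwarz against the constant $1$ on that annulus. One then absorbs the resulting $\big(\int(u')^2\big)^{1/2}$ term and, after rearranging, picks $r_*\in[r,r+l]$ realizing a favourable value of $|\partial B(x,r_*)|$ via the mean value theorem for integrals. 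I expect the main obstacle to be bookkeeping: making the boundary terms at a generic radius $\rho$ rigorous when $S(x,\rho)$ meets edge interiors (so "boundary of the ball" is a finite set of edge-points, not vertices), and verifying that the constant $2$ in \eqref{harnack} survives the Cauchy--Schwarz absorption; both are routine once one fixes the convention that $|\partial B(x,\rho)|$ counts these crossing points and notes that for a.e.\ $\rho$ no vertex lies on $S(x,\rho)$.
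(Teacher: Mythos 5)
Your first two steps---the pointwise identity $(u')^2=\lambda-u''\le -u''$ on open edges, and the vanishing of interior-vertex terms by dividing Kirchhoff's law for $f$ by $f(v)$---are exactly what the paper does. The gap is in the averaging step, and it is not a bookkeeping issue. After integrating the boundary bound over $\rho\in[r,r+l]$ and applying Cauchy--Schwarz against $1$ on the annulus $B(x,r+l)\setminus B(x,r)$, what you would have is
\[
l\int_{B(x,r)}(u')^2\,d\mu \;\le\; \Bigl(\int_{B(x,r+l)\setminus B(x,r)}(u')^2\,d\mu\Bigr)^{1/2}\,\mu\bigl(B(x,r+l)\setminus B(x,r)\bigr)^{1/2}.
\]
Writing $F(\rho)=\int_{B(x,\rho)}(u')^2\,d\mu$, the first factor on the right is $(F(r+l)-F(r))^{1/2}$, an integral over a region \emph{disjoint} from $B(x,r)$. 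There is nothing on the right to absorb against the left; after picking $r_*$ by the mean value theorem you are left with $l\,F(r)^2\le\bigl(F(r+l)-F(r)\bigr)\,|\partial B(x,r_*)|$, which leaves $F(r+l)$ uncontrolled and does not close.

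The paper resolves this by producing a \emph{differential} inequality rather than a one-shot annular bound. It takes a cutoff $\varphi$ equal to $1$ on $B(x,s-\delta)$, supported in $B(x,s)$, with $|\varphi'|\le 2/\delta$, derives $\int\varphi'(\log f)'\,d\mu\ge\int\varphi\,|(\log f)'|^2\,d\mu$ from the same superharmonicity plus Kirchhoff cancellation you used, then applies Cauchy--Schwarz only on the \emph{thin} annulus $B(x,s)\setminus B(x,s-\delta)$ and lets $\delta\to 0$. This gives $F(s)^2\le 4\,F'(s)\,|\partial B(x,r_{i,*})|$ for $s$ in each branching-free subinterval $(r_i,r_{i+1})$, equivalently $1/|\partial B(x,r_{i,*})|\le 4F'(s)/F(s)^2=-4\,(1/F)'(s)$. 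Integrating this exact derivative over $(r_i,r_{i+1}]$ and summing the telescoping estimate over the finitely many branching-free subintervals, then choosing $r_*$ to \emph{maximize} $|\partial B(x,r_{i,*})|$, yields $l/|\partial B(x,r_*)|\le 4/F(r)$. The missing idea is that the Cauchy--Schwarz must be done at a single radius level (infinitesimally thin annulus), so the right-hand side becomes $F'(s)$ and you can integrate $F'/F^2$ over $[r,r+l]$; that integration, not a global Cauchy--Schwarz, is what produces the factor $1/l$ and the choice of $r_*$.
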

\begin{proof}Set $F(s) =\int_{B(x,s)} |(\log f)'|^2 d\mu$. Let $\{ r=r_0 < r_1<\cdots <r_n=r+l \}$ be the union of the set $\{r_0=r, r_n=r+l\}$ and the set of all radii such that a branching appears at a point in the sphere $S(x, r_i)$, for $i=1, \cdots n-1$ .   \\
\textit{\underline{Step 1. Computation on non-branching parts:}}  For any $i$, we find a lower bound of $\frac{F'(s)}{F(s)^2}$ on $(r_i,r_{r+1}].$ Let us first compute $F(s)$ on each interval $(r_i,r_{r+1}]$ (i.e. where there is no branching). By Lemma \ref{bwws2}, $f \in D^\infty(\X)$ and we have 
\begin{equation}\label{par}
\Delta \log f = (\log f)''=  f''/f -|(\log f)'|^2=-\lambda-|(\log f)'|^2.
\end{equation}
For any $s$ and $\delta$ such that $(s-\delta,s] \subset (r_i,r_{i+1})$, choose a non-negative function $\varphi$ in $C^\infty(B(x,r+l))$ satisfying 
\begin{displaymath}
\varphi|_{B(x,s-\delta)} \equiv 1 \text{  and  } \varphi|_{B(x,s)^c} \equiv 0 \text{ and } |\varphi'| \leq \frac{2}{\delta}.
\end{displaymath}  
Since $f$ is continuous and satisfies Kirchhoff law, i.e. $f'$ satisfies \eqref{Kirchhoff}, it follows that $(\log f)'$ also satisfies \eqref{Kirchhoff} when $k=0$. By the integration by part and the equation \eqref{par}, we obtain 
\begin{equation}\label{h}
\begin{split}
&\int_{B(x,s)\backslash B(x,s-\delta)} \varphi'(\log f)'d\mu=\int_{B(x,s)} \varphi'(\log f)'d\mu\\
&=\sum_{v\in B(x,s)} \left\{\sum_{\substack{e\in E\\t(e)=v}} \varphi(v) (\log f|_e)'(v)-\sum_{\substack{e\in E\\ v=i(e)}}\varphi(v) (\log f|_e)'(v)\right\}-\int_{B(x,s)} \varphi(\log f)''d\mu\\
&\overset{\substack{\eqref{Kirchhoff}+\\ \eqref{par}}}=\displaystyle \int_{B(x,s)}\lambda\varphi d\mu +\int_{B(x,s) }\varphi|(\log f)'|^2d\mu\geq\int_{B(x,s) }\varphi|(\log f)'|^2d\mu.
\end{split}
\end{equation}
Note that the first equality follows from the support of $\varphi'$.
 Let $r_{i,*}$ be the midpoint of the interval $[r_{i},r_{i+1}]$. 
By H\"older's inequality and $\eqref{h}$,
\begin{equation}
\begin{split}
\nonumber\displaystyle&\int_{B(x,s)} \varphi |(\log f)'|^2 d\mu \overset{\eqref{h}}\leq\int_{B(x,s)} \varphi'(\log f)'d\mu \\ 
&\leq  \displaystyle \biggl(\frac{4}{\delta}\int_{B(x,s)\backslash B(x,s-\delta)}|(\log f)'|^2d\mu\biggr)^{\frac{1}{2}}(|\partial B(x,r_{i,*})|)^{\frac{1}{2}}.
\end{split}
\end{equation}
Since $B(x,r_{i+1}) \backslash B(x,r_i)$ is a disjoint union of $|\partial B(x,r_{i,*}))|$ distinct intervals of length $r_{i+1}-r_i$, we have
\begin{displaymath}
\frac{1}{|\partial B(x,r_{i,*})|} \leq \frac{4F'(s)}{F(s)^2}.
\end{displaymath}

Integrating over the interval $(r_i,r_{i+1}]$,
\begin{equation}\label{harnack2}
\displaystyle \frac{r_{i+1}-r_i}{|\partial B(x,r_{i,*})|}=\int_{r_i}^{r_{i+1}} \frac{1}{|\partial B(x,r_{i,*})|} \leq \displaystyle \int_{r_i}^{r_{i+1}} \frac{4F'(s)}{F(s)^2}ds=\frac{4}{F(r_i)}-\frac{4}{F(r_{i+1})}. \\
\end{equation}
\textit{\underline{Step 2. Completing the proof:}} Using the inequality \eqref{harnack2}, we have
$$\displaystyle\sum_{i=0}^{n-1}  \frac{r_{i+1}-r_i}{|\partial B(x,r_{i,*})|} \leq \displaystyle\sum_{i=0}^{n-1}\left(\frac{4}{F(r_i)}-\frac{4}{F(r_{i+1})}\right)\leq\frac{4}{F(r)}.$$ 
Let $r_*$ be a number such that $|\partial B(x,r_*)| = \underset{0\leq i \leq n-1}\max |\partial B(x,r_{i,*})|.$
We obtain 
\begin{displaymath}
\frac {l}{|\partial B(x,r_*)|} =\frac{\sum_{i=0}^{n-1} r_{i+1}-r_{i}}{|\partial B(x,r_*)|}\leq \displaystyle \sum_{i=0}^{n-1}\frac{r_{i+1}-r_{i}}{|\partial B(x,r_{i,*})|} \leq \frac{4}{F(r)},
\end{displaymath}
thus the inequality \eqref{harnack} holds.
\end{proof}

Using Proposition \ref{gphar}, we obtain the graph version of Harnack inequality.

\begin{coro}\label{G2} Fix $\lambda>0$. Let $f$ be a positive $\lambda$-harmonic function in $C(\overline{B(x,r+l)})$. Then there exists an explicit constant $D_{r,l}=(4|\partial B(x,r_*)|\mu(B(x,r)/l)^{\frac{1}{2}}$ such that for all $y,z \in B(x,r)$,
\begin{equation}\label{harnack3} 
\left|\log\frac{f(y)}{f(z)}\right| \leq  D_{r,l}.
\end{equation}
\end{coro}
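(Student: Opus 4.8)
The plan is to derive the Harnack inequality from Proposition~\ref{gphar} by integrating $(\log f)'$ along a shortest path in $\overline{B(x,r)}$ joining $z$ to $y$ and controlling the resulting integral by the Cauchy--Schwarz inequality together with the $L^2$-bound \eqref{harnack}. First I would fix $y,z\in B(x,r)$ and choose a path $\gamma$ from $z$ to $y$ lying inside $\overline{B(x,r)}$; since $\X$ is a connected metric graph and $B(x,r)$ is (path-)connected, such a $\gamma$ exists, and along $\gamma$ the function $s\mapsto \log f(\gamma(s))$ is absolutely continuous on each edge (by Lemma~\ref{bwws2}, $f\in D^\infty(B(x,r))$, so $f$ and hence $\log f$ is smooth on each open edge and continuous across vertices, $f$ being positive). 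Then
\[
\left|\log\frac{f(y)}{f(z)}\right| = \left|\int_\gamma (\log f)'\,ds\right| \le \int_\gamma |(\log f)'|\,ds.
\]

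Next I would apply the Cauchy--Schwarz inequality to the last integral, writing $\int_\gamma |(\log f)'|\,ds \le \bigl(\mathrm{length}(\gamma)\bigr)^{1/2}\bigl(\int_\gamma |(\log f)'|^2\,ds\bigr)^{1/2}$. Since $\gamma$ can be taken inside $B(x,r)$, its length is at most $\mu(B(x,r))$ in the crudest bound — or, more carefully, one bounds the path integral $\int_\gamma |(\log f)'|^2\,ds$ by the full integral $\int_{B(x,r)}|(\log f)'|^2\,d\mu$, which by Proposition~\ref{gphar} is at most $2|\partial B(x,r_*)|/l$. Combining, one gets
\[
\left|\log\frac{f(y)}{f(z)}\right| \le \mu(B(x,r))^{1/2}\left(\frac{2|\partial B(x,r_*)|}{l}\right)^{1/2},
\]
which, after absorbing the numerical factor, is $\le \bigl(4|\partial B(x,r_*)|\mu(B(x,r))/l\bigr)^{1/2}=D_{r,l}$, as claimed.

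The one genuine subtlety — and the step I would be most careful about — is passing from the $L^2$-norm of $(\log f)'$ over the two-dimensional-looking measure $\mu$ on $B(x,r)$ to a bound on a one-dimensional path integral $\int_\gamma|(\log f)'|^2\,ds$. On a metric graph these coincide once one picks a single geodesic path, because $d\mu$ restricted to any edge is just $ds$ and a path uses each edge segment at most once (a geodesic in particular does not backtrack), so $\int_\gamma |(\log f)'|^2\,ds \le \int_{B(x,r)}|(\log f)'|^2\,d\mu$; the factor $\mu(B(x,r))^{1/2}$ then comes from Cauchy--Schwarz applied with the constant function $1$ on $\gamma$, using $\mathrm{length}(\gamma)\le\mu(B(x,r))$. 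Everything else is routine: positivity of $f$ (Theorem in \S\ref{sec:2.3}) guarantees $\log f$ is well-defined and smooth on open edges, and Lemma~\ref{bwws2} guarantees the Kirchhoff-type regularity so that $\log f$ extends continuously across vertices, making the path integral legitimate. I would present the argument in this order: (i) reduce to bounding a path integral; (ii) Cauchy--Schwarz; (iii) invoke Proposition~\ref{gphar}; (iv) read off the constant $D_{r,l}$.
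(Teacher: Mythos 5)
Your proposal is correct and essentially the same as the paper's proof: both reduce to $\bigl|\int_{[y,z]}(\log f)'\bigr|$, invoke Cauchy--Schwarz together with $\mathrm{length}\leq\mu(B(x,r))$, and then close with Proposition~\ref{gphar}. The only difference is cosmetic ordering — the paper first enlarges the integration domain to $B(x,r)$ and then applies Cauchy--Schwarz there, whereas you apply Cauchy--Schwarz along the path and then enlarge each factor separately — and both rely on the same tacit point (the connecting path lies in $\overline{B(x,r)}$, so the path integral is dominated by the ball integral).
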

\begin{proof}By H\"older inequality,
\begin{align*}
\displaystyle\left|\log \frac{f(y)}{f(z)}\right|&=\displaystyle\left|\int_{[y,z]}(\log f)'d\mu\right|\leq  \int_{B(x,r)}|(\log f)'|d\mu\\
&\leq \displaystyle\left\{\mu(B(x,r))\int_{B(x,r)}|\log f'|^2d\mu \right\}^\frac{1}{2}\leq\sqrt{4|\partial B(x,r_*)|\mu(B(x,r))/l}.
\end{align*}
\end{proof}
By compactness of $X$, we can choose $D_{r,l}$ satisfying \eqref{harnack3} indepedent of $x$.
\subsection{Existence of $\lambda$-harmonic functions}\label{sec:2.5} 
In this section, for the bottom of the $L^2$-spectrum $\lambda_0$ of $-\Delta$, we prove that for all $\lambda\in [0,\lambda_0]$, a positive $\lambda$-harmonic function exists. The existence of a positive $\lambda_0$-harmonic function will be used to prove Theorem \ref{2.32}. 
 \begin{defn} 
 \emph{The bottom of the spectrum $\lambda_0$ of $-\Delta$} is defined as follows: $$\lambda_0:=\inf \left\{\frac{||f'||_{L^2( \widetilde{X})}^2}{||f||_{L^2( \widetilde{X})}^2}:f \in W_0^1( \widetilde{X})\backslash\{0\}\right\}.$$
 \end{defn}
 We will see in Proposition \ref{spect} that $\lambda_0$ is indeed the infimum of the spectrum of $-\Delta$. Since $\G$ is non-amenable, the bottom of the spectrum is non-zero (\cite{SW} Theorem 8.5).

 Let $\cE$ be an arbitrary strongly local, regular Dirichlet form. If the heat kernel is positive (which is our case by Theorem~\ref{positive}), the existence of a positive $\lambda$-harmonic function for $\lambda \in [ 0, \lambda_0]$ is proved under the condition of the local compactness property and Harnack principle \cite{LSV}. Our space $\X$ satisfies the local compactness property and a modified version of uniform Harnack principle (Lemma~\ref{lem:3.5}), which suffice to use the result of \cite{LSV}. Now let $\mathcal{E}$ be the Dirichlet form defined in Definition \ref{def:2.2}.
\begin{lem}\label{lem:3.5.1} The Dirichlet form $\cE$ satisfies the local compactness property, i.e. for every precompact open $O \subset \ X$,
$W_0^1(O)$ is compactly embedded in $L^2(\X).$
\end{lem}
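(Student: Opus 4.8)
The plan is to prove the local compactness property by reducing it to the classical Rellich–Kondrachov compactness theorem on a finite union of intervals. First I would observe that since $O$ is precompact in the locally finite graph $\X$, it meets only finitely many edges $e_1,\dots,e_N$, and each $e_i^o\cap O$ is a finite disjoint union of open intervals; hence $O^o$ is, up to a set of measure zero, a finite disjoint union of bounded open intervals $I_1,\dots,I_k\subset\R$. Restriction to open edges gives an isometric embedding (for the relevant norms) of $W^1(O)$ into $\bigoplus_{j=1}^k W^1(I_j)$, since the $W^1$-norm and the form $\cE$ are defined edge-by-edge and the vertex set has $\mu$-measure zero; likewise $L^2(\X)$ restricted to $O$ is $\bigoplus_j L^2(I_j)$.

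Next I would take a bounded sequence $(f_n)$ in $W^1_0(O)$ and push it to the finite direct sum $\bigoplus_j W^1(I_j)$. On each bounded interval $I_j$, the inclusion $W^1(I_j)\hookrightarrow L^2(I_j)$ is compact by the one-dimensional Rellich–Kondrachov theorem (equivalently, by Arzelà–Ascoli, since a bounded set in $W^1$ of an interval is uniformly bounded and equicontinuous by the fundamental theorem of calculus and Cauchy–Schwarz). Passing to a subsequence finitely many times, once for each $j$, we extract a subsequence of $(f_n)$ converging in $L^2(I_j)$ for every $j$, hence converging in $\bigoplus_j L^2(I_j)=L^2(O)\subset L^2(\X)$ (extending by zero outside $O$, which is legitimate since $f_n\in W^1_0(O)$). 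This shows that the bounded set $(f_n)$ has an $L^2(\X)$-convergent subsequence, i.e. the embedding $W^1_0(O)\hookrightarrow L^2(\X)$ is compact.

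The one genuinely non-routine point — and the step I would flag as the main obstacle — is making the "restriction to edges" identification fully rigorous at the vertices: one must check that convergence in $L^2$ on each interval, together with the uniform $W^1$ bound, is compatible with the continuity constraint at vertices that is built into $W^1_0(O)$ (functions in $C_c^\infty(O)$ and their $W^1$-closure are continuous across vertices), and that no mass "escapes" through the finitely many branch points. This is handled by noting that the limit obtained in $\bigoplus_j L^2(I_j)$ automatically lies in the closed subspace $L^2(O)$, and that compactness of the embedding is a statement purely about the $L^2$ topology, so the vertex continuity of the $f_n$ need not be preserved in the limit — we only need the $L^2$ limit to exist, which the interval-by-interval argument provides. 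Finally, one should remark (as the paper does for other constants) that although the statement is for a fixed precompact $O$, the cocompactness of the $\Gamma$-action means the compact embedding constants can be taken uniformly over a $\Gamma$-orbit of such sets, which is what is needed to invoke \cite{LSV}.
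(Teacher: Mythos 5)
Your argument is essentially the one the paper intends: the paper's own proof is a one-line appeal to a ``graph version of Rellich's theorem,'' and you supply exactly the expected details, reducing to the classical one-dimensional Rellich--Kondrachov (or Arzel\`a--Ascoli) compactness on the finitely many edges met by $\overline{O}$. One small caveat: for an arbitrary precompact open $O$ the set $e^o\cap O$ need not be a \emph{finite} union of intervals (that holds only when $O$ is connected), but this is harmless --- simply extend each $f\in W^1_0(O)$ by zero to all of the finitely many edges meeting $\overline{O}$ and apply Rellich on those full bounded edges, so that only finitely many diagonal extractions are needed. Your closing remark about uniformity of ``compact embedding constants'' over a $\Gamma$-orbit is not really a meaningful strengthening (compactness of an embedding is not quantified by a constant) and is not needed to invoke \cite{LSV}, which only requires the local compactness property as stated.
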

 \begin{proof} The lemma follows from a graph version of Rellich theorem on compactness of the embedding of Sobolev spaces in $L^2$.
 \end{proof}
 \begin{lem} \label{lem:3.5}
 The Dirichlet form $\cE$ satisfies uniform Harnack principle, i.e for every bounded interval $I\subset \mathbb{R}_{\geq 0}$, for every precompact connected open subset $O$ of $\X$ and for every sequence $\{f_n\}$ of positive $\lambda_n$-harmonic functions on $O$ with $\lambda_n\in I$, the following holds: If for some compact set $K\subset O$ with $d(K,\partial O)\geq 1$, 
 \begin{equation}\label{eq:3.7}
 \sup_{n\in \mathbb{N}}||f_n1_K||_{L^2(O)}<\infty,
 \end{equation}
 then for any compact $K'\subset O$ with $d(K',\partial O)\geq 1$, we have
 \begin{equation}\label{eq:3.8}
 \sup_{n\in \mathbb{N}}|| f_n 1_{K'}||_{L^2(O)}<\infty.
 \end{equation}
 \end{lem}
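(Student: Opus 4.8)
The plan is to reduce the uniform Harnack principle of Lemma~\ref{lem:3.5} to the pointwise Harnack inequality already established in Corollary~\ref{G2}, exploiting the cocompactness of the $\Gamma$-action to make all the constants appearing there independent of the center. First I would fix the bounded interval $I \subset \mathbb{R}_{\geq 0}$; by Corollary~\ref{G2} (and the remark following it, which makes $D_{r,l}$ independent of $x$), there is a single constant $D$, depending only on $I$, such that for every ball $B(x,2)$ whose double $B(x,3)$ is contained in the domain of $f_n$, and for any $\lambda \in I$, any positive $\lambda$-harmonic function $g$ on $B(x,3)$ satisfies $\sup_{B(x,1)} g \leq e^{D}\inf_{B(x,1)} g$. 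The point is that for $\lambda \in I$ bounded, the quantities $|\partial B(x,r_*)|$ and $\mu(B(x,r))$ entering $D_{r,l}$ are uniformly bounded above and below over all $x \in \widetilde{X}$, since $\widetilde{X}/\Gamma$ is compact and locally finite; one should also note that Corollary~\ref{G2} is stated for $\lambda>0$, so the case $\lambda=0$ (ordinary harmonic functions) needs the same argument run with $\lambda=0$ in \eqref{par}, where the $-\lambda$ term simply drops out and only improves the estimate. Hence a uniform \emph{local} Harnack inequality holds on balls of a fixed radius, with a constant depending only on $I$.

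Next I would pass from this local statement to the global $L^2$ statement by a chaining argument. Let $K \subset O$ be the compact set with $d(K,\partial O) \geq 1$ on which $\sup_n \|f_n 1_K\|_{L^2(O)} < \infty$, and let $K' \subset O$ be another compact set with $d(K',\partial O)\geq 1$. Cover $K \cup K'$ by finitely many balls $B_1,\dots,B_N$ of radius, say, $1/4$, each with triple contained in $O$ (possible because both $K$ and $K'$ stay at distance $\geq 1$ from $\partial O$, and $O$ is connected and precompact so $\overline{O}$ is compact). Since $O$ is connected, one can arrange these balls — adding finitely many intermediate balls if necessary, all still with triples inside $O$ — so that the nerve of the cover is connected and so that at least one of them, say $B_1$, meets $K$. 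On each $B_i$ apply the uniform local Harnack inequality: $\sup_{B_i} f_n \leq e^{D} \inf_{B_i} f_n$ for all $n$. For two overlapping balls $B_i, B_j$ one then gets $\sup_{B_i} f_n \leq e^{2D} \sup_{B_j} f_n$ by comparing both to a value at a common point. Chaining along a path in the nerve from $B_1$ to any $B_i$ gives $\sup_{B_i} f_n \leq e^{2ND} \sup_{B_1} f_n$ for every $i$, hence $\sup_{K'} f_n \leq e^{2ND}\sup_{B_1} f_n$.

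It remains to bound $\sup_{B_1} f_n$ by a constant times $\|f_n 1_K\|_{L^2(O)}$, which is uniformly bounded by hypothesis. Since $B_1$ meets $K$, pick a point $p \in B_1 \cap K$; enlarging $B_1$ slightly we may assume $B(p,1/4) \subset K$ (or else replace $K$ by a slightly larger compact set still at distance $\geq 1$ from $\partial O$ — this is harmless since shrinking $d(K,\partial O)$ a bit keeps the hypothesis applicable, or one simply notes $B_1\cap K$ has positive $\mu$-measure). By the uniform Harnack inequality on $B(p,1/4)$, $f_n$ varies there by a bounded factor, so $\sup_{B_1} f_n \leq e^{D}\inf_{B(p,1/4)} f_n \leq e^{D}\big(\mu(B(p,1/4))^{-1}\int_{B(p,1/4)} f_n^2\, d\mu\big)^{1/2} \leq e^{D} c^{-1/2}\|f_n 1_K\|_{L^2(O)}$, where $c>0$ is a uniform lower bound for $\mu(B(p,1/4))$ coming again from cocompactness. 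Combining the two displays gives $\sup_n \sup_{K'} f_n < \infty$, which in particular yields $\sup_n \|f_n 1_{K'}\|_{L^2(O)} \leq \mu(\overline{O})^{1/2} \sup_n \sup_{K'} f_n < \infty$, i.e.\ \eqref{eq:3.8}.

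The main obstacle is the bookkeeping in making every constant genuinely uniform: one must check that the geometric quantities $|\partial B(x,r_*)|$, $\mu(B(x,r))$, and the local volumes $\mu(B(p,1/4))$ in Corollary~\ref{G2} are bounded above and away from zero uniformly in the center, which is where the cocompactness and local finiteness of $\widetilde{X}$ enter, and that the number $N$ of balls in the chaining cover can be taken uniform over the relevant pairs $(K,K')$ — or at least finite for each fixed pair, which is all that \eqref{eq:3.8} requires since $K'$ is fixed. A minor technical point is handling the endpoint $\lambda=0$, not covered by the statement of Corollary~\ref{G2}, which as noted is immediate since dropping $-\lambda$ from \eqref{par} only strengthens the estimate \eqref{h}.
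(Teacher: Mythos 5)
Your proposal is correct and follows essentially the same route as the paper: both reduce the statement to Corollary~\ref{G2}, chain local Harnack estimates across $O$ to compare $f_n$ at points of $K$ and $K'$, and pass between $L^2$-norms and pointwise values using the finite measure of the compact sets. You simply spell out the chaining and the uniformity of the constants (via cocompactness) more explicitly than the paper, which compresses the chain into a single factor $e^{\operatorname{diam}(O)D_{1/2,1/2}}$, and you also flag the harmless $\lambda=0$ endpoint, which Proposition~\ref{gphar} already covers.
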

  \begin{proof}By Corollary \ref{G2}, for any points $x,y\in\{z\in O:d(z,\partial O)\geq 1\}$ and $n\in \mathbb{N}$, we have $f_n(x)\leq e^{diam(O)D_{1/2,1/2}}f_n(y)$. For any $x\in K$ and $y\in \{z\in O:d(z,\partial O)\geq 1\},$ we have
  \begin{equation}
  \begin{split}
 \nonumber (\int_K f_n^2d\mu)^{1/2}&\geq e^{-diam(O)D_{1/2,1/2}}(\mu(K))^{1/2} f_n(x)\\
 \nonumber&\geq e^{-2diam(O)D_{1/2,1/2}}(\mu(K))^{1/2} f_n(y).
 \end{split}
 \end{equation}
  This shows that for any compact set $K'$ with $d(K',\partial O)\geq 1$ and $y\in K'$,
  \begin{equation}
  \begin{split}
\nonumber  ||f_n 1_{K'}||_{L^2(O)}&\leq e^{diam(O)D_{1/2,1/2}}{\mu(O)}^{1/2}f_n(y)\\\nonumber&\leq e^{3diam(O)D_{1/2,1/2}}\biggl(\frac{\mu(O)}{\mu(K)}\biggr)^{1/2} \sup_{n\in \mathbb{N}}||f_n1_K||_{L^2(O)}.
\end{split}
 \end{equation}
  Uniform Harnack principle follows from the above inequality.
   \end{proof} 

\begin{lem}\label{lem:3.9} Let $\lambda \in (-\infty, \lambda_0).$ The inverse operator $ (-\Delta-\lambda)^{-1}$ is well-defined and for any compactly supported smooth function $g$,
 $$(-\Delta-\lambda)^{-1} g =\int_{\X}  \int_0^\infty e^{\lambda t} p(t,x,y) g(y) d\mu(y).$$
\end{lem}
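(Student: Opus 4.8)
The plan is to recover the resolvent $(-\Delta-\lambda)^{-1}$ as the Laplace transform $\int_0^\infty e^{\lambda t}P_t\,dt$ of the heat semigroup $P_t=e^{t\Delta}$ via the spectral theorem, and then to unfold the semigroup into the heat kernel using Theorem~\ref{hk}.

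First, recall (Proposition~\ref{spect}) that $\lambda_0$ is the infimum of the spectrum of the non-negative self-adjoint operator $-\Delta$, with $\lambda_0>0$ since $\G$ is non-amenable. Thus for $\lambda\in(-\infty,\lambda_0)$ the self-adjoint operator $-\Delta-\lambda$ is bounded below by $\lambda_0-\lambda>0$, hence invertible with $\|(-\Delta-\lambda)^{-1}\|_{L^2\to L^2}\le(\lambda_0-\lambda)^{-1}$; this gives the first assertion. Writing $-\Delta=\int_{[\lambda_0,\infty)}s\,dE_s$ for the spectral resolution, we have $P_t=\int_{[\lambda_0,\infty)}e^{-st}\,dE_s$ and $\|P_t\|_{L^2\to L^2}=e^{-\lambda_0t}$, so $\int_0^\infty e^{\lambda t}\|P_t\|\,dt\le(\lambda_0-\lambda)^{-1}<\infty$ and the Bochner integral $R:=\int_0^\infty e^{\lambda t}P_t\,dt$ defines a bounded operator on $L^2(\X)$. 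Since $\int_0^\infty e^{(\lambda-s)t}\,dt=(s-\lambda)^{-1}$ for $s\ge\lambda_0$, with integrand dominated uniformly in $s$ by the integrable function $e^{(\lambda-\lambda_0)t}$, Fubini's theorem applied to the finite positive measure $d\langle E_sf,f\rangle$ and Lebesgue measure on $(0,\infty)$ yields
\[
\langle Rf,f\rangle=\int_0^\infty e^{\lambda t}\langle P_tf,f\rangle\,dt=\int_{[\lambda_0,\infty)}\frac{d\langle E_sf,f\rangle}{s-\lambda}=\langle(-\Delta-\lambda)^{-1}f,f\rangle
\]
for all $f\in L^2(\X)$; by polarization $R=(-\Delta-\lambda)^{-1}$.

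It remains to insert the heat kernel. Let $g\in C_c^\infty(\X)\subset L^2(\X)$; by linearity we may assume $g\ge0$. By Theorem~\ref{hk}(1), $P_tg(x)=\int_{\X}p(t,x,y)g(y)\,d\mu(y)$, and since $p\ge0$ Tonelli's theorem makes
\[
v(x):=\int_0^\infty e^{\lambda t}P_tg(x)\,dt=\int_{\X}\Bigl(\int_0^\infty e^{\lambda t}p(t,x,y)\,dt\Bigr)g(y)\,d\mu(y)
\]
a well-defined $[0,\infty]$-valued measurable function, whose inner integral is the $\lambda$-Green function $G_\lambda(x,y)$. The truncations $\int_0^T e^{\lambda t}P_tg\,dt$ increase pointwise to $v$ and converge to $Rg=(-\Delta-\lambda)^{-1}g$ in $L^2(\X)$ as $T\to\infty$, so $v=(-\Delta-\lambda)^{-1}g$ almost everywhere, which is the asserted identity. (If a genuinely pointwise statement is wanted, one identifies $v$ with the continuous representative of $(-\Delta-\lambda)^{-1}g$ using the smoothness of $y\mapsto p(t,x,y)$ from Theorem~\ref{hk}.)

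The spectral bookkeeping above is routine; the step requiring care is the last one — exchanging $\int_0^\infty dt$ with $\int_{\X}d\mu(y)$ and obtaining the finiteness of the resulting integral. This is precisely where the hypothesis $\lambda<\lambda_0$ enters (through the decay $\|P_t\|_{L^2\to L^2}=e^{-\lambda_0t}$, which makes $R$ a bounded operator), together with the positivity of the heat kernel, which licenses the use of Tonelli's theorem.
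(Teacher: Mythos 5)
Your proof is correct and follows essentially the same route as the paper's: the paper likewise derives invertibility from the coercivity estimate $\mathcal{E}(f,f)-\lambda\langle f,f\rangle \ge (\lambda_0-\lambda)\langle f,f\rangle$ (it then cites two propositions from Schm\"udgen for surjectivity and boundedness of the inverse, where you package the same facts via the spectral theorem), and the paper dismisses the integral identity as ``a direct calculation,'' which you have actually carried out via the Laplace transform of the semigroup together with Fubini/Tonelli. One small logical wrinkle: you invoke Proposition~\ref{spect} to assert $\lambda_0$ is the infimum of the spectrum, but in the paper that proposition comes \emph{after} Lemma~\ref{lem:3.9} and its proof cites Lemma~\ref{lem:3.9}, so the reference is circular as stated; it is also unnecessary, since the inclusion $\sigma(-\Delta)\subset[\lambda_0,\infty)$ that you actually use follows directly from self-adjointness and the form bound $\mathcal{E}(f,f)\ge\lambda_0\|f\|^2$, which is exactly the estimate the paper starts from.
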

\begin{proof}
For all $f\in Dom(\Delta)$ and for all $\lambda\in (-\infty,\lambda_0)$,
$$\left\langle-\Delta f,f\right\rangle-\lambda\left\langle f,f\right\rangle=\mathcal{E}(f,f)-\lambda\left\langle f,f\right\rangle\geq(\lambda_0-\lambda)\left\langle f,f\right\rangle.$$
The dimension of the kernel of $(-\Delta-\lambda I)$ is zero. By Proposition 1.6 in \cite{Sc}, $(-\Delta-\lambda I)$ is surjective.
By Proposition 2.1 in \cite{Sc}, $(-\Delta -\lambda)$ has a bounded inverse operator. The second statement is a direct calculation. \end{proof}   
   
\begin{thm}\label{harmonic}\emph{(Modified version of Theorem 3.7. of \cite{LSV})}  Lemma \ref{lem:3.5.1} and Lemma~\ref{lem:3.5} imply the existence of  a positive $\lambda$-harmonic function for any $\lambda \in [0, \lambda_0].$
\end{thm}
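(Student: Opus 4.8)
\textbf{Proof plan for Theorem \ref{harmonic}.}
The plan is to follow the strategy of \cite{LSV}, which produces a positive $\lambda$-harmonic function as a limit of normalized $\lambda$-Green functions (resolvents), and to verify that the two hypotheses we have checked -- local compactness (Lemma \ref{lem:3.5.1}) and uniform Harnack principle (Lemma \ref{lem:3.5}) -- are exactly what drives the argument. For $\lambda \in [0,\lambda_0)$ one has a genuine resolvent by Lemma \ref{lem:3.9}: fix a base point $x_0 \in \X$ and a nonnegative bump function $g \in C_c^\infty(\X)$ with $g(x_0)>0$, and set $u_\lambda(x) := (-\Delta-\lambda)^{-1}g(x) = \int_0^\infty e^{\lambda t}P_tg(x)\,dt$. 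By positivity of the heat kernel (Theorem \ref{positive}) this $u_\lambda$ is positive, and it is $\lambda$-harmonic on $\X \setminus \supp g$; it solves $(-\Delta-\lambda)u_\lambda = g$ in the weak sense everywhere. Normalize $h_\lambda := u_\lambda/u_\lambda(x_0)$. The heart of the matter is then to extract a convergent subsequence $h_{\lambda_n}$ as $\lambda_n \uparrow \lambda_0$ and to show the limit $h$ is a positive $\lambda_0$-harmonic function on all of $\X$ (the case $\lambda < \lambda_0$ is already handled by $u_\lambda$ itself).

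First I would establish a uniform (in $n$) local $L^2$ bound on $\{h_{\lambda_n}\}$ away from $\supp g$. The normalization pins down the value at $x_0$; applying Corollary \ref{G2} on a fixed ball around $x_0$ gives a uniform bound on $\|h_{\lambda_n}1_K\|_{L^2}$ for a compact neighborhood $K$ of $x_0$ with $d(K,\partial O)\ge 1$ inside some large precompact $O$. Then the uniform Harnack principle (Lemma \ref{lem:3.5}), applied on an exhaustion of $\X$ by precompact connected open sets, propagates this to a uniform $L^2$ bound on every compact subset of $\X$ (strictly speaking, on $\X \setminus \supp g$, where the functions are honestly $\lambda_n$-harmonic; near $\supp g$ one controls them directly from the resolvent formula and the uniform bound on $P_tg$). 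Given the uniform local $L^2$ bounds, the local compactness property (Lemma \ref{lem:3.5.1}) yields, by a diagonal argument over the exhaustion, a subsequence converging in $L^2_{loc}(\X)$ to some $h \ge 0$ with $h(x_0)$-normalization surviving (via Harnack, $h$ is either identically zero on a component or strictly positive; the lower Harnack bound at $x_0$ forbids $h\equiv 0$). Passing to the limit in the weak formulation $\mathcal{E}(h_{\lambda_n},\varphi) = \lambda_n\langle h_{\lambda_n},\varphi\rangle$ for $\varphi \in C_c^\infty(\X\setminus\supp g)$ -- using $L^2_{loc}$ convergence on the right and, for the left side, the elliptic regularity encoded in Lemma \ref{bwws2} together with the uniform $C^1$ control coming from Proposition \ref{gphar} to pass the derivatives to the limit -- shows $h$ is $\lambda_0$-harmonic on $\X\setminus\supp g$; a separate argument (letting the support of $g$ shrink, or noting $h$ is a decreasing-type limit) upgrades this to $\lambda_0$-harmonicity on all of $\X$. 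Positivity of $h$ is then immediate from Harnack.

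The main obstacle I anticipate is the passage to the limit $\lambda_n \uparrow \lambda_0$: at $\lambda = \lambda_0$ the resolvent $(-\Delta-\lambda_0)^{-1}$ need not exist (indeed $\lambda_0$ is the bottom of the spectrum), so one cannot simply define $u_{\lambda_0}$ directly, and one must ensure that the normalized functions $h_{\lambda_n}$ do not degenerate -- i.e. that the limit is not identically zero and that no mass escapes to the Gromov boundary in a way that destroys harmonicity. This is precisely where the \emph{uniformity} in Lemma \ref{lem:3.5} (uniform over the whole bounded parameter interval $I \ni \lambda_n$, not just for a fixed $\lambda$) is essential: it gives compactness of the family uniformly in $\lambda_n$, so the diagonal extraction produces a genuine nonzero $\lambda_0$-harmonic limit. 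A secondary technical point is handling the behavior near $\supp g$ so that the limit is harmonic there as well; this is dealt with by choosing a sequence of bump functions $g_k$ with shrinking supports and taking a further diagonal limit, or by the standard trick of subtracting off a fixed local potential. Once these points are in place, the conclusion for $\lambda = \lambda_0$ follows, and for $\lambda \in [0,\lambda_0)$ the function $u_\lambda$ from Lemma \ref{lem:3.9} already does the job.
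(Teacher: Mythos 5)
The overall strategy you outline (resolvent functions, normalization, uniform Harnack to propagate $L^2$ bounds, local compactness plus a diagonal argument to extract a limit) matches the paper's proof, which follows \cite{LSV}. However, there is a genuine gap in how you handle the support of the source term $g$, and it appears twice. First, for $\lambda < \lambda_0$ you assert that $u_\lambda = (-\Delta-\lambda)^{-1}g$ ``already does the job''; but $u_\lambda$ solves $(-\Delta-\lambda)u_\lambda = g \not\equiv 0$, so it is \emph{not} $\lambda$-harmonic on $\supp g$ and does not satisfy the conclusion as stated. Second, with a single fixed $g$ your extracted limit $h$ is only $\lambda_0$-harmonic on $\X\setminus\supp g$, and your proposed remedy of ``letting the support of $g$ shrink'' goes in the wrong direction: shrinking $\supp g_k$ to a point makes the normalized resolvents converge (by uniqueness of the resolvent kernel) to a multiple of $G_\lambda(x_0,\cdot)$, which has a Kirchhoff-type defect at $x_0$ and is therefore not $\lambda$-harmonic there either.

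The fix the paper uses, following \cite{LSV}, is to push the sources \emph{out to infinity} rather than shrink them: one fixes an exhaustion $O_1 \subset O_2 \subset \cdots$ (taken to be balls, so that $d(\overline{O_m},\partial O_{m+1})\ge 1$ as required by the modified Harnack hypothesis), chooses $\lambda_n\uparrow\lambda$, and chooses compactly supported $g_n\geq 0$ with $\supp g_n \subset \overline{O_{n+2}}^{\,c}$. Then $f_n := (-\Delta-\lambda_n)^{-1}g_n$ is $\lambda_n$-harmonic on the increasingly large set $O_{n+2}$. After normalizing $\|f_n 1_{\overline{O}_1}\|_{L^2}=1$ (your pointwise normalization at $x_0$ is equivalent up to Harnack constants and would also work), Lemma \ref{lem:3.5} gives $\sup_n\|f_n 1_{\overline{O}_m}\|_{L^2}<\infty$ for each $m$, Lemma \ref{lem:3.5.1} gives local compactness, and the diagonal argument produces a limit $f$ that is $\lambda$-harmonic on every $O_m$, hence on all of $\X$, and positive by Harnack. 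Note this construction is what one should use for $\lambda < \lambda_0$ as well.
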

\begin{proof} 
The proof of \cite{LSV} is as follows. We first choose an exhausting sequence of increasing precompact connected open subsets $\{O_m\}_{m\geq 1}$, i.e. for any $m$, $\overline{O_m}$ is a subset of $O_{m+1}$ and $\bigcup O_m=\X$. Choose a sequence $\{\lambda_n\}_{n\geq1}\subset [0,\lambda_0)$ with $\underset{n\rightarrow\infty}\lim \lambda_n=\lambda$ and a sequence of compactly supported non-trivial functions $g_n \geq 0$ with $\text{supp }g_n\subset \overline{O_{n+2}}^c.$ Denote $f_n=(-\Delta-\lambda_n)^{-1}g_n$. By the positivity of the heat kernel and Lemma~\ref{lem:3.9}, for any $n$, $f_n$ is positive almost everywhere. By definition, the function $f_n$ is a $\lambda_n$-harmonic function on $\X\backslash \text{supp }g_n$.  Using uniform Harnack principle, $\underset{n\in \mathbb{N}}\sup||f_n1_{\overline{O}_{m}}||_2<\infty$ when $||f_n1_{\overline{O}_{1}}||_2=1$ for any $n$. Using the diagonal argument, it follows that there is a weak limit $f$ of $f_n$ and $f$ is a positive $\lambda$-harmonic function. In our modified version, we only need to choose an exhausting sequence of open sets $O_m$ with the additional condition $d(\overline{O_m},\partial O_{m+1})\geq 1$, which is satisfied by simply taking $O_n=B(x,n)$.
\end{proof}

\subsection{Green functions of graphs}\label{3}
In this section, we first define the $\lambda$-Green function and Green region $\lambda \in (-\infty, \lambda_0].$
Using the existence of a positive $\lambda$-harmonic function proved in Section \ref{sec:2.5}, we prove that the $\lambda$-Green function on $\X$ is finite for all $\lambda\in(-\infty,\lambda_0]$. 
\begin{defn}
\begin{enumerate}
 \item[(1)] The $\lambda$-\emph{Green function} is defined as follows:
 $$G_\lambda(x,y):=\int_0^\infty e^{\lambda t}p(t,x,y)dt.$$
 The \emph{Green region} is the set of $\lambda \in \mathbb{R}$ for which $G_{\lambda}$ is finite.
\item[(2)]  The resolvent set of $-\Delta$ on $Dom(\Delta)$ is the set of $\lambda\in \mathbb{C}$ such that $(-\Delta-\lambda I)$ has a bounded inverse operator on $\mathcal H$. The spectrum of $-\Delta$ is the complement of the resolvent set. 
\end{enumerate}
\end{defn}
   \begin{prop}\label{spect} The bottom of the spectrum $\lambda_0$ is in the closure of the spectrum of the operator $-\Delta$.
\end{prop}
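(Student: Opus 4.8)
The plan is to show that $\lambda_0$ cannot lie in the resolvent set of $-\Delta$, so that it must belong to the spectrum (the spectrum is closed, so its closure coincides with itself). Recall that by definition $\lambda_0 = \inf\{\mathcal{E}(f,f)/\|f\|_{L^2(\X)}^2 : f \in W_0^1(\X)\setminus\{0\}\}$, and since $-\Delta$ is a non-negative definite self-adjoint operator with $\mathcal{E}(f,f) = \langle -\Delta f, f\rangle$ on $Dom(\Delta)$, the standard variational characterization of the bottom of the spectrum of a self-adjoint operator via its Rayleigh quotient applies.

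First I would recall the general functional-analytic fact: for a self-adjoint operator $A$ bounded below, $\inf \operatorname{spec}(A) = \inf_{f \in Dom(A)\setminus\{0\}} \langle Af, f\rangle / \|f\|^2$, and moreover this infimum is attained as a limit along an approximate-eigenvector sequence, i.e. there exist $f_n \in Dom(A)$ with $\|f_n\| = 1$ and $\langle A f_n, f_n\rangle \to \inf\operatorname{spec}(A)$. Then I would argue that the quadratic-form domain $W_0^1(\X)$ (the form core of $\mathcal{E}$) and the operator domain $Dom(\Delta)$ give the same infimum of the Rayleigh quotient, since $Dom(\Delta)$ is a form core: any $f \in W_0^1(\X)$ can be approximated in the form norm by elements of $Dom(\Delta)$ (for instance using the spectral resolution, $f_t = e^{t\Delta}f \in Dom(\Delta)$ and $f_t \to f$ in $W_0^1$ as $t \to 0$), and the Rayleigh quotient is continuous in the form norm. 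Hence $\lambda_0 = \inf \operatorname{spec}(-\Delta)$.

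Next, to conclude that $\lambda_0$ itself is in the spectrum, I would use that the spectrum is a closed subset of $\R$: since $-\Delta$ is self-adjoint its spectrum is a closed subset of $[0,\infty)$, and $\lambda_0 = \inf\operatorname{spec}(-\Delta)$ is the infimum of a nonempty closed set, hence belongs to it. Alternatively, and more constructively in the spirit of the paper, I would take a sequence $f_n \in W_0^1(\X)$ with $\|f_n\|_{L^2} = 1$ and $\mathcal{E}(f_n, f_n) \to \lambda_0$; after replacing $f_n$ by $e^{t_n \Delta} f_n$ for suitable $t_n \to 0$ one may assume $f_n \in Dom(\Delta)$ with the same limiting behavior. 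Then one shows $\|(-\Delta - \lambda_0)f_n\|_{L^2} \to 0$: indeed, writing $E$ for the spectral measure of $-\Delta$ and using $\langle(-\Delta-\lambda_0)^2 f_n, f_n\rangle = \int (\mu - \lambda_0)^2 \, d\langle E_\mu f_n, f_n\rangle$, one bounds this by $\|(-\Delta)f_n\|^2 - 2\lambda_0\langle -\Delta f_n, f_n\rangle + \lambda_0^2$, and a Cauchy–Schwarz / spectral-calculus argument (using that $\operatorname{spec}(-\Delta) \subset [\lambda_0,\infty)$, so $(\mu-\lambda_0)^2 \le C(\mu - \lambda_0)$ is false in general but $(\mu-\lambda_0)^2 \le \mu(\mu-\lambda_0)/\lambda_0$ does hold when $\lambda_0>0$ and $\mu \ge \lambda_0$) forces $\|(-\Delta-\lambda_0)f_n\| \to 0$. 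This exhibits $\lambda_0$ as an approximate eigenvalue, so $(-\Delta - \lambda_0 I)$ has no bounded inverse, i.e. $\lambda_0$ is in the spectrum.

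The main obstacle I anticipate is the bookkeeping needed to move between the form domain $W_0^1(\X)$ — where $\lambda_0$ is defined — and the operator domain $Dom(\Delta)$ — where spectral calculus lives — while keeping control of the Rayleigh quotient; this is the step where one must invoke that $Dom(\Delta)$ is a form core for $\mathcal{E}$ and that the resolvent/heat semigroup approximates identity in the form norm. Once that identification is in hand, the statement is a soft consequence of self-adjointness and closedness of the spectrum, and the approximate-eigenvector construction makes it fully explicit. I would present the argument via the closedness of the spectrum for brevity, relegating the approximate-eigenvector computation to a remark, since $\lambda_0 > 0$ here (as $\Gamma$ is non-amenable) which slightly simplifies the spectral estimate $(\mu - \lambda_0)^2 \le \lambda_0^{-1}\mu(\mu-\lambda_0)$ for $\mu \ge \lambda_0$.
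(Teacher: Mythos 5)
Your main argument — identify $\lambda_0$ with $\inf\operatorname{spec}(-\Delta)$ via the variational (Rayleigh-quotient) characterization, using that $Dom(\Delta)$ is a form core, then invoke closedness of the spectrum — is correct and is essentially the same route the paper takes, though you state it more directly; the paper's detour through the first Dirichlet eigenfunction $p_1^O$ of a precompact open set $O$ merely repackages the same density/variational fact and is not strictly necessary. One caution: your ``constructive'' side-remark is flawed. The pointwise estimate $(\mu-\lambda_0)^2\leq\lambda_0^{-1}\mu(\mu-\lambda_0)$ for $\mu\geq\lambda_0$ is equivalent to $\mu(\lambda_0-1)\leq\lambda_0^2$, which fails for large $\mu$ whenever $\lambda_0>1$; more fundamentally, a minimizing sequence for the Rayleigh quotient need not satisfy $\|(-\Delta-\lambda_0)f_n\|\to 0$ without further truncation (a spectral measure concentrating a small mass far from $\lambda_0$ can keep $\int(\mu-\lambda_0)\,d\nu_n\to 0$ while $\int(\mu-\lambda_0)^2\,d\nu_n$ stays bounded away from zero). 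The standard fix is to project $f_n$ onto $\operatorname{Ran}E\bigl([\lambda_0,\lambda_0+\tfrac1n)\bigr)$, which is nonzero precisely because $\lambda_0=\inf\operatorname{spec}(-\Delta)$; but at that point you are already using the conclusion, so the soft argument via closedness of the spectrum is really the right one here, as you chose to present.
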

\begin{proof}By definition, for any $\lambda\in (\lambda_0,\infty)$, there exists a function $f$ in $C_c^\infty(\X)$ such that  $$\lambda_0<\frac{||f'||^2_{L^2(\X)}}{||f||^2_{L^2(\X)}}< \lambda.$$
Let $O$ be a precompact open set containing the support of $f$. By Theorem \ref{obasis}, the eigenvalue of the eigenfunction $p^{O}_{1}$ (in the equation \eqref{basis}) of $-\Delta$, which is the bottom of the spectrum on $O$, is smaller than $\lambda.$ Since $-\Delta$ is self-adjoint, the spectrum of $-\Delta$ is in $\mathbb{R}$. By Lemma~\ref{lem:3.9}, the closure of the spectrum of $-\Delta$ is contained in $[\lambda_0,\infty).$\end{proof}
As in the page 338 of \cite{Su}, if $\lambda$ is in the Green region, for any compact set $K$ and $x\in \X$, $\displaystyle\lim_{t\rightarrow\infty} P^\lambda_t 1_{K}(x)=0$ where $1_K$ is the characteristic function of $K$. For any $\lambda$ with $\lambda>\lambda_0$, there exists a connected precompact open set $O$ in $\X$ such that $\lambda>\lambda_o^O$. Denote by $p_O(t,x,y)$ the heat kernel on $O$ defined by \eqref{A.2}. Since $\displaystyle \lim_{t\rightarrow \infty} e^{\lambda^{O}_{1} t}p_{O}(t,x,y)=p^{O}_1(x)p^{O}_1(y)$, $\underset{t\rightarrow\infty} \lim P^\lambda_t 1_{\overline{O}}(x)=\infty$, thus one obtains the following corollary. 
\begin{coro}\label{g} If $\lambda$ is an element of the resolvent set of $-\Delta$, then the inverse operator of $(-\Delta-\lambda I)$ is described by the following integral: for all $f\in L^2( \widetilde{X})$,
$$(-\Delta-\lambda I)^{-1}(f)(x)=\displaystyle \int_{ \widetilde{X}} G_{\lambda}(x,y)f(y)d\mu.$$
Furthermore, the Green region is $(-\infty,\lambda_0)$ or $(-\infty,\lambda_0]$ .
\end{coro}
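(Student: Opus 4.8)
The statement to prove is Corollary~\ref{g}: that for $\lambda$ in the resolvent set, the inverse of $(-\Delta-\lambda I)$ is given by integration against $G_\lambda$, and that the Green region is either $(-\infty,\lambda_0)$ or $(-\infty,\lambda_0]$.

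\medskip

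The plan is to split the corollary into its two assertions and handle them in sequence, leaning on the spectral theory already assembled in the section. For the integral formula, I would argue as follows. Since $-\Delta$ is a non-negative definite self-adjoint operator, the spectral theorem provides a functional calculus, and for $\lambda$ in the resolvent set the operator $(-\Delta-\lambda I)^{-1}$ is bounded on $\mathcal{H}=L^2(\X)$. When $\lambda<\lambda_0$ this is exactly Lemma~\ref{lem:3.9}, where the kernel $\int_0^\infty e^{\lambda t}p(t,x,y)\,dt=G_\lambda(x,y)$ is obtained by integrating the semigroup identity $P_t=e^{\Delta t}$ against $e^{\lambda t}$ and using stochastic completeness (Corollary~\ref{Brooks}) together with positivity of the heat kernel (Theorem~\ref{positive}) to justify the interchange of integrals via Tonelli's theorem. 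For a general $\lambda$ in the resolvent set that is not necessarily less than $\lambda_0$ but still lies in the Green region, one first notes — as recalled in the paragraph preceding the corollary, following page 338 of \cite{Su} — that $\lambda$ being in the Green region is equivalent to $\lim_{t\to\infty}P^\lambda_t 1_K(x)=0$ for all compact $K$, which forces $\int_0^\infty e^{\lambda t}p(t,x,y)\,dt$ to converge; then the same Laplace-transform computation identifies this finite kernel with the Schwartz kernel of the bounded resolvent $(-\Delta-\lambda I)^{-1}$, first on $C_c^\infty(\X)$ and then on all of $L^2$ by density.

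\medskip

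For the second assertion, that the Green region is an interval of the form $(-\infty,\lambda_0)$ or $(-\infty,\lambda_0]$, I would proceed in three steps. First, monotonicity: if $\lambda'<\lambda$ and $\lambda$ is in the Green region, then $e^{\lambda' t}p(t,x,y)\leq e^{\lambda t}p(t,x,y)$ pointwise (the heat kernel is non-negative), so $G_{\lambda'}(x,y)\leq G_\lambda(x,y)<\infty$ and $\lambda'$ is also in the Green region; hence the Green region is a (possibly half-open or closed) interval unbounded below. Second, the Green region contains $(-\infty,\lambda_0)$: this is Lemma~\ref{lem:3.9}, or more concretely the convergence of $\int_0^\infty e^{\lambda t}p(t,x,y)\,dt$ for $\lambda<\lambda_0$, which follows from the $L^2$-spectral gap giving $\|P_t\|\leq e^{-\lambda_0 t}$ and a local Harnack-type estimate (Corollary~\ref{G2}) to pass from $L^2$ decay to pointwise decay of $p(t,x,y)$. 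Third, the Green region does not extend beyond $\lambda_0$: for any $\lambda>\lambda_0$ one can, as in the proof of Proposition~\ref{spect}, find a precompact connected open set $O$ with $\lambda>\lambda_1^O$, the bottom eigenvalue of $-\Delta$ on $O$; since $p(t,x,y)\geq p_O(t,x,y)$ by domain monotonicity and $e^{\lambda_1^O t}p_O(t,x,y)\to p_1^O(x)p_1^O(y)>0$, we get $e^{\lambda t}p(t,x,y)\geq e^{(\lambda-\lambda_1^O)t}p_1^O(x)p_1^O(y)\to\infty$, so $G_\lambda(x,y)=\infty$. Combining the three steps, the Green region is an interval containing $(-\infty,\lambda_0)$ and disjoint from $(\lambda_0,\infty)$, hence equals $(-\infty,\lambda_0)$ or $(-\infty,\lambda_0]$ according to whether or not $\lambda_0$ itself is included.

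\medskip

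The main obstacle is the third step: controlling $p(t,x,y)$ from below for $\lambda>\lambda_0$ so as to conclude divergence of the Green function. This requires the domain-monotonicity comparison $p\geq p_O$ for the Dirichlet heat kernels, the long-time asymptotics $e^{\lambda_1^O t}p_O(t,x,y)\to p_1^O(x)p_1^O(y)$ coming from the spectral decomposition on the precompact set $O$ (which needs $p_1^O$ to be strictly positive on $O$ — an application of the strong maximum principle or Harnack inequality, Corollary~\ref{G2}), and the choice of $O$ with $\lambda_1^O<\lambda$, which is exactly the variational argument already used in Proposition~\ref{spect}. Once the pointwise lower bound $e^{\lambda t}p(t,x,y)\gtrsim e^{(\lambda-\lambda_1^O)t}$ is in hand, divergence of the time integral is immediate. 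The remaining pieces — monotonicity of the Green region and the identification of the resolvent kernel — are essentially bookkeeping on top of Lemma~\ref{lem:3.9} and the recalled result of \cite{Su}.
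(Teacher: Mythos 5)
Your proposal matches the paper's argument in all essentials. The paper's proof is embedded in the paragraph preceding the corollary: it invokes the Sullivan criterion (from page 338 of \cite{Su}) that $\lambda$ in the Green region forces $\lim_{t\to\infty}P^\lambda_t 1_K(x)=0$, then for $\lambda>\lambda_0$ chooses a precompact connected $O$ with $\lambda>\lambda_1^O$ as in Proposition~\ref{spect}, uses the long-time asymptotics $e^{\lambda_1^O t}p_O(t,x,y)\to p_1^O(x)p_1^O(y)$ from Theorem~\ref{obasis}, and (implicitly via domain monotonicity $p\geq p_O$) concludes $P^\lambda_t 1_{\overline O}(x)\to\infty$, a contradiction; the inclusion $(-\infty,\lambda_0)$ in the Green region and the resolvent-kernel identification come from Lemma~\ref{lem:3.9}. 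You use exactly the same ingredients; the only cosmetic difference is that you bypass the auxiliary criterion from \cite{Su} and show divergence of $G_\lambda(x,y)$ directly from the pointwise lower bound $p\geq p_O$, which is an equivalent (and arguably slightly cleaner) formulation of the same estimate. One small nit: the chain $e^{\lambda t}p(t,x,y)\geq e^{(\lambda-\lambda_1^O)t}p_1^O(x)p_1^O(y)$ should be read as holding up to a factor $1/2$ for $t$ large (since the left side only converges to the right side), but this does not affect the divergence conclusion.
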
 

As in the page 340 in \cite{Su}, if the set of positive $\lambda_0$-harmonic functions $f$ satisfying $f(x_0)=1$ for a fixed $x_0$ has more than one element, the $\lambda_0$-Green function is finite. 
\begin{lem}\label{conv}If there exists a positive $\lambda$-superharmonic function $f$, i.e. f is a increasing limit of continuous functions and for any $x\in\X$ and $t>0$, $$ P^{\lambda}_tf(x):=\int_{\X}e^{\lambda t} p(t,x,y)f(y)d\mu(y)\leq f(x),$$ and $f$ is not $\lambda$-harmonic, then the $\lambda$-Green function is finite. 
\end{lem}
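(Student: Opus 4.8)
The plan is as follows. It suffices to exhibit a single pair of distinct points $x_0\ne y_0$ with $G_\lambda(x_0,y_0)<\infty$: the Green region does not depend on the pair of points (Corollary~\ref{g}; concretely, finiteness of $G_\lambda$ at one pair propagates to all pairs through the Gaussian heat-kernel bounds, hence the parabolic Harnack inequality, which follow from the doubling property~\eqref{2.2} and the Poincar\'e inequality~\eqref{2.3} via \cite{St2,St3}). We may also assume $\lambda\le\lambda_0$, the standing range of this subsection. The idea is to dominate the continuous-time Green function, averaged over a small ball, by a discrete-time series that the $\lambda$-superharmonic function $f$ keeps bounded.

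Put $\phi:=f-P_1^\lambda f\ge 0$. Since $f$ is finite and $P_1^\lambda f\le f$, the function $P_1^\lambda f$ is continuous (smoothing by the heat semigroup), so $\phi$ is lower semicontinuous; and $\phi\not\equiv 0$, for otherwise $P_1^\lambda f=f$ would force $P_t^\lambda f=f$ for all $t>0$, contradicting that $f$ is not $\lambda$-harmonic. Hence $\phi\ge c\,1_B$ for some ball $B$ and some $c>0$. From the semigroup relation $P_{k+1}^\lambda f=P_k^\lambda(P_1^\lambda f)$ we get the telescoping identity $\sum_{k=0}^{n-1}P_k^\lambda\phi=f-P_n^\lambda f\le f$; letting $n\to\infty$ and using $\phi\ge c\,1_B$,
\begin{equation*}
\sum_{k\ge 0}e^{\lambda k}\,(P_k 1_B)(x)\ \le\ c^{-1}f(x)<\infty\qquad(x\in\X).
\end{equation*}
Integrating over $x\in B$ (all integrands are non-negative since $p>0$) and writing $\psi(t):=\int_B\int_B e^{\lambda t}p(t,x,y)\,d\mu(x)\,d\mu(y)=\langle P_t^\lambda 1_B,1_B\rangle_{L^2}$, this yields $\sum_{k\ge 1}\psi(k)<\infty$; here one also uses $\int_B f\,d\mu<\infty$, which again follows from $P_1^\lambda f\le f$ and positivity of the heat kernel.

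The final ingredient is that $t\mapsto\psi(t)$ is non-increasing. By symmetry of $p$ and the semigroup property, $\psi(t)=\|P_{t/2}^\lambda 1_B\|_{L^2}^2$, and $\|P_a^\lambda\|_{L^2\to L^2}=e^{a(\lambda-\lambda_0)}\le 1$ for $a\ge 0$ because the spectrum of $-\Delta$ is contained in $[\lambda_0,\infty)$ (cf. Proposition~\ref{spect}); hence $\psi(t+s)=\|P_{s/2}^\lambda(P_{t/2}^\lambda 1_B)\|_{L^2}^2\le\psi(t)$. Since $\psi(t)\le\lim_{s\to 0^+}\psi(s)=\mu(B)$ by strong continuity of the semigroup, monotonicity gives
\begin{equation*}
\int_0^\infty\psi(t)\,dt\ \le\ \int_0^1\psi(t)\,dt+\sum_{k\ge 1}\psi(k)\ \le\ \mu(B)+\sum_{k\ge 1}\psi(k)\ <\ \infty.
\end{equation*}
By Tonelli, $\int_B\int_B G_\lambda(x,y)\,d\mu(x)\,d\mu(y)=\int_0^\infty\psi(t)\,dt<\infty$, so $G_\lambda(x_0,y_0)<\infty$ for $\mu\times\mu$-almost every $(x_0,y_0)\in B\times B$; picking such a pair with $x_0\ne y_0$ and applying the reduction of the first paragraph completes the argument.

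The estimates themselves are routine; the points requiring care are the standard regularity facts about superharmonic functions invoked along the way — continuity of $P_1^\lambda f$ and local integrability of $f$, used to produce the ball $B$ on which $\phi\ge c>0$ — together with the base-point independence of the Green region used at the end. A more potential-theoretic alternative, closer to Sullivan's treatment, would instead use the Riesz decomposition $f=\int_\X G_\lambda(\cdot,y)\,d\nu(y)+h$ with $\nu=-(\Delta+\lambda)f\ge 0$ the Riesz measure (nonzero, since $f$ is not $\lambda$-harmonic) and $h\ge 0$ a $\lambda$-harmonic function; then $\int_\X G_\lambda(x,y)\,d\nu(y)\le f(x)<\infty$ gives $G_\lambda(x,y)<\infty$ for $\nu$-a.e.\ $y$ at once. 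The work in that route is justifying the Riesz representation within the Dirichlet-form framework — positivity of $-(\Delta+\lambda)f$ as a Radon measure and the identity $-\tfrac{d}{ds}P_s^\lambda f=P_s^\lambda\nu$ for $s>0$ — which is where the real difficulty lies.
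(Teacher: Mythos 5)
Your proof is correct, and its engine — the telescoping identity $P^\lambda_s f - P^\lambda_{s+t}f = P^\lambda_s(f - P^\lambda_t f)$ combined with $\lambda$-superharmonicity to trap a partial Green integral under $f$ — is exactly the paper's idea. The difference is purely in execution: the paper integrates the telescoping continuously, bounding $\int_0^T P^\lambda_s\bigl(\frac{f - P^\lambda_t f}{t}\bigr)\,ds \le f(x)$ directly and reading off $\frac{\epsilon}{t}\int_O G_\lambda(x,y)\,d\mu(y) \le f(x)$ for a single base point $x$, whereas you discretize in time ($\sum_{k} P^\lambda_k\phi \le f$ with $t=1$) and then need the auxiliary step that $\psi(t) = \langle P^\lambda_t 1_B, 1_B\rangle = \|P^\lambda_{t/2}1_B\|^2_{L^2}$ is non-increasing (via the spectral bound $\|P^\lambda_a\|_{L^2\to L^2} \le 1$ for $\lambda\le\lambda_0$) to pass from $\sum_k\psi(k)<\infty$ back to $\int_0^\infty\psi(t)\,dt<\infty$. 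That extra monotonicity argument is sound but is avoidable; working in continuous time from the start, as the paper does, is more economical and also sidesteps the need to argue separately that $\phi = f - P^\lambda_1 f \not\equiv 0$ (the paper simply fixes an arbitrary $t>0$ and uses that $f$ is not $\lambda$-harmonic to produce a set of positive measure where $f - P^\lambda_t f > \epsilon$). Your Riesz-decomposition remark at the end is a genuinely different and cleaner-looking route, and you correctly flag that the work there would be in justifying the decomposition itself in the Dirichlet-form setting — which is precisely why the paper (and Sullivan) argue through the semigroup instead.
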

\begin{proof}Suppose that the $\lambda$-Green function diverges. If $f$ is $\lambda$-harmonic, then $-\Delta f(x)$ is equal to $\lambda f(x)$ for $\mu$-a.e. $x$. By the definition of the heat kernel,
$$f(x)=e^{t(\Delta+\lambda)}f(x)=P^\lambda_tf(x)\text{ for }\mu\text{-a.e }x.$$ Thus, for any positive $\lambda$-superharmonic function $f$ which is not $\lambda$-harmonic and for any $t>0$, there exists $\epsilon>0$ such that $\mu(\{x\in \X:f(x)-P^\lambda_tf(x)>\epsilon\})>0.$ Choose a measurable set $O\subset\{x\in \X:f(x)-P^\lambda_tf(x)>\epsilon\}$ with $0<\mu(O)<\infty.$ For any sufficiently large $T>t$, we have
 \begin{equation}
 \begin{split}
 \nonumber&\int_0^{T}\int_O e^{\lambda s}p(s,x,y)\biggl(\frac{f(y)-P_t^\lambda f(y)}{t}\biggr)d\mu(y)ds\leq  \int_0^{T} P_s^\lambda\biggl(\frac{f(x)-P_t^\lambda f(x)}{t}\biggr)ds\\
 \nonumber&=\frac{1}{t}\int_0^{T}P^\lambda_{s}f(x)-P^\lambda_{s+t}f(x)ds=\frac{1}{t}\int_0^{t}P^\lambda_{s}f(x)ds-\frac{1}{t}\int_T^{T+t}P^\lambda_{s}f(x)ds\\ 
 \nonumber&\leq \frac{1}{t}\int_0^{t}f(x)ds-\frac{1}{t}\int_T^{T+t}P^\lambda_{T+t}f(x)ds=f(x)-P^\lambda_{T+t}f(x)\leq f(x).
 \end{split}
 \end{equation}
 The above inequality shows that $\frac{\epsilon}{t} \int_OG_\lambda(x,y)d\mu(y)\leq f(x).$ This contradicts that the $\lambda$-Green function diverges. Therefore, the $\lambda$-Green function is finite.
\end{proof}
\begin{prop}\label{lamhar}\emph{(\cite{Su})} If the Green region is $(-\infty,\lambda_0)$, then there exists a unique positive $\lambda_0$-harmonic function up to a constant multiple. 
\end{prop}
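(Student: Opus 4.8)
\textbf{Proof proposal for Proposition \ref{lamhar}.}

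The plan is to argue by contradiction: assume the Green region is exactly $(-\infty,\lambda_0)$ and that there exist two positive $\lambda_0$-harmonic functions $f_1,f_2$ with $f_1(x_0)=f_2(x_0)=1$ that are not proportional, and deduce that $G_{\lambda_0}$ is finite, contradicting the hypothesis. The main tool is Lemma \ref{conv}: it suffices to produce a positive $\lambda_0$-superharmonic function that is \emph{not} $\lambda_0$-harmonic, and then the finiteness of $G_{\lambda_0}$ follows immediately. So the whole problem reduces to manufacturing such a superharmonic function out of two non-proportional harmonic ones.

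First I would recall that each positive $\lambda_0$-harmonic function $f_i$ satisfies $P^{\lambda_0}_t f_i = f_i$ for all $t>0$ (by the computation in the proof of Lemma \ref{conv}, using $-\Delta f_i=\lambda_0 f_i$ and the definition of the heat semigroup). Then I would consider the function $g:=\min(f_1,f_2)$. Since $P^{\lambda_0}_t$ is a positivity-preserving linear operator (the heat kernel is non-negative by Theorem \ref{positive}), we have for every $t>0$
\begin{equation}\nonumber
P^{\lambda_0}_t g \le P^{\lambda_0}_t f_1 = f_1 \quad\text{and}\quad P^{\lambda_0}_t g \le P^{\lambda_0}_t f_2 = f_2,
\end{equation}
hence $P^{\lambda_0}_t g \le \min(f_1,f_2) = g$; moreover $g>0$ and, being a minimum of two continuous $D^\infty$-functions, it is an increasing limit of continuous functions, so $g$ is a positive $\lambda_0$-superharmonic function in the sense of Lemma \ref{conv}. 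The key point is then that $g$ is \emph{not} $\lambda_0$-harmonic: because $f_1$ and $f_2$ are linearly independent positive harmonic functions normalized at $x_0$, they are not comparable in the pointwise order on all of $\X$ (if $f_1 \le f_2$ everywhere then, the difference $f_2-f_1$ being a nonnegative $\lambda_0$-harmonic function vanishing at $x_0$, the Harnack inequality of Corollary \ref{G2} applied on balls would force $f_2-f_1\equiv 0$). Hence the set where $f_1<f_2$ and the set where $f_2<f_1$ are both nonempty; on an open edge where, say, $f_1<f_2$ strictly, $g=f_1$ is smooth and $\lambda_0$-harmonic, but at a point of transition or at a vertex where the two cross, $g$ fails to be twice differentiable in the sense required for $\Delta g=\lambda_0 g$ (equivalently, $g$ fails Kirchhoff's law of Definition \ref{weaksol}), so $g\in D^\infty(\X)$ cannot hold and $g$ is not $\lambda_0$-harmonic. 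Applying Lemma \ref{conv} to $g$ gives that $G_{\lambda_0}$ is finite, contradicting the assumption that the Green region is $(-\infty,\lambda_0)$. This contradiction shows the set of normalized positive $\lambda_0$-harmonic functions is a singleton, which is precisely the claimed uniqueness up to scalar.

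The main obstacle I anticipate is the rigorous justification that $g=\min(f_1,f_2)$ is not $\lambda_0$-harmonic, rather than the superharmonicity (which is a soft consequence of positivity of the semigroup). One has to rule out the degenerate possibility that $\min(f_1,f_2)$ coincides with one of the $f_i$ everywhere — this is exactly where non-proportionality plus the Harnack inequality (Corollary \ref{G2}) enter, via the observation that a nonnegative $\lambda_0$-harmonic function vanishing at a point must vanish identically. A secondary technical point is checking that a minimum of two $D^\infty$ functions which actually cross violates the harmonic equation; this is genuinely a local statement on a single edge or at a vertex, and follows because at a crossing point the one-sided derivatives of $g$ inherited from $f_1$ and from $f_2$ differ, so $g'$ has a jump and $g$ cannot satisfy $\mathcal{E}(g,\varphi)=\lambda_0\langle g,\varphi\rangle$ for all test functions $\varphi$. (This is the standard argument, going back to \cite{Su}, that the minimum of two harmonic functions is superharmonic but not harmonic unless they are ordered.) Once these two points are in place, the rest is the formal contradiction via Lemma \ref{conv}.
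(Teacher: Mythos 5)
Your argument follows the same strategy as the paper: use Lemma \ref{conv} by producing a positive $\lambda_0$-superharmonic, non-harmonic function as the pointwise minimum of two non-proportional normalized positive $\lambda_0$-harmonic functions. The two sub-steps differ slightly, and the comparison is worth noting.

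First, to get that $f_1$ and $f_2$ are genuinely incomparable, you argue directly: if $f_1\le f_2$ with $f_1(x_0)=f_2(x_0)$, then $h:=f_2-f_1\ge 0$ is $\lambda_0$-harmonic and vanishes at $x_0$, so Harnack forces $h\equiv 0$. This is clean and more elementary than the paper, which invokes Arzel\`a--Ascoli to show that the normalized set $\cH_{\lambda_0}$ is convex compact and then picks two distinct \emph{extreme points}; in fact the paper's assertion that two distinct extreme points are automatically order-incomparable itself uses essentially your Harnack observation, so your route shortcuts the Krein--Milman machinery. (One small caveat: Corollary \ref{G2} as stated applies to \emph{positive} harmonic functions, so you should apply it to $h+\epsilon f_1$ and let $\epsilon\downarrow 0$, or otherwise record the passage from nonnegative to positive.)

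Second, for the non-harmonicity of $g=\min(f_1,f_2)$, your argument via local failure of $C^1$-matching or of Kirchhoff's law at a crossing point is correct in spirit but has a small unaddressed case: you must rule out crossings at which $f_1$ and $f_2$ touch tangentially (equal value and equal first derivative). On an open edge this is excluded by ODE uniqueness for $u''=-\lambda_0 u$, since a tangential crossing would force $f_1\equiv f_2$ on the whole edge; and then one must propagate through vertices. The paper sidesteps this entirely by exploiting strict positivity of the heat kernel: since $f_1-g$ and $f_2-g$ are each nonnegative and strictly positive on a set of positive measure, one gets the strict inequality $P^{\lambda_0}_t g(x) < \min\{f_1(x),f_2(x)\} = g(x)$ at every $x$, so $g$ cannot be $\lambda_0$-harmonic. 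That argument is shorter and avoids the local differentiability bookkeeping. With that adjustment your proof is complete and correct.
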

\begin{proof}Fix a point $x_0$. Let $\cH_{\lambda_0}$ be the set of positive $\lambda_0$-harmonic functions $f$ on $\X$ satisfying $f(x_0)=1.$ Suppose that $\cH_{\lambda_0}$ has more than one elements. Corollary \ref{G2} shows that the functions in $\cH_{\lambda_0}$ are equicontinuous and $\{f(x):f\in\cH_{\lambda_0}\}$ has a compact closure for each $x\in \X$. By Arzela-Ascoli's theorem (See \cite{Mu} Theorem 47.1), $\cH_{\lambda_0}$ is a convex compact set with respect to the topology of uniform convergence on compact sets. We can choose two distinct extreme points $f_1$ and $f_2$ of $\cH_{\lambda_0}$ satisfying neither $f_1\geq f_2$ nor $f_1\leq f_2$. 

Put $f(x)=\min\{f_1(x),f_2(x)\}$. Then $P^{\lambda_0}_tf(x)<P^{\lambda_0}_tf_i(x)=f_i(x)$ for all $i=1,2$ and $x\in \X$. The function $f$ is a $\lambda_0$-superhamonic function. By Lemma \ref{conv}, the $\lambda_0$-Green function is finite.
\end{proof}
Following the proof of Theorem 3 in \cite{LS}, we show the following theorem.
\begin{thm}\label{2.32}If $\Gamma$ is non-amenable, then the $\lambda_0$-Green function is finite.
\end{thm}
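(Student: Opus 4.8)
The plan is to derive the finiteness of the $\lambda_0$-Green function from the existence of a positive $\lambda_0$-harmonic function (Theorem~\ref{harmonic}) together with the dichotomy recorded in Proposition~\ref{lamhar}. Concretely: by Corollary~\ref{g}, the Green region is either $(-\infty,\lambda_0)$ or $(-\infty,\lambda_0]$; the goal is to rule out the first case. Suppose for contradiction that the Green region equals $(-\infty,\lambda_0)$, i.e.\ $G_{\lambda_0}\equiv\infty$. By Proposition~\ref{lamhar}, there is then a \emph{unique} positive $\lambda_0$-harmonic function $h$ (up to scaling), normalized by $h(x_0)=1$. The strategy, following \cite{LS}, is to exploit the $\Gamma$-action: I would show that this uniqueness forces $h$ to be $\Gamma$-invariant (a $\Gamma$-fixed point in the convex compact set $\cH_{\lambda_0}$ of Proposition~\ref{lamhar}), and then show that the existence of such an invariant positive $\lambda_0$-harmonic function contradicts non-amenability of $\Gamma$, via the characterization that $\lambda_0>0$ precisely when $\Gamma$ is non-amenable (\cite{SW} Theorem 8.5).

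The key steps, in order, are as follows. First, assuming $G_{\lambda_0}=\infty$, invoke Proposition~\ref{lamhar} to obtain the unique normalized positive $\lambda_0$-harmonic $h$. Second, observe that for each $g\in\Gamma$ the function $x\mapsto h(g^{-1}x)/h(g^{-1}x_0)$ is again a positive $\lambda_0$-harmonic function normalized at $x_0$ (since the Laplacian and hence $\lambda_0$-harmonicity are $\Gamma$-invariant, the group acting by isometries); by uniqueness it equals $h$, so $h(g^{-1}x)=h(g^{-1}x_0)\,h(x)$. Setting $\chi(g):=h(g^{-1}x_0)$, this is a multiplicative cocycle relation, and one checks $\chi$ is a homomorphism $\Gamma\to\mathbb{R}_{>0}$; composing with the Harnack bound (Corollary~\ref{G2}), which controls $h$ on any fundamental-domain translate in terms of $\chi(g)$ up to a fixed multiplicative constant depending only on $\diam(F)$, one gets that $\log\chi$ is a quasi-morphism bounded on the generating set $S$, hence $\chi\equiv 1$ and $h$ is genuinely $\Gamma$-invariant. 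Third, with $h$ $\Gamma$-invariant and positive, $h$ descends to a positive function on the compact quotient $X$, hence is bounded above and below by positive constants; then $e^{\lambda_0 t}P_t h = h$ combined with stochastic completeness (Corollary~\ref{Brooks}) gives, after integrating $P_t 1 = 1$ against the bounds, that $e^{\lambda_0 t}$ stays bounded as $t\to\infty$, forcing $\lambda_0\le 0$, contradicting $\lambda_0>0$. Hence $G_{\lambda_0}$ is finite.

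An alternative route for the third step, perhaps cleaner, is to note that a bounded positive $\lambda_0$-harmonic function is itself $\lambda_0$-\emph{superharmonic} in the sense of Lemma~\ref{conv} but cannot be used there directly (it \emph{is} harmonic); instead, from $\Gamma$-invariance and compactness of $X$ one produces a genuine $\lambda_0$-superharmonic function that is not harmonic --- for instance by a small local perturbation supported in one fundamental domain, or by comparing $h$ with $P_t^{\lambda_0}h$ on a sweep-out --- and then Lemma~\ref{conv} yields finiteness of $G_{\lambda_0}$, again a contradiction. I would pick whichever of these makes the "not harmonic" clause cleanest to verify.

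The main obstacle I anticipate is the second step: promoting uniqueness of the positive $\lambda_0$-harmonic function to actual $\Gamma$-invariance. Uniqueness only gives the cocycle $\chi:\Gamma\to\mathbb{R}_{>0}$, and one must show $\chi$ is trivial; this is exactly where the Harnack inequality with a constant uniform over $\X$ (the remark after Corollary~\ref{G2}) and the cocompactness of the $\Gamma$-action enter, bounding $|\log\chi(g)|$ uniformly for $g$ in the finite generating set $S$ and hence forcing $\chi\equiv 1$. Getting the quantitative Harnack comparison across adjacent fundamental domains lined up correctly --- so that the bound on $S$ genuinely propagates to all of $\Gamma$ --- is the delicate point; everything after that is soft.
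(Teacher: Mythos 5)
Your proposal breaks at the crucial second step, where you pass from uniqueness of the normalized positive $\lambda_0$-harmonic function $h$ to $\Gamma$-invariance of $h$. The cocycle relation $h(g^{-1}x)=\chi(g)\,h(x)$ with $\chi:\Gamma\to\mathbb{R}_{>0}$ a homomorphism is correct, and Harnack (Corollary~\ref{G2}) together with cocompactness does bound $|\log\chi(s)|$ uniformly for $s$ in the finite generating set $S$. But a group homomorphism into $(\mathbb{R},+)$ that is bounded on a finite generating set need not be trivial: for $\Gamma=F_2=\langle a,b\rangle$ (which is non-amenable and acts geometrically on its Cayley tree), the homomorphism with $\log\chi(a)=1$, $\log\chi(b)=0$ satisfies $|\log\chi|\le 1$ on $S=\{a^{\pm1},b^{\pm1}\}$ yet $\log\chi(a^n)=n$ is unbounded on $\Gamma$. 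A homomorphism \emph{is} a quasi-morphism, but being bounded on generators never forces it to vanish; you would need a bound on $\log\chi$ over all of $\Gamma$, which Harnack cannot supply because $g^{-1}x_0$ escapes every compact set. Without $\chi\equiv 1$ your third step has nothing to apply to, since $h$ is then unbounded and does not descend to the compact quotient. So the proof does not close.

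The paper's argument, following Lyons--Sullivan, is designed precisely to avoid needing $\chi\equiv 1$. It takes \emph{any} positive $\lambda_0$-harmonic $f$ and performs the Doob $h$-transform $q(t,x,y)=e^{\lambda_0 t}p(t,x,y)f(y)/f(x)$ with semigroup $Q_t$. The key is that $q$ is automatically $\Gamma$-invariant whatever $\chi$ is: the factors $\chi(g)$ cancel in the ratio $f(g^{-1}y)/f(g^{-1}x)$. Stochastic completeness gives $Q_t1=1$, so $Q_t$ is a conservative $\Gamma$-equivariant Markov semigroup. Using an invariant mean on $C_b(\mathbb{R}_+)$, one defines an averaging map $\widetilde\varphi$ whose image lies in the bounded $Q_t$-fixed functions. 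For such a function $g$, the product $fg_1$ (with $g_1=g+\|g\|_\infty+1$) is again positive $\lambda_0$-harmonic; if $G_{\lambda_0}=\infty$, uniqueness (Proposition~\ref{lamhar}) then forces $g$ to be constant, so $\widetilde\varphi$ descends to a $\Gamma$-invariant mean on $L^\infty(\Gamma)$ of operator norm at most $1$, contradicting non-amenability. Your proposal attempts to shortcut this by forcing the equivariance character to be trivial, but that is exactly the step that fails; the Lyons--Sullivan machinery is what lets the argument run without it.
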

\begin{proof} By Theorem \ref{harmonic}, there exists a positive $\lambda_0$-harmonic function $f$. Denote $q(t,x,y):=e^{\lambda_0 t}p(t,x,y)\frac{f(y)}{f(x)}.$ Define a semigroup $\{Q_t\}$ as follows: for any $g$ in $L^\infty(\X),$
$$Q_tg(x):=\int_{\X}q(t,x,y)g(y)d\mu(y).$$
By the continuity of the heat kernel and $f$, for any $x$ and $t,t_0>0$,
\begin{equation}\label{...}
\underset{x'\rightarrow x}\lim \int_{\X}\left|q(t,x,y)-q(t,x',y)\right|=0\text{ and }
\end{equation}
  \begin{equation}\label{....}
   \underset{s\rightarrow t}\lim \int_{\X}\left|q(s+t_0,x,y)-q(t+t_0,x,y)\right|=0.
\end{equation}
 Since $||Q_tg||_{L^\infty(\X)}\leq ||g||_{L^\infty(\X)}$ for all $g\in L^\infty(\X)$ and $t>0$, the equation \eqref{....} implies that for any fixed $x\in\X$, $t_0>0$ and $g\in L^\infty(\X)$, $g_x(t):=Q_{t+t_0}g(x)$ is a bounded continuous function on $\mathbb{R}_+$.

 Applying that $Q_{t+s}=Q_sQ_t$ and $Q_t 1=1$, for any $g\in L^\infty(\X)$ and $x,x'\in \X$, we have
\begin{equation}\label{.....}
\begin{split}
&\sup_{t>0} |Q_{t+t_0}g(x)-Q_{t+t_0}g(x')|\\&= \sup_{t>0} \left|\int_{\X}q(t+t_0,x,y)g(y)-q(t+t_0,x',y)g(y)d\mu(y)\right|\\
&\leq \sup_{t>0} \int_{\X}\int_{\X}|q(t_0,x,z)q(t_0,z,y)g(y)-q(t_0,x',z)q(t,z,y)g(y)|d\mu(y)d\mu(z)\\
&\leq \sup_{t>0} \int_{\X}|q(t_0,x,z)-q(t_0,x',z)|q(t,y,z)|g(y)|d\mu(y)d\mu(z)\\
&= \sup_{t>0} \int_{\X}|q(t_0,x,z)-q(t_0,x',z)|Q_t|g|(z)d\mu(z)\\
 &\leq||g||_{L^\infty(\X)} \int_{\X}\left|q(t_0,x,z)-q(t_0,x',z)\right|d\mu(z).
 \end{split}
\end{equation}
 The equation \eqref{...} and the inequality \eqref{.....} show that for any fixed $g\in L^\infty(\X)$ and $t_0>0$, the map $x\mapsto g_x(t)$ is a continuous map from $\X$ to $L^\infty(\mathbb{R}_+)$ with respect to $||\cdot||_{L^\infty(\mathbb{R}_+)}$-norm.
  
Denote $u_{s}(t):=u(t+s).$ Since $\mathbb{R}_+$ is amenable, there exists a linear functional $\varphi$ on $C_b(\R_+)$ such that for all $s\in \mathbb{R}_+$ and for all $u \in L^\infty(\R_+),$ $\varphi(u_{s})=\varphi(u)$ and $\|\varphi\|_{op}\leq1$.

 The map $\widetilde{\varphi}$ from $L^\infty(\X)$ to the subspace $C_b(\X)$ is defined as follows: for any $g\in L^\infty(\X),$ $$\widetilde{\varphi}(g)(x):=\varphi(Q_{t+t_0}g(x)).$$   
For any partition $\cP$ of $\X$ and for any $y_p\in P\in \cP$, we have
 \begin{equation}
 \nonumber\sum_{P\in\cP}\mu(P)q(s,x,y_p)\varphi(Q_{t+t_0}g(y_p))= \varphi\biggl(\sum_{P\in\cP}\mu(P)q(s,x,{y_p})Q_{t+t_0}g(y_p)\biggr).
 \end{equation}
As the diameter of $\cP$ goes to zero, we have $Q_s\widetilde{\varphi}(g)(x)=\varphi(Q_{t+t_0+s}g(x))$.  Since for any fixed $x$, $\varphi(g_x(t+s))=\varphi(g_x(t))$, we obtain $Q_s\varphi(g_x(t))=\varphi(g_x(t)).$ 
 
 Let $\mathcal{H}_{\lambda_0}$ be the set of functions $g\in C_b(\X)$ satisfying $Q_tg=g.$ Obviously, the image of $\widetilde{\varphi}$ is in $\cH_{\lambda_0}$.
 By the definition of $Q_t$, for any $g\in \mathcal{H}_{\lambda_0}$, $P_t^\lambda (f g_1)(x)=f(x)g_1(x),$ where $g_1=g+||g||_{L^\infty(\X)}+1.$ The operator $\Delta+\lambda I$ is the limit $\lim_{t\rightarrow 0+} \frac{P_t^\lambda -I}{t}$ in the sense of distributions, i.e. for any $f_1\in W_c^1(\X)$, \begin{equation}\label{3.10}\lim_{t\rightarrow0+}\frac{1}{t}({P_t^\lambda fg_1-fg_1},f_1)=-\mathcal{E}(fg_1,f_1)-\lambda(fg_1,f_1).\end{equation} 
The left side of \eqref{3.10} is zero when $g\in \cH_{\lambda_0}$. Thus the function $fg_1$ is $\lambda$-harmonic. 

Suppose that the $\lambda_0$-Green function diverges. By Proposition \ref{lamhar}, $fg_1$ is a constant multiple of $f$. Hence, $g_1$ and $g$ are constant functions. The space $\mathcal{H}_{\lambda_0}$ consists of constant functions and the map $\widetilde{\varphi}$ is a linear functional on $L^\infty(\X)$. Any function $g$ in $L^\infty(\G)$ is regarded as a function in $L^\infty(\X)$, by defining as follows $g(x)=g(\gamma)$, where $x\in \gamma F$. 
As in \cite{LS}, the functional $\widetilde{\varphi}$ on $L^\infty(\Gamma)$ is $\G$-invariant and $||\widetilde{\varphi}||_{op}\leq 1$. Hence, the group $\Gamma$ is amenable.
\end{proof}
The next lemma follows from Corollary \ref{g} and Corollary \ref{G2}.
\begin{coro}\label{harnack5}For any $\lambda\in [0,\lambda_0]$, the map $y\mapsto G_\lambda(x,y)$ defined on $\X\backslash \{x\}$ is $\lambda$-harmonic. The restriction of  the map $y\mapsto G_\lambda(x,y)$ on $B(z,r+l)\subset\X\backslash \{x\}$ satisfies Harnack inequality, i.e. for any $z_1,z_2\in B(z,r)$,
\begin{equation}\label{harnack4}
{G_\lambda(x,z_1)}\leq e^{D_{r,l}}{G_\lambda(x,z_2)},
\end{equation}
where the constant $D_{r,l}$ is from Corollary \ref{G2}.
\end{coro}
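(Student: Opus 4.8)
The plan is to deduce Corollary~\ref{harnack5} directly from the two earlier results it invokes, so the ``proof'' is really a short assembly argument rather than anything new. First I would address the $\lambda$-harmonicity of $y \mapsto G_\lambda(x,y)$ on $\X \setminus \{x\}$. By Corollary~\ref{g}, since $\lambda \in [0,\lambda_0]$ is in the Green region, the operator $(-\Delta - \lambda I)^{-1}$ is represented by integration against $G_\lambda$; equivalently, for any $g \in C_c^\infty(\X)$ supported away from $x$ one has $(-\Delta - \lambda)\big(\int G_\lambda(\cdot,y) g(y)\,d\mu\big) = g$, so that $\mathcal{E}(G_\lambda(x,\cdot), \varphi) = \lambda (G_\lambda(x,\cdot), \varphi)$ for all test functions $\varphi \in W_c^1(\X \setminus \{x\})$. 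That is precisely the statement that $y \mapsto G_\lambda(x,y)$ is a weak solution of \eqref{weaksolution} on $\X \setminus \{x\}$, i.e. $\lambda$-harmonic there. (One also needs $G_\lambda(x,\cdot) \in W_{loc}^1(\X \setminus \{x\})$, which follows from the smoothness of the heat kernel in the space variable, cf.\ Theorem~\ref{hk}(4) and the $D^\infty$ property, together with the local integrability of $G_\lambda$ away from the diagonal.)

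Second, I would invoke positivity. By Theorem~\ref{positive} the heat kernel $p(t,x,y)$ is strictly positive, hence $G_\lambda(x,y) = \int_0^\infty e^{\lambda t} p(t,x,y)\,dt > 0$ for $y \neq x$. Thus on any ball $B(z,r+l) \subset \X \setminus \{x\}$ the function $f := G_\lambda(x,\cdot)$ is a positive $\lambda$-harmonic function in $C(\overline{B(z,r+l)})$ — continuity on the closed ball again coming from the continuity of the heat kernel in the space variable and dominated convergence, using that $\overline{B(z,r+l)}$ is a compact subset of $\X \setminus \{x\}$ on which $p(t,x,\cdot)$ is uniformly controlled by Gaussian-type bounds.

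Third, with $f$ a positive $\lambda$-harmonic function on $C(\overline{B(z,r+l)})$, Corollary~\ref{G2} applies verbatim: for all $z_1, z_2 \in B(z,r)$,
\[
\left| \log \frac{f(z_1)}{f(z_2)} \right| \leq D_{r,l},
\]
with $D_{r,l} = \big(4|\partial B(z,r_*)|\mu(B(z,r))/l\big)^{1/2}$. Exponentiating gives $G_\lambda(x,z_1) \leq e^{D_{r,l}} G_\lambda(x,z_2)$, which is \eqref{harnack4}. For $\lambda = 0$ one uses the same argument, noting that Corollary~\ref{G2} is stated for $\lambda > 0$ but Proposition~\ref{gphar} — on which it rests — allows $\lambda \geq 0$; alternatively the case $\lambda=0$ follows by taking $\lambda \downarrow 0$ if one prefers. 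The only mild subtlety, and the step I would be most careful about, is the regularity bookkeeping near the excluded point $x$: one must confirm that $G_\lambda(x,\cdot)$ really lies in $W_{loc}^1$ and is continuous on the relevant closed balls, so that it is a legitimate input to Corollary~\ref{G2}; away from the diagonal this is routine given the smoothness properties of $p$ recalled in Section~\ref{2}, but it should be stated. Everything else is a direct citation.
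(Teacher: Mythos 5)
Your argument is essentially the paper's own: $\lambda$-harmonicity of $y\mapsto G_\lambda(x,y)$ away from $x$ is extracted from Corollary~\ref{g} (the paper phrases it via $\lim_{t\to 0+}\frac{I-P^\lambda_t}{t}G_\lambda(x,\cdot)=\delta_{x-\cdot}$, you via the resolvent identity, which is the same fact), and the Harnack bound is then a direct application of Corollary~\ref{G2}. Your extra remarks on positivity, $W^1_{loc}$-regularity, and the $\lambda=0$ boundary case of Corollary~\ref{G2} are sound and fill in bookkeeping the paper leaves implicit, but they do not change the route.
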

\begin{proof} By Corollary \ref{g}, $\underset{t\rightarrow 0+}\lim\frac{I-P^\lambda_t}{t}G_\lambda(x,y)=\delta_{x-y}$ in the sense of distributions. Hence, the map $y\mapsto G_\lambda(x,y)$ is $\lambda$-harmonic function on $\X\backslash \{x\}$. \end{proof}
Since the $\lambda$-Green function $G_\lambda(x,y)$ is $\lambda$-harmonic on $\widetilde{X}\backslash \{x\}$, Harnack inequality  will appear the proof of Ancona-Gou\"ezel inequality \eqref{1.1.0}.
\section{Martin boundary of hyperbolic graphs}\label{4}
 For any Riemannian manifold with negatively pinched curvature, Ancona showed that there exists a constant $C_\epsilon$ satisfying the inequality \eqref{1.1.0} when $\lambda \in [0,\lambda_0-\epsilon)$ (\cite{A}). Using the inequality \eqref{1.1.0}, Ancona proved that the Gromov boundary of a hyperbolic group coincides with the $\lambda$-Martin boundary for all $\lambda\in[0,\lambda_0)$. Using the ideas of \cite{GL} and \cite{G}, Ledrappier and Lim showed that the Gromov boundary of the universal cover of a negatively curved closed manifold coincides with its $\lambda_0$-Martin boundary (\cite{LL}). We extend their results to Gromov hyperbolic graphs.
\subsection{Brownian motion} 
For a general regular Dirichlet form, one can associate a strong Markov process on the set $\Omega$ of continuous paths, which is so-called a Hunt process. In the context of metric measure spaces, this Hunt process is often called Brownian motion (\cite{BSSW}, \cite{BK}, \cite{PS}, \cite{St4} for example). Let us recall this process and its basic properties (see Section 7.3. of \cite{FOT}).
Let $\cB$ be the Borel $\sigma$-algebra of the metric space $\X.$

  \begin{defn}\label{def:4.2} Let $\Omega_x$ be the set of continuous paths in $\X$ starting at $x$ and $\omega \in\Omega_x$. \begin{enumerate}
\item[(1)]The family $\mathbb{P}=\{\mathbb{P}_x\}_{x\in \X}$ of probability measures indexed by $\X$ is defined as follows: for any $0<t_1<\cdots<t_n$ and any Borel sets $B_n$ in $ \widetilde{X}$,
\begin{align*}
\displaystyle &\mathbb{P}_x[\omega({t_1})\in B_1,\cdots, \omega({t_n})\in B_n]\\
&=\displaystyle \int_{B_1\times \cdots \times B_n} p(t_1,x,y_1)p(s_1,y_1,y_2)\cdots p(s_{n-1},y_{n-1},y_n)d\mu^n(y_1,\cdots,y_n),
\end{align*}
 where $\mu^n=\mu\times\cdots\times \mu$ and $s_i=t_{i+1}-t_{i}$.  The expectation of a function $f$ on $\Omega_x$ is defined by
$\mathbb{E}_x(f)=\int_{\Omega_x}f\mathbb{P}_x.$ 
\item[(2)] For a continuous path $\omega$ in $ \widetilde{X}$, the exit time $\sigma_O$ of $O$ is defined by
$$\sigma_O(\omega)=\inf\{t\geq 0: \omega(t) \in  O^c\}.$$

\end{enumerate}
\end{defn}
Define $\Omega$ be the set of all continuous paths in $\X$. Let $\cM_t$ be the $\sigma$-algebra generated by the sets $\{\omega\in \Omega:\omega(s)\in B\}_{B\in \cB,s\leq t}$.  Denote $\cM=\{\cM_t\}_{t\geq0}$. Define a family of measurable maps $Y_t:\Omega\rightarrow \X$ by $Y_t(\omega)=\omega(t).$
A function $\sigma$ on $\Omega$ is a \textit{stopping time} if $$\{\omega \in \Omega:\sigma(\omega)\leq t\}\in \mathcal{M}_t$$ for all $t$. For a stopping time $\sigma$, denote $$\mathcal{M}_\sigma:=\{A\in \mathcal{M}:A\cap \{\omega\in\Omega:\sigma(\omega)\leq t\}\in \mathcal{M}_t, \forall t\geq 0\}.$$ For convenience, denote $\omega(\sigma):=\omega(\sigma(\omega))$, $e^{\lambda \sigma}:=e^{\lambda \sigma(\omega)}$ and $1_M:=1_M(\omega)$ for all $\omega\in \Omega$ and $M\in \cM$.
 \begin{prop} The process $(\Omega, \mathcal{M}, \{Y_t\}_{t\geq0}, \mathbb{P})$ is a strong Markov process with state space $(\X,\mathcal{B})$, i.e. a Markov process with the following property: For any $x\in \X$, $t\geq0$, $B\in \mathcal{B}$ and any stopping time $\sigma$, 
 \begin{equation}\label{4.1}\mathbb{E}_x[1_{\sigma<\infty} 1_B(\omega(\sigma +t))|\mathcal{M}_\sigma]=1_{\sigma<\infty}\mathbb{E}_{\omega(\sigma)}[1_B(\omega(t))]
\end{equation}
and $\cM_t=\mathcal{M}_{t+}:=\underset{t'>t}\bigcap \mathcal{M}_{t'}$. \end{prop}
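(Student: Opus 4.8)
The plan is to establish the strong Markov property for the Hunt process associated to the Dirichlet form $\cE$ by invoking the general theory rather than reconstructing it from scratch. First I would recall that by the results discussed in Section~\ref{sec:2.1}, the form $(\cE, W_0^1(\X))$ is a strongly local regular Dirichlet form on $L^2(\X,\mu)$ (\cite{BSSW} Theorems 3.29 and 3.30). The general theory of Dirichlet forms (\cite{FOT}, Theorem 7.2.1) then produces a Hunt process $(\Omega,\cM,\{Y_t\},\mathbb{P})$ with state space $(\X,\cB)$ whose transition function is the semigroup $P_t = e^{\Delta t}$; by Theorem~\ref{hk}(1), this transition function has the kernel $p(t,x,y)$, so the finite-dimensional distributions of the process under $\mathbb{P}_x$ are exactly those prescribed in Definition~\ref{def:4.2}(1). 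By definition, a Hunt process is a strong Markov process with right-continuous sample paths and quasi-left-continuity; in particular the identity \eqref{4.1} is part of what it means to be a Hunt process, and the right-continuity of the filtration, $\cM_t = \cM_{t+}$, is one of the standard "usual conditions" built into the Hunt process axioms (this is the normality/right-continuity of the augmented filtration, see \cite{FOT} Appendix A.2). Thus the bulk of the proposition is a citation.

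The one point requiring a small argument is that the process can be realized on the space $\Omega$ of \emph{continuous} paths, as asserted in Definition~\ref{def:4.2} and in the statement, rather than merely on the canonical Skorokhod space of càdlàg paths with a cemetery point. The plan here is to use two facts already available in the excerpt: the Dirichlet form is strongly local (so the associated Hunt process is a diffusion, i.e. has continuous sample paths up to its lifetime, by \cite{FOT} Theorem 4.5.1), and the heat kernel is stochastically complete (Corollary~\ref{Brooks}), which forces the lifetime to be almost surely infinite so no cemetery state is ever reached. Combining these, $\mathbb{P}_x$-almost every path lies in $\Omega_x$, and after discarding a polar (hence $\mathbb{P}_x$-null for every $x$ outside an exceptional set, which strong locality plus regularity let us remove) set we obtain a process living on genuine continuous paths. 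I would phrase this as: "the strong locality of $\cE$ and the stochastic completeness (Corollary~\ref{Brooks}) guarantee that the Hunt process of \cite{FOT} is a conservative diffusion, hence admits a modification with state space $(\X,\cB)$ and sample paths in $\Omega$."

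The step I expect to be the only mild obstacle is the removal of the exceptional set so that the process is defined starting from \emph{every} point $x\in\X$ (not merely quasi-everywhere) and so that $x\mapsto \mathbb{P}_x$ is Borel as a map into probability measures on $\Omega$. This is handled because our graph $\X$ has, by Theorem~\ref{positive} and the smoothness results (\cite{BSSW} Theorems 5.23 and 7.5), a jointly continuous strictly positive heat kernel $p(t,x,y)$; continuity of the transition density is precisely the hypothesis under which a Hunt process associated to a Dirichlet form admits a Feller-type modification with no exceptional set (the transition semigroup maps $C_b$ to $C_b$ and points are non-polar), so the pointwise family $\{\mathbb{P}_x\}_{x\in\X}$ is well-defined and the Markov property \eqref{4.1} holds for all $x$. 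With these ingredients assembled, the proof amounts to: (i) cite \cite{BSSW} for the Dirichlet form being strongly local and regular; (ii) cite \cite{FOT} for the existence of the associated Hunt process and the strong Markov property \eqref{4.1} together with $\cM_t=\cM_{t+}$; (iii) use strong locality plus Corollary~\ref{Brooks} to get continuous paths and conservativeness; (iv) use the continuity of $p(t,x,y)$ to upgrade to a process started from every point with Borel dependence on $x$.
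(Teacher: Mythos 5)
Your proposal is correct and follows essentially the same route as the paper: the paper states the proposition without a formal proof and justifies it by citing the general Dirichlet-form theory (the text immediately following notes that "the process derived from a strongly local regular Dirichlet form is a strong Markov process on continuous paths with probability 1," citing \cite{FOT} Theorems 4.5.3 and 7.3.1, together with \cite{BSSW}). Your write-up is a more careful expansion of exactly this citation argument, additionally spelling out the roles of strong locality (continuity of paths), stochastic completeness (Corollary~\ref{Brooks}, conservativeness), and the smoothness of the heat kernel (removal of the exceptional set), all of which are implicit in the paper's appeal to \cite{FOT} and \cite{BSSW}.
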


\subsection{Proof of Ancona-Gou\"ezel inequality}\label{sec:3.2}
Using the properties of a strong Markov process as in \cite{LL}, we prove Ancona-Gou\"ezel inequality. In this section, the constant $C$ may vary from line to line. 

  The probability measure $\mathbb{P}_x$ describes the Brownian motion related to our Laplacian on $\X$ (see \cite{BSSW} and \cite{FOT}). The process derived from a strongly local regular Dirichlet form is a strong Markov process on continuous paths with probability 1 (\cite{FOT} Theorem 4.5.3 and Theorem 7.3.1).  

  The exit time $\sigma_O$ of a connected open set $O$ in Definition \ref{def:4.2}  is an example of the stopping time.
 Using the exit time $\sigma_O$ of an open set $O$, the relative Green function $G_\lambda(x,y:O)$ is defined as follows: for any point $x \in O$ and for any Borel measurable function $f$ on $O$,
$$\int_OG_\lambda(x,y:O)f(y)d\mu=\displaystyle\mathbb{E}_x\biggl[\int_0^{\sigma_O}e^{\lambda t}f(\omega(t))dt\biggr].$$

By strong Markov property, we have the following proposition as in \cite{LL}.
\begin{prop}\label{MArkov} Let $O_1$ and $O_2$ be connected open sets in $ \widetilde{X}$ intersecting each other. Then we have the following: for all $x\in O_1\backslash\overline{O_2}$ and $y\in O_1\backslash{\partial O_2}$ and for all $\lambda\in[0,\lambda_0]$,
\begin{equation}\label{exit}
G_\lambda(x,y:O_1)=\mathbb{E}_x[1_{\tau< \sigma}e^{\lambda\tau}G_\lambda(\omega(\tau),y:O_1)]+G_\lambda(x,y:O_1\backslash \overline{O_2}),
\end{equation}
where $\sigma=\sigma_{O_1}$ and $\tau=\sigma_{O_1\backslash \overline{O_2}}$ are the exit times of $O_1$ and $O_1\backslash \overline{O_2}$, respectively.
\end{prop}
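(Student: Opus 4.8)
The plan is to apply the strong Markov property \eqref{4.1} at the stopping time $\tau = \sigma_{O_1 \setminus \overline{O_2}}$, using the representation of the relative Green function as an expectation of an integral along Brownian paths. First I would write, for a nonnegative Borel function $f$ on $O_1$,
\begin{equation}\label{eq:decomp}
\mathbb{E}_x\biggl[\int_0^{\sigma} e^{\lambda t} f(\omega(t))\,dt\biggr]
= \mathbb{E}_x\biggl[\int_0^{\tau\wedge\sigma} e^{\lambda t} f(\omega(t))\,dt\biggr]
+ \mathbb{E}_x\biggl[1_{\tau<\sigma}\int_{\tau}^{\sigma} e^{\lambda t} f(\omega(t))\,dt\biggr],
\end{equation}
where $\sigma = \sigma_{O_1}$, and observe that since $x \in O_1 \setminus \overline{O_2}$, the path stays in $O_1\setminus\overline{O_2}$ for $t < \tau$, so on the event $\{\tau \geq \sigma\}$ one has $\tau\wedge\sigma = \sigma$ and the first term equals the integral up to $\sigma$ restricted to the smaller domain; more cleanly, $\tau\wedge\sigma = \sigma_{O_1\setminus\overline{O_2}}$ regarded as an exit time from $O_1\setminus\overline{O_2}$ (the path leaves $O_1\setminus\overline{O_2}$ either by hitting $\partial O_2$ from outside, or by leaving $O_1$). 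Hence the first term on the right of \eqref{eq:decomp} is exactly $\int_{O_1} G_\lambda(x,y:O_1\setminus\overline{O_2}) f(y)\,d\mu(y)$.

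For the second term, I would use the Markov property at time $\tau$: conditionally on $\mathcal{M}_\tau$ and on $\{\tau < \sigma < \infty\}$, the shifted path $t \mapsto \omega(\tau + t)$ is a Brownian motion started at $\omega(\tau)$, and the remaining integral $\int_0^{\sigma - \tau} e^{\lambda(\tau + s)} f(\omega(\tau+s))\,ds$ has conditional expectation $e^{\lambda\tau}\,\mathbb{E}_{\omega(\tau)}\bigl[\int_0^{\sigma_{O_1}} e^{\lambda s} f(\omega(s))\,ds\bigr] = e^{\lambda\tau}\int_{O_1} G_\lambda(\omega(\tau),y:O_1) f(y)\,d\mu(y)$; here I use that the exit time of $O_1$ for the shifted path starting at $\omega(\tau) \in \partial(O_1\setminus\overline{O_2}) \cap O_1 \subset \overline{O_2}\cap O_1$ is the same as $\sigma - \tau$. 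Taking expectations and applying the tower property gives that the second term equals $\mathbb{E}_x\bigl[1_{\tau<\sigma} e^{\lambda\tau}\int_{O_1} G_\lambda(\omega(\tau),y:O_1) f(y)\,d\mu(y)\bigr]$. Comparing with \eqref{eq:decomp} and using that $f$ is an arbitrary nonnegative Borel function yields \eqref{exit} as an identity of measures in $y$, hence $\mu$-a.e., and then everywhere on $O_1\setminus\partial O_2$ by the $\lambda$-harmonicity (Corollary \ref{harnack5}) and continuity of both sides in $y$.

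The main obstacle I anticipate is the careful bookkeeping of the exit times: one must verify that $\tau\wedge\sigma$ genuinely coincides with the exit time of $O_1\setminus\overline{O_2}$ (which requires $x \notin \overline{O_2}$, as assumed), that $\omega(\tau) \in O_1$ on the event $\{\tau<\sigma\}$ so that $G_\lambda(\omega(\tau),y:O_1)$ makes sense, and that $\{\tau<\sigma\} \in \mathcal{M}_\tau$ so the conditioning is legitimate. A secondary point is justifying the interchange of $\mathbb{E}_x$ with the $y$-integral (Fubini/Tonelli, valid since everything is nonnegative) and confirming the finiteness needed to pass from the measure identity to the pointwise one, which follows from $\lambda \leq \lambda_0$ being in the Green region (Corollary \ref{g}) and the local boundedness of $G_\lambda$. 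Modulo these measure-theoretic verifications, which are standard for Hunt processes (cf. \cite{FOT}, Section 7.3, and the argument in \cite{LL}), the identity is a direct consequence of the strong Markov property.
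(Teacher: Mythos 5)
Your proposal correctly sets up the strong Markov decomposition at the stopping time $\tau=\sigma_{O_1\setminus\overline{O_2}}$ and recovers the integrated (measure-valued) identity; this is the same starting point as the paper's proof. Where you and the paper diverge is in passing from the $\mu$-a.e.\ identity to the pointwise one: you appeal to $\lambda$-harmonicity and continuity of both sides in $y$, whereas the paper takes $f=\mathbf{1}_{B(y,r)}$, divides by $\mu(B(y,r))$, and lets $r\to 0$. In fact the \emph{entire} displayed content of the paper's proof is devoted to verifying that the Lebesgue differentiation theorem holds at the vertices of $\widetilde X$ (a ball around a vertex is a union of $\deg(v)$ intervals, so this requires a short but explicit computation), with the strong Markov step stated in one line. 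Your continuity route is a valid alternative, but two points would need filling in before it is airtight: (i) Corollary~\ref{harnack5} as stated concerns $G_\lambda(x,\cdot)$ on $\widetilde X\setminus\{x\}$, not the \emph{relative} Green functions $G_\lambda(x,\cdot:O_1)$ and $G_\lambda(x,\cdot:O_1\setminus\overline{O_2})$, so the interior $\lambda$-harmonicity (hence continuity) of these would require a separate, if routine, justification; and (ii) continuity of $y\mapsto\mathbb{E}_x[1_{\tau<\sigma}e^{\lambda\tau}G_\lambda(\omega(\tau),y:O_1)]$ needs a dominated-convergence bound that is uniform over the random point $\omega(\tau)\in\partial O_2\cap O_1$. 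Neither is a serious obstruction, but the paper's vertex-level Lebesgue argument is a more local and self-contained way to close the gap, and it is the only graph-specific subtlety in the proof.
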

\begin{proof}The proof of \cite{LL} is as follows. Choose a ball $B(y,r)\subset O_1\backslash{\partial O_2}$. Using strong Markov property, one obtains
\begin{equation}\label{4.2.11}
\begin{split}&\int_{B(y,r)}G_\lambda(x,z:O_1)d\mu(z)\\ \nonumber
&=\mathbb{E}_x[1_{\tau< \sigma}e^{\lambda\tau}\int_{B(y,r)}G_\lambda(\omega(\tau),z:O_1)d\mu(z)]+\int_{B(y,r)}G_\lambda(x,z:O_1\backslash \overline{O_2})d\mu(z).\nonumber
\end{split}
\end{equation}
To complete the proof, we need Lebesgue's theorem, which is clear if a point $x$ is in an open edge $e^o$. 
Let us verify Lebesgue theorem for vertices of $ \widetilde{X}$. For a vertex $x$ in $ \widetilde{X}$ and a continuous function $f$ on $B(x,r)$, 
$$\displaystyle\frac{1}{B(x,r)}\int_{B(x,r)}fd\mu=\displaystyle\frac{1}{\deg(x)}\biggl(\sum_{i(e)=x}\frac{1}{r}\int_0^{r} f|_edt+\sum_{t(e)=x}\frac{1}{r}\int_{l_e-r}^{l_e}f|_edt\biggr).$$
For any edge $e$ with $i(e)=x$ ($t(e)=x$, resp), $$\lim_{r\rightarrow0+}\frac{1}{r}\int_0^rf|_edt=f(x) \left(\lim_{r\rightarrow0+}\frac{1}{r}\int_{l_e-r}^{l_e}f|_edt=f(x),\text{ resp}\right).$$
Using above equation, we obtain the graph version of Lebesgue's theorem as follows:
$$\displaystyle\lim_{r\rightarrow 0+}\frac{1}{B(x,r)}\int_{B(x,r)}fd\mu=\displaystyle\lim_{r\rightarrow 0+}\frac{1}{r\deg(x)}\biggl(\sum_{i(e)=x}\int_0^{r} fdt+\sum_{t(e)=x}\int_{l_e-r}^{l_e}fdt\biggr)=f(x).$$
\end{proof}
\begin{lem}\label{AC!!} 
For any given $r>0$, there exists a constant $C_r$ such that for all $\lambda \in [0,\lambda_0]$ and for all $(y,z)$ in $\overline{B(x,r)}^2$ with $d(y,z)\geq r/2$, $$G_\lambda(y,z: B(x,2r))\geq C_r.$$
\end{lem}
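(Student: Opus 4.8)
The plan is to obtain a uniform (in $\lambda$) lower bound for the relative Green function $G_\lambda(y,z:B(x,2r))$ by comparing it with the relative Green function at $\lambda = 0$. First I would note that since $\lambda \geq 0$, we have $e^{\lambda t} \geq 1$ for all $t \geq 0$, so directly from the definition
\begin{displaymath}
\int_{B(x,2r)} G_\lambda(y,z:B(x,2r)) f(z)\, d\mu = \mathbb{E}_y\biggl[\int_0^{\sigma_{B(x,2r)}} e^{\lambda t} f(\omega(t))\, dt\biggr] \geq \mathbb{E}_y\biggl[\int_0^{\sigma_{B(x,2r)}} f(\omega(t))\, dt\biggr]
\end{displaymath}
for any nonnegative measurable $f$, which gives $G_\lambda(y,z:B(x,2r)) \geq G_0(y,z:B(x,2r))$ pointwise $\mu$-a.e., and in fact everywhere by continuity of both sides in $z$ (away from $y$). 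So it suffices to bound $G_0(y,z:B(x,2r))$ from below by a constant $C_r$ depending only on $r$ (and the geometry of $\X$, but not on $x$, using cocompactness of the $\Gamma$-action at the very end).

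Next I would establish that $G_0(\cdot,\cdot:B(x,2r))$ is strictly positive on the relevant set. The relative Green function $G_0(y,z:B(x,2r))$ is the integral kernel of the Dirichlet heat semigroup on the open set $B(x,2r)$; by the positivity of the heat kernel (Theorem \ref{positive}) together with a standard argument (the Dirichlet heat kernel $p_{B(x,2r)}(t,y,z)$ is positive for $t>0$ whenever $y,z$ lie in the same connected component, which is guaranteed since $B(x,2r)$ is connected), the function $G_0(y,z:B(x,2r))$ is strictly positive for all $y \neq z$ in $B(x,2r)$. Moreover $y \mapsto G_0(y,z:B(x,2r))$ is $0$-harmonic on $B(x,2r) \setminus \{z\}$, and similarly in $z$, so it is continuous there; combined with the Harnack-type control from Corollary \ref{G2} / Corollary \ref{harnack5} it is even locally bounded below on compact subsets away from the diagonal.

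Then I would run a compactness argument to turn pointwise positivity into a uniform bound. Fix $x$ for the moment; the set $K = \{(y,z) \in \overline{B(x,r)}^2 : d(y,z) \geq r/2\}$ is compact, does not meet the diagonal, and $(y,z) \mapsto G_0(y,z:B(x,2r))$ is continuous and strictly positive on $K$, hence attains a positive minimum $C_r(x) > 0$ there. To remove the dependence on $x$, I would invoke the cocompactness of the $\Gamma$-action: every $x \in \X$ is $\Gamma$-translate of a point in the fundamental domain $\overline{F}$, and since the Dirichlet form, the heat kernel and hence $G_0(\cdot,\cdot:B(\cdot,2r))$ are all $\Gamma$-equivariant, the constant $C_r(x)$ depends only on the orbit of $x$; taking the infimum over the compact set $\overline{F}$ (or rather over a neighborhood of it, using continuity of the whole construction in $x$ as well) gives a single constant $C_r > 0$ valid for all $x$.

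The main obstacle I anticipate is justifying the continuity and strict positivity of the relative Green function $G_0(y,z:B(x,2r))$ as a genuine function (not merely an $L^2$ kernel) on the set $K$, and in particular checking that the Dirichlet heat kernel on the connected open set $B(x,2r)$ is strictly positive — this requires either a parabolic Harnack / strong maximum principle argument on the graph, or an explicit chaining argument decomposing the path $[y,z]$ into short pieces inside $B(x,2r)$ and using the semigroup property $\int p_{B}(t/2,y,w) p_{B}(t/2,w,z)\, d\mu(w) = p_B(t,y,z)$ together with positivity of the heat kernel on small balls. The $\Gamma$-equivariance step is routine once one records that $F$ (Definition near \eqref{2.2}) is compact and the construction commutes with isometries, but one should be slightly careful that $x$ ranges over all of $\X$, not just over vertices, so the infimum is over a compact fundamental domain rather than a finite set.
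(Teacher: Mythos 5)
Your proposal is correct, and it takes a genuinely different route from the paper's. The paper argues by contradiction: it fixes $x$, assumes a sequence $(\lambda_n,y_n,z_n)\in[0,\lambda_0]\times\overline{B(x,r)}^2$ with $G_{\lambda_n}(y_n,z_n:B(x,2r))\to 0$, extracts a limit $(\lambda,y,z)$ by joint compactness of the product, and derives $G_\lambda(y,z:B(x,2r))=0$ from continuity, contradicting the strict positivity of the Dirichlet heat kernel $p_{B(x,2r)}$ (Proposition \ref{Aa}(3)). Your monotonicity observation $G_\lambda(y,z:O)\ge G_0(y,z:O)$ for $\lambda\ge0$ (immediate from $e^{\lambda t}\ge 1$, or equally well from the spectral formula $G_\lambda(y,z:O)=\int_0^\infty e^{\lambda t}p_O(t,y,z)\,dt$) disposes of the $\lambda$-uniformity in one line, so you need compactness and continuity only in the spatial variables and only at $\lambda=0$; the paper instead needs joint continuity in $(\lambda,y,z)$, which is a slightly stronger hypothesis on the relative Green function. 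Both arguments ultimately rest on the same two inputs you isolated as the main obstacle: strict positivity of $p_{B(x,2r)}(t,y,z)$ on the connected set $B(x,2r)$ (this is exactly Proposition \ref{Aa}(3) in the appendix, proved via the parabolic maximum principle Lemma \ref{Maximum}), and continuity of $G_0(\cdot,\cdot:B(x,2r))$ off the diagonal, which comes from the eigenfunction expansion \eqref{A.2}.

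One further point where your write-up is actually more careful than the paper's: you explicitly invoke $\Gamma$-equivariance and cocompactness to make the constant $C_r$ independent of $x$. The paper's proof, as written, fixes $x$ throughout and establishes only an $x$-dependent bound; the passage to an $x$-independent constant is left implicit (and is invoked in the remark immediately following the lemma, via "compactness of $X$"). Your observation that one should take an infimum over $x$ in a compact fundamental domain, rather than a finite set, since $x$ ranges over all of $\X$, is the right way to fill that gap.
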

\begin{proof}Suppose on the contrary that there exists a sequence of points $(\lambda_n,y_n,z_n)$ in $[0,\lambda]\times \overline{B(x,r)}^2$ such that $$G_{\lambda_n}(y_n,z_n:B(x,2r))\leq \frac{1}{n}.$$
Since $[0, \lambda]\times\overline{B(x,r)}^2$ is compact, there is a subsequence $\{(\lambda_{n_k}, y_{n_k},z_{n_k})\}_{k=1}^\infty$ converging to, say $(\lambda, y,z)$.     
By continuity of $G_\lambda$, $G_\lambda(y,z:B(x,2r))=0$. 
Since $$G_\lambda(y,z:B(x,2r))=\displaystyle \int_0^\infty e^{\lambda t}p_{B(x,2r)}(t,x,y)dt=0,$$ we have $p_{B(x,2r)}(t,x,y)=0,$ which contradicts Proposition \ref{Aa} (3). 
\end{proof}
 By compactness of $X$ and Lemma \ref{AC!!}, we remark that there exists a constant $C'_m$ such that for all $\lambda\in[0,\lambda_0]$ and for two distinct points $y,z$ with $1\leq d(y,z)\leq m$,
\begin{equation}\label{eq1.1.2} {C'}_m^{-1}\leq G_\lambda(y,z)\leq C_m'. \end{equation}
However, Harnack inequality \eqref{harnack4} and \eqref{eq1.1.2} together implies only an inequality similar to \eqref{1.1.0} with a constant depending on the distance between points $x$, $y$ and $z$. 

 Let $O_1$ and $O_2$ be connected open sets in $ \widetilde{X}$ intersecting each other.  The measure $\eta_x^{\lambda, O_1\cap \partial O_2}$ on $O_1\cap \partial O_2$ is defined as follows: for any measurable function $f$ on $O_1 \cap \partial O_2,$
 \begin{equation}\label{hmeasure}
\int_{O_1\cap\partial O_2} f(z)d\eta_x^{\lambda,O_1\cap\partial O_2}(z):=\mathbb{E}_x[1_{\tau< \sigma}e^{\lambda\tau}f(\omega(\tau))],
\end{equation}
where $\sigma=\sigma_{O_1}$ and $\tau=\sigma_{O_1\backslash \overline{O_2}}$ are the exit times of $O_1$ and $O_1\backslash \bar{O_2}$, respectively.

Using strong Markov property, we obtain the following lemma, which will be used in the proof of pre-Ancona inequality \eqref{prean}.
\begin{lem}\label{acn} Let $O_1$ and $O_2$ be connected open sets in $ \widetilde{X}$ intersecting each other. Suppose that $O_2$ has the following property: for any $z\in O_1\cap\partial O_2$, \begin{equation}\label{4.3}\mu((B(z,1)\cap O_1\cap O_2)\backslash B(z,1/2))\geq \frac{1}{2}.\end{equation} Let $f$ be a bounded positive function on $\partial O_2$. There exists a constant $C$ such that for all $x\in O_1\backslash\overline{O_2}$ with $d(x,\partial O_2)>2,$
\begin{equation}\label{MARkov}\sum_{z\in O_1\cap \partial O_2}G_\lambda(x,z)f(z)\geq C\int_{O_{1}\cap\partial O_2}f(z)d\eta_x^{\lambda,O_1\cap\partial O_2}.
\end{equation}

\end{lem}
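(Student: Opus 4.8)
The idea is to bound the right-hand side integral against the left-hand side sum using the strong Markov property, exactly as in the proof of Proposition~\ref{MArkov}. Fix $x\in O_1\setminus\overline{O_2}$ with $d(x,\partial O_2)>2$, and set $\sigma=\sigma_{O_1}$, $\tau=\sigma_{O_1\setminus\overline{O_2}}$. First I would observe that $O_1\cap\partial O_2$ is a discrete set (vertices and finitely many points on edges), so the measure $\eta_x^{\lambda,O_1\cap\partial O_2}$ from \eqref{hmeasure} is supported on this countable set, and for each $z\in O_1\cap\partial O_2$ one has the atom weight $\eta_x^{\lambda,O_1\cap\partial O_2}(\{z\})=\mathbb{E}_x[1_{\tau<\sigma}e^{\lambda\tau}1_{\omega(\tau)=z}]$. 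So it suffices to produce a uniform constant $C>0$ with
\begin{equation}\label{atompf}
G_\lambda(x,z)\geq C\,\eta_x^{\lambda,O_1\cap\partial O_2}(\{z\})\qquad\text{for every }z\in O_1\cap\partial O_2,
\end{equation}
and then multiply by $f(z)\geq 0$ and sum.

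\textbf{Reduction to a lower bound on a relative Green function.} To prove \eqref{atompf} I would run the Brownian motion from $x$ until the stopping time $\tau$: on the event $\{\tau<\sigma,\ \omega(\tau)=z\}$ the path is at $z\in\partial O_2$, and then I restart the process using the strong Markov property \eqref{4.1} (more precisely its consequence via the formula for $G_\lambda$ in terms of $\mathbb{E}_x[\int_0^{\sigma}e^{\lambda t}\cdots]$, as used in Proposition~\ref{MArkov}). Concretely, for any small ball $B(z,\rho)$, decomposing the occupation time at the first exit of $O_1\setminus\overline{O_2}$ gives
\begin{equation}\label{decomppf}
\int_{B(z,\rho)}G_\lambda(x,w:O_1)\,d\mu(w)\ \geq\ \mathbb{E}_x\Big[1_{\tau<\sigma}e^{\lambda\tau}\int_{B(z,\rho)}G_\lambda(\omega(\tau),w:O_1)\,d\mu(w)\Big],
\end{equation}
since the right side is one of the nonnegative terms in the Markov decomposition \eqref{exit} integrated over $B(z,\rho)$. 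Dividing by $\mu(B(z,\rho))$ and letting $\rho\to 0^+$, the graph Lebesgue differentiation theorem proved inside Proposition~\ref{MArkov} gives $G_\lambda(x,z:O_1)\geq \mathbb{E}_x[1_{\tau<\sigma}e^{\lambda\tau}G_\lambda(\omega(\tau),z:O_1)]$; restricting the expectation to $\{\omega(\tau)=z\}$ and using continuity of paths (so $\omega(\tau)=z$ forces the restarted Green function to be $G_\lambda(z,z:O_1)$, interpreted via the occupation measure of the restarted motion near $z$) yields, after another $\rho\to 0^+$ limit,
\begin{equation}\label{keylowerpf}
G_\lambda(x,z:O_1)\ \geq\ \eta_x^{\lambda,O_1\cap\partial O_2}(\{z\})\cdot \inf_{z\in O_1\cap\partial O_2}\, \liminf_{\rho\to 0^+}\frac{1}{\mu(B(z,\rho))}\int_{B(z,\rho)}G_\lambda(z,w:O_1)\,d\mu(w),
\end{equation}
and since $G_\lambda(x,z)\geq G_\lambda(x,z:O_1)$, it remains to bound the infimum on the right below by a positive constant.

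\textbf{The uniform lower bound near $z$.} This is where hypothesis \eqref{4.3} enters: it guarantees that the portion of $B(z,1)\cap O_1\cap O_2$ outside $B(z,1/2)$ has measure at least $1/2$, so $B(z,1)\subset O_1$ near $z$ contains a definite amount of room on which to compare Green functions. I would apply Lemma~\ref{AC!!} (with $r=1$ and base point $z$): for $w$ in $\overline{B(z,1)}$ with $d(z,w)\geq 1/2$ one gets $G_\lambda(z,w:B(z,2))\geq C_1$, hence $G_\lambda(z,w:O_1)\geq C_1$ once $B(z,2)\subset O_1$ — and if $B(z,2)\not\subset O_1$ one shrinks the radius and uses compactness of $X=\widetilde X/\Gamma$ together with $\Gamma$-equivariance to get a single constant working for all $z$. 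Combined with \eqref{4.3}, which supplies measure $\geq 1/2$ of such points $w$ inside $O_1$, the average of $w\mapsto G_\lambda(z,w:O_1)$ over any small $B(z,\rho)$ — or rather the relevant occupation-measure quantity — is bounded below by an absolute constant $C>0$ independent of $\lambda\in[0,\lambda_0]$, $z$, $O_1$, $O_2$. Feeding this into \eqref{keylowerpf} gives \eqref{atompf}, and summing against $f$ finishes the proof.

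\textbf{Main obstacle.} The delicate point is the $\rho\to 0^+$ restart argument at $z$: one must pass from the relative Green function $G_\lambda(\cdot,\cdot:O_1)$ evaluated at the random exit point $\omega(\tau)$ to a clean lower bound, keeping track that $\omega(\tau)\in\partial O_2$ and that the restarted motion spends a uniformly nonnegligible amount of (discounted) time in $B(z,1)\cap O_1$. Making the two successive Lebesgue-differentiation limits rigorous — and ensuring the constant from Lemma~\ref{AC!!} is genuinely uniform over all admissible $O_1,O_2$ by invoking cocompactness — is the crux; the rest is the same bookkeeping as in Proposition~\ref{MArkov}.
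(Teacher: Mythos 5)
Your atom-by-atom reduction --- showing $G_\lambda(x,z)\geq C\,\eta_x^{\lambda,O_1\cap\partial O_2}(\{z\})$ for each $z$ and then summing against $f$ --- is a fine reformulation, and the paper's chain \eqref{acc} effectively proves the same bound. But the way you derive that atom bound is not the paper's route, and it has a genuine gap. Your two successive $\rho\to 0^+$ limits collapse everything to a uniform positive lower bound on the diagonal quantity $\liminf_{\rho\to 0^+}\mu(B(z,\rho))^{-1}\int_{B(z,\rho)}G_\lambda(z,w:O_1)\,d\mu(w)$, i.e.\ on $G_\lambda(z,z:O_1)$. On a graph this is a genuine, finite, locally one-dimensional diagonal Green function, and it is \emph{not} uniformly bounded below under the lemma's hypotheses: exactly as $G_0(x,x)=2x(1-x)\to 0$ for Brownian motion on $(0,1)$ killed at the endpoints, $G_\lambda(z,z:O_1)\to 0$ whenever $z$ approaches $\partial O_1$ along some edge. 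Hypothesis \eqref{4.3} constrains only the annulus $B(z,1)\setminus B(z,1/2)$ on the $O_1\cap O_2$ side, and the existence of an $x\in O_1\setminus\overline{O_2}$ with $d(x,\partial O_2)>2$ gives room in at most one other direction from $z$; a third edge at $z$ reaching $\partial O_1$ at distance $\epsilon$ sends $G_\lambda(z,z:O_1)$ to $0$ while leaving both hypotheses intact. Your attempted rescue also misreads Lemma~\ref{AC!!}: that lemma lower-bounds $G_\lambda(z,w:B(\cdot,2r))$ only for $w$ at distance at least $r/2$ from $z$, so it says nothing about the $\rho\to 0^+$ diagonal limit, and the fallback ``shrink the radius and use cocompactness'' fails because the diagonal relative Green function on a ball of radius $\rho'$ itself tends to $0$ as $\rho'\to 0$.

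The paper avoids the diagonal entirely. It fixes the annular set $A_z=(B(z,1)\cap O_1\cap O_2)\setminus B(z,1/2)$, which \eqref{4.3} makes substantial, $\mu(A_z)\geq 1/2$, and it keeps every Green function evaluation at distance in $[1/2,1]$ from $z$. It then combines (i) the Harnack upper bound $\int_{A_z}G_\lambda(x,y')\,d\mu(y')\leq C\,G_\lambda(x,z)$ from Corollary~\ref{G2} (applicable because $d(x,\partial O_2)>2$ puts $x$ outside $B(z,2)$) --- a step you never need because you start directly from $G_\lambda(x,z)\geq G_\lambda(x,z:O_1)$; (ii) the same strong Markov restart at $\tau$ that you use, but applied to the occupation time $\int_0^{\sigma}e^{\lambda t}1_{A_z}(\omega(t))\,dt$ of the fixed set $A_z$ rather than to shrinking balls; and (iii) the lower bound $\int_{A_z}G_\lambda(z,y':O_1)\,d\mu(y')\geq\tfrac{1}{2}C_{1/2}$ via Lemma~\ref{AC!!}, which here is being evaluated at points $y'\in A_z$ with $d(z,y')\geq 1/2$, squarely inside its range of applicability. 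If you replace your shrinking balls $B(z,\rho)$ by the fixed set $A_z$ throughout and drop the $\rho\to 0^+$ limits, your argument becomes the paper's. As written, the $\rho\to 0^+$ steps convert the problem into a uniform diagonal bound that is false, and that is a real gap.
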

\begin{proof}
 For any point $z\in O_1\cap\partial O_2$, set $A_z:=(B(z,1)\cap O_1\cap  O_2)\backslash B(z,1/2)$. By assumption, $\mu(A_z)\geq 1/2.$ Since the action of $\G$ is cocompact, there exists a constant $C'$ such that $\mu(A_z)<C.$
 By Harnack inequality \eqref{harnack4}, for all $z\in O_1\cap\partial O_2$,
\begin{equation}\label{Acccc}
\int_{A_z} G_\lambda(x,y')d\mu(y')\overset{\eqref{harnack4}}\leq e^{D_{1,1}}\int_{A_z} G_\lambda(x,z)d\mu(y')\leq C'e^{D_{1,1}} G_\lambda(x,z).
\end{equation}

By assumption \eqref{4.3} and Lemma \ref{AC!!}, we have the following inequality: For all $z$ in $O_1\cap\partial O_2$,
\begin{equation}\label{acc1}
\frac{1}{2}{C_{1/2}}<\int_{A_z}G_\lambda(z,y':O_1)d\mu(y').
\end{equation}
Denote $\sigma=\sigma_{O_1}$ and $\tau=\sigma_{O_1\backslash \overline{O_2}}$. We first replace $G_\lambda(x,y')$ by $G_\lambda(x,y':O_1)$ to obtain  the first inequality of \eqref{acc}. Then we apply Proposition \ref{MArkov}, where the second term in the right hand side of \eqref{exit} is zero since $A_z$ and $O_1 \backslash \overline{O_2}$ are disjoint. We now disregard the paths for which $z \neq \omega(\tau)$ to obtain the second inequality of \eqref{acc}.
\begin{equation}\label{acc}
\begin{split}
&\displaystyle\sum_{z\in O_1\cap\partial O_2}f(z)\int_{A_z}G_\lambda(x,y')d\mu(y')\geq\displaystyle\sum_{z\in O_1\cap\partial O_2}f(z)\int_{A_z}G_\lambda(x,y':O_1)d\mu(y')\\
&\overset{\substack{\text{Prop}\\\ref{MArkov}}}= \displaystyle\sum_{z\in O_1\cap\partial O_2}f(z) \displaystyle\mathbb{E}_x\biggl[1_{\tau< \sigma}\int_0^{\sigma}e^{\lambda t}{1}_{A_{\omega(\tau)}}(\omega(t))dt\biggr]\\
&\geq  \displaystyle\sum_{z\in O_1\cap\partial O_2}f(z) \displaystyle\mathbb{E}_x\biggl[1_{\tau< \sigma}1_{\omega(\tau)=z}\int_0^{\sigma}e^{\lambda t}{1}_{A_{\omega(\tau)}}(\omega(t))dt\biggr]\\
&\overset{\mathrm{def}}=\displaystyle\mathbb{E}_x\biggl[1_{\tau< \sigma}f(\omega(\tau))\int_0^{\sigma}e^{\lambda t}{1}_{A_{\omega(\tau)}}(\omega(t))dt\biggr]\\
&\overset{\eqref{4.1}}=\displaystyle\mathbb{E}_x\biggl[1_{\tau< \sigma}e^{\lambda\tau}f(\omega({\tau}))\mathbb{E}_{\omega(\tau)}\biggl[\int_0^{\sigma-\tau}e^{\lambda t}{1}_{A_{\omega(\tau)}}(\omega(t+\tau))dt\biggr]\biggr] \\
&=\displaystyle\mathbb{E}_x\biggl[1_{\tau< \sigma}e^{\lambda\tau}f(\omega(\tau))\int_{{A_{\omega(\tau)}}}G_\lambda(\omega(\tau),y':O_1)d\mu(y')\biggr]\\
&\overset{\eqref{acc1}}\geq\frac{1}{2}C_{1/2}\mathbb{E}_x\biggl[1_{\tau< \sigma}e^{\lambda\tau}f(\omega(\tau))\biggr]=\frac{1}{2}C_{1/2}\int_{O_1\cap\partial O_2}f(z)d\eta^{\lambda,O_1\cap\partial O_2}(z).
\end{split}
\end{equation}
The third equality of \eqref{acc} uses strong Markov property \eqref{4.1}. The inequality \eqref{acc1} shows the third inequality of \eqref{acc}.
Since $$\sum_{z\in O_1\cap\partial O_2}f(z)\int_{A_z} G_\lambda(x,y')d\mu(y')\leq C'e^{D_{1,1}}\sum_{z\in O_1\cap\partial O_2} G_\lambda(x,z)f(z)$$ by \eqref{Acccc}, we have the inequality \eqref{MARkov}.\end{proof}
Using strong Markov property, we obtain the first inequality of \eqref{1.1.0}.
\begin{thm}\label{lAncona} There exists a constant $C$ such that for all $\lambda\in(0,\lambda_0]$ and $x,z$ in $B(y,1)^c$,
\begin{equation}\label{AC5}
C^{-1}G_\lambda(x,y)G_\lambda(y,z)\leq G_\lambda(x,z).
\end{equation} 
\end{thm}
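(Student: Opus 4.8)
The plan is to deduce the lower bound \eqref{AC5} from the local Harnack-type estimate \eqref{eq1.1.2}, the strong Markov property, and Lemma~\ref{acn} applied to a suitable pair of nested open sets separating $x$ from $z$ through $y$. First I would fix the geometry: since $d(x,y)\ge 1$ and $d(y,z)\ge 1$, I set $O_2:=B(y,1/2)$ (or a slightly larger ball whose boundary sphere $\partial O_2$ is crossed by every path from $x$ to $z$ that passes near $y$) and $O_1:=\X$, or more precisely a large precompact connected open set exhausting a neighborhood of $[x,z]$; the point is that a Brownian path started at $x$ which eventually reaches $z$ must, with controlled probability, pass through the annular region $A_y=(B(y,1)\cap O_1)\setminus B(y,1/2)$ near $y$. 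The verification of condition \eqref{4.3} for this choice of $O_2$ is where the hypothesis that $d(y,z)\ge 1$ and $d(x,y)\ge 1$ enters, guaranteeing enough room around $y$.

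Next I would run the strong Markov decomposition at the exit time $\tau$ of $\X\setminus\overline{O_2}$, exactly as in Proposition~\ref{MArkov} and its consequence \eqref{hmeasure}: for a path from $x$ eventually contributing to $G_\lambda(x,z)$, one conditions on the first hitting point $\omega(\tau)\in\partial O_2$, uses \eqref{4.1} to restart the process there, and writes $G_\lambda(x,z)$ as an integral of $G_\lambda(\omega(\tau),z)$ against the harmonic measure $\eta_x^{\lambda,\partial O_2}$. Then I would bound $G_\lambda(\omega(\tau),z)$ below by a uniform constant times $G_\lambda(y,z)$ using Harnack's inequality \eqref{harnack4} (since $\omega(\tau)$ and $y$ both lie in a ball of bounded radius and $z$ is at distance $\ge 1/2$ from that ball), pulling this factor out of the integral. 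What remains is to bound $\int_{\partial O_2}d\eta_x^{\lambda,\partial O_2}$, i.e. the total mass of the harmonic measure, below by a uniform constant times $G_\lambda(x,y)$; this is precisely what Lemma~\ref{acn} delivers (with $f\equiv 1$), giving $\sum_{z'\in\partial O_2}G_\lambda(x,z')\ge C\,\eta_x^{\lambda,\partial O_2}(\partial O_2)$, combined with the upper estimate $G_\lambda(x,z')\le e^{D_{1,1}}G_\lambda(x,y)$ from Harnack for $z'$ near $y$ and the fact that $\partial O_2$ meets only finitely many edges, bounded uniformly by cocompactness.

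Assembling these pieces yields $G_\lambda(x,z)\ge C^{-1}G_\lambda(y,z)\,\eta_x^{\lambda,\partial O_2}(\partial O_2)\ge C^{-2}G_\lambda(x,y)G_\lambda(y,z)$, with all constants independent of $\lambda\in(0,\lambda_0]$ because every ingredient — the compactness-based estimate \eqref{eq1.1.2}, Lemma~\ref{AC!!}, the Harnack constant $D_{1,1}$, and Lemma~\ref{acn} — is already uniform in $\lambda$ over $[0,\lambda_0]$. The main obstacle I anticipate is handling vertices and branch points cleanly: the "annulus" $A_y$ around a vertex $y$ is a union of short segments along incident edges, so one must make sure the cocompactness bound on $\deg(y)$ and edge lengths $l_m\le l_e\le l_M$ is used to keep $\mu(A_y)$ and $|\partial O_2|$ uniformly controlled, and that the choice of radius $1/2$ versus $1$ is compatible with $l_m$; if $l_m$ is very small one may need to rescale the radii (replacing $1/2,1$ by multiples of $l_m$) while keeping $d(x,y),d(y,z)\ge 1$ intact, which only affects the constant $C$. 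A secondary technical point is ensuring $x\in O_1\setminus\overline{O_2}$ with $d(x,\partial O_2)>2$ as required by Lemma~\ref{acn}; since we only assume $d(x,y)\ge 1$, one either enlarges $O_2$ slightly or absorbs the short remaining bridge from $x$ to distance $2$ using \eqref{eq1.1.2} and Harnack, again at the cost of a uniform constant.
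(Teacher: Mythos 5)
Your plan starts the same way the paper does: strong Markov (Proposition~\ref{MArkov}) at the first hitting time $\tau$ of a small ball around $y$ gives $G_\lambda(x,z)\geq \mathbb{E}_x[1_{\tau<\infty}e^{\lambda\tau}G_\lambda(\omega(\tau),z)]$, and Harnack extracts the factor $G_\lambda(y,z)$. The problem is the last step. You claim Lemma~\ref{acn} bounds the total harmonic measure $\eta_x^{\lambda,\partial O_2}(\partial O_2)$ \emph{below} by a constant times $G_\lambda(x,y)$, but the inequality \eqref{MARkov} with $f\equiv 1$ reads $\sum_{z'\in\partial O_2}G_\lambda(x,z')\geq C\,\eta_x^{\lambda,\partial O_2}(\partial O_2)$, which is an \emph{upper} bound on $\eta_x$. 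Feeding in the Harnack bound $G_\lambda(x,z')\leq e^{D_{1,1}}G_\lambda(x,y)$ as you propose gives $\eta_x(\partial O_2)\lesssim G_\lambda(x,y)$, exactly the wrong direction. (Indeed Lemma~\ref{acn} is designed for the \emph{upper} Ancona bound in Theorem~\ref{rancona}, where one needs to dominate the iterated harmonic-measure integrals by Green functions.) So your assembly produces $G_\lambda(x,z)\gtrsim G_\lambda(y,z)\cdot\eta_x(\partial O_2)$ together with $\eta_x(\partial O_2)\lesssim G_\lambda(x,y)$, which does not combine into the desired lower bound.

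What is actually needed, and what the paper supplies, is a \emph{lower} bound on $\mathbb{E}_x[1_{\tau<\infty}e^{\lambda\tau}]$ in terms of $G_\lambda(x,y)$. The paper obtains this not from Lemma~\ref{acn} but from a time-slicing/strong-Markov argument: writing $B_y=B(y,1/2)\setminus\overline{B(y,1/4)}$ and $\tau=\sigma_{\overline{B(y,1/8)}^c}$, one sets
$c_y=\inf_{y_1\in B_y}\inf_{t\in[0,1]}\mathbb{E}_{y_1}[1_{t+1\leq\tau\leq t+2}e^{\lambda\tau}]$,
and uses the strong Markov property at each intermediate time $s\in[k,k+1]$ to show
$\mathbb{E}_x[1_{k+2\leq\tau\leq k+3}e^{\lambda\tau}]\geq c_y\int_k^{k+1}\mathbb{E}_x[e^{\lambda s}1_{B_y}(\omega(s))]ds$.
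Summing over $k$ gives $\mathbb{E}_x[1_{\tau<\infty}e^{\lambda\tau}]\geq c_y\int_{B_y}G_\lambda(x,y')d\mu(y')\gtrsim c_y\,G_\lambda(x,y)$ by Harnack, and cocompactness guarantees $c_V:=\inf_{y\in V}c_y>0$ (after reducing to vertices and then spreading to general $y$ by Harnack). Replacing your final step with this argument is what the theorem requires; the rest of your setup is sound and essentially matches the paper's.
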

\begin{proof} Suppose that $y\in V$. Denote $B_y=B(y,1/2)\backslash\overline{B(y,1/4)}$ and $$c_y=\inf_{y_1\in B_y}\inf_{t\in[0,1]} E_{y_1}[1_{t+1\leq\tau\leq t+2}e^{\lambda \tau}],$$
where $\tau=\sigma_{\overline{B(y,1/8)}^c}$. 
 By strong Markov property \eqref{4.1}, for $k\geq 0$, 
\begin{equation}\label{4.9}
\begin{split}
&\int_{k}^{k+1}\mathbb{E}_x[1_{k+2\leq\tau\leq k+3}e^{\lambda\tau}]ds\geq\int_{k}^{k+1}\mathbb{E}_x\left[1_{k+2\leq\tau\leq k+3}e^{\lambda\tau}1_{B_y}(\omega(s))\right]ds\\ &\overset{\eqref{4.1}}= \int_{k}^{k+1}\mathbb{E}_x\left[e^{\lambda s}1_{B_y}(\omega(s))\mathbb{E}_{\omega(s)}\left[1_{k+2-s\leq \tau-s\leq k+3-s}e^{\lambda(\tau-s)}\right]\right]ds\\ &\geq c_y\int_{k}^{k+1}\mathbb{E}_x\left[e^{\lambda s}1_{B_y}(\omega(s))\right]ds
\end{split}
\end{equation}
 By Proposition \ref{MArkov} applied to $O_1=\X, O_2=B(y,1/8), y=z,$ we obtain the first inequality of \eqref{4.100}. Harnack inequality \eqref{harnack4} shows the second inequality of \eqref{4.100}. We disregard the paths $\omega$ satisfying $\tau(\omega)\leq 3$ to obtain the third inequality of \eqref{4.100}. Using \eqref{4.9}, we have the fourth inequality of \eqref{4.100}.
\begin{equation}
\begin{split}\label{4.100}&{G_\lambda(x,z)}{} \overset{\substack{\text{Prop}\\ \ref{MArkov}}}\geq \mathbb{E}_x\left[1_{\tau<\infty}e^{\lambda\tau} {G_\lambda(\omega({\tau}),z)}{}\right]\overset{\eqref{harnack4}}\geq e^{-D_{1,1}}\mathbb{E}_x[1_{\tau<\infty}e^{\lambda\tau}]G_\lambda(y,z)\\
 &\geq e^{-D_{1,1}}\sum_{k=0}^\infty\mathbb{E}_x[1_{k+2\leq\tau\leq k+3}e^{\lambda\tau}]G_\lambda(y,z)\\
 &\overset{\eqref{4.9}}\geq c_ye^{-D_{1,1}}\sum_{k=0}^\infty\mathbb{E}_x\left[\int_k^{k+1}e^{\lambda s}1_{B_y}(\omega(s))ds\right]G_\lambda(y,z)\\
 &=c_ye^{-D_{1,1}}\mathbb{E}_x\left[\int_0^\infty e^{\lambda s}1_{B_y}(\omega(s))ds\right]G_\lambda(y,z)\\
 &\overset{\text{Def}}=c_ye^{-D_{1,1}}\int_{B_y}G_\lambda(x,y')d\mu(y')G_\lambda(y,z).
 \end{split}
\end{equation} 
By compactness of $X$, $c_V=\underset{y\in V} \inf c_y$ is finite and positive. The measure of $B_y$ is at least $1/4$. Applying  Harnack inequality {\eqref{harnack4}} again, we have
 \begin{equation}\label{4.9.0}G_\lambda(x,z)\overset{\eqref{harnack4}}\geq \frac{c_V}{4}e^{-2D_{1,1}}G_\lambda(x,y)G_\lambda(y,z).\end{equation}
Suppose $y\notin V$. We first obtain the inequality \eqref{4.9.0} for the closest vertex to $y$. Using Harnack inequality \eqref{harnack4}, we have the inequality \eqref{4.9.0} for $y$.
 \end{proof}

By Proposition \ref{hk} and the integration by substitution, the following proposition holds as in \cite{LL}.
\begin{prop}\label{G11}  For $\lambda \in [0, \lambda_0)$, for any two distinct points $x, y$ in $ \widetilde{X}$,
\begin{equation}\label{G1}
\frac{\partial}{\partial \lambda} G_\lambda(x,y) =\displaystyle \int_{ \widetilde{X}} G_\lambda(x,z) G_\lambda (z,y) dz.
\end{equation}
\end{prop}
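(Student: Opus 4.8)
The plan is to differentiate the formula
$$G_\lambda(x,y)=\int_0^\infty e^{\lambda t}p(t,x,y)\,dt$$
in $\lambda$ and to interchange $\partial_\lambda$ with the time integral, and then to recognize the resulting double integral over $(0,\infty)$ as the $\mu$-integral on the right-hand side of \eqref{G1} via the semigroup identity (3) of Theorem \ref{hk}. First I would fix $\lambda\in[0,\lambda_0)$ and note, by Corollary \ref{g}, that the Green region contains an open neighborhood $(-\infty,\lambda_0)$ of $\lambda$; hence for $\lambda'$ slightly larger than $\lambda$ the integral $\int_0^\infty e^{\lambda' t}p(t,x,y)\,dt$ still converges. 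This gives a uniform integrable majorant $e^{\lambda' t}p(t,x,y)$ for the difference quotients $\bigl(e^{(\lambda+h)t}-e^{\lambda t}\bigr)/h \cdot p(t,x,y)$ on a small interval of $h$'s, since $|(e^{(\lambda+h)t}-e^{\lambda t})/h|\le t e^{(\lambda+h)t}\le C_{\lambda'} e^{\lambda' t}$ for $|h|$ small (absorbing the polynomial factor $t$ into the slightly larger exponential). The dominated convergence theorem then licenses
$$\frac{\partial}{\partial\lambda}G_\lambda(x,y)=\int_0^\infty t\,e^{\lambda t}p(t,x,y)\,dt.$$

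Next I would write $t e^{\lambda t}=\int_0^t e^{\lambda t}\,ds$ and apply Tonelli (all integrands are nonnegative) to exchange the $s$- and $t$-integrals:
$$\int_0^\infty t e^{\lambda t}p(t,x,y)\,dt=\int_0^\infty\!\!\int_0^t e^{\lambda t}p(t,x,y)\,ds\,dt=\int_0^\infty\!\!\int_s^\infty e^{\lambda t}p(t,x,y)\,dt\,ds.$$
In the inner integral substitute $t=s+u$, so $e^{\lambda t}=e^{\lambda s}e^{\lambda u}$ and $p(t,x,y)=p(s+u,x,y)$, and then invoke the Chapman--Kolmogorov identity (3) of Theorem \ref{hk}, $p(s+u,x,y)=\int_{\X}p(s,x,z)p(u,z,y)\,d\mu(z)$. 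This turns the expression into
$$\int_0^\infty\!\!\int_0^\infty e^{\lambda s}e^{\lambda u}\int_{\X}p(s,x,z)p(u,z,y)\,d\mu(z)\,du\,ds,$$
and a final application of Tonilli to bring the $\mu$-integral outermost, together with Fubini to separate the $s$- and $u$-integrals, yields
$$\int_{\X}\left(\int_0^\infty e^{\lambda s}p(s,x,z)\,ds\right)\left(\int_0^\infty e^{\lambda u}p(u,z,y)\,du\right)d\mu(z)=\int_{\X}G_\lambda(x,z)G_\lambda(z,y)\,d\mu(z),$$
which is exactly \eqref{G1} (the paper writes $dz$ for $d\mu(z)$).

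The main obstacle is the justification of the very first interchange of $\partial_\lambda$ with $\int_0^\infty dt$: one must be careful that the polynomial-times-exponential difference quotient is dominated by an integrable function uniformly in $h$, and this is where the strict inequality $\lambda<\lambda_0$ (equivalently, $\lambda$ lying in the interior of the Green region, Corollary \ref{g}) is essential — it provides the headroom $e^{\lambda' t}$ with $\lambda<\lambda'<\lambda_0$ to absorb the factor $t$. Once that single domination estimate is in place, every remaining step is a nonnegative Tonelli/Fubini rearrangement combined with the semigroup property, so there is no further analytic subtlety; this is why the statement is restricted to $\lambda\in[0,\lambda_0)$ rather than the closed interval. (At $\lambda=\lambda_0$ the same computation shows only that $\partial_\lambda G_\lambda$ is the possibly-infinite integral $\int_{\X}G_{\lambda_0}(x,z)G_{\lambda_0}(z,y)\,d\mu(z)$, which is not asserted here.)
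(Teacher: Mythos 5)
Your proposal is correct and follows essentially the same route the paper indicates (the paper's proof is the one-line remark ``by Proposition~\ref{hk} and the integration by substitution, \ldots\ as in [LL]''): differentiate under the integral using the slack $\lambda<\lambda'<\lambda_0$ to dominate the difference quotients, write $t e^{\lambda t}=\int_0^t e^{\lambda t}\,ds$, substitute $t=s+u$, and invoke the Chapman--Kolmogorov identity (Theorem~\ref{hk}(3)) together with Tonelli to separate the $s$- and $u$-integrals into $G_\lambda(x,z)G_\lambda(z,y)$. The only small caveat worth flagging is that the finiteness of $G_{\lambda'}(x,y)$ used in your domination step requires $x\neq y$ (the heat kernel is singular as $t\to 0$ on the diagonal), which is exactly why the proposition is stated for distinct points.
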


The inverse operator of $(-\Delta-\lambda I)$ is described by the Green function $G_\lambda(x,y)$ (see Corollary \ref{g}). Hence, for two distinct points $x,y$ in $\X$, the derivative $\frac{\partial}{\partial \lambda}G_\lambda(x,y)$ converges at $\lambda\in (0,\lambda_0)$.
\begin{defn}For points $x,y,z$ in a metric space $(Y,d_Y)$, the \textit{Gromov product} $(y|z)_x$ of $y$ and $z$ at $x$ is defined by $$(y|z)_x=\frac{1}{2}\{d_Y(x,y)+d_Y(x,z)-d_Y(y,z)\}.$$ 
 \end{defn}
 
For $y\in \X$, fix an element $\gamma_y$ of $\G$ such that $ y \in \gamma_y \overline{F}$.
\begin{lem}\label{Alpha}\emph{(\cite{G}, Lemma 2.4)} 
There exists a positive constant $D$ such that for all $x\in \overline{F}$ and $y,z\in \X$, there exists an element $\gamma(y,z) \in \Gamma$ satisfying $d_{\G}(e,\gamma(y,z))\leq D$ and \begin{equation}\label{tree1}d(x,y)+d(x,z)-3diam(F) \leq d(x,\gamma_y\gamma(y,z)z).\end{equation}
\end{lem}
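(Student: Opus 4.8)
It is enough to prove \eqref{tree1} with the constant $3\diam(F)$ replaced by some $C=C(\X,F)$; tracking the additive errors below to get precisely $3\diam(F)$ is routine bookkeeping that I will not reproduce. Since $d(x,\gamma_y\gamma z)=d(\gamma_y^{-1}x,\gamma z)$ for every $\gamma\in\Gamma$, put $p:=\gamma_y^{-1}y\in\overline F$ and $q:=\gamma_y^{-1}x$; then $d(p,q)=d(x,y)$ and $|d(p,z)-d(x,z)|\le\diam(F)$ since $p,x\in\overline F$, so it suffices to produce $\gamma=\gamma(y,z)\in\Gamma$ with $d_\Gamma(e,\gamma)\le D$ and $d(q,\gamma z)\ge d(p,q)+d(p,z)-C_0$ for a suitable $C_0$. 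By the definition of the Gromov product, $d(q,\gamma z)=d(p,q)+d(p,\gamma z)-2(q\,|\,\gamma z)_p$, so it is enough that (i) $\gamma$ not move $z$ much closer to $p$, and (ii) $[p,\gamma z]$ leave $p$ in a direction uniformly transverse to the one along which $[p,q]$ leaves $p$. Condition (i) holds for \emph{any} bounded $\gamma$, because $d(p,\gamma z)\ge d(p,z)-d(p,\gamma^{-1}p)$ and $\gamma^{-1}p$ stays at bounded distance from $\overline F$; condition (ii), by hyperbolicity of $\X$ (the only place it is used), makes $(q\,|\,\gamma z)_p$ bounded by a constant depending only on $\delta$ and $\diam(F)$. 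One also reduces at the outset to $x,y,z$ being vertices, at a cost $\le l_M\le\diam(F)$ at each.

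To construct $\gamma$: if $[p,z]$ already leaves $p$ transversally to $[p,q]$, take $\gamma=e$. Otherwise one inserts a ``buffer'': a $\gamma$ for which the geodesic $[p,\gamma z]$ first runs through the translated fundamental tile $\gamma\overline F$, leaving $p$ transversally to $[p,q]$, and then continues without backtracking along $\gamma[p,z]$; it suffices for this that $[p,\gamma p]$ leave $p$ transversally to $[p,q]$ and that $[p,\gamma^{-1}p]$ leave $p$ transversally to $[p,z]$. The hypotheses on $\X$ enter precisely in showing that a suitable $\gamma$ exists: cocompactness and local finiteness make the relevant datum --- the vertex $p\in\overline F$ together with the germs of $[p,q]$ and of $[p,z]$ --- take only finitely many values up to $\Gamma$, and $\#\partial\X=\infty$ makes $\Gamma$ non-elementary, which guarantees that each such configuration admits a direction at $p$ uniformly transverse to a prescribed one and (by a ping-pong argument) one occupied by the tile $\gamma\overline F$ of a suitable $\gamma\in\Gamma$. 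Since there are finitely many configurations one may take $D$ to be the maximum of the corresponding $d_\Gamma(e,\gamma)$; translating back by $\gamma_y$ then gives $d(x,\gamma_y\gamma z)=d(q,\gamma z)\ge d(x,y)+d(x,z)-C$.

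The one substantial step is thus the uniform existence, in the buffer case, of a bounded $\gamma$ effecting the reorientation --- this is Gou\"ezel's argument, carried over to the graph setting. What makes it possible, and what fails in the absence of negative curvature --- for instance for $\Gamma=\Z^2$ acting on its Cayley graph, where $\gamma_y\gamma z$ stays within $O(d_\Gamma(e,\gamma))$ of $\gamma_y z$, which can lie at distance $\ll d(x,y)+d(x,z)$ from $x$ --- is that in a non-elementary hyperbolic group a bounded-length element $\gamma$ can move the far point $z$ by a distance comparable to $d(x,z)$, so that a genuine change of the direction of $z$ seen from $p$ is available; the content of the lemma is then to certify that among these (boundedly many) available reorientations some one is uniformly transverse to $[p,q]$.
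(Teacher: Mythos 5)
Your reduction is sound and matches the paper's outline: pass to $p=\gamma_y^{-1}y\in\overline F$ and $q=\gamma_y^{-1}x$, use $d(q,\gamma z)=d(p,q)+d(p,\gamma z)-2(q\,|\,\gamma z)_p$, note that boundedness of $\gamma$ takes care of the $d(p,\gamma z)$ term, and isolate the real issue as a uniform bound on the Gromov product. The gap is in how you propose to get that bound. You argue that the ``configuration'' consisting of $p$ and the germs of $[p,q]$ and $[p,z]$ at $p$ takes only finitely many values by cocompactness and local finiteness, and that a choice of bounded $\gamma$ per configuration therefore suffices. But the germ (initial edge) of $[p,q]$ at $p$ does not control $(q\,|\,\gamma p)_p$: points $q$ whose geodesics from $p$ share a common initial edge can still lie near an infinite set of points of $\partial\X$, and $(q\,|\,\gamma p)_p$ is large exactly when $q$ lies near the forward endpoint of the ray through $\gamma p$. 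For a fixed germ pair, the set of boundary directions that must be avoided is therefore infinite, and no single bounded $\gamma$ works for every $q,z$ compatible with that germ pair. The per-configuration choice cannot yield a uniform constant; the quantifier order you need is ``for all $(y,z)$ there is a $\gamma(y,z)$ of bounded length,'' with $\gamma$ depending on $(y,z)$ and not merely on a finite datum extracted from them.

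The paper (following Gou\"ezel) closes this gap with a boundary argument rather than a germ argument. It fixes two elements $\alpha,\beta$ with four distinct fixed points $\alpha_\pm,\beta_\pm\in\partial\X$ and disjoint neighborhoods $V(\alpha_\pm),V(\beta_\pm)$ in $\X\cup\partial\X$, and observes that $(\gamma x\,|\,a^n x)_x$ is uniformly bounded over all $\gamma x\notin V(a_+)$ and $n\geq N$ (otherwise a sequence in $V(a_+)^c$ would converge to $a_+$). Given $(y,z)$, each of $\gamma_y^{-1}x$ and $\gamma_z x$ lies in at most one of the four neighborhoods, so some $a^{N'}\in\{\alpha^{\pm N'},\beta^{\pm N'}\}$ satisfies both $\gamma_y^{-1}x\notin V(a_+)$ and $\gamma_z x\notin V(a_-)$; this $a^{N'}$ is $\gamma(y,z)$, with $N'$ chosen large enough that the buffer segment $[x,a^{N'}x]$ outlasts the two bounded Gromov products plus the tree-approximation error. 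The candidate elements are fixed in advance, the finite case analysis is over boundary proximity of $\gamma_y^{-1}x$ and $\gamma_z x$ to $\{\alpha_\pm,\beta_\pm\}$, and the selected element depends on $(y,z)$. Your ``ping-pong'' remark gestures at this, but the germ-finiteness framing is not a substitute for it, and as written your plan has a genuine hole at exactly the point the lemma is nontrivial.
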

\begin{proof} The main idea of the proof is from Lemma 2.4 of \cite{G}. We present the proof for completeness. By Theorem 2.12 in \cite{GH}, there exists a constant $C$ satisfying the following property. For all points $x_1,x_2,x_3,x_4\in \X$, there exists a map $\Phi$ from $\{x_1,x_2,x_3,x_4\}$ to some metric tree $T$ satisfying
\begin{equation}
d(x_i,x_j)-C\leq d_T(\Phi(x_i),\Phi(x_j))\leq d(x_i,x_j)
\end{equation}
for all $x_i,x_j\in  \{x_1,x_2,x_3,x_4\}$.
\begin{figure}[htbp]
\begin{center}
\includegraphics[width=90mm]{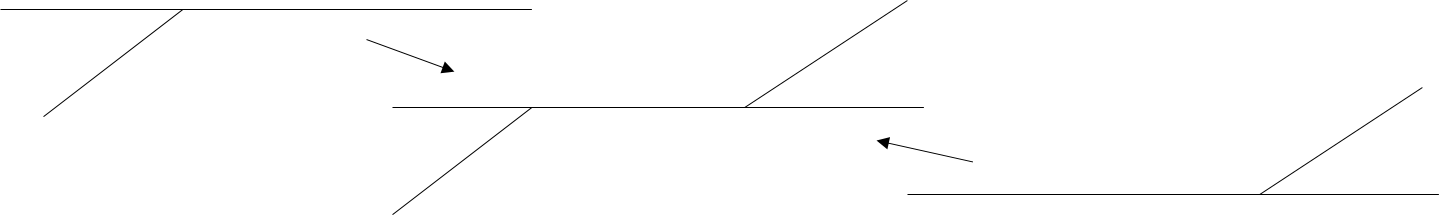}
\put(-280,35){$\Phi( x)$} \put(-270,10){$\Phi(\gamma_y^{-1}x)$} \put(-160,35){$\Phi(\alpha^{N'}x)$}\put(-180,30){$\gamma_y$}
\put(-220,18){$\Phi(\gamma_y x)$} \put(-210,-5){$\Phi( x)$}  \put(-90,18){$\Phi(\gamma_y \alpha^{N'} x)$} \put(-95,37){$\Phi(\gamma_y \alpha^{N'} \gamma_z x)$} \put(-140,-3){$\Phi(\alpha^{-N'} x)$} \put(-0,-5){$\Phi(x)$}\put(-0,25){$\Phi(\gamma_z x)$}\put(-130,10){$\gamma_y\alpha^{N'}$}
\end{center}
\caption{Tree approximation}\label{treea}
\end{figure}

Since $\G$ is non-elementary, there exist two elements $\alpha$ and $\beta$ in $\G$ such that the sequences $\{\alpha^{\pm n} x\}$ and $\{\beta^{\pm n} x\}$ converge to 4 distinct points $\alpha_{\pm}$ and $\beta_{\pm}$ in $\partial \X$, respectively. Note that $\alpha_{\pm}$ and $\beta_{\pm}$ are independent of the choice of $x\in\overline{F}$. Choose small disjoint neighborhoods $V(\alpha_{\pm})$ and $V(\beta_{\pm})$ of $\alpha_{\pm}$ and $\beta_{\pm}$ in $\X \cup \partial \X$, respectively. Choose $N$ satisfying $\alpha^{\pm n} x\in V(\alpha_{\pm})$ and $\beta^{\pm n} x\in V(\beta_{\pm})$ for all $n\geq N$. For any $\gamma x\in V(\alpha_+)^c$ and $n\geq N$, there exists a constant $K_{\alpha_+}$ such that  the $(\Phi(\gamma x)|\Phi(\alpha^n x))_{\Phi(x)}\leq K_{\alpha_+}$. Otherwise, we obtain sequences $\{\gamma_k x\}\subset V(\alpha_{+})^c$ and $\{a^{n_k} x\}\subset V(\alpha_+)$ with $\lim_{k\rightarrow\infty} (\gamma_k x|a^{n_k} x)_x=\infty$. The sequence $\{\gamma_k x\}$ converges to $\alpha_+$, which is a contradiction. Denote by $K$ the number satisfying this property for $\alpha_\pm$ and $\beta_\pm$. Fix $N'\geq N$ with $d(a^{N'} x,x)\geq 4K+3C$ for all $a\in \{\alpha^{\pm1},\beta^{\pm1}\}$. 
Without loss of generality, we may assume that $\gamma_y^{-1}x\in V(\alpha_+)^c$ and $\gamma_z x\in V(\alpha_-)^c$. We have
\begin{equation}
\begin{split}
\nonumber d&(x,\gamma_y \alpha^{N'} z)\geq d(x,\gamma_y \alpha^{N'} \gamma_z x)-diam(F)\geq d_T(\Phi(x),\Phi(\gamma_y \alpha^{N'} \gamma_z x))-diam(F)\\
\nonumber=&d_T(\Phi(x),\Phi(\gamma_y x))+d_T(\Phi(\gamma_y x),\Phi(\gamma_y \alpha^{N'} x))+d_T(\Phi(\gamma_y \alpha^{N'} x),\Phi(\gamma_y \alpha^{N'} \gamma_z x))\\
\nonumber&-2(\Phi(x)|\Phi(\gamma_y \alpha^{N'} x))_{\Phi(\gamma_y x)}-2(\Phi(\gamma_y x)|\Phi(\gamma_y \alpha^{N'} \gamma_zx))_{\Phi(\gamma_y \alpha^{N'}x)}-diam(F)\\
\nonumber\geq& d(x,\gamma_y  x)+d(\gamma_y x,\gamma_y \alpha^{N'} x)+d(\gamma_y \alpha^{N'} x,\gamma_y \alpha^{N'} \gamma_z x)-3C-4K-diam(F)\\
\nonumber \geq& d(x,\gamma_y  x)+d(x,\gamma_z x)-diam(F)\geq d(x,y)+d(x,z)-3diam(F).
\end{split}
\end{equation}
In the other cases, it is possible to find an elements $\gamma(y,z)\in \{\alpha^{\pm N'},\beta^{\pm N'}\}$ satisfying \eqref{tree1}. Put $D=\max\{d_\Gamma(e,\gamma):\gamma\in \{\alpha^{\pm N'},\beta^{\pm N'}\}\}$. Then we complete the proof.
\end{proof}
 Using Lemma \ref{Alpha}, we prove the following which is the analog of Lemma 2.5 in \cite{G}. 

\begin{prop}\label{volume} 
Denote $V_n(v):=\{w\in V: n<d(v,w)\leq n+1\}.$ There exists a constant $C$ such that for any $v\in V$ and integer $n\geq 3diam(F)+1$, 
$$\sum_{w\in V_n(v)}G_{\lambda_0}^2(v,w) \leq C.$$
\end{prop}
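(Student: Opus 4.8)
The plan is to combine the multiplicative structure of the Green function coming from Ancona--Gou\"ezel inequality \eqref{AC5} with a counting estimate on points in the sphere $V_n(v)$, organized via the tree approximation of Lemma~\ref{Alpha}. The starting observation is that for $w\in V_n(v)$ with $n$ large, a point $y$ on the geodesic $[v,w]$ at distance roughly $n/2$ from both endpoints satisfies $G_{\lambda_0}(v,w)\le C\,G_{\lambda_0}(v,y)G_{\lambda_0}(y,w)$ by Theorem~\ref{lAncona} (used together with the reverse inequality, i.e. the full \eqref{1.1.0}; but since only \eqref{AC5} has been proven so far in the text, I would instead use \eqref{AC5} directly in the direction that is available, namely $G_{\lambda_0}(v,y)G_{\lambda_0}(y,w)\le C\,G_{\lambda_0}(v,w)$, reorganizing the sum accordingly). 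Concretely, I would dyadically decompose $V_n(v)$ according to which coset $\gamma \overline F$ the point $w$ lies in, and use that the number of group elements $\gamma$ with $n<d(v,\gamma v)\le n+1$ is controlled, via Lemma~\ref{Alpha}, by the Gromov-product geometry: two such elements $\gamma_1,\gamma_2$ with $(\gamma_1 v\mid \gamma_2 v)_v$ small are ``independent'' and the associated Green-function contributions are essentially disjoint.

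The key steps, in order, are as follows. First, reduce from vertices $w$ to cosets: since edge lengths are bounded below by $l_m$ and above by $l_M$ and the action is cocompact, $\sum_{w\in V_n(v)}G_{\lambda_0}^2(v,w)$ is comparable to $\sum_{\gamma\in \Gamma_n(v)} G_{\lambda_0}^2(v,\gamma v_0)$ for a fixed basepoint $v_0\in \overline F$, where $\Gamma_n(v)=\{\gamma: n<d(v,\gamma v_0)\le n+1\}$, up to a bounded multiplicative constant and a bounded shift in $n$. Second, for each $\gamma\in\Gamma_n(v)$ pick $y_\gamma$ on $[v,\gamma v_0]$ with $d(v,y_\gamma)\approx n/2$; by \eqref{AC5} (reverse Ancona) one gets $G_{\lambda_0}(v,y_\gamma)G_{\lambda_0}(y_\gamma,\gamma v_0)\le C G_{\lambda_0}(v,\gamma v_0)$, hence $G_{\lambda_0}^2(v,\gamma v_0)\le C^2 G_{\lambda_0}^2(v,y_\gamma)G_{\lambda_0}^2(y_\gamma,\gamma v_0)$ — wait, that is the wrong direction for an upper bound on the sum; instead I would use the factorization to estimate $G_{\lambda_0}(v,\gamma v_0)\le C G_{\lambda_0}(v,y_\gamma)G_{\lambda_0}(y_\gamma,\gamma v_0)$, which requires the upper Ancona inequality in \eqref{1.1.0}. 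Since Theorem~\ref{thm:1.1}/\eqref{1.1.0} is stated as the main result, I am entitled to use it: so I would apply the full \eqref{1.1.0} to split $G_{\lambda_0}(v,w)\le C G_{\lambda_0}(v,y)G_{\lambda_0}(y,w)$. Third, by Harnack \eqref{harnack4} and compactness, $G_{\lambda_0}(v,y)$ and $G_{\lambda_0}(y,w)$ depend, up to bounded factors, only on the vertices nearest $y$ and on $w$ through the ``shadow'' geometry; the decisive input is that $\sum_{\gamma} G_{\lambda_0}^2(v,y_\gamma)$ is finite uniformly, which follows because the $y_\gamma$ lie at bounded mutual Gromov product — by Lemma~\ref{Alpha}, genuinely many $\gamma$'s would force a configuration of points in a tree whose total squared Green mass telescopes geometrically. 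Fourth, sum the geometric series: the exponential decay of $G_{\lambda_0}(v,\cdot)$ along a geodesic (again from \eqref{1.1.0} iterated, giving $G_{\lambda_0}(v,y)\le C\rho^{d(v,y)}$ with $\rho<1$ coming from the uniform bound $G_{\lambda_0}\le C'_m$ and super-exponential decay of relative Green functions) beats the at-most-exponential growth of $|V_n(v)|$ in the relevant regime once the Gromov-product weighting is taken into account, yielding a bound independent of $n$ and $v$.

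The main obstacle, which I would treat carefully, is the counting/independence step: turning Lemma~\ref{Alpha} into a genuine bound on $\sum_{\gamma\in\Gamma_n(v)} G_{\lambda_0}^2(v,\gamma v_0)$. The point is that $V_n(v)$ can be exponentially large, so one cannot simply bound each term by $\rho^{2n}$ and multiply by $|V_n(v)|$; one must exploit that the Green function from $v$ ``spreads its mass'' among the directions, i.e. $\sum_{w\in V_n(v)} G_{\lambda_0}(v,w) \le C$ first (this intermediate $\ell^1$-type bound is what Lemma~\ref{Alpha} is designed to give, through the tree comparison and a sub-multiplicativity argument: partitioning $V_n$ by the first $k$ steps of the geodesic and inducting on $n$ using \eqref{AC5} to relate level-$n$ mass to level-$(n-1)$ mass), and then upgrade the $\ell^1$ bound to the desired $\ell^2$ bound by inserting a uniform pointwise estimate $G_{\lambda_0}(v,w)\le C'_m$ valid for $d(v,w)$ in a bounded window — but here $d(v,w)\approx n$ is not bounded, so instead I would use $G_{\lambda_0}(v,w) \le C G_{\lambda_0}(v,y)G_{\lambda_0}(y,w)$ with $y$ near $w$ at distance $1$ from $w$, giving $G_{\lambda_0}(v,w)\le C'' G_{\lambda_0}(v,w')$ for a neighbor $w'$, so that $G_{\lambda_0}^2(v,w) \le C'' G_{\lambda_0}(v,w)\cdot \sup_{w}G_{\lambda_0}(v,w)$, and bound the remaining supremum by exponential decay $\rho^{n}$ while the sum $\sum_w G_{\lambda_0}(v,w)$ contributes only a constant. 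Putting these together gives $\sum_{w\in V_n(v)}G_{\lambda_0}^2(v,w) \le C\rho^n \le C$, uniformly for $n\ge 3\,\mathrm{diam}(F)+1$, as claimed; I anticipate the bookkeeping around choosing $y_\gamma$, the shift of $n$ by $O(\mathrm{diam}(F))$, and the passage between vertices and cosets to be the routine-but-delicate part.
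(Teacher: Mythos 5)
Your proposal has a genuine circularity problem that you notice mid-argument but do not resolve. You repeatedly reach for the \emph{upper} Ancona inequality $G_{\lambda_0}(v,w)\leq C\,G_{\lambda_0}(v,y)G_{\lambda_0}(y,w)$ (equation \eqref{AC6}/\eqref{1.1.0}), and also for consequences of it such as pointwise exponential decay $G_{\lambda_0}(v,y)\leq C\rho^{d(v,y)}$ and the super-exponential decay of relative Green functions. But Proposition~\ref{volume} is an ingredient in the proof of the upper Ancona inequality: Lemma~\ref{acnn} (Step 5) invokes Proposition~\ref{volume}, and Theorem~\ref{rancona} relies on Lemma~\ref{acnn}. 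Saying that you are ``entitled to use'' Theorem~\ref{thm:1.1} because it is the stated main result does not work here; the logical order of the paper is lower Ancona $\Rightarrow$ Proposition~\ref{volume} $\Rightarrow$ Lemma~\ref{acnn} $\Rightarrow$ upper Ancona, and you must respect it. Your fallback plan — prove $\sum_{w\in V_n(v)}G_{\lambda_0}(v,w)\leq C$ first and then upgrade to $\ell^2$ via pointwise decay — runs into the same wall: both the $\ell^1$ bound and the pointwise decay would, by the arguments you sketch, again require the upper Ancona estimate or something equivalent, neither of which is available at this point in the paper.

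The paper's proof runs in a different and non-circular direction. It first establishes, for each fixed $\lambda<\lambda_0$, that $M_\lambda:=\sup_n\sum_{w\in V_n(v)}G_\lambda^2(v,w)$ is \emph{finite} (though possibly $\lambda$-dependent) by dominating the full sum over all $n$ by $\int_{\X\setminus B(v,1)}G_\lambda(v,z)G_\lambda(z,y)\,d\mu(z)=\partial_\lambda G_\lambda(v,y)<\infty$, using Harnack \eqref{harnack4} and Proposition~\ref{G11}. Then the key step is a bootstrap that uses only the \emph{lower} Ancona inequality \eqref{AC5} together with Lemma~\ref{Alpha}: given $w_1\in V_m(v)$ and $w_2\in V_n(v)$, the element $\gamma_{w_1}\gamma(w_1,w_2)w_2$ lies roughly at distance $m+n$ from $v$ and
\begin{equation*}
G_\lambda^2(v,w_1)\,G_\lambda^2(v,w_2)\leq C\,G_\lambda^2\bigl(v,\gamma_{w_1}\gamma(w_1,w_2)w_2\bigr),
\end{equation*}
with the map $(w_1,w_2)\mapsto\gamma_{w_1}\gamma(w_1,w_2)w_2$ uniformly boundedly-to-one onto a bounded range of spheres. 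Summing over $(w_1,w_2)\in V_m(v)\times V_n(v)$ gives $M_\lambda^2\leq C(T_2-T_1+2)M_\lambda$, so $M_\lambda\leq C(T_2-T_1+2)$ with a constant independent of $\lambda$. Finally, a continuity-in-$\lambda$ argument pushes the bound to $\lambda=\lambda_0$. Your proposal does not contain this self-improvement step (the quadratic inequality $M_\lambda^2\lesssim M_\lambda$), which is precisely what lets one get a uniform $\ell^2$ bound out of only the lower Ancona inequality, and it is the missing idea you would need.
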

\begin{proof}By Harnack inequality \eqref{harnack4}, for any $w\in e$ with $d(v,w)\geq diam(F)$, there exists a constant $C$ such that
\begin{equation}\label{eq:4.11}
G_\lambda^2(v,w)\leq \frac{C}{l_m}\int_0^{l_e} G_\lambda^2(v,e_s)ds,
\end{equation}
where $d(i(e),e_s)=s.$ 
By Harnack inequality \eqref{harnack4}, Proposition \ref{G11} and the inequality \eqref{eq:4.11}, for all $\lambda \in [0,\lambda_0)$ and for all $y \in B(v,1)$ with $v\neq y$,
\begin{equation}\label{4.2.17}
\begin{split}
\displaystyle  \sum_{n>diam(F)}\sum_{w\in V_n(v)} G_{\lambda}^2(v,w) &\overset{\eqref{eq:4.11}}\leq \sum_{n>diam(F)}\sum_{w\in V_n(v)}\sum_{\substack{e\in E\\w\in e}} \frac{C}{l_m}\int_0^{l_e} G_\lambda^2(v,e_s)ds\\
&\leq C\frac{2}{l_m}\displaystyle\int_{\widetilde{X}-B(v,1)} G_\lambda ^2 (v,z)d\mu(z)\\
&\overset{\eqref{harnack4}}\leq C  \displaystyle \frac{2}{l_m}\int_{\widetilde{X}-B(v,1)} G_\lambda (v,z) G_\lambda (z,y) d\mu(z)\overset{\substack{\text{Prop}\\ \ref{G11}}}<\infty,
\end{split}
\end{equation}
thus the sum $\sum_{w\in V_n(v)}G^2_{\lambda}(v,w)$ is bounded above for all $\lambda\in[0,\lambda_0)$.
 By Lemma \ref{AC!!} and Lemma \ref{Alpha}, there exists a constant $C$ such that for any $w_1 \in V_m(v)$ and $w_2 \in V_n(v)$, 
 \begin{equation}\label{a!@}
 G_\lambda(v,\gamma(w_1,w_2)v)\geq C.
 \end{equation}
For any $w_1 \in V_m(v)$ and $w_2 \in V_n(v)$, by the construction of $\gamma(w_1,w_2)$, the point $\gamma_{w_1} v$ and $\gamma_{w_1}\gamma(w_1,w_2) v$ are contained in the $K+2C$-neighborhood of the geodesic from $v$ to $\gamma_{w_1}\gamma(w_1,w_2)w_2$ (see Figure \ref{treea}), where $K$ is from the proof of Lemma \ref{Alpha} and $C$ is the upper bound of the radii of the inscribed circles of geodesic triangles in $\X$. Using Harnack inequality \eqref{harnack4}, the inequality \eqref{a!@} and Theorem \ref{lAncona} in order, we obtain 
 \begin{equation}\label{multi}
 \begin{split}
&G^2_{\lambda}(v,w_1)G^2_\lambda(v,w_2) =G^2_{\lambda}(v,w_1)G^2_\lambda(\gamma_{w_1}\gamma(w_1,w_2) v,\gamma_{w_1}\gamma(w_1,w_2) w_2)\\
&\leq C G^2_\lambda (v,\gamma_{w_1} v) G^2_\lambda(\gamma_{w_1} v,\gamma_{w_1}\gamma(w_1,w_2) v)G^2_\lambda(\gamma_{w_1}\gamma(w_1,w_2) v,
\gamma_{w_1}\gamma(w_1,w_2) w_2)\\
&\leq C G^2_\lambda(v,\gamma_{w_1}\gamma(w_1,w_2)w_2).
\end{split}
\end{equation}
 Since $d_{\G}(e,\gamma(w_1,w_2))\leq D$ for all $w_1 \in V_m(v)$ and $w_2 \in V_n(v)$,
$$T_1:=m+n-3diam(F)\leq d(v,\gamma_{w_1}\gamma(w_1,w_2)w_2)\leq T_2:=m+n+2+D\,diam(F).$$ 
For any vertex $w'$ with $T_1\leq d(v,w')\leq T_2$, the number of two pairs $(w_1,w_2)$ in $V_m(v)\times V_n(v)$ satisfying $w'=\gamma_{w_1}\gamma(w_1,w_2)w_2$ is uniformly bounded. It follows from the fact that $\gamma_{w_1} v$ and $\gamma_{w_1}\gamma(w_1,w_2) v$ are in the $K+2C$ neighborhood of the geodesic from $v$ to $w'$, $$m-diam(F) \leq d(v,\gamma_{w_1} v)\leq m+diam(F)+1$$ and $n-1\leq d(\gamma_{w_1}\gamma(w_1,w_2) v, w')\leq n.$

By the inequality \eqref{multi}, 
\begin{equation}\label{sphere}
\displaystyle\sum_{w_1 \in V_m(v)}\sum_{w_2\in V_n(v)}G^2_{\lambda}(v,w_1)G^2_\lambda(v,w_2)
\leq C\displaystyle \sum_{i=\llcorner T_1\lrcorner}^{\ulcorner T_2\urcorner} \sum_{w'\in V_i(v)}G^2_{\lambda}(v,w').
\end{equation}
Let $M_\lambda$ be the supremum of $\sum_{w\in V_n(v)}G^2_{\lambda}(v,w)$. By the inequality \eqref{sphere}, the following holds: 
$$M_{\lambda}^2 \leq C \sum_{i=\llcorner T_1\lrcorner}^{\ulcorner T_2\urcorner} M_{\lambda}\leq C(T_2-T_1+2) M_{\lambda}.$$
Hence, $M_\lambda\leq C(T_2-T_1+2)=C(D+3)diam(F)+4C$ for any $\lambda\in [0,\lambda_0)$. Suppose that $M_{\lambda_0}>C(T_2-T_1)$. There exists $n$ satisfying $\sum_{w\in V_n(v)} G_{\lambda_0}^2(v,w)>C(T_2-T_1).$ By the continuity of $G_\lambda$, there exists a constant $\lambda \in [0,\lambda_0)$ satisfying $$\sum_{w\in V_n(v)} G_{\lambda}^2(v,w)>C(T_2-T_1).$$ This is a contradiction. Thus $M_{\lambda_0}\leq C(T_2-T_1).$
\end{proof}

Using strong Markov property as in \cite{LL}, we obtain the following proposition.
\begin{prop}\emph{(\cite{LL}, Proposition 8.6)} \label{smv} Let $O_1$, $O_2$, and $O_3$ be open sets in $\X$ satisfying ${O_3}\subset {O_2}\subset O_1$. Then the following equation holds: for any $x\in O_1\backslash \bar{O_2}$,
\begin{displaymath}
\int_{\partial O_3} f(z)d\eta_x^{\partial O_3}(z)=\int_{\partial O_2}\left(\int_{\partial O_3} f(z)d\eta_y^{\partial O_3}(z)\right)d\eta_x^{\partial O_2}(y).
\end{displaymath}
\end{prop}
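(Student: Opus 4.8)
The plan is to prove Proposition~\ref{smv} by reducing the nested hitting-measure identity to a single application of the strong Markov property at the exit time $\tau_2=\sigma_{O_1\setminus\overline{O_2}}$, together with the tower property of conditional expectation. The measure $\eta_x^{\partial O_3}$ is, by definition \eqref{hmeasure}, the distribution (weighted by $e^{\lambda\tau}$) of the first point on $\partial O_3$ hit by a path started at $x$ before exiting $O_1$; since $O_3\subset O_2\subset O_1$, any such path must first cross $\partial O_2$. So the strategy is: start a Brownian path at $x\in O_1\setminus\overline{O_2}$, stop it at the exit time $\tau_2$ of $O_1\setminus\overline{O_2}$ (this is the first time the path meets $\partial O_2$ — here I would, as in the proof of Proposition~\ref{MArkov} and Lemma~\ref{acn}, note that the event $\{\tau_2<\sigma_{O_1}\}$ is what carries mass, since otherwise the path leaves $O_1$ first and contributes nothing to either side), and then run the path afresh from $\omega(\tau_2)\in\partial O_2$ until it either exits $O_1$ or first meets $\partial O_3$.

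Concretely, I would write, for a bounded measurable $f$ on $\partial O_3$,
\begin{equation}\label{eq:smv-proof}
\int_{\partial O_3} f(z)\,d\eta_x^{\partial O_3}(z)=\mathbb{E}_x\bigl[1_{\rho<\sigma}e^{\lambda\rho}f(\omega(\rho))\bigr],
\end{equation}
where $\sigma=\sigma_{O_1}$ and $\rho=\sigma_{O_1\setminus\overline{O_3}}$ is the exit time of $O_1\setminus\overline{O_3}$, i.e. the first hitting time of $\partial O_3$ (staying inside $O_1$). Because $O_3\subset O_2$, on the event $\{\rho<\sigma\}$ we have $\tau_2\le\rho<\sigma$ and $\tau_2<\sigma$. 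I would then condition on $\mathcal{M}_{\tau_2}$ and apply the strong Markov property \eqref{4.1} at the stopping time $\tau_2$: conditionally on $\mathcal{M}_{\tau_2}$ and on $\{\tau_2<\sigma\}$, the shifted path $\omega(\cdot+\tau_2)$ is a Brownian motion started at $\omega(\tau_2)$, and the remaining exit/hitting times satisfy $\sigma=\tau_2+\sigma'$, $\rho=\tau_2+\rho'$ where $\sigma'$, $\rho'$ are the corresponding exit/hitting times for the shifted path. Splitting $e^{\lambda\rho}=e^{\lambda\tau_2}e^{\lambda\rho'}$ and taking the inner conditional expectation gives
\begin{displaymath}
\mathbb{E}_x\bigl[1_{\rho<\sigma}e^{\lambda\rho}f(\omega(\rho))\bigr]
=\mathbb{E}_x\Bigl[1_{\tau_2<\sigma}\,e^{\lambda\tau_2}\,\mathbb{E}_{\omega(\tau_2)}\bigl[1_{\rho'<\sigma'}e^{\lambda\rho'}f(\omega(\rho'))\bigr]\Bigr],
\end{displaymath}
and the inner expectation is exactly $\int_{\partial O_3}f(z)\,d\eta_{\omega(\tau_2)}^{\partial O_3}(z)$ by \eqref{eq:smv-proof} applied with $\omega(\tau_2)\in\partial O_2\subset O_1\setminus\overline{O_3}$. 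Meanwhile the outer expectation $\mathbb{E}_x[1_{\tau_2<\sigma}e^{\lambda\tau_2}\,g(\omega(\tau_2))]$ is by definition $\int_{\partial O_2}g(y)\,d\eta_x^{\partial O_2}(y)$ for $g(y)=\int_{\partial O_3}f(z)\,d\eta_y^{\partial O_3}(z)$, which is the claimed identity.

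The main obstacle, as usual for these path-space manipulations, is the careful bookkeeping of the stopping times and of the measurability needed to apply \eqref{4.1}: one must check that $\rho$ and $\tau_2$ are genuine stopping times with $\tau_2\le\rho$ on $\{\rho<\sigma\}$, that the shifted hitting times decompose as stated, that $y\mapsto\int_{\partial O_3}f\,d\eta_y^{\partial O_3}$ is bounded and measurable so that the outer integral makes sense, and that the event $\{\tau_2<\sigma\}$ has full mass among the paths relevant to the left side (paths exiting $O_1$ before reaching $\partial O_2$ contribute $0$ to both sides). Since these are the same technical points already handled in the proofs of Proposition~\ref{MArkov} and Lemma~\ref{acn} — and since this is precisely Proposition~8.6 of \cite{LL} — I would simply remark that the argument is identical to \cite{LL} once one replaces the exit times there by the exit times $\sigma_{O_1}$, $\sigma_{O_1\setminus\overline{O_2}}$, $\sigma_{O_1\setminus\overline{O_3}}$ of our graph, and that the strong Markov property \eqref{4.1} together with the tower property of conditional expectation with respect to $\mathcal{M}_{\tau_2}$ yields the result.
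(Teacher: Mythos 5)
The paper offers no proof of Proposition~\ref{smv} beyond citing Proposition~8.6 of \cite{LL} and remarking that it follows from the strong Markov property; your argument—writing $\int_{\partial O_3}f\,d\eta_x^{\partial O_3}=\mathbb{E}_x[1_{\rho<\sigma}e^{\lambda\rho}f(\omega(\rho))]$, noting $\tau_2\le\rho$ since $O_1\setminus\overline{O_2}\subset O_1\setminus\overline{O_3}$, and then applying the strong Markov property \eqref{4.1} at $\tau_2$ with the tower property—is precisely the intended argument and is correct. One point worth stating explicitly rather than leaving implicit: when you assert $\omega(\tau_2)\in\partial O_2\subset O_1\setminus\overline{O_3}$ so that the inner measure $\eta_{\omega(\tau_2)}^{\partial O_3}$ is defined, you are using $\overline{O_3}\subset O_2$ (equivalently $\partial O_2\cap\overline{O_3}=\emptyset$), which is the natural reading of the hypothesis $O_3\subset O_2$ and holds in the application to the nested sets $A_i$ of Lemma~\ref{acnn}.
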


 The following lemma is analogous to Lemma 2.6 in \cite{G}, which is the main technical part of the proof of Theorem \ref{rancona}.
\begin{lem}\label{acnn} Let $y$ be a point on a geodesic $[x,z]$ between $x$ and $z$. There exist constants $\varepsilon>0$ and $R_0 >0$ such that for all   $d(x,y) \geq r, \ d(y,z) \geq r$ and $r \geq R_0$,
\begin{equation}\label{prean}
G_{\lambda_0}(x,z:\overline{B(y,r)}^c) \leq 2^{e^{-\varepsilon r}}.
\end{equation}

\end{lem}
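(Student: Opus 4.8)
The plan is to run the iteration scheme of Gou\"ezel (Lemma 2.6 in \cite{G}), carried over to the Brownian motion via the strong Markov property developed above. Fix $y$ on the geodesic $[x,z]$ with $d(x,y),d(y,z)\geq r$. For an integer $k$, set $A_k:=\overline{B(y,kr_1)}^c$ for a fixed base radius $r_1$ to be chosen (a bounded number of times the minimal edge length $l_m$), so that $A_1\subset A_2\subset\cdots$ and $A_n=\overline{B(y,r)}^c$ for $n\approx r/r_1$. The heart of the argument is a one-step contraction: there is a constant $\theta<1$ (independent of $\lambda$, by the uniformity in \eqref{eq1.1.2}, Proposition \ref{volume}, and Harnack \eqref{harnack4}) such that, writing $g_k(x,z):=G_{\lambda_0}(x,z:A_k)$ normalised appropriately, one has $g_{k+1}\leq \theta\, g_k^{\,?}$ — more precisely a submultiplicative estimate of the form $g_{k+1}(x,z)\le C\sum_{w} g_k(x,w)\,g_k(w,z)$ where $w$ ranges over $\partial B(y,kr_1)$, together with an $\ell^2$-type bound $\sum_w G_{\lambda_0}^2(y,w)\le C$ from Proposition \ref{volume}, which upgrades linear decay into the doubly-exponential decay $2^{e^{-\ve r}}$.

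First I would set up the exit-time decomposition. For $x,z\in A_{k+1}$, a Brownian path from $x$ that reaches $z$ before exiting $A_{k+1}$ must cross the sphere $S(y,kr_1)$; applying Proposition \ref{smv} (or directly the strong Markov identity \eqref{4.1}) to the nested open sets $A_{k+1}\supset A_k$ and the first entrance into $\overline{B(y,kr_1)}\setminus\overline{B(y,(k-\tfrac12)r_1)}$, one writes $G_{\lambda_0}(x,z:A_{k+1})$ as an integral over an exit-distribution $\eta$ on that shell of $G_{\lambda_0}(\cdot,z:A_{k+1})$. Then Lemma \ref{acn}, whose hypothesis \eqref{4.3} is exactly the geometric condition one arranges by choosing $r_1$ a suitable multiple of $l_M$, converts the harmonic-measure integral into a sum $\sum_{w\in V_{k}(y)\cap(\text{shell})} G_{\lambda_0}(x,w)\,(\,\text{local factor}\,)$, and a symmetric argument on the other side produces the full product $\sum_w G_{\lambda_0}(x,w)G_{\lambda_0}(w,z)$, each factor being a \emph{relative} Green function on the smaller set $A_k$ because the path is still constrained. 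This yields the recursion $M_{k+1}\le C\,\big(\sum_w G_{\lambda_0}^2(y,w)\big)^{1/2} M_k^2$ after Cauchy--Schwarz, where $M_k:=\sup_{x,z} G_{\lambda_0}(x,z:A_k)/\big(G_{\lambda_0}(x,y)G_{\lambda_0}(y,z)\big)$; by Proposition \ref{volume} the sum is $\le C$ uniformly.

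Next I would close the induction. From $M_{k+1}\le C M_k^2$ one gets $\log(CM_{k+1})\le 2\log(CM_k)$, so if the base case gives $CM_{k_0}\le 1/2$ for some fixed $k_0$ (this is where one needs $r$, equivalently $n$, large: the innermost relative Green function is controlled by the ordinary one via \eqref{eq1.1.2} and the global Harnack constant, and the gain of a genuine factor $<1$ per step comes from the shell being far from both $x$ and $z$ — here Lemma \ref{acnn}'s own statement is being bootstrapped, so I would instead run the cruder estimate that $G_{\lambda_0}(x,z:A_k)\le G_{\lambda_0}(x,z)$ and that each crossing costs a definite factor, cf. the proof of Theorem \ref{lAncona}), then $CM_{k_0+j}\le (1/2)^{2^j}$, i.e. $M_n\le 2^{-2^{\,n-k_0}}\le 2^{e^{-\ve r}}$ for suitable $\ve>0$ once $n\approx r/r_1$. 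Translating $M_n$ back and absorbing the bounded factor $G_{\lambda_0}(x,y)G_{\lambda_0}(y,z)$ (bounded away from $0$ and $\infty$ on the relevant scale by \eqref{eq1.1.2} and Harnack, after the usual reduction to $d(x,y),d(y,z)$ comparable to $r$) gives \eqref{prean} with $R_0$ chosen so that $n-k_0\ge 1$.

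\textbf{Main obstacle.} The delicate point is establishing the genuine contraction factor $\theta<1$ (equivalently $CM_{k_0}<1$) \emph{uniformly in} $\lambda\in[0,\lambda_0]$: one must show that a path forced to travel across an annulus of width $\sim r_1$, entirely inside $\overline{B(y,r)}^c$, loses a definite proportion of its weighted mass, and that none of the constants (the Harnack constant $D_{1,1}$, the constant in Lemma \ref{acn}, and above all the constant $C$ in Proposition \ref{volume}) degenerate as $\lambda\to\lambda_0$. This uniformity is precisely what Proposition \ref{volume} was set up to provide — it is valid at $\lambda_0$ itself — and the exit-time manipulations \eqref{4.1}, \eqref{exit} hold for all $\lambda\in[0,\lambda_0]$; so the obstacle is really bookkeeping: making sure every appeal to Harnack and to Lemma \ref{acn} is on a ball of radius independent of $r$ and $\lambda$, so that the per-step constant $C$ is absolute and the recursion $M_{k+1}\le CM_k^2$ can be iterated $\sim r/r_1$ times without accumulating a bad factor.
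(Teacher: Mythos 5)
Your proposal takes a genuinely different route from the paper, but the central mechanism you propose does not work.

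The paper's proof (which follows Gou\"ezel's Lemma 2.6) does \emph{not} use a family of radial shells with a quadratic recursion. Instead, it embeds $\X$ quasi-isometrically into $\bH^n$ and partitions $\overline{B(y,r)}^c$ into $N=\llcorner e^{\eps r}\lrcorner$ \emph{angular} sectors $A_1\supset A_2\supset\cdots\supset A_N$ viewed from $\Psi(y)$, so that a path from $x$ to $z$ avoiding $B(y,r)$ must \emph{go around} the ball and cross all $N$ sector boundaries. Each crossing is encoded by an operator $L_i$ whose Hilbert--Schmidt norm is made $\le 1/(2C_1)$ by a pigeonhole choice of the offsets $\theta_i\in[0,1/N]$ (this is where \eqref{3.17}, Proposition \ref{volume} and the $\kappa$ bookkeeping come in), and the product $C_1^N\prod_i\|L_i\|_{op}\le (1/2)^N$ gives the super-exponential bound directly. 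The doubly-exponential gain comes entirely from having \emph{exponentially many} regions with a \emph{constant} factor $<1$ per region; no squaring ever happens.

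Your scheme has several concrete problems. First, with $A_k:=\overline{B(y,kr_1)}^c$ you actually have $A_1\supset A_2\supset\cdots$, not $A_1\subset A_2\subset\cdots$ as you assert, and $A_n=\overline{B(y,r)}^c$ is the \emph{smallest} of these sets. Second, and more seriously, the putative estimate $g_{k+1}(x,z)\le C\sum_{w\in\partial B(y,kr_1)}g_k(x,w)g_k(w,z)$ cannot be derived from the strong Markov property, because $\partial B(y,kr_1)\subset B(y,(k+1)r_1)$, so a Brownian path killed on exiting $A_{k+1}$ never reaches $\partial B(y,kr_1)$: the waypoint set $w$ is disjoint from the domain $A_{k+1}$. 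Proposition \ref{smv} and \eqref{4.1} produce a \emph{linear} identity (the Green function equals an integral of itself against a harmonic exit measure, as in \eqref{exit}, \eqref{3.13.1}), never a bilinear product of two Green functions from $x$ to $w$ and $w$ to $z$; the product structure you want appears in the paper only after replacing the harmonic-measure integral by a \emph{sum} over a single shell (Lemma \ref{acn}), and even then it yields $g\le C\sum G\cdot G$, not $g_{k+1}\le C\sum g_k\cdot g_k$. Consequently the quadratic recursion $M_{k+1}\le CM_k^2$ is unjustified, and with only linearly many shells $n\approx r/r_1$ and a per-shell factor that is bounded but not $<1$ (each $\sum_w G_{\lambda_0}^2(y,w)\le C$ by Proposition \ref{volume}, but that $C$ is not small), the most you would get is $M_n\lesssim C^n$, i.e.\ ordinary exponential growth or decay, far from $2^{-e^{\eps r}}$.

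You do correctly identify the ingredients that must be uniform in $\lambda$ (Harnack, Lemma \ref{acn}, Proposition \ref{volume}), and you are right that Proposition \ref{volume} is what controls sums of squared Green functions at $\lambda_0$. But the structural obstacle is not bookkeeping: it is that linearly many radial shells cannot produce super-exponential decay unless one establishes a genuinely quadratic recursion, and the strong Markov property does not give one. The geometric point you are missing is hyperbolicity: the path must detour \emph{around} $B(y,r)$, through an angular arc whose intrinsic length is $\sim e^{cr}$, and the correct partition is into $N\approx e^{\eps r}$ angular sectors, not $\approx r$ radial shells. (As an aside, the bound as printed, $2^{e^{-\ve r}}$, is a typo for $2^{-e^{\ve r}}$ --- the printed quantity tends to $1$ and carries no content; the proof produces $(1/2)^N$ with $N=\llcorner e^{\eps r}\lrcorner$. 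Your proposal inherits this typo without noticing it.)
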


\begin{proof}
\underline{\textit{Step 1. Geometric argument using quasi isometry:}} In this step, using a quasi-isometry $\Psi$ from $\widetilde{X}$ to a $n$-dimensional hyperbolic space $\mathbb{H}^n$, we construct a sequence of subsets $A_i$ of $\widetilde{X}$ which is used to  decompose $G_{\lambda_0}(x,z:\overline{B(x,r)}^c)$. Since the relative $\lambda$-Green function $G_{\lambda_0}(x,z:\overline{B(y,r)}^c)=0$ if $\overline{B(y,r)}^c$ is disconnected, we may assume that $\overline{B(x,r)}^c$ is connected for all $r$. Since the $\G$-action is cocompact, by Theorem 10.2 in \cite{BS}, there exist a map $\Psi$ from $ \widetilde{X}$ to a convex subset $Y$ of a hyperbolic space $\mathbb{H}^n$ and positive constants $L$ and $k$ such that for all $x,y\in  \widetilde{X}$,
$$|Ld(x,y)-d_{\mathbb{H}^n}(\Psi(x),\Psi(y))|\leq k$$
and the $k$-neighborhood of $\Psi( \widetilde{X})$ is contained in $Y.$ Since the image $\Psi([x,z])$ of the geodesic $[x,z]$ is a quasi-geodesic in $\mathbb{H}^n$, it is contained in the $K$-neighborhood of the geodesic $g$ from $\Psi(x)$ to $\Psi(z)$ in $\mathbb{H}^n$ (see Figure \ref{AGI}). Choose a point $o$ on the geodesic $g$ in $\mathbb{H}^n$ such that $d_{\mathbb{H}^n}(\Psi(y),o)<K$.  

 Let $a,b$ be points in $\X$ and let $i_{a,b}$ be the radius of the inscribed circle of the geodesic triangle $\triangle o\Psi(a)\Psi(b)$ in $\mathbb{H}^n$, which is bounded by a universal constant, say $R_1$ (Proposition II.1.17 in \cite{BH}). Choose a positive constant $\kappa<L$. Denote $$R_2=\max\left\{R_1,\frac{3k+2K}{L}+3l_M,\frac{3l_ML+3k+K+\log(8\tanh R_1)}{2(L-\kappa)}\right\}.$$ By Theorem 7.11.2.(i) in \cite{Be}, \begin{equation}\label{anglehy}\tanh i_{a,b}=\sinh  ((\Psi(a)|\Psi(b))_o) \tan \displaystyle\frac{1}{2}\angle_o\Psi(a)\Psi(b).\end{equation}
 By \eqref{anglehy}, for all $a,b$ in $\X$ with $(\Psi(a)|\Psi(b))_o>\log \sqrt{{4+8\tanh^2 R_2}}$,
 \begin{equation}\label{anglehy2}
 \tan \angle_O\Psi(a)\Psi(b)=\frac{2\sinh ((\Psi(a)|\Psi(b))_o) \tanh i_{a,b}}{\sinh^2 ((\Psi(a)|\Psi(b))_o)- \tanh^2 i_{a,b}}\leq 8(\tanh i_{a,b}) e^{-(\Psi(a)|\Psi(b))_o}.
 \end{equation}
  By the triangle inequality, for all edge $e$ with $d(i(e),y)>R_2$ and points $a,b \in e$, 
\begin{equation}\label{anglehy3}
\begin{split}
  2(\Psi(a)|\Psi(b))_o&=d_{\mathbb{H}^n}(\Psi(a),o)+d_{\mathbb{H}^n}(\Psi(b),o)-d_{\mathbb{H}^n}(\Psi(a),
\Psi(b))\\ 
&\geq{L}(d(a,y)+d(b,y)-d(a,b))-{3k}-2K\\ 
&\geq{L}(2d(i(e),y)-3l_M)-{3k}-2K>0.
\end{split}
\end{equation}
By \eqref{anglehy2} and \eqref{anglehy3}, for all edge $e$ with $d(i(e),y)>R_2$,
\begin{equation}\label{3.17}
\theta_e:=\max\{\angle_o\Psi(a)\Psi(b):a,b\in e\}\leq \tan \theta_e\leq e^{-\kappa d(i(e),y)}.
\end{equation}
\begin{figure}[htbp]
\begin{center}
{\includegraphics[width=110mm]{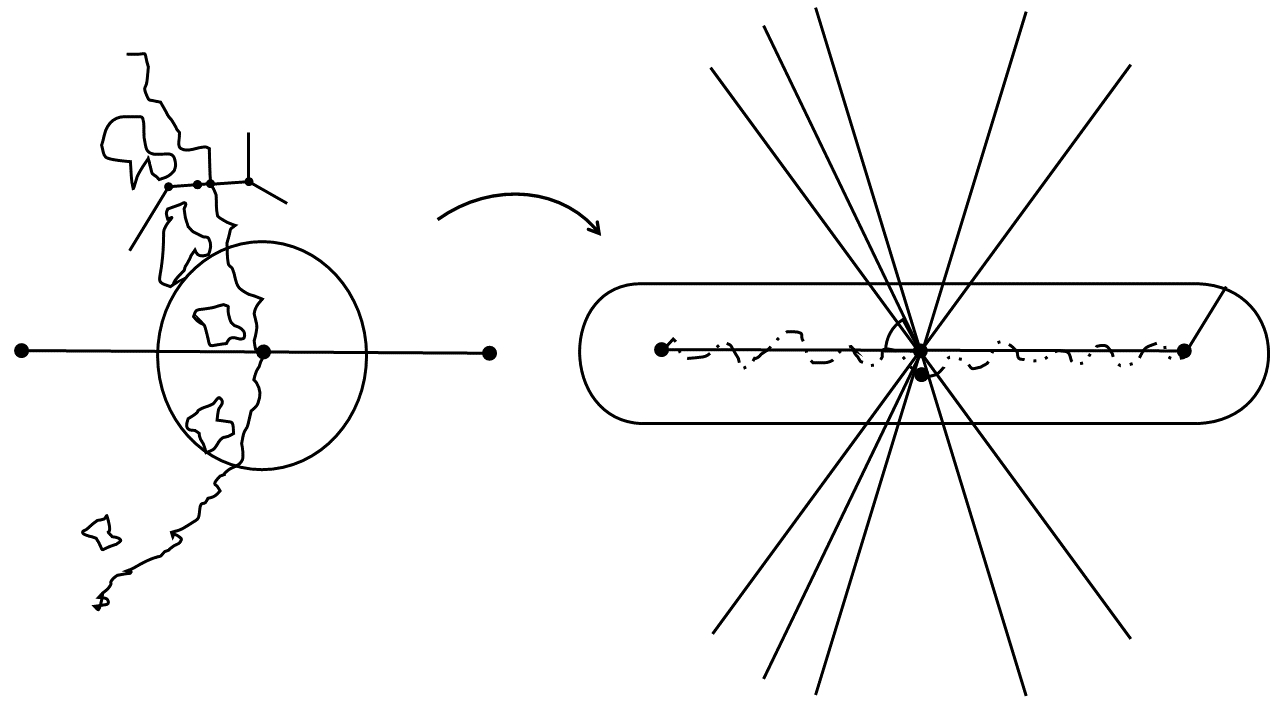}}
\put(-317,87){{$x$}} \put(-160,80){\small{$\Psi(x)$}}\put(-90,97){\large{$o$}}
\put(-245,83){{$y$}}\put(-100,75){\small{$\Psi(y)$}}\put(-261,134){\small{$u_i$}}
\put(-190,85){{$z$}}\put(-40,80){\small{$\Psi(z)$}}\put(-266,125){\small{$w$}}
\put(-260,180){\large{$\widetilde{X}$}}\put(-95,180){\large{$\mathbb{H}^n$}}
\put(-300,165){{$\partial A_i(\theta_i)$}}\put(-182,158){{$\theta=\frac{2i-1}{N}$}}\put(-178,173){{$\theta=\frac{2i-1+\theta_i}{N}$}}\put(-130,180){\large{$\theta=\frac{2i}{N}$}}
\put(-70,175){\large{$\theta=\frac{2i+1}{N}$}}\put(-50,160){\large{$\theta=\frac{2i+2}{N}$}}
\put(-190,135){\large{$\Psi$}}\put(-15,90){$K$}
\end{center}
\caption{Ancona-Gou\"ezel inequality}\label{AGI}
\end{figure}

Fix $\epsilon<\kappa$ and $r>R_2$ satisfying $e^{-\kappa r}<e^{-\epsilon r}/4$. Denote $N=\llcorner e^{\epsilon r}\lrcorner$. Denote $A_0=\{x\}$, $A_{N+1}=\{z\}$ and let $A_i(\theta)$ be the connected component containing $z$ of the set $$\{u\in  \overline{B(y,r)}^c:\angle_o\Psi(x)\Psi(u)> (2i-1)/N+\theta\}.$$ Fix an N-tuple $(\theta_1,\cdots,\theta_N)\in[0,1/N]^N$. Denote $A_i=A_i(\theta_i)$.  \\
  \underline{\textit{Step 2. Decomposition of $G_{\lambda_0}(x,z:\overline{B(x,r)}^c)$:}} In this step, using Proposition \ref{smv} and $A_i$, we represent  the $\lambda$-Green function $G_{\lambda_0}(x,z:\overline{B(x,r)}^c)$ as an integral. For any point $u \in  \overline{A_i}^c$, every continuous path in $ \overline{B(x,r)}^c$ from $u$ to $z$ must go through $\partial A_{i}$. Thus the second term of the right hand side of \eqref{exit} is zero. For the convenience, denote $\partial A_{i,r}:=\partial A_{i}\cap \overline{B(x,r)}^c$. By the definition of $\eta_{u_i}^{\lambda_0,\partial A_{i+1,r}}$, for any $u_i\in \partial A_{i,r},$ we have
  \begin{equation}\label{3.13.1}
 G_{\lambda_0}(u_i,z:\overline{B(x,r)}^c)=\int_{\partial A_{i+1,r}}G_{\lambda_0}(u_{i+1},z:\overline{B(x,r)}^c)d\eta_{u_i}^{\lambda_0,\partial A_{i+1,r}}(u_{i+1}).
  \end{equation}
 By Propositon \ref{smv} and \eqref{3.13.1},
 \begin{equation}\label{3.18}
 \begin{split}
  &G_{\lambda_0}(x,z:\overline{B(x,r)}^c)\\
  &=\int_{\partial A_{N,r}}G_{\lambda_0}(u_{N},z:\overline{B(x,r)}^c)d\eta_{x}^{\lambda_0,\partial A_{N,r}}(u_{N})\\
  &=\int_{\partial A_{N-1,r}}\int_{\partial A_{N,r}}G_{\lambda_0}(u_{N},z:\overline{B(x,r)}^c)d\eta_{u_{N-1}}^{\lambda_0,\partial A_{N,r}}(u_N)\eta_{x}^{\lambda_0,\partial A_{N-1,r}}(u_{N-1})\\
  &=\int_{\partial A_{1,r}}\cdots\int_{\partial A_{N,r}}G_{\lambda_0}(u_{N},z:\overline{B(x,r)}^c)d\eta_{u_{N-1}}^{\lambda_0,\partial A_{N,r}}(u_N)\cdots\eta_{x}^{\lambda_0,\partial A_{1,r}}(u_{1})\\
  &\leq\int_{\partial A_{1,r}}\cdots\int_{\partial A_{N,r}}G_{\lambda_0}(u_{N},z)d\eta_{u_{N-1}}^{\lambda_0,\partial A_{N,r}}(u_N)\cdots\eta_{x}^{\lambda_0,\partial A_{1,r}}(u_{1}).
  \end{split}
 \end{equation}
 Since $A_i$ is connected, $\mu((B({u_i},1)\cap A_i)\backslash B({u_i},1/2))\geq \frac{1}{2}.$ By Proposition \ref{acn}, for any $u_i\in \partial A_{i,r}$, there exists a constant $C_1$ such that
 \begin{equation}\label{3.19}
 \int_{\partial A_{i+1,r}}G_{\lambda_0}(u_{i+1},z)d\eta_{u_{i}}^{\lambda_0,\partial A_{i+1,r}}(u_{i+1})\leq C_1\sum_{u_{i+1}\in\partial A_{i+1,r}}G_{\lambda_0}(u_i,u_{i+1})G_{\lambda_0}(u_{i+1},z).
 \end{equation}
 As in \cite{G} and \cite{LL}, the operator $L_{i}:{l}^2(\partial A_{i+1,r})\rightarrow l^2(\partial A_{i,r})$ is defined by 
$$L_{i}f(u_i)=\sum_{u_{i+1}\in\partial A_{i+1,r}}G_{\lambda_0}(u_i,u_{i+1})f(u_{i+1}).$$
Let $||L_i||_{op}$ be the operator norm of $L_i$. Applying \eqref{3.18} and \eqref{3.19}, 
\begin{align*}\label{prean2}
&G_{\lambda_0}(x,z:\overline{B(x,r)}^c)\\
&\leq C_1^N\displaystyle\sum_{u_1\in \partial A_{1,r}}\cdots\sum_{u_N\in \partial A_{N,r}}
G_{\lambda_0}(x,u_1)G_{\lambda_0}(u_1,u_2)\cdots G_{\lambda_0}(u_N,z)\nonumber\\
&=C_1^N(G_{\lambda_0}(x\cdot),L_1L_2\cdots L_NG_{\lambda_0}(\cdot,z))_{l^2(\partial A_{1,r})}\nonumber\\
&\leq C_1^N||G_{\lambda_0}(x,\cdot)||_{l^2(\partial A_{1,r})}||L_1\cdots L_NG_{\lambda_0}(\cdot,z)||_{l^2(\partial A_{1,r})}\nonumber\\ 
&\leq C_1^N||G_{\lambda_0}(x,\cdot)||_{l^2(\partial A_{1,r})}||L_1||_{op}\cdots||L_N||_{op} ||G_{\lambda_0}(\cdot,z)||_{l^2(\partial A_{N,r})}.
\end{align*}
By Cauchy inequality,
$$||L_if||_{l^2(\partial A_{i,r})}^2\leq \sum_{\substack{u_i\in \partial A_{i,r}\\ u_{i+1}\partial A_{i+1,r}}}G^2_{\lambda_0}(u_i,u_{i+1})||f||_{l^2(\partial A_{i+1,r})}^2.$$
Denote $$f_i(\theta_1,\cdots, \theta_N):=\displaystyle\sum_{\substack {u_i \in \partial A_{i,r}(\theta_i) \\ u_{i+1}\in \partial A_{i+1,r}(\theta_{i+1})}}G_{\lambda_0}^2(u_i,u_{i+1}),$$
where $\partial A_{i,r}(\theta_i):=\partial A_{i}(\theta_i)\cap \overline{B(x,r)}^c$.
To complete the proof, it remains to find an $N$-tuple $(\theta_1,\cdots \theta_N) \in [0,1/N]^N$ such that for all $i$,
$$||L_i||_{op}\leq f_i(\theta_1,\cdots,\theta_N)^{1/2}<\frac{1}{2C_1}.$$
\underline{\textit{Step 3. Contribution of edges in $f_i(\theta_1,\cdots,\theta_N)$:}}  Given edges $e$ and $e'$, the function $G|_{e,e'}(\theta_{i},\theta_{i+1})$ is defined by
\begin{displaymath}
G|_{e,e'}(\theta_{i},\theta_{i+1})=\begin{cases} G_{\lambda_0}(x,y) &\text{if } x\in e\cap\partial A_{i}(\theta_{i}) \text{ and } y\in e'\cap\partial A_{i+1}(\theta_{i+1})\\
0 &\text{otherwise}.
\end{cases}\end{displaymath}
By applying Corollary \ref{harnack5} twice to edges $e$ and $e'$, 
\begin{equation}\label{3.23}
G|_{e,e'}(\theta_{i},\theta_{i+1})\leq e^{2D_{l_M,l_M}}G_{\lambda_0}(i(e),i(e')).
\end{equation}
Let $d\theta_i$ be Lebesgue measure on the interval $ [0,1/2N]$. By \eqref{3.17} and \eqref{3.23}, 
\begin{equation}\label{3.24}
\begin{split}
&\displaystyle\int_0^{1/2N}\int_0^{1/2N}G|_{e,e'}^2(\theta_{i},\theta_{i+1})d\theta_{i}d\theta_{i+1}\\  
&\overset{\eqref{3.23}}\leq e^{4D_{l_M,l_M}}G_{\lambda_0}^2(i(e),i(e'))\theta_{e}\theta_{e'}\\
&\overset{\eqref{3.17}}\leq e^{4D_{l_M,l_M}}G_{\lambda_0}^2(i({e}),i({e'}))e^{-\kappa(d({i(e)},y)+d({i(e'}),y))}.\end{split}
\end{equation}\\
\underline{\textit{Step 4. Counting $\gamma \in \G_i(v,w)$:}} Fix a fundamental domain $F$ containing $y$. Let $E_i$ be the set of edges in $ \widetilde{X}$ that intersect $\partial A_i(\theta)$ for some $\theta\in[0,1/N]$. By \eqref{3.24},
 \begin{equation}\label{3.25}
 \begin{split}
 &\displaystyle \int_0^{\frac{1}{N}}\int_0^{\frac{1}{N}}f_id\theta_i d\theta_{i+1}\\ 
 &=\displaystyle\sum_{\substack{e_{i} \in E_{i}\\e_{i+1}\in E_{i+1}}}\int_0^{1/2N}\int_0^{1/2N}G|_{e_{i},e_{i+1}}^2(\theta_{i},\theta_{i+1})d\theta_{i}d\theta_{i+1}\\
 &\leq C_2\displaystyle \displaystyle\sum_{\substack{e_{i} \in E_{i}\\e_{i+1}\in E_{i+1}}}e^{-\kappa d(i(e_i),i(e_{i+1}))}G_{\lambda_0}^2(i({e_i}),i(e_{i+1}))\\
 &= C_2\displaystyle \displaystyle\sum_{\substack{e_{i} \in E_{i}\\e_{i+1}\in E_{i+1}}}e^{-\kappa d(\gamma_{i(e_i)}^{-1}i(e_i),\gamma_{i(e_i)}^{-1}i(e_{i+1}))}G_{\lambda_0}^2(\gamma_{i(e_i)}^{-1}i({e_i})),\gamma_{i(e_i)}^{-1}i(e_{i+1}))
 \end{split}
 \end{equation}
for some constant $C_2$. It follows from inequality \eqref{3.17} that for any $e_i\in E_i$ and $w\in e_i$,
$$(8i-5)/4N\leq(2i-1)/N-\theta_{e_i}\leq\angle_o\Psi(x)\Psi(w)\leq 2i/N+\theta_{e_i}\leq(8i+1)/4N.$$
Denote $X_i=\{u\in  \overline{B(x,r)}^c:\angle_o\Psi(x)\Psi(u)\in[(8i-5)/4N,(8i+1)/4N]\}.$ If $e_i\in E_i$, then $e_i\subset X_i$. Let $[v,w]$ be a geodesic segment from a vertex $v$ to a vertex $w$ in $\widetilde{X}$. Denote  $$\Gamma_{i}(v,w):=\{\gamma\in \Gamma:\gamma v \in X_i \,\,\text{ and }\,\,\gamma w \in X_{i+1}\}.$$   
The vertex $\gamma_{i(e_i)}^{-1}i({e_i})$ in the fourth line of \eqref{3.25} is in $\overline{F}$. Thus we have
 \begin{equation}\label{3.26}\displaystyle \int_0^{\frac{1}{N}}\int_0^{\frac{1}{N}}f_id\theta_i d\theta_{i+1}\leq C_2\sum_{v\in V\cap \overline{F}}\sum_{w\in V} |\Gamma_{i}(v,w)|e^{-\kappa d(v,w)} G_{\lambda_0}^2(v,w).
  \end{equation} 
 The inequality \eqref{3.17} shows that for any $\gamma \in \G_i(v,w),$ $$e^{-\kappa(\gamma v|\gamma w)_y}\geq \angle_o \Psi(\gamma v)\Psi(\gamma w)\geq 1/4N.$$ and $\kappa(\gamma v|\gamma w)_y\leq 2\epsilon r $ for sufficiently large $r$. Since $d(y,\gamma v)\geq r$ and $d(y,\gamma w)\geq r$, for sufficiently small $\epsilon$, $d(\gamma v,\gamma w)\geq r$. By the hyperbolicity of $\X$, there exists a constant $C'$ such that $[\gamma v, \gamma w]$ intersects $B(y,C'\epsilon r)$ for all $\gamma\in \G_i(v,w)$. 
 
 Denote $h(r)=(r+2diam(F))/l_m.$ Let $id_\G$ be the identity of $\G$. Since for all $\gamma\in B(id_\G, h(C'\epsilon r))^c,$ $d(y,\gamma y)\geq C'\epsilon r$, $B(id_\G,h(C'\epsilon r))\overline{F}$ contains $B(y,C'\epsilon r)$. This implies that for any $\gamma y' \in B(y,h(C'\epsilon r))\cap [\gamma v,\gamma w]$, the element $\gamma$ is in $B(id_\G,h(2\epsilon r))\gamma_{y'}^{-1}$. Since the geodesic $[v,w]$  intersects at most $h(d(v,w))$ orbits of $\overline{F}$,  
 $|\Gamma_{i}(v,w)|\leq(1+h(d(v,w)))e^{C''\epsilon d(v,w)}$ for some $C''$.\\
 \underline{\textit{Step 5. Finding $\epsilon$ and $r$ using $\kappa$ in \eqref{3.17}:}} Choose $R_3>0$ so that for all $r>R_3,$ $e^{\kappa r/2}\geq (1+h(r+1))^2$. By \eqref{3.26} and Corollary \ref{volume}, we can choose $C_3$ such that for all $r>R_3$,
\begin{align*}
&\displaystyle \int_0^{\frac{1}{N}}\int_0^{\frac{1}{N}}f_id\theta_i d\theta_{i+1}\\
&\leq\displaystyle C_2\sum_{v\in V\cap \overline{F}}\sum_{n=\llcorner r\lrcorner}^\infty\sum_{w\in V_n(v)} \{1+h(n+1)\}^2e^{-\kappa n+2C''\epsilon(n+1)}G_{\lambda_0}^2(v,w)\\
&\leq \displaystyle C_3\sum_{n=\llcorner r\lrcorner}^\infty e^{-\kappa n/2+2C''\epsilon (n+1)}.
\end{align*}
Choose a sufficiently small $\epsilon$ with $\max\{2\epsilon(C''+1),2\epsilon\} \leq \kappa/4$. 
\begin{align*}
 &\displaystyle N^2\int_0^{\frac{1}{N}}\int_0^{\frac{1}{N}}f_id\theta_i d\theta_{i+1} \leq C_3\displaystyle\sum_{n=\llcorner r\lrcorner}^\infty e^{-\kappa n/2 +2\epsilon C''(n+1)+2\epsilon r}\\ 
 &\leq C_3\displaystyle\sum_{n=\llcorner r\lrcorner}^\infty e^{-\kappa n/2 +2\epsilon(C''+1)(n+1)}\\
&\leq C_3e^{\kappa/4}\displaystyle \sum_{n=\llcorner r \lrcorner}^\infty e^{-\kappa n/4}\leq \frac{C_3e^{\kappa/4-\kappa r/2}}{1- e^{-\kappa /4}}=C_4e^{-\kappa r/2}.
\end{align*}
Choose $R_4>0$ satisfying $e^{-2\epsilon r}/(8C_1^2C_4)>e^{-\kappa r/2}$ for all $r>R_4$,  
$$N^2\displaystyle\int\sum_if_id\theta_1\cdots\theta_N\leq1/(4C_1^2).$$ Put $R_0=\max\{R_2,R_3,R_4\}$. Then there exists an N-tuple $(\theta_1,\cdots,\theta_N)$ such that for any $i\in N$, $f_i(\theta_1,\cdots ,\theta_{N})\leq 1/(4C_1^2)$.\end{proof}
 Using Harnack inequality \eqref{harnack4}, Theorem \ref{lAncona}, and Lemma \ref{acnn}, we obtain Ancona inequality as in \cite{LL}.
\begin{thm}\label{rancona}Let $y$ be a point on a geodesic $[x,z]$ between $x$ and $z$. Suppose that $d(x,y)\geq 1$ and $d(y,z)\geq 1$. There exists a constant $C$ such that for all $\lambda\in(0,\lambda_0]$,
\begin{equation}\label{AC6}
G_\lambda(x,z)\leq CG_\lambda(x,y)G_\lambda(y,z).
\end{equation}
\end{thm}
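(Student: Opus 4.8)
The plan is to combine the strong Markov decomposition of Proposition~\ref{MArkov} with the lower bound of Theorem~\ref{lAncona}, the Harnack inequality \eqref{harnack4}, the uniform two‑sided estimate \eqref{eq1.1.2}, and the pre‑Ancona estimate \eqref{prean}, following \cite{LL}; the key point throughout is that every one of these inputs is uniform in $\lambda\in[0,\lambda_0]$, so the constant $C$ in \eqref{AC6} will be too.

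First I would dispose of bounded distances. If $d(x,y)$ and $d(y,z)$ are both at most the constant $R_0$ of Lemma~\ref{acnn}, then all three pairwise distances lie in $[1,2R_0]$ and \eqref{eq1.1.2} pinches $G_\lambda(x,z)$, $G_\lambda(x,y)$ and $G_\lambda(y,z)$ between positive constants, so \eqref{AC6} is immediate; and if exactly one of $d(x,y),d(y,z)$ is at most $R_0$, say $d(x,y)\le R_0$, then $G_\lambda(x,y)$ is bounded below by \eqref{eq1.1.2} while $G_\lambda(x,z)\le e^{D_{R_0,1}}G_\lambda(y,z)$ by the Harnack inequality \eqref{harnack4} applied to the positive $\lambda$‑harmonic function $w\mapsto G_\lambda(w,z)$ on $B(x,R_0+1)$ (the case $d(x,z)$ small being again covered by \eqref{eq1.1.2}). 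So it is enough to treat $d(x,y)\ge R_0$ and $d(y,z)\ge R_0$.

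Now fix a radius $r$ with $R_0\le r<\min(d(x,y),d(y,z))$ and let $\tau$ be the first hitting time of $\overline{B(y,r)}$, that is, the exit time of $\overline{B(y,r)}^c$. Applying Proposition~\ref{MArkov} with $O_1=\X$ (so that $\sigma_{O_1}=\infty$ almost surely, by stochastic completeness, Corollary~\ref{Brooks}) and $O_2=B(y,r)$ gives
\[
G_\lambda(x,z)=\mathbb{E}_x\!\left[1_{\tau<\infty}\,e^{\lambda\tau}\,G_\lambda(\omega(\tau),z)\right]+G_\lambda\!\left(x,z:\overline{B(y,r)}^c\right).
\]
On $\{\tau<\infty\}$ one has $\omega(\tau)\in S(y,r)$; chaining \eqref{harnack4} a number of times (depending only on $r$) along the geodesic from $\omega(\tau)$ to $y$ gives $G_\lambda(\omega(\tau),z)\le e^{D(r)}G_\lambda(y,z)$, while applying Proposition~\ref{MArkov} with the same $O_1,O_2$ but target $y\in B(y,r)$ — for which the relative term vanishes — together with the lower bound in \eqref{eq1.1.2} for $G_\lambda(w,y)$, $w\in S(y,r)$, yields $\mathbb{E}_x[1_{\tau<\infty}e^{\lambda\tau}]\le C'_r\,G_\lambda(x,y)$. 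Hence the first term is at most $C(r)\,G_\lambda(x,y)G_\lambda(y,z)$ with $C(r)$ uniform in $\lambda$, and it remains to estimate $G_\lambda(x,z:\overline{B(y,r)}^c)$, which is bounded by $G_{\lambda_0}(x,z:\overline{B(y,r)}^c)$ since $\lambda\le\lambda_0$ and $p(t,\cdot,\cdot)\ge 0$.

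The main obstacle is this last term. Estimate \eqref{prean} says it is super‑exponentially small in $r$, but to compare it with $G_\lambda(x,y)G_\lambda(y,z)$ — which itself tends to $0$ as $d(x,z)\to\infty$, and which can be far smaller than any fixed power of $e^{-r}$ when $y$ is far from the midpoint of $[x,z]$ — one has to iterate, as in \cite{LL}. I would place points $y=y_0,y_1,\dots,y_n$ along the part of the geodesic $[x,z]$ lying between $y$ and $z$ (and symmetrically between $x$ and $y$ if needed), at spacing $\approx 3r$ and each at distance $\ge r$ from the endpoints, and apply Proposition~\ref{MArkov} and Proposition~\ref{smv} repeatedly at the balls $\overline{B(y_i,r)}$: a Brownian path from $x$ to $z$ either hits all of them — in which case, peeling off the hitting times one at a time and using \eqref{harnack4} and \eqref{eq1.1.2} exactly as above, the contribution factors through the $G_\lambda(x,y_i)$ and $G_\lambda(y_i,z)$ — or it first misses some $\overline{B(y_{k+1},r)}$, and then the part of the path after its last visit to $\overline{B(y_k,r)}$ travels from a point near $y_k$ to $z$ while avoiding $B(y_{k+1},r)$, a point lying essentially on the geodesic between them, so that \eqref{prean} forces a super‑exponentially small factor. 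Summing the resulting series, absorbing the geometric loss incurred by iterating Theorem~\ref{lAncona} into those super‑exponential factors by choosing $r$ large (yet no larger than the available room $\min(d(x,y),d(y,z))$ allows), and using Theorem~\ref{lAncona} once more to recombine $G_\lambda(x,y_i)G_\lambda(y_i,z)\asymp G_\lambda(x,z)$, one arrives at $G_\lambda(x,z:\overline{B(y,r)}^c)\le C\,G_\lambda(x,y)G_\lambda(y,z)$ with $C$ independent of $x,y,z$ and of $\lambda$. The delicate points are (i) organizing the iteration so that the accumulated multiplicative constants are still beaten by the super‑exponential gain, in particular in the unbalanced regime $d(y,z)\gg d(x,y)$, which is exactly why one ball does not suffice, and (ii) verifying at each step that no constant secretly depends on $\lambda$, so that the bound survives at the bottom of the spectrum. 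Together with the estimate on the hitting term, this gives \eqref{AC6}.
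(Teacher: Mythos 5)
Your overall plan is the right one, and it matches what the paper is actually doing: the paper gives no explicit proof of Theorem~\ref{rancona}, but simply states that it follows ``as in \cite{LL}'' from Proposition~\ref{MArkov}, Theorem~\ref{lAncona}, the Harnack inequality~\eqref{harnack4}, and Lemma~\ref{acnn}. You have correctly singled out exactly those inputs, correctly disposed of the bounded-distance case using~\eqref{eq1.1.2} and Harnack, correctly set up the single-ball Markov decomposition (with $O_1=\X$, noting that stochastic completeness makes $\sigma_{O_1}=\infty$), correctly bounded the hitting term by $C(r)\,G_\lambda(x,y)G_\lambda(y,z)$, and correctly diagnosed that one ball cannot work in the unbalanced regime because $G_\lambda(x,y)G_\lambda(y,z)$ can be far smaller than the super-exponential gain $2^{-e^{\varepsilon r}}$ for the largest admissible~$r$. (You also silently read~\eqref{prean} the way it must be meant, namely $2^{-e^{\varepsilon r}}$ rather than the printed $2^{e^{-\varepsilon r}}$, which tends to~$1$.) All of this is sound and uniform in $\lambda$.

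The gap is in the iteration, which is the whole content of the theorem. Two things in the description don't close. First, you write that after peeling off the balls one recombines via Theorem~\ref{lAncona} the products $G_\lambda(x,y_i)G_\lambda(y_i,z)\asymp G_\lambda(x,z)$; but the $\asymp$ uses both directions of the Ancona inequality, and the direction $G_\lambda(x,z)\le C\,G_\lambda(x,y_i)G_\lambda(y_i,z)$ is precisely what is being proved — so as stated this step is circular. Second, the ``hit all the balls'' branch carries no super-exponential factor at all, only the multiplicative losses from chaining Harnack, from the hitting estimates, and from iterating Theorem~\ref{lAncona}; these compound roughly as $C^n$ with $n\approx d(y,z)/(3r)$, and there is nothing super-exponential to absorb them with in that branch, contrary to what you assert. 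The Gou\"ezel--Ledrappier--Lim argument avoids both problems by running a bootstrap on the quantity $A=\sup G_\lambda(x,z)/\bigl(G_\lambda(x,y)G_\lambda(y,z)\bigr)$ itself (taken over $\lambda\in[0,\lambda_0)$ and over admissible triples with a distance cap): the single-ball Markov decomposition plus the pre-Ancona bound yields a self-improving inequality for $A$ in which the super-exponential smallness of~$2^{-e^{\varepsilon r}}$ dominates the only-exponential Harnack loss, forcing $A$ to be bounded, after which $\lambda=\lambda_0$ follows by letting $\lambda\uparrow\lambda_0$. If you want to keep your linear-chaining picture you would need to reorganize it so that the hit-all term is handled by a \emph{single} application of the strong Markov property at $B(y,r)$ (not by peeling off all $n$ balls) and the intermediate balls only enter the miss branch, where each miss genuinely contributes a factor $2^{-e^{\varepsilon r}}$; and you would need to replace the $\asymp$ step by the one-sided lower Ancona inequality together with the a priori bound furnished by the bootstrap hypothesis.
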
 
\subsection{Martin boundary}\label{sec:3.3} In this section, we show that the Gromov boundary coincides with its $\lambda$-Martin boundary for $\lambda\in [0,\lambda_0]$.
\begin{defn} The $\lambda$-\emph{Martin kernel} $K$ of $\X$ is defined as follows:
$$K_\lambda(x_0,x,y)=\frac{G_\lambda(x,y)}{G_\lambda(x_0,y)}.$$
The $\lambda$-\emph{Martin boundary} is the boundary of the image of the embedding defined by $y\mapsto K_\lambda(x_0,\cdot,y)$ on $\X$. 
\end{defn} 
 
Let $f$ and $g$ be functions on $ \widetilde{X}$. Denote by $f\asymp_c g$ if there exists a constant $c$ such that $f\leq cg$ and $g\leq cf$. 
 The following theorem is the analog of Theorem 4.6 in \cite{GL}. Unlike the proof in \cite{GL}, we prove the theorem without harmonic functions.
\begin{thm}\label{MB1} Let $[x,y]$ be a geodesic segment of length $n\geq 3$. Suppose that $d(x,y)\leq d(x',y')$ and $[x,y]$ is contained in the $r$-neighborhood of a geodesic segment $[x',y']$. Then there exist positive constant $C(r)$ and $\rho<1$ such that for all $\lambda$ in $[0,\lambda_0]$,
\begin{equation}\label{strongancona}
\displaystyle \left|\frac{G_\lambda(x,y)/G_\lambda(x',y)}{G_\lambda(x,y')/G_\lambda(x',y')}-1\right|\leq C\rho^n.
\end{equation}
The constant $C$ depends only on $r$.
\end{thm}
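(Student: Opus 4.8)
The plan is to establish the equivalent multiplicative form of \eqref{strongancona}: to produce $C=C(r)$ and $\rho<1$, \emph{independent of} $\lambda\in[0,\lambda_0]$, with $G_\lambda(x,y)\,G_\lambda(x',y')=\big(1+O(\rho^{\,n})\big)\,G_\lambda(x',y)\,G_\lambda(x,y')$. First I would carry out the geometric reduction: using hyperbolicity and Harnack (Corollary~\ref{harnack5}, inequality \eqref{harnack4}, applied in either variable via the symmetry of $G_\lambda$), after moving $x$ and $y$ by at most $r$ -- a change of each relevant Green function by a factor $e^{D(r)}$, absorbed into $C$ -- I reduce to the aligned configuration in which $x',x,y,y'$ lie in this order on a common geodesic, so that $d(x,y)=n$; this is the only place the dependence of $C$ on $r$ enters. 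If $n$ is bounded, the conclusion is then immediate from a short chain of the Ancona inequalities \eqref{AC5}, \eqref{AC6} (one telescopes $G_\lambda(x',y')\asymp G_\lambda(x',x)G_\lambda(x,y)G_\lambda(y,y')$, and similarly for $G_\lambda(x',y)$ and $G_\lambda(x,y')$), so I may assume $n$ is large.

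Next I would set up barriers and turn the problem into one about iterated positive operators -- exactly the mechanism already used in \eqref{3.13.1}--\eqref{3.18}. Fix a large constant $L$, take points $p_1,\dots,p_k$ on $[x,y]$ with $d(x,p_i)\approx iL$ and $k\asymp n$, and let $\Sigma_i$ be a sphere $S(p_i,R)$ (restricted to the component toward $y$) of fixed radius $R$, chosen -- possible by hyperbolicity once $L$ is large -- so that $\Sigma_i$ separates $\{x,x'\}\cup\bigcup_{j<i}\Sigma_j$ from $\{y,y'\}\cup\bigcup_{j>i}\Sigma_j$. Since $y$ and $y'$ lie beyond $\Sigma_k$, every Brownian path from a point $u\in\Sigma_i$ to $y$ first hits $\Sigma_{i+1}$; hence Proposition~\ref{MArkov}, with the relative term vanishing, gives $G_\lambda(u,y)=\int_{\Sigma_{i+1}}G_\lambda(v,y)\,d\eta^{(i+1)}_u(v)$ where $\eta^{(i+1)}_u$ is the $e^{\lambda\tau}$-weighted hitting measure of $\Sigma_{i+1}$. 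Writing $\mathcal L_i$ for the resulting positive operator from functions on $\Sigma_{i+1}$ to functions on $\Sigma_i$ and iterating via Proposition~\ref{smv}, one gets $G_\lambda(x,y)=\langle\eta^{(1)}_x,\ \mathcal L_1\cdots\mathcal L_{k-1}\,\psi_y\rangle$ with $\psi_y:=G_\lambda(\cdot,y)|_{\Sigma_k}\geq 0$, and likewise with $x'$ in place of $x$ and with $\psi_{y'}$ in place of $\psi_y$. Consequently the left side of \eqref{strongancona} equals $\Phi(\psi_y)/\Phi(\psi_{y'})$, where $\Phi(\psi):=\langle\eta^{(1)}_x,\mathcal L_1\cdots\mathcal L_{k-1}\psi\rangle/\langle\eta^{(1)}_{x'},\mathcal L_1\cdots\mathcal L_{k-1}\psi\rangle$; since $\psi\mapsto\log\Phi(\psi)$ is $1$-Lipschitz for the Hilbert projective metric on the positive cone, $|\log\Phi(\psi_y)-\log\Phi(\psi_{y'})|$ is at most the Hilbert projective diameter of the image of $\mathcal L_1\cdots\mathcal L_{k-1}$.

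The heart of the argument is then to show that each $\mathcal L_i$ is a uniform strict contraction of the Hilbert metric, with a factor $q<1$ depending on nothing -- in particular not on $\lambda\in[0,\lambda_0]$: by Birkhoff's theorem this reduces to a \emph{uniform bound on the projective diameter of the image of a single $\mathcal L_i$}, which in turn is a boundary-Harnack-type comparability statement -- for $u,u'$ in the part of $\Sigma_i$ that carries mass, the hitting measures $\eta^{(i+1)}_u$ and $\eta^{(i+1)}_{u'}$ on $\Sigma_{i+1}$ are comparable up to a universal factor. Composing the $\asymp n$ contractions then bounds the diameter of the image of $\mathcal L_1\cdots\mathcal L_{k-1}$ by $Cq^{\,cn}$ for a fixed $c>0$, hence $|\Phi(\psi_y)/\Phi(\psi_{y'})-1|\leq C\rho^{\,n}$ with $\rho:=q^{\,c}<1$, which is \eqref{strongancona}. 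Establishing this uniform single-barrier contraction is the main obstacle, and the feature that makes it genuinely harder than on manifolds or on trees is that the spheres $\Sigma_i$ are \emph{unbounded}: one must rule out that the process slips around a barrier instead of being funnelled through a bounded neighbourhood of $[x,y]$. This is precisely what Lemma~\ref{acnn} delivers -- the super-exponential bound $G_\lambda(\cdot,\cdot:\overline{B(p_i,t)}^c)\leq G_{\lambda_0}(\cdot,\cdot:\overline{B(p_i,t)}^c)\leq 2^{e^{-\varepsilon t}}$, uniform in $\lambda$ since $e^{\lambda s}\leq e^{\lambda_0 s}$ -- so that the relevant harmonic mass concentrates near the geodesic; with that confinement in hand, the Ancona inequalities \eqref{AC5}, \eqref{AC6} control the distortion of $G_\lambda$ across one barrier and the $\lambda$-free Harnack inequality \eqref{harnack4} absorbs bounded moves, together yielding the comparability of hitting measures and hence the contraction. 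Carrying out all estimates uniformly in $\lambda$ then produces $\rho<1$ and $C=C(r)$, proving \eqref{strongancona}.
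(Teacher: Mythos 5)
Your proposal takes a genuinely different route from the paper's proof. The paper does not pass through barrier operators or the Hilbert projective metric at all: it places a chain of points $x_1,\dots,x_k$ on $[x,y]$, uses the two Ancona inequalities \eqref{AC5}, \eqref{AC6} together with Harnack \eqref{harnack4} to show that all four Green-function ratios in \eqref{AC7} are comparable, and then runs a short telescoping induction on the auxiliary quantity $A(x,x',y,k)$ of \eqref{HB}--\eqref{HB1}. The geometric series $\sum (1-\tfrac1C)^{i-1}$ is extracted explicitly and produces the factor $(1-\tfrac1C)^k\approx\rho^n$ directly. No measure-theoretic machinery, no Birkhoff theorem, no Lemma~\ref{acnn} is invoked in the proof of Theorem~\ref{MB1} itself. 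Your plan, by contrast, is the Hilbert-metric strategy: decompose $G_\lambda$ through a chain of separating spheres via Propositions~\ref{MArkov} and~\ref{smv}, view the resulting positive kernels $\mathcal L_i$ as operators on a cone, and obtain exponential decay from Birkhoff contraction.

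The plan is conceptually coherent, but there is a real gap precisely at the step you yourself flag as the heart of the argument. Birkhoff's theorem requires that the image of the full positive cone under a single $\mathcal L_i$ have \emph{finite projective diameter}, which is equivalent to the hitting measures $\eta^{(i+1)}_u$ and $\eta^{(i+1)}_{u'}$ being \emph{multiplicatively} comparable --- two-sided bounds on the Radon--Nikodym derivative $d\eta^{(i+1)}_u/d\eta^{(i+1)}_{u'}$ --- uniformly over \emph{all} $u,u'\in\Sigma_i$. The spheres $\Sigma_i$ are unbounded, and for $u,u'$ far apart on $\Sigma_i$ these measures need not be comparable at all: a path started far from $[x,y]$ will typically first reach $\Sigma_{i+1}$ far from $[x,y]$ as well. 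The tool you propose for this, Lemma~\ref{acnn}, gives an \emph{additive} bound: the relative Green function through the complement of a large ball is super-exponentially small. An additive smallness estimate does not convert into the multiplicative Radon--Nikodym comparability that the Hilbert-metric argument requires. If you instead truncate the spheres to a bounded neighbourhood of $[x,y]$ to enforce comparability, then the exact factorisation $G_\lambda(x,y)=\langle\eta^{(1)}_x,\mathcal L_1\cdots\mathcal L_{k-1}\psi_y\rangle$ via Proposition~\ref{smv} is broken and error terms appear at each of the $\asymp n$ barriers; controlling their accumulation is a new problem not addressed in your sketch. The paper's telescoping argument sidesteps this entirely by never forming operators on cones over spheres: it only needs the weak Ancona inequality at finitely many points of the geodesic, each of which lives in a compact configuration where \eqref{AC5}--\eqref{AC6} apply cleanly. (A minor point: the map $\psi\mapsto\log\Phi(\psi)$ is $2$-Lipschitz, not $1$-Lipschitz, in the Hilbert metric, but this would not affect the conclusion.)
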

\begin{proof}
Let $x_1 \in [x,y]$ such that  
$$\min\{d(x,x_1),d(x',x_1)\}=2.$$ Let $x_k \in [x,y]$ with $d(x_1,x_k)=k-1$ for all integer $k< d(x_1,y)$ (see Figure \ref{SAI}).
\begin{figure}[h]\label{figure2}\begin{center}(100,-40)
\put(-130,-23){\includegraphics[width=70mm]{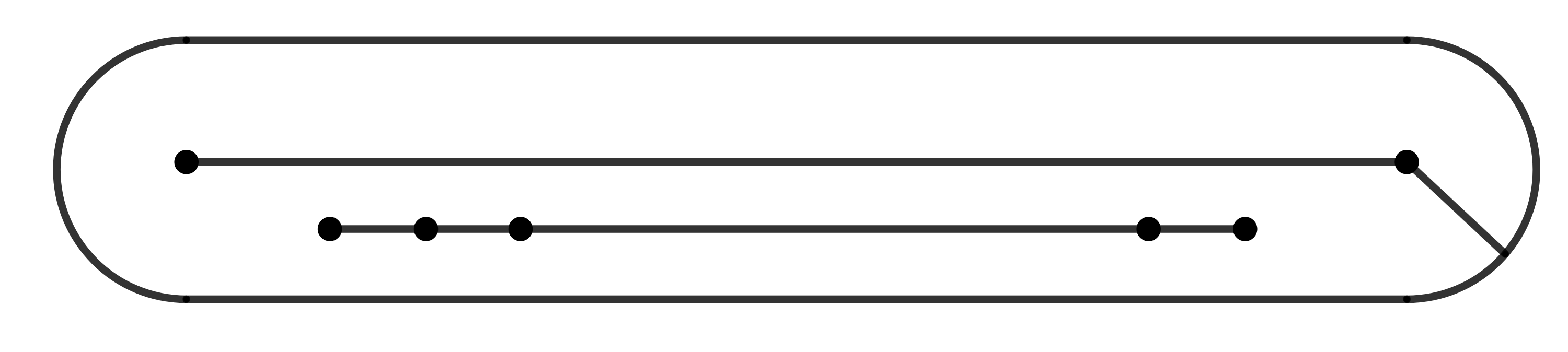}}
\put(-110,4){$x'$}\put(50,4){$y'$}\put(-95,-10){$x$}\put(-80,-14){$x_1$}\put(-68,-14){$x_2$}\put(0,-13){$x_{n-2}$}
\put(30,-10){$y$}\put(50,-12){$r$}
 \end{center}
\caption{Strong Ancona-Gou\"ezel inequality}\label{SAI}
\end{figure}  

Using \eqref{AC5}, \eqref{AC6}, and Harnack inequality \eqref{harnack4}, there exists a constant $C$ such that
\begin{equation} \label{AC7}\frac{G_\lambda(x,y)}{G_\lambda(x',y)}\asymp_{C} \frac{G_\lambda(x,x_k)}{G_\lambda(x',x_k)}\text{ and } \frac{G_\lambda(x,y')}{G_\lambda(x',y')}\asymp_{C} \frac{G_\lambda(x,x_k)}{G_\lambda(x',x_k)}.
\end{equation}

We first claim that for all integer $k< d(x_1,y)$,
\begin{equation}\label{HB}
 A(x,x',y,k):=\frac{G_\lambda(x,y)}{G_\lambda(x',y)}-\sum_{i=1}^k(1-\frac{1}{C})^{i-1} \frac{G_\lambda(x,x_i)}{G_\lambda(x',x_i)}\leq (1-\frac{1}{C})^k \frac{G_\lambda(x,y')}{G_\lambda(x',y')}
\end{equation}
and 
\begin{equation}\label{HB1}
 A(x,x',y,k)\geq -(1-\frac{1}{C})^k \frac{G_\lambda(x,y_k)}{G_\lambda(x',y_k)}.
\end{equation}
By \eqref{AC7}, \eqref{HB} and \eqref{HB1}, it follows that
\begin{equation}
\begin{split}\label{eq:59}
 \left|\frac{G_\lambda(x,y)}{G_\lambda(x',y)}-\frac{G_\lambda(x,y')}{G_\lambda(x',y')}\right|&\leq  \left|A(x,x',y,k)-A(x,x',y',k)\right|\\
&\leq |A(x,x',y,k)|+|A(x,x',y',k)| \leq 2{C}(1-\frac{1}{C})^k\frac{G_\lambda(x,y')}{G_\lambda(x',y')}.
\end{split}
\end{equation}
It remains to prove the claim. It is clear when $n=1$. Suppose that the inequality \eqref{HB} holds for all $k\leq n$. By induction, the following inequality holds:
\begin{align*}
A(x,x',y,k+1)&=A(x,x',y,k)-(1-\frac{1}{C})^k \frac{G_\lambda(x,x_{k+1})}{G_\lambda(x',x_{k+1})}\\
&\geq(1-\frac{1}{C})^k \frac{G_\lambda(x,y')}{G_\lambda(x',y')}-\frac{1}{C}(1-\frac{1}{C})^k \frac{G_\lambda(x,y')}{G_\lambda(x',y')}\\
&=(1-\frac{1}{C})^{k+1}\frac{G_\lambda(x,y')}{G_\lambda(x',y')}.
\end{align*}
Similarly, \eqref{HB1} holds for all $k$. By \eqref{eq:59}, we have \eqref{strongancona}.
\end{proof} 
 \begin{lem}\label{Zero} For any $\lambda\in [0,\lambda_0]$, $G_\lambda(x,y)$ goes to zero as $y$ goes to infinity.
 \end{lem}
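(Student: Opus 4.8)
The plan is to fix $y_0\in\X$ and, using the symmetry $G_\lambda(x,y_0)=G_\lambda(y_0,x)$, prove the equivalent statement that $G_\lambda(z,y_0)\to 0$ as $z\to\infty$. By Corollary~\ref{harnack5} the function $z\mapsto G_\lambda(z,y_0)$ is $\lambda$-harmonic on $\X\setminus\{y_0\}$, so Harnack's inequality \eqref{harnack4} (in the form of Corollary~\ref{G2}) makes $G_\lambda(\cdot,y_0)$ comparable to its values on $B(z,1)$ once $d(z,y_0)>1$: there is a constant $c$ with $\mu(B(z,1))\,G_\lambda(z,y_0)^2\le c\int_{B(z,1)}G_\lambda(\cdot,y_0)^2\,d\mu$. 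Since $\inf_{z}\mu(B(z,1))>0$ by cocompactness, it therefore suffices to show that $G_\lambda(\cdot,y_0)$ is square-integrable outside a neighbourhood of $y_0$; then the right-hand side tends to $0$ as $d(z,y_0)\to\infty$ because it is the tail of an $L^2$-function.

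For $\lambda\in[0,\lambda_0)$ this square-integrability is straightforward. By Proposition~\ref{spect}, $\lambda$ lies in the resolvent set, so by Corollary~\ref{g} (together with Lemma~\ref{lem:3.9}) the operator $(-\Delta-\lambda I)^{-1}$ is bounded on $L^2(\X)$ with integral kernel $G_\lambda$. Taking $g=1_{B(y_0,1/2)}\in L^2(\X)$ and $h:=(-\Delta-\lambda I)^{-1}g\in L^2(\X)$, Harnack's inequality \eqref{harnack4} gives, for $d(z,y_0)\ge 2$, $h(z)=\int_{B(y_0,1/2)}G_\lambda(z,w)\,d\mu(w)\ge c_0\,G_\lambda(z,y_0)$. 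Hence $G_\lambda(\cdot,y_0)\in L^2(\X\setminus B(y_0,2))$, and the reduction in the first paragraph closes the case $\lambda<\lambda_0$.

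The endpoint $\lambda=\lambda_0$ is the main obstacle: there the resolvent is unbounded and, in fact, $G_{\lambda_0}(\cdot,y_0)$ does \emph{not} lie in $L^2$ near infinity, so a new idea is needed. I would argue by contradiction. First, $G_{\lambda_0}$ is uniformly bounded on $\{d(\cdot,\cdot)\ge1\}$: each summand in Proposition~\ref{volume} forces $G_{\lambda_0}^2(v,w)\le C$ for vertices $w$ with $d(v,w)>3\diam(F)+1$, while \eqref{harnack4} and \eqref{eq1.1.2} handle the remaining scales; write $M$ for this bound. Now suppose $G_{\lambda_0}(x,w_k)\ge\eps_0>0$ for vertices $w_k\to\infty$ (by Harnack it is enough to rule out vertices). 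Passing to a subsequence, $w_k\to\xi\in\partial\X$; every vertex $z$ on the geodesic ray $[x,\xi)$ eventually lies on $[x,w_k]$, so the upper Ancona inequality (Theorem~\ref{rancona}) gives $G_{\lambda_0}(x,w_k)\le C\,G_{\lambda_0}(x,z)\,G_{\lambda_0}(z,w_k)\le CM\,G_{\lambda_0}(x,z)$, whence $G_{\lambda_0}(x,z)\ge\eps_1:=\eps_0/(CM)$ for \emph{all} vertices $z$ on the ray $[x,\xi)$.

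It remains to contradict this ray lower bound, and this is the hard part. One route is to Doob-transform by a positive $\lambda_0$-harmonic function $f$ supplied by Theorem~\ref{harmonic}: exactly as in the proof of Theorem~\ref{2.32} this yields a transient Markov (Hunt) process with Green kernel $\hat G(x,y)=G_{\lambda_0}(x,y)f(y)/f(x)$ whose paths converge to $\partial\X$ almost surely; since $\hat G(y,y)=G_{\lambda_0}(y,y)$ is bounded by cocompactness, transience forces the hitting probabilities of far balls, hence $\hat G(x,\cdot)$, to tend to $0$, after which one has to control the factor $f$ to return to $G_{\lambda_0}$. A more self-contained alternative is to translate the ray $[x,\xi)$ by elements of $\G$ and splice the pieces using the lower Ancona inequality (Theorem~\ref{lAncona}), thereby producing, for each large $n$, enough vertices in the annulus $V_n(x)$ along which $G_{\lambda_0}(x,\cdot)$ is bounded below to violate the uniform estimate $\sum_{w\in V_n(x)}G_{\lambda_0}^2(x,w)\le C$ of Proposition~\ref{volume}; the delicate points here are the hyperbolic alignment conditions needed for the splicing and arranging that the exponential growth of $\G$ outweighs the decay of the Green function along the translated rays.
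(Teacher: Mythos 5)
Your argument splits naturally into two regimes, and the split itself is where the difficulty lies. For $\lambda<\lambda_0$ the resolvent argument is correct and tidy: since $\lambda$ lies in the resolvent set, $h=(-\Delta-\lambda I)^{-1}\mathbf 1_{B(y_0,1/2)}$ is in $L^2(\X)$, Harnack gives $h(z)\ge c_0 G_\lambda(z,y_0)$ for $d(z,y_0)\ge 2$, and the reduction in your first paragraph (Harnack plus cocompactness converting $L^2$-tails into pointwise decay) then finishes. This is a genuinely different and, for that range, more elementary route than the paper's.

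The problem is that for $\lambda=\lambda_0$ you openly do not close the argument, and the endpoint is the entire content of the lemma (the sub-$\lambda_0$ case is routine for exactly the reason you give). Your reduction in paragraph 3 is fine as far as it goes: by Proposition~\ref{volume} and \eqref{eq1.1.2} the function $G_{\lambda_0}$ is uniformly bounded off the diagonal, and the upper Ancona inequality (Theorem~\ref{rancona}) upgrades a bounded-below subsequence $w_k\to\xi$ to a uniform lower bound $G_{\lambda_0}(x,z)\ge\varepsilon_1$ along the ray $[x,\xi)$. But the two routes you then offer to contradict that ray bound are both left at the sketch level, and both have real obstacles you only name rather than overcome. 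For the Doob transform, the ratio $f(y)/f(x)$ for a positive $\lambda_0$-harmonic $f$ is not controllable as $d(x,y)\to\infty$ (Harnack only bounds it on bounded scales), so returning from $\hat G$ to $G_{\lambda_0}$ is not automatic. For the splicing route, producing enough vertices per annulus requires aligning the $\Gamma$-translates of the ray so that the spliced concatenations stay quasi-geodesic and the base points remain near the original ray; this is exactly the kind of \enquote{tree-approximation plus bounded overlap} bookkeeping that occupies the paper in Lemma~\ref{Alpha} and Proposition~\ref{volume}, and it is not done here.

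The paper closes the contradiction by a different mechanism, which you may want to compare. Given $G_{\lambda_0}(x,y_n)\ge c$ with $y_n$ well-separated, one fixes a nearby reference point $y'$ and decomposes $G_{\lambda_0}(x,y')$ along the first hitting time $T$ of $\overline{B}=\bigcup_n\overline{B(y_n,1)}$ using Proposition~\ref{MArkov} and the strong Markov property. Each disjoint contribution from paths first entering near $y_n$ is bounded below, via Harnack and a uniform positive hitting estimate \eqref{zeros7} from the annulus $C_n=B(y_n,3)\setminus\overline{B(y_n,2)}$, by a fixed multiple of $G_{\lambda_0}(y_n,y')\int_{C_n}G_{\lambda_0}(x,\cdot)\,d\mu\gtrsim G_{\lambda_0}^2(y_n,x)$. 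Summing over $n$ gives $G_{\lambda_0}(x,y')=\infty$, contradicting Theorem~\ref{2.32}; the range $\lambda<\lambda_0$ then follows by the monotonicity $G_\lambda\le G_{\lambda_0}$. In fact your paragraph-3 reduction would feed directly into this hitting-time argument (take $y_n$ to be vertices at spacing $>8$ along the ray), so the missing idea is precisely the paper's strong-Markov divergence estimate, not the geometric reduction. As written, the proposal has a genuine gap at $\lambda=\lambda_0$.
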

 \begin{proof} The proof of lemma will use the idea in the proof of Theorem \ref{lAncona}.
  Suppose that there exist a constant $c>0$ and a sequence $\{y_n\}$ such that $$G_{\lambda_0}(x,y_n)\geq c \text{ and } \lim_{n\rightarrow \infty}d(x,y_n)=\infty.$$  By choosing a subsequence, we may assume that for all distinct two points $y_n$ and $y_{n'},$ $d(y_n,y_{n'})>8$ and $y_n$, $d(x,y_n)>8$.  Choose a point $y'$ with $d(x,y')=2$.  Since the action of $\G$ is cocompact, by Harnack inequality \eqref{harnack4}, we may assume that $y_n=\gamma_n y_1$ for some $\gamma_n\in \Gamma$. Denote $B_n=B(y_n,1),$ $B:=\bigcup\limits_{n=1}^{\infty} B_n,$  { and }$C_{n}= B(y_n,3)\backslash \overline{B(y_n,2)}.$
 Let $T$ be the first visit time of $\overline{B}$, i.e. $$T(\omega):=\inf\{t:\omega(t) \in \overline {B}\}.$$ 

\begin{figure}[h]
\begin{center}
{\includegraphics[width=80mm]{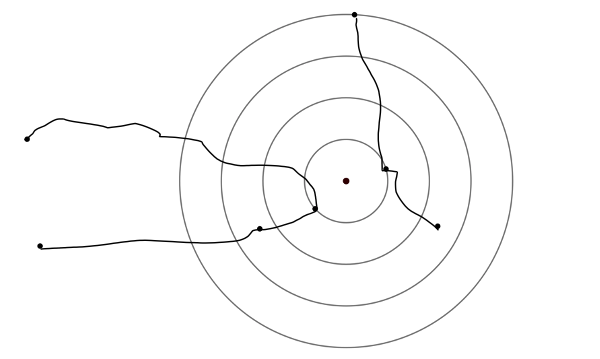}}
\put(-220,41){$y'$}\put(-197,92){{$\omega$}}\put(-146,55){{$\omega_2(s)$}}\put(-120,45){{$\omega(T)$}}\put(-224,82){{$x$}}\put(-100,58){{$y_n$}}\put(-66,40){{$z$}}\put(-90,134){{$\omega'(\tau_n)$}}\put(-82,74){{$\omega'(T)$}}
\end{center}
\caption{}\label{figure4}
\end{figure}
 Proposition \ref{MArkov} shows the first equation of \eqref{zeros4}. To obtain the second inequality of \eqref{zeros4}, we disregard paths $\omega$ satisfying $T(\omega)< 2.$
  \begin{equation}\label{zeros4}
\begin{split}
G_{\lambda_0}(x,y')
&\overset{\substack{\text{Prop}\\\ref{MArkov}}}=\mathbb{E}_x[{1}_{T<\infty}e^{\lambda_0 T}G_{\lambda_0}(\omega({T}),y')]+G_{\lambda_0}(x,y': \overline{B}^c)\\
&\geq\mathbb{E}_x[{1}_{T<\infty}e^{\lambda_0 T}G_{\lambda_0}(\omega({T}),y')]\\
&\geq\sum_{k=0}^\infty \mathbb{E}_x[{1}_{T\in [k+2,k+3)}e^{\lambda_0 T}G_{\lambda_0}(\omega({T}),y')]\\
&=\sum_{k=0}^\infty\sum_{n=1}^\infty \mathbb{E}_x[{1}_{T\in [k+2,k+3)}1_{\omega(T)\in \partial B_n}e^{\lambda_0 T}G_{\lambda_0}(\omega({T}),y')].\\
\end{split}
\end{equation}

We first show that there exists a positive constant $C$ such that for any $n$ and $k$,
\begin{equation}\label{zeros6}
\begin{split}
&\mathbb{E}_x[{1}_{T\in [k+2,k+3)}1_{\omega(T)\in \partial B_n}e^{\lambda_0 T}G_{\lambda_0}(\omega({T}),y')]\\
&\geq \int_k^{k+1} CG_{\lambda_0}(y_n,y')\mathbb{E}_x[1_{C_n}(\omega(s))e^{\lambda_0 s}]ds.
\end{split}
\end{equation}
Using \eqref{zeros6}, we will complete the proof.

Let $C_H=e^{-D_{2,2}},$ By construction of $D_{r,l}$, $e^{-D_{1,1}}\geq C_H$. The first inequality of \eqref{eq:61} follows from Harnack inequality \eqref{harnack4}. To obtain the second inequality of \eqref{eq:61}, we only consider paths satisfying that $\omega(s)\in C_n$ for some $s \in [k,k+1]$, $T(\omega)\in[k+2,k+3)$ and $\omega(T)\in \partial B_n$ (see Figure \ref{figure4}). The strong Markov property \eqref{4.1} shows the last equation of \eqref{eq:61}.
\begin{equation}\label{eq:61}
\begin{split}
&\mathbb{E}_x[{1}_{T\in [k+2,k+3)}1_{\omega(T)\in \partial B_n}e^{\lambda_0 T}G_{\lambda_0}(\omega({T}),y')]\\
&= \int_k^{k+1}\mathbb{E}_x[{1}_{T\in [k+2,k+3)}1_{\omega(T)\in \partial B_n}e^{\lambda_0 T}G_{\lambda_0}(\omega({T}),y')]ds\\
&\overset{\eqref{harnack4}}\geq \int_k^{k+1}C_HG_{\lambda_0}(y_n,y')\mathbb{E}_x[{1}_{T\in [k+2,k+3)}1_{\omega(T)\in \partial B_n}e^{\lambda_0 T}]ds\\
&\geq \int_k^{k+1}C_HG_{\lambda_0}(y_n,y')\mathbb{E}_x[1_{C_n}(\omega(s)){1}_{T\in [k+2,k+3)}1_{\omega(T)\in \partial B_n}e^{\lambda_0 T}]]ds\\
&\overset{\eqref{4.1}}=\int_k^{k+1}C_HG_{\lambda_0}(y_n,y')\\
&\quad\quad\quad\quad\quad\quad\times\mathbb{E}_x[1_{C_n}(\omega(s))e^{\lambda_0s}\mathbb{E}_{\omega(s)}[{1}_{T-s\in [k-s+2,k-s+3)}1_{\omega(T)\in \partial B_n}e^{\lambda_0 (T-s)}]]ds.\\
\end{split}
\end{equation}

Since for any $s\in [k,k+1],$ $0\leq k-s+1\leq 1,$ to find the constant $C$ satisfying \eqref{zeros6}, we show that there exists a positive constant $c_1$ such that  
\begin{equation}\label{zeros7}
\inf_{n}\inf_{z\in C_n}\inf_{t\in [0,1]}\mathbb{E}_z[{1}_{T\in [t+1,t+2)}1_{\omega'(T)\in \partial B_n}e^{\lambda_0 T}]]\geq c_1.
\end{equation}
 Let $\tau_n$ be the exit time of the ball ${B(y_n,4)}.$  For any $z\in C_n$ and $\omega'\in \Omega_z$ with $\tau_n(\omega')>T(\omega'),$ $\omega'(T)\in \partial B_n$ (See Figure \ref{figure4}). Thus for any $t\in [0,1]$ and $z\in C_n,$
\begin{equation}\label{eq:611}
E_{z}[1_{T\in[t+1, t+2)}1_{T<\tau_n}e^{\lambda_0 T}]\leq\mathbb{E}_z[{1}_{T\in [t+1,t+2)}1_{\omega'(T)\in \partial B_n}e^{\lambda_0 T}]]
\end{equation}
Denote
$c_n:=\inf_{z\in C_{n}}\inf_{t\in[0,1]} E_{z}[1_{T\in[t+1, t+2]}1_{T<\tau_n}e^{\lambda_0 T}].$
Since $y_n=\gamma_n y_1$, $c_n=c_1$ and $c_1$ is satisfies \eqref{zeros7}. 
By \eqref{eq:61}, \eqref{zeros7}, $C:=c_1C_H$ satisfies \eqref{zeros6}.

 By \eqref{zeros4}, we have the first inequality of \eqref{zeros5}. Using \eqref{zeros6}, we have the second inequality of \eqref{zeros5}. The last inequality of \eqref{zeros5} follows from Harnack inequality \eqref{harnack4}.
 \begin{equation}\label{zeros5} \begin{split}
G_{\lambda_0}(x,y')
&\overset{\eqref{zeros4}}\geq\sum_{k=0}^\infty\sum_{n=1}^\infty \int_{k}^{k+1}\mathbb{E}_x[{1}_{T\in [k+2,k+3)}1_{\omega(T)\in \partial B_n}e^{\lambda_0 T}G_{\lambda_0}(\omega({T}),y')]ds\\
&\overset{\eqref{zeros6}}\geq\sum_{n=1}^\infty\sum_{k=0}^\infty CG_{\lambda_0}(y_n,y')\mathbb{E}_x\left[\int_k^{k+1}e^{\lambda_0 s}1_{C_n}(\omega(s))ds\right]\\
&=\sum_{n=1}^\infty CG_{\lambda_0}(y_n,y')\mathbb{E}_x\left[\int_0^\infty e^{\lambda_0 s}1_{C_n}(\omega(s))ds\right]\\
&=C\sum_{n=1}^\infty G_{\lambda_0}(y_n,y')\int_{C_n}G_{\lambda_0}(x,z)d\mu(z)\\
&\overset{\eqref{harnack4}}\geq CC_H^2\sum_{n=1}^\infty G_{\lambda_0}^2(y_n,x)\mu({C_n})>CC_H^2\sum_{n=1}^\infty c^2=\infty.
\end{split}
\end{equation}
This is a contradiction. Hence, the ${\lambda_0}$-Green function $G_{\lambda_0}(x,y)$ converges to zero as $y$ goes to infinity. Since $G_\lambda(x,y)\leq G_{\lambda_0}(x,y)$ for any $x,y\in \widetilde{X}$, we complete the proof.
 \end{proof}

Let $x_0$ be a point of $\X$ and let $\{y_n\}$ and $\{y_n'\}$ be sequences converging to a point $\xi$ of the Gromov boundary of $\X$. By Theorem \ref{MB1}, for all $x\in\X$, the functions $K_\lambda(x_0,x,y_n)$ and $K_\lambda(x_0,x,y_n')$ converge pointwise to the same function $K_\lambda(x_0,x,\xi)$.
The map from the Gromov boundary to the $\lambda$-Martin boundary is defined by
$$\xi\mapsto K_\lambda(x_0,x,\xi).$$
For two different points $\xi_1,\xi_2$ in the Gromov boundary, let $g_1$ and $g_2$ be the geodesic rays that converge to $\xi_1$ and $\xi_2$, respectively. Let $g_3$ is the geodesic line from $\xi_1$ to $\xi_2$. Since $\X$ is hyperbolic, there exist a point $p$ and a constant $C$ such that for all $i\in\{1,2,3\}$, $p$ is in the $C$-neighborhood of $g_i$. By Harnack inequality and Ancona inequality, for sufficiently large $t>0$, $$K_\lambda(x_0, g_1(t),\xi_2)=O(G_\lambda(p, g_1(t)))\text{ and } K_\lambda(x_0,g_2(t),\xi_2)=O(G_\lambda^{-1}(x_0,g_2(t))).$$ By Lemma \ref{Zero}, $\displaystyle \lim_{t\rightarrow \infty}K_\lambda(x_0, g_1(t),\xi_2)=0$ and $\displaystyle \lim_{t\rightarrow \infty}K_\lambda(x_0, g_2(t),\xi_2)=\infty.$ Hence, two distinct points in the Gromov boundary converge to the distinct points in the $\lambda$-Martin boundary. 
\begin{thm}For any $\lambda\in [0,\lambda_0]$, the Gromov boundary coincides with the $\lambda$-Martin boundary.
\end{thm}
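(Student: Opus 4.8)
The plan is to show that the natural map $\iota\colon\partial\X\to\partial^{M}_\lambda\X$ from the Gromov boundary to the $\lambda$-Martin boundary, given by $\xi\mapsto K_\lambda(x_0,\cdot,\xi)$, is a well-defined continuous bijection. Since $\X$ is a locally finite, hence proper, geodesic $\delta$-hyperbolic space, $\partial\X$ is compact and $\X\cup\partial\X$ is a compact metrizable space; and $\partial^{M}_\lambda\X$, being a subspace of the (metrizable) space of uniformly-on-compacta bounded positive functions, is Hausdorff. A continuous bijection from a compact space onto a Hausdorff space is a homeomorphism, so this will finish the proof; extending by the identity on $\X$ then identifies the two compactifications.

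\emph{Step 1 (well-definedness of $\iota$).} Fix $x_0\in\X$. For $\xi\in\partial\X$ and any sequence $y_n\to\xi$ in $\X$ I would first note, by Harnack chaining along $[x_0,x]$ (Corollary \ref{G2}, Corollary \ref{harnack5}), that $K_\lambda(x_0,x,y)$ stays in a fixed compact interval as $y$ ranges over $\X$, so it suffices to prove $\{K_\lambda(x_0,x,y_n)\}_n$ is Cauchy. For $n,m$ large the segment $[x,y_n]$ has large length and, by $\delta$-hyperbolicity (both $[x,y_n]$ and $[x_0,y_m]$ issue from the bounded set $\{x,x_0\}$ and head towards $\xi$, so they fellow-travel with a constant depending only on $\delta$ and $d(x,x_0)$), lies in a uniform neighborhood of $[x_0,y_m]$. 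Applying Theorem \ref{MB1} with $(x,x',y,y')\mapsto(x,x_0,y_n,y_m)$ gives
\[
\left|\frac{K_\lambda(x_0,x,y_n)}{K_\lambda(x_0,x,y_m)}-1\right|
=\left|\frac{G_\lambda(x,y_n)/G_\lambda(x_0,y_n)}{G_\lambda(x,y_m)/G_\lambda(x_0,y_m)}-1\right|
\le C\rho^{\,d(x,y_n)},
\]
which tends to $0$ as $n,m\to\infty$ since $d(x,y_n)\to\infty$ and $\rho<1$; comparing two sequences $y_n,y_n'\to\xi$ in the same way shows the limit is sequence-independent. Thus $\iota(\xi)(x):=\lim_n K_\lambda(x_0,x,y_n)$ is well defined, with $\iota(\xi)(x_0)=1$, and being a locally uniform limit of the $\lambda$-harmonic functions $K_\lambda(x_0,\cdot,y_n)$ (Corollary \ref{harnack5}), it is itself positive $\lambda$-harmonic.

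\emph{Step 2 (injectivity).} Given $\xi_1\ne\xi_2$ in $\partial\X$, pick geodesic rays $g_1,g_2$ from $x_0$ to $\xi_1,\xi_2$ and a geodesic line $g_3$ from $\xi_1$ to $\xi_2$; by hyperbolicity there is a point $p$ within a bounded distance of all three. For $y\to\xi_2$ the geodesics $[x_0,y]$ and $[g_1(t),y]$ both pass within a bounded distance of $p$, so the Ancona inequality together with Harnack (Theorems \ref{lAncona}, \ref{rancona}, Corollary \ref{harnack5}) gives $K_\lambda(x_0,g_1(t),y)=G_\lambda(g_1(t),y)/G_\lambda(x_0,y)\asymp_{C} G_\lambda(g_1(t),p)$ uniformly in $y$; letting $y\to\xi_2$ and then $t\to\infty$, Lemma \ref{Zero} forces $\iota(\xi_2)(g_1(t))\to 0$. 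On the other hand $g_1(t)$ lies on a geodesic from $x_0$ towards $\xi_1$, so for $y\to\xi_1$ the segment $[x_0,y]$ passes near $g_1(t)$ and the Ancona inequality yields $K_\lambda(x_0,g_1(t),y)\asymp_{C}1/G_\lambda(x_0,g_1(t))$, whence $\iota(\xi_1)(g_1(t))\asymp_{C}1/G_\lambda(x_0,g_1(t))\to\infty$ by Lemma \ref{Zero}. Hence $\iota(\xi_1)\ne\iota(\xi_2)$.

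\emph{Step 3 (continuity, surjectivity, conclusion; and the main obstacle).} For continuity of $\iota$: if $\xi_j\to\xi$ in $\partial\X$, choose for each $j$ a point $z_j$ far enough out along a ray to $\xi_j$ that $K_\lambda(x_0,x,z_j)$ is within $1/j$ of $\iota(\xi_j)(x)$ on $B(x_0,j)$ and $z_j\to\xi$ in $\X\cup\partial\X$; then Step 1 applied to $z_j\to\xi$ gives $K_\lambda(x_0,\cdot,z_j)\to\iota(\xi)$ locally uniformly, so $\iota(\xi_j)\to\iota(\xi)$. For surjectivity: any point of $\partial^{M}_\lambda\X$ is $\lim_n K_\lambda(x_0,\cdot,y_n)$ for some $y_n$ leaving every compact set; compactness of $\X\cup\partial\X$ lets us pass to a subsequence $y_{n_k}\to\xi\in\partial\X$, and Step 1 forces the limit to be $\iota(\xi)$. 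Thus $\iota$ is a continuous bijection of the compact space $\partial\X$ onto the Hausdorff space $\partial^{M}_\lambda\X$, hence a homeomorphism. All the genuinely hard work has already been done in Theorems \ref{lAncona}, \ref{rancona}, \ref{MB1} and Lemma \ref{Zero}; the remaining delicate point is purely geometric bookkeeping — verifying, via $\delta$-hyperbolicity, that in each application the relevant segments fellow-travel with constants \emph{independent of the indices and of $\lambda\in[0,\lambda_0]$}, so that the hypotheses of Theorem \ref{MB1} and of the Ancona inequalities are met uniformly. That $\lambda$-uniformity is exactly what pushes the classical identification of the two boundaries up to, and including, the bottom of the spectrum $\lambda=\lambda_0$.
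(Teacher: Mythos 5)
Your proposal follows the same route as the paper: well-definedness of $\xi\mapsto K_\lambda(x_0,\cdot,\xi)$ via Theorem~\ref{MB1}, injectivity via the three-geodesics-plus-Lemma~\ref{Zero} argument, and surjectivity via an Arzel\`a--Ascoli argument on the geodesics $[x_0,y_n]$. The paper carries out the first two steps in the paragraph preceding the theorem statement and proves only surjectivity in the displayed proof; you are somewhat more explicit in closing the topology (continuous bijection from compact $\partial\X$ to Hausdorff Martin boundary is a homeomorphism), which is a genuine, if small, improvement in completeness.

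There is one technical slip in Step~1. It is not true that $[x,y_n]$ lies in a uniform neighborhood of $[x_0,y_m]$ merely because both $y_n$ and $y_m$ converge to $\xi$: the geodesics $[x_0,y_n]$ and $[x_0,y_m]$ fellow-travel only up to roughly $(y_n|y_m)_{x_0}$, which is at most $\min(d(x_0,y_n),d(x_0,y_m))$ and may be strictly smaller, and the tail of $[x,y_n]$ beyond the branching region need not be near $[x_0,y_m]$. So the direct application of Theorem~\ref{MB1} to the quadruple $(x,x_0,y_n,y_m)$ is not in general justified. The correct way to apply Theorem~\ref{MB1} is to fix a geodesic ray $g$ from $x_0$ to $\xi$ and compare both kernels to an intermediate one along $g$: for any $t$, once $n$ is large enough that $(y_n|\xi)_{x_0}\geq t+O(\delta)$, the segment $[x,g(t)]$ is contained in a $(d(x,x_0)+O(\delta))$-neighborhood of $[x_0,y_n]$, and Theorem~\ref{MB1} applied to $([x,g(t)],[x_0,y_n])$ and to $([x,g(t)],[x_0,y_m])$ gives
\[
\left|\frac{K_\lambda(x_0,x,y_n)}{K_\lambda(x_0,x,g(t))}-1\right|\leq C\rho^{\,d(x,g(t))}
\quad\text{and}\quad
\left|\frac{K_\lambda(x_0,x,y_m)}{K_\lambda(x_0,x,g(t))}-1\right|\leq C\rho^{\,d(x,g(t))},
\]
from which the Cauchy property (uniformly in $\lambda\in[0,\lambda_0]$, since $C$ and $\rho$ do not depend on $\lambda$) follows by letting $t\to\infty$. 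The same fix should be carried through your continuity argument in Step~3.
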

\begin{proof} Suppose that a sequence $\{y_n\}$ in $ \widetilde{X}$ converges to a function $K_\lambda(x_0,x,\zeta)$ of the $\lambda$-Martin boundary. Let us consider the geodesic $g_n$ from $x_0$ to $y_n$. By Arzel\`a-Ascoli's theorem (\cite{BH} Theorem I.3.10), for any integer $m$, the sequence of geodesics $g_n|_{[0,m]}$ has a subsequence that converges to a geodesic of length $m$. By the induction on the lengths of geodesics, we have a subsequence of $g_{n_k}$ that converges to a geodesic ray $g$. Let $\xi$ be a point satisfying $\xi=\displaystyle \lim_{t\rightarrow \infty} g(t)$. Then the subsequence $\{y_{n_k}\}$ converges to $\xi$. Since the subsequence $\{K_\lambda(x_0,x,y_{n_k})\}$ converges pointwise to $K_\lambda(x_0,x,\xi)$, $K_\lambda(x_0,x,\xi)=K_\lambda(x_0,x,\zeta)$ for all $x\in\X$. Hence, the map from the Gromov boundary to $\lambda$-Martin boundary is surjective.
\end{proof}
\appendix
\section{Dirichlet forms and the heat kernels on precompact open sets}\label{appendix}
\addcontentsline{toc}{section}{Appendices}
\renewcommand{\thesubsection}{\Alph{subsection}}

In this appendix, we recall the definitions related to the Dirichlet form on the $L^2$-space of a graph $(\X, d\mu)$ and the construction of the heat kernel on a precompact connected open set $O$ of $\X$. 
\begin{defn} Let $\mathcal{H}=L^2(\X,d\mu)$ with the standard $L^2$-inner product $\left\langle \cdot,\cdot \right\rangle$. For a dense subspace $Dom(\mathcal{E})$ of $\mathcal{H}$, the map $\mathcal{E}:Dom(\mathcal{E}) \times Dom(\mathcal{E}) \rightarrow \mathbb{R}$ is a \textit{symmetric form} if the following properties hold:
 for all $u,v,w \in Dom(\mathcal{E})$ and $\alpha \in \mathbb{R}$,
\begin{align*}
\mathcal{E}(\alpha u+v,w)&= \alpha \mathcal{E}(u,w)+\mathcal{E}(v,w)\\
%\mathcal{E}(\alpha u,v)&=&\alpha\mathcal{E}(u,v)\\
\mathcal{E}(u,u)&\geq 0\\
\mathcal{E}(u,v)&=\mathcal{E}(v,u).
\end{align*}
\end{defn}

Let $(\mathcal{E},Dom(\mathcal{E}))$ be a symmetric form. For any $\alpha>0$, we define another symmetric form $\mathcal{E}_\alpha$ as follows:
$$\mathcal{E}_\alpha(u,v):=\mathcal{E}(u,v) +\alpha \left\langle u,v \right\rangle \text{ for all } u,v \in Dom(\mathcal{E})\ \ \text{   and }$$ $$Dom(\mathcal{E}_\alpha)=Dom(\mathcal{E}),$$
In particular, when $\alpha=1$, we call $\sqrt{\mathcal{E}_1(u,u)}$ the $\mathcal{E}_1$-norm of a function $u$ in $Dom(\mathcal{E})$.

 For all $u,v\in \mathcal H$, denote by $\wedge$ and $\vee$ the \emph{minimum} and the \emph{maximum functions}:
$$u\wedge v(x)=\min \{u(x),v(x)\}\ \ \text{ and}\ \ u \vee v(x)=\max \{u(x),v(x)\}.$$
\begin{defn}[Dirichlet form]
 Let $\mathcal{E}$ be a symmetric form with domain $ Dom(\mathcal{E})$ contained in $\mathcal{H}$. 
\begin{enumerate}
 \item[(1)] Let $C_0(\X)$ be the space of continuous functions on $\X$ that vanish at infinity. A subspace $\mathcal{C}$ of $Dom(\mathcal{E}) \cap C_0(\X)$ is a \textit{core} if $\mathcal{C}$ is dense in $Dom(\mathcal{E})$ with $\mathcal{E}_1$-norm and $\mathcal{C}$ is dense in $C_0(\X)$ with uniform norm $||\cdot ||_\infty$. The symmetric form $\mathcal{E}$ is \textit{regular} if $\mathcal{E}$ has a core.

 \item[(2)] A symmetric form $\mathcal{E}$ is \textit{strongly local} if for all compactly supported functions \\
 $u,v \in Dom(\mathcal{E})$, $\mathcal{E}(u,v)=0$, when $v$ is constant on a neighborhood of $supp(u)$.
 \item[(3)] A symmetric form $\mathcal{E}$ is \textit{closed} if for any sequence of functions $u_n$ in $Dom(\mathcal{E})$ satisfying $ \displaystyle\lim_{m,n\rightarrow \infty}\mathcal{E}_1(u_n -u_m,u_n-u_m)=0$,
there exists a function $u$ in $Dom(\mathcal{E})$ such that $\displaystyle\lim_{n\rightarrow\infty} \mathcal{E}_1(u_n-u,u_n-u) =0.$
\item[(4)] A closed symmetric form $\mathcal{E}$ is \textit{Markovian} if the following hold: for all $u$ in $Dom(\mathcal{E})$, if $v=(0\wedge u)\vee 1$, then $\mathcal{E}(v,v) \leq \mathcal{E}(u,u).$
\item[(5)] A symmetric form $\mathcal{E}$ is a \textit{Dirichlet form} if $\mathcal{E}$ is closed and Markovian.
\end{enumerate}\end{defn} 
Using the discreteness of the spectrum of $(\Delta,Dom_O(\Delta))$ and the smoothness of the eigenfunctions of $(\Delta,Dom_O(\Delta))$, we will construct a heat kernel on $O$.
 \begin{thm} \emph{(\cite{Sc} Theorem A.3)} The spectrum $\sigma(A)$ of a compact operator $A$ on a Hilbert space $\mathcal{H}$ is at most countable and has no nonzero accumulation point. If the dimension of $\mathcal{H}$ is infinite, $0\in \sigma(A)$. The eigenspace of any eigenvalue $\lambda\neq0$ of $A$ is finite dimensional. 
 \end{thm}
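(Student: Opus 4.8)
The plan is to reconstruct the Riesz--Schauder theory of compact operators in four stages. Throughout, for a scalar $\lambda\neq0$ I set $T=A-\lambda I$ and note that $(A-\lambda I)^n=(-\lambda)^nI+K_n$ with $K_n$ compact, so each power of $T$ has the form ``an invertible scalar multiple of $I$ plus a compact operator''. First I would establish finite multiplicity: on $N:=\ker T$ the operator $A$ acts as multiplication by $\lambda$, so the closed unit ball of $N$ is $\lambda^{-1}A$ applied to the unit ball of $N$ and hence relatively compact; a Hilbert space whose unit ball is relatively compact is finite-dimensional, so $\dim\ker T<\infty$, which is the last assertion of the theorem. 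Next I would prove $T(\mathcal H)$ closed. Since $\mathcal H=N\oplus N^\perp$ and $\|Tx\|$ depends only on the $N^\perp$-component of $x$, it suffices to produce $c>0$ with $\|Tx\|\ge c\|x\|$ for all $x\in N^\perp$; if no such $c$ existed, unit vectors $x_n\in N^\perp$ with $Tx_n\to0$ would, along a subsequence making $Ax_n$ convergent, satisfy $x_n=\lambda^{-1}(Ax_n-Tx_n)\to x$ with $x\in N^\perp$, $\|x\|=1$ and $Tx=0$, forcing $x\in N\cap N^\perp=\{0\}$, a contradiction. This bound makes $T(\mathcal H)$ --- and, applied to the powers, each $T^n(\mathcal H)$ --- closed.

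The third stage is the Fredholm alternative, and I expect it to be the main obstacle: I would show that injectivity of $T$ forces surjectivity, which then identifies $\sigma(A)\setminus\{0\}$ with the set of eigenvalues, since a non-invertible $T$ would otherwise be non-injective. Assume $T$ is injective but $M_1:=T(\mathcal H)\subsetneq\mathcal H$, and set $M_n=T^n(\mathcal H)$, closed by the previous stage. Since $T^n$ is injective, $M_{n+1}=M_n$ would already force $T(\mathcal H)=\mathcal H$, so in fact $M_{n+1}\subsetneq M_n$ for every $n$. By Riesz's lemma choose unit vectors $y_n\in M_n$ with $\operatorname{dist}(y_n,M_{n+1})\ge\tfrac12$; then for $n>m$
\[
Ay_m-Ay_n=\lambda y_m-\bigl(\lambda y_n+Ty_n-Ty_m\bigr),
\]
and the parenthesized vector lies in $M_{m+1}$ (for $y_m\in M_m$ gives $Ty_m\in M_{m+1}$, while $n\ge m+1$ puts $\lambda y_n$ and $Ty_n$ in $M_{m+1}$), so $\|Ay_m-Ay_n\|\ge|\lambda|\operatorname{dist}(y_m,M_{m+1})\ge|\lambda|/2$; this contradicts compactness of $A$ on the bounded sequence $\{y_n\}$, completing the alternative.

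Finally I would exclude nonzero accumulation points of $\sigma(A)$ and deduce countability. If distinct eigenvalues $\lambda_n\to\lambda\neq0$ had eigenvectors $e_n$, then $E_n:=\operatorname{span}(e_1,\dots,e_n)$ would be strictly increasing (eigenvectors for distinct eigenvalues being independent), and Riesz's lemma would give unit vectors $x_n\in E_n$ with $\operatorname{dist}(x_n,E_{n-1})\ge\tfrac12$; writing $x_n=\sum_{k\le n}c_ke_k$ yields $(A-\lambda_nI)x_n=\sum_{k<n}c_k(\lambda_k-\lambda_n)e_k\in E_{n-1}$, so $Ax_n=\lambda_nx_n+v_n$ with $v_n\in E_{n-1}$, and for $n>m$, since $x_m\in E_m\subseteq E_{n-1}$ and $\lambda_n^{-1}v_n\in E_{n-1}$, the difference $\lambda_n^{-1}Ax_n-\lambda_m^{-1}Ax_m$ is $x_n$ minus an element of $E_{n-1}$ and hence has norm $\ge\tfrac12$, contradicting compactness of $A$ on the bounded sequence $\{\lambda_n^{-1}x_n\}$. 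Thus, for each integer $k\ge1$ the closed bounded set $\sigma(A)\cap\{\mu:|\mu|\ge1/k\}$ has no accumulation point and so is finite, whence $\sigma(A)\setminus\{0\}=\bigcup_{k\ge1}\bigl(\sigma(A)\cap\{\mu:|\mu|\ge1/k\}\bigr)$ is at most countable. And if $\dim\mathcal H=\infty$ while $0\notin\sigma(A)$, then $A^{-1}$ is bounded and $I=A^{-1}A$ is compact, forcing the unit ball of $\mathcal H$ to be compact --- impossible; hence $0\in\sigma(A)$ whenever $\dim\mathcal H=\infty$.
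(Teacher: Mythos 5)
The paper does not prove this statement; it is quoted verbatim from Schm\"udgen's book (\cite{Sc}, Theorem A.3) and used as a black box, so there is no in-paper argument to compare with. Your reconstruction is a correct and complete account of the Riesz--Schauder theory in the form needed: stage one (relative compactness of the unit ball of $\ker(A-\lambda I)$) gives finite multiplicity; stage two (the lower bound on $N^\perp$ via a normal-families argument, extended to the powers through the identity $(A-\lambda I)^n=(-\lambda)^nI+K_n$) gives closed ranges; stage three is the standard strictly-decreasing-range contradiction via Riesz's lemma, correctly yielding that injectivity of $A-\lambda I$ forces surjectivity and hence that every nonzero spectral value is an eigenvalue; stage four rules out nonzero accumulation of eigenvalues, again by Riesz's lemma applied to the nested spans $E_n$, and the rescaling by $\lambda_n^{-1}$ is handled correctly since $\lambda_n\to\lambda\neq0$ keeps the sequence bounded. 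The closing observation that $0\notin\sigma(A)$ would make $I=A^{-1}A$ compact is the right way to finish. The only cosmetic remark is that in stage three you invoke closedness of $T^n(\mathcal H)$ and injectivity of $T^n$; both do indeed follow from what precedes (closedness from stage two applied to $(-\lambda)^nI+K_n$, injectivity from injectivity of $T$), so the chain $M_n$ is a genuine strictly decreasing sequence of closed subspaces and Riesz's lemma applies. No gap.
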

 By the definition of the Laplacian and Cauchy inequality, for any $f\in Dom_O(\Delta)$, $$||(-\Delta+I)f||_{L^2(O)}\geq ||f||_{L^2(O)}.$$ Proposition 2.1 in \cite{Sc} shows that $(-\Delta+I)^{-1}$ is a bounded operator. Since the embedding $\iota:W_0^1(O) \rightarrow L^2(O)$ is a compact operator, $\iota \circ (-\Delta+I)^{-1}$ from $L^2(O)$ to $L^2(O)$ is compact. 

\begin{coro}\label{Rell} Let $O$ be a precompact connected open subset of $ \widetilde{X}$. The spectrum of the Laplacian $(\Delta,Dom_O(\Delta))$ is discrete and every eigenspace is finite dimensional.
\end{coro}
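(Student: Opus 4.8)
The plan is to deduce the corollary from the classical spectral theory of compact self-adjoint operators, applied to the operator $K:=\iota\circ(-\Delta+I)^{-1}$ on $L^2(O)$ whose compactness was established just above. First I would record the two elementary features of $K$ that are needed. For self-adjointness and positivity: since $-\Delta$ is non-negative and self-adjoint, $-\Delta+I$ is self-adjoint with spectrum contained in $[1,\infty)$, so $(-\Delta+I)^{-1}$ is a bounded, self-adjoint, non-negative operator, and viewed as a map $L^2(O)\to L^2(O)$ it is precisely $K$. For injectivity: if $Kf=0$ then $(-\Delta+I)^{-1}f=0$, and applying $-\Delta+I$ gives $f=0$; hence $0$ is not an eigenvalue of $K$ and every eigenvalue of $K$ is strictly positive.

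Next I would invoke the spectral theorem for compact self-adjoint operators together with Theorem A.3 of \cite{Sc}. Since $\dim L^2(O)=\infty$, there is an orthonormal basis $\{\phi_k\}_{k\geq 1}$ of $L^2(O)$ consisting of eigenfunctions of $K$, with eigenvalues $\mu_k>0$ satisfying $\mu_k\to 0$, and each eigenspace $\ker(K-\mu_k I)$ finite dimensional. Because $K\phi_k=\mu_k\phi_k$ with $\mu_k\neq0$ and $K\phi_k\in Dom_O(\Delta)$, we have $\phi_k\in Dom_O(\Delta)$; applying $-\Delta+I$ then gives $-\Delta\phi_k=\lambda_k\phi_k$ with $\lambda_k:=\mu_k^{-1}-1\geq 0$. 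Since $\mu_k\to 0$ we get $\lambda_k\to\infty$, and $\ker(-\Delta-\lambda_k I)=\ker(K-\mu_k I)$ is finite dimensional.

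Finally I would check that $\sigma(-\Delta)=\{\lambda_k\}_{k\geq 1}$, so that no further spectrum is produced. As $-\Delta\geq 0$, only real $\lambda\geq 0$ can lie in its spectrum; for such a $\lambda$ with $\lambda\notin\{\lambda_k\}$ the scalar $\mu:=(1+\lambda)^{-1}$ lies in $(0,1]$ and differs from every $\mu_k$, so $\mu$ belongs to the resolvent set of $K$, whose spectrum is $\{0\}\cup\{\mu_k\}$. Using $-\Delta=K^{-1}-I$ on $Dom_O(\Delta)$ one checks directly that $(-\Delta-\lambda I)^{-1}=-(1+\lambda)^{-1}(K-\mu I)^{-1}K$ is a bounded operator, so $\lambda$ lies in the resolvent set of $-\Delta$. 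Hence $\sigma(-\Delta)$ is the discrete set $\{\lambda_k\}$, accumulating only at $+\infty$, with finite-dimensional eigenspaces. All the required inputs — the compact embedding $W_0^1(O)\hookrightarrow L^2(O)$ of Lemma~\ref{lem:3.5.1}, the bounded inverse $(-\Delta+I)^{-1}$, and Theorem A.3 of \cite{Sc} — are already in hand, so there is no genuine obstacle; the only step that demands a little care is the bookkeeping that translates the spectral decomposition of $K$ back to $-\Delta$ and confirms that it exhausts the spectrum.
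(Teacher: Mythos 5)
Your proposal is correct and follows exactly the route the paper intends: the paper records, immediately before the corollary, that $K=\iota\circ(-\Delta+I)^{-1}$ is a compact operator on $L^2(O)$ and then invokes Theorem~A.3 of \cite{Sc}, leaving the translation of $\sigma(K)$ back to $\sigma(-\Delta)$ implicit. You have simply made that translation explicit (self-adjointness and injectivity of $K$, the change of variables $\lambda_k=\mu_k^{-1}-1$, and the resolvent identity $(-\Delta-\lambda I)^{-1}=-(1+\lambda)^{-1}(K-\mu I)^{-1}K$ showing the spectrum is exhausted), which is standard bookkeeping and entirely consistent with the paper's argument.
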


Let $Y$ and $Z$ be normed vector spaces. A function $A$ from an open set $U$ of $Y$ to $Z$ is \emph{Fr\'echet differentiable} if for all $y\in U$, there exists a bounded linear operator $DA(y)$ from $Y$ to $Z$ such that 
$$\lim_{t\rightarrow 0} \frac{||A(y+t)-A(y)-DA(y)t||_{Z}}{||t||_{Y}}=0.$$
The operator $DA(y)$ is \emph{Fr\'echet derivative} at $y$. The function $A$ is a $C^1$-function if Fr\'echet derivative $DA$ on $U$ is continuous.
\begin{thm}\emph{(\cite{Mc} 7.2.a Theorem 1, Lagrange multiplier for Banach space)} Let $Y$ be a Banach spaces and let $A,B:Y \rightarrow \mathbb{R}$ be $C^1$-functions. Denote by $DA$ and $DB$ Fr\'echet derivatives. If $f\in B^{-1}(0)$ is an extreme point of $A$ and $DB(f)$ is a nontrivial linear functional, then there exists a Lagrange multiplier $\lambda\in\mathbb{R}$ such that  
$$DA(f)=\lambda DB(f).$$
\end{thm}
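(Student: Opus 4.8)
The plan is to deduce the statement from the elementary fact that a continuous linear functional which annihilates the kernel of a nonzero continuous linear functional must be a scalar multiple of it; the only analytic ingredient needed beyond this is the classical (finite-dimensional) implicit function theorem, applied on a two-dimensional affine slice through $f$.

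First I would use that $DB(f)$ is a nontrivial continuous linear functional to fix a vector $v_0 \in Y$ with $DB(f)v_0 = 1$, so that $Y = \ker DB(f) \oplus \R v_0$. The heart of the argument is to show $\ker DB(f) \subseteq \ker DA(f)$. To this end, take an arbitrary $h \in \ker DB(f)$ and consider $\Phi \colon \R^2 \to \R$, $\Phi(t,s) := B(f + th + sv_0)$. Since $B$ is $C^1$ and $(t,s) \mapsto f + th + sv_0$ is affine, $\Phi$ is $C^1$ with $\Phi(0,0) = 0$ and $\partial_s\Phi(0,0) = DB(f)v_0 = 1 \neq 0$. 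The implicit function theorem then yields a $C^1$ function $s(t)$ near $t = 0$ with $s(0) = 0$ and $\Phi(t,s(t)) \equiv 0$; differentiating this identity at $t = 0$ and using $DB(f)h = 0$ gives $s'(0) = 0$. Hence $\gamma(t) := f + th + s(t)v_0$ is a $C^1$ curve contained in $B^{-1}(0)$ with $\gamma(0) = f$ and $\gamma'(0) = h$.

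Next, because $f$ is an extreme point of $A$ on the constraint set $B^{-1}(0)$, the scalar function $t \mapsto A(\gamma(t))$ has a local extremum at $t = 0$; as $A$ is $C^1$, the chain rule gives $0 = \frac{d}{dt}\big|_{t=0}A(\gamma(t)) = DA(f)\gamma'(0) = DA(f)h$. Since $h$ ranged over all of $\ker DB(f)$, we conclude $\ker DB(f) \subseteq \ker DA(f)$. Finally I would set $\lambda := DA(f)v_0$ and check that $DA(f) = \lambda DB(f)$: for any $y \in Y$, the vector $y - (DB(f)y)v_0$ lies in $\ker DB(f) \subseteq \ker DA(f)$, whence $DA(f)y = (DB(f)y)\,DA(f)v_0 = \lambda\,DB(f)y$.

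The step I expect to require the most care is the construction of the curve $\gamma$: one must verify that $C^1$-regularity of $B$ is exactly what makes $\Phi$ meet the hypotheses of the implicit function theorem, and that the resulting $s(t)$ has vanishing derivative at the origin, so that $\gamma$ is genuinely tangent to the prescribed direction $h$. Everything after that — the first-order condition along $\gamma$ and the extraction of the multiplier $\lambda$ — is routine linear algebra together with one application of the chain rule.
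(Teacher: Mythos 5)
The paper does not prove this theorem; it is quoted from McOwen \cite{Mc} as a known result and applied directly, so there is no in-paper argument to compare against. Your proof is correct and is the standard argument for a single scalar constraint: reduce to the two-dimensional affine slice $(t,s)\mapsto f+th+sv_0$, note that $\Phi(t,s)=B(f+th+sv_0)$ is $C^1$ with $\Phi(0,0)=0$ and $\partial_s\Phi(0,0)=DB(f)v_0=1\neq 0$, invoke the finite-dimensional implicit function theorem to get a $C^1$ curve $\gamma(t)=f+th+s(t)v_0$ inside $B^{-1}(0)$ with $\gamma'(0)=h$ (since $s'(0)=-DB(f)h=0$), conclude $DA(f)h=\frac{d}{dt}\big|_{t=0}A(\gamma(t))=0$ from the constrained extremality of $f$, and finally identify $\lambda=DA(f)v_0$ by writing $y=(y-(DB(f)y)v_0)+(DB(f)y)v_0$. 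Every hypothesis you use ($C^1$-regularity of $A,B$ and nontriviality of $DB(f)$) is exactly what the statement supplies, and the reading of ``extreme point'' as a local extremum of $A|_{B^{-1}(0)}$ is the intended one, as is clear from how the paper applies the theorem in the passage that follows. The proof is complete; no gap.
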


Let $O$ be a precompact connected open set in $\X$. Define functions $$A(f)=||f'||^2_{L^2(\X)},\qquad B(f)=||f||_{L_2(\X)}^2-1$$ on $W_0^1(O)$. Fr\'echet derivatives of  $A$ and $B$ are $$DA(f)g=2\mathcal{E}(f,g)\text{ and }DB(f)g=2(f,g), \text{ respectively.}$$ 

Suppose that $A(f)$ is an extreme value of $A$ on $B^{-1}(0)$.
 Since Fr\'echet derivative $DB(f)$ is nontrivial, there exists a constant $\lambda$ such that for all $g \in W_0^1(O)$,
 $$\mathcal{E}(f,g)=\lambda (f,g).$$
 By Cauchy inequality, $f$ is in $Dom_O(\Delta)$ and $f$ is an eigenfunction of $-\Delta$ with eigenvalue $\lambda$. Using this fact, we find an orthonormal basis of $L^2(O)$ which consists of the eigenfunctions of $\Delta$: 
 \begin{thm}\emph{(\cite{Mc} 7.2.b)}\label{obasis} Let $O$ be a precompact connected open subset of $ \widetilde{X}$. There is a maximal orthonormal set $\{p^{O}_{i}\}$ of eigenfunctions of $-\Delta$ satisfying the following properties: for all $f \in L^2(O)$,
\begin{equation}\label{basis} f=\displaystyle \sum_{i=1}^{\infty} \left\langle f,p^{O}_{i}\right\rangle p^{O}_{i}\end{equation}
and the sequence of the eigenvalues $\lambda^O_i$ corresponding to the eigenfunctions $p^O_i$ is increasing.
\end{thm}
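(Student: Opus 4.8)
The plan is to construct the family $\{p_i^O\}$ inductively by the Courant--Fischer minimization scheme, relying only on facts already in place: the compact embedding $W_0^1(O)\hookrightarrow L^2(O)$ (Lemma~\ref{lem:3.5.1}, the same fact behind Corollary~\ref{Rell}), which lets a bounded sequence in $W_0^1(O)$ have a subsequence converging weakly in $W_0^1(O)$ and strongly in $L^2(O)$; the weak lower semicontinuity of $f\mapsto\mathcal{E}(f,f)=\|f'\|_{L^2(\X)}^2$ on $W_0^1(O)$; and the discussion preceding the theorem, which via the Lagrange multiplier theorem (with $DA(f)g=2\mathcal{E}(f,g)$ and $DB(f)g=2(f,g)$) shows that if $f$ realizes an extreme value of $A(f)=\|f'\|_{L^2(\X)}^2$ on $B^{-1}(0)=\{\|f\|_{L^2}=1\}$, then $f\in Dom_O(\Delta)$ is an eigenfunction of $-\Delta$ with eigenvalue $A(f)$.

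First I would handle the base case. Since $A\ge 0$ on the nonempty set $\{f\in W_0^1(O):\|f\|_{L^2}=1\}$, set $\lambda_1^O=\inf A$ over it and take a minimizing sequence; boundedness in $W_0^1(O)$ gives, along a subsequence, a weak $W_0^1$-limit that is also a strong $L^2$-limit $p_1^O$ with $\|p_1^O\|_{L^2}=1$, and weak lower semicontinuity gives $A(p_1^O)\le\lambda_1^O$, so $p_1^O$ attains the infimum; by the Lagrange multiplier theorem it is an eigenfunction with $-\Delta p_1^O=\lambda_1^O p_1^O$ and $\lambda_1^O=\mathcal{E}(p_1^O,p_1^O)$. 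For the inductive step, given orthonormal eigenfunctions $p_1^O,\dots,p_k^O$ with $\lambda_1^O\le\dots\le\lambda_k^O$, let $V_k=\{f\in W_0^1(O):(f,p_i^O)=0,\ i\le k\}$, a closed, hence weakly closed, infinite-dimensional subspace. Minimizing $A$ over $\{f\in V_k:\|f\|_{L^2}=1\}$, the same compactness argument (the weak limit of a minimizing sequence stays in $V_k$) yields a minimizer $p_{k+1}^O\in V_k$ with value $\lambda_{k+1}^O:=A(p_{k+1}^O)$. The Lagrange multiplier theorem applied to $A,B$ on the Banach space $V_k$ gives $\mathcal{E}(p_{k+1}^O,g)=\lambda_{k+1}^O(p_{k+1}^O,g)$ for all $g\in V_k$; for arbitrary $g\in W_0^1(O)$ I would write $g=g_0+\sum_{i\le k}(g,p_i^O)p_i^O$ with $g_0\in V_k$ and use $(p_{k+1}^O,p_i^O)=0$ and $\mathcal{E}(p_{k+1}^O,p_i^O)=\mathcal{E}(p_i^O,p_{k+1}^O)=\lambda_i^O(p_i^O,p_{k+1}^O)=0$ to promote the identity to all of $W_0^1(O)$. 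Thus $p_{k+1}^O\in Dom_O(\Delta)$ is an eigenfunction, and $\lambda_{k+1}^O\ge\lambda_k^O$ because the admissible set for $p_{k+1}^O$ is contained in that for $p_k^O$; hence the eigenvalues are increasing.

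To finish I would prove maximality of $\{p_i^O\}$. First $\lambda_k^O\to\infty$: otherwise $\{\lambda_k^O\}$ is bounded, so $\{p_k^O\}$ is bounded in $W_0^1(O)$ and has an $L^2$-convergent subsequence, contradicting $\|p_k^O-p_j^O\|_{L^2}^2=2$ for $k\ne j$. Next, let $f\in L^2(O)$ with $(f,p_i^O)=0$ for all $i$. Since $(-\Delta+I)^{-1}$ is bounded and self-adjoint on $L^2(O)$ with $(-\Delta+I)^{-1}p_i^O=(1+\lambda_i^O)^{-1}p_i^O$, the function $h:=(-\Delta+I)^{-1}f\in Dom_O(\Delta)\subset W_0^1(O)$ is again orthogonal to every $p_i^O$ and vanishes iff $f$ does, so I may assume $f\in W_0^1(O)$ and $f\in V_k$ for all $k$. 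Minimality of $\lambda_{k+1}^O$ over unit vectors of $V_k$ gives $A(f)\ge\lambda_{k+1}^O\|f\|_{L^2}^2$ for every $k$ (scale $f$ to unit norm if $f\ne 0$); letting $k\to\infty$ forces $f=0$. Hence $\{p_i^O\}$ is a maximal orthonormal set, \eqref{basis} holds, and $\{\lambda_i^O\}$ increases.

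The hard part will be the inductive step: making sure each constrained minimization actually attains its infimum --- precisely where the Rellich-type compactness $W_0^1(O)\hookrightarrow L^2(O)$ is indispensable --- and that a constrained extremum found inside $V_k$ is a genuine eigenfunction of $-\Delta$ on all of $W_0^1(O)$, i.e.\ that the multipliers of the orthogonality constraints vanish, which I handle by decomposing test functions along $p_1^O,\dots,p_k^O$. The remaining verifications are routine functional analysis.
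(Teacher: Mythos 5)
Your proof is correct and follows the same Courant--Fischer variational scheme that the paper sets up in the discussion immediately preceding Theorem~\ref{obasis} (the functions $A$, $B$, the Lagrange multiplier lemma, and the remark that a constrained extremum of $A$ lies in $Dom_O(\Delta)$ and is an eigenfunction), which is also the content of the cited McOwen~\S7.2.b. The details you add---attaining the infimum via the compact embedding $W_0^1(O)\hookrightarrow L^2(O)$ and weak lower semicontinuity, promoting the multiplier identity from $V_k$ to all of $W_0^1(O)$ by decomposing test functions along $p_1^O,\dots,p_k^O$, and deducing maximality from $\lambda_k^O\to\infty$ together with the resolvent trick---are exactly the standard ones and are sound.
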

 Let $O$ be a precompact open subset of $ \widetilde{X}$. Using the eigenfunctions $p_i^O$ of $(-\Delta,Dom_O(\Delta))$ with eigenvalues $\lambda_i^O$, define a function $p_O:(0,\infty) \times O \times O \rightarrow \mathbb{R}$ as follows:
\begin{equation}\label{A.2}p_O(t,x,y) =\displaystyle \sum e^{-\lambda^{O}_it}p^{O}_{i}(x)p^{O}_{i}(y).\end{equation}

Similar to the proof of Lemma \ref{bwws2}, we also have the following lemma.
 \begin{lem}\label{bwws3}
Let $O$ be a precompact open subset of $ \widetilde{X}$. The eigenfunctions of $(\Delta,Dom_O(\Delta))$ are contained in $D_c^\infty(\X)$.
\end{lem}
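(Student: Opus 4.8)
The plan is to derive this lemma directly from Lemma~\ref{bwws2}, exactly as the remark preceding the statement suggests, by observing that an eigenfunction of the Dirichlet Laplacian on $O$ is a $\lambda$-harmonic function on $O$ in the sense of \eqref{weaksolution}. Let $f$ be an eigenfunction of $(\Delta, Dom_O(\Delta))$ with eigenvalue $\lambda$; by definition $f \in Dom_O(\Delta) \subset W_0^1(O)$ and $\mathcal{E}(f,g) = \lambda\langle f,g\rangle$ for every $g \in W_0^1(O)$. Since a compactly supported $W^1$-function on $O$ is a $W^1(O)$-limit of functions in $C_c^\infty(O)$, we have $W_c^1(O) \subset W_0^1(O)$, and $f \in W_0^1(O) \subset W_{loc}^1(O)$; hence the weak identity $\mathcal{E}(f,g)=\lambda\langle f,g\rangle$ holds in particular for all $g \in W_c^1(O)$, which is precisely the statement that $f$ is $\lambda$-harmonic on $O$.

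Having made this observation, Lemma~\ref{bwws2} applies verbatim and yields $f \in D^\infty(O)$. For completeness I would recall the three steps of that proof in the present setting: (i) on each open edge the weak equation reads $(f|_e)'' = -\lambda f|_e$, so one-dimensional ODE regularity bootstraps $f|_e$ to a smooth (indeed real-analytic) function on $e^o \cap O$ with all derivatives locally bounded, giving $f \in C^\infty(O)$; (ii) the estimate \eqref{smooth} shows that $(f|_e)'(e_{1/n})$ is a Cauchy sequence, so $(f|_e)'$ extends continuously to each vertex $v \in O$, and the edge ODE yields $(f|_e)^{(2k+1)}(v) = \lambda^k (f|_e)'(v)$ for all $k \geq 0$; (iii) the integration-by-parts identity \eqref{kh}, applied with a test function supported in a small ball around $v$ and equal to $1$ at $v$, forces $\sum_{v=i(e)}(f|_e)'(v) = \sum_{v=t(e)}(f|_e)'(v)$, which is Kirchhoff's law \eqref{Kirchhoff} for $k=0$, and differentiating the edge ODE propagates it to all $k$. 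Thus $f$ satisfies all the defining conditions of $D^\infty(O)$.

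Finally, to conclude membership in $D_c^\infty(\widetilde{X})$ it only remains to note that $f$ has compact support: since $f \in W_0^1(O)$ and $O$ is precompact, $\supp f \subset \overline{O}$, a compact subset of $\widetilde{X}$. I do not expect any genuine obstacle here, as the content is entirely contained in Lemma~\ref{bwws2}; the only point requiring a word of care is the verification that an eigenfunction qualifies as a $\lambda$-harmonic function (the inclusions $W_c^1(O)\subset W_0^1(O)$ and $f \in W^1_{loc}(O)$), after which everything is a restatement of the earlier argument. This regularity of the $p^O_i$ is precisely what makes the series \eqref{A.2} defining $p_O(t,x,y)$ a well-behaved heat kernel on $O$.
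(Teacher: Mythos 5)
Your proposal is correct and takes the same route the paper intends: the paper offers no written proof and simply says the argument is similar to Lemma~\ref{bwws2}, and you have correctly supplied the one missing observation (that a Dirichlet eigenfunction is a $\lambda$-harmonic function on $O$ because $Dom_O(\Delta)\subset W_0^1(O)\subset W^1_{loc}(O)$ and $W^1_c(O)\subset W_0^1(O)$), after which Lemma~\ref{bwws2} and the inclusion $\supp f\subset\overline{O}$ finish the job.
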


 \begin{lem} \label{Maximum} Let $I=[0,T]\subset \mathbb{R}$ and let $O$ be a precompact connected open subset of $ \widetilde{X}$. Let $u:I \times \bar{O} \rightarrow \mathbb{R}$ be a function such that $u(\cdot,x):I^o \rightarrow \mathbb{R}$ is differentiable for any $x \in O$ and $u(t,\cdot)$ is in $D_c^\infty(\bar{O})$ for any $t \in I^o$. If 
\begin{equation}\label{heat1}
 \Delta u(t,x)-\frac{d}{dt}u(t,x) \geq 0 \emph{ for all } (t,x) \in I^o \times O,
\end{equation}
then 
\begin{equation}\label{max}
\max_{I\times \overline{O}} u=\max_{\substack{I\times \partial O\cup\\ \{0\}\times O}} u.
\end{equation}
\end{lem}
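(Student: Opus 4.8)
The plan is to reduce to a \emph{strict} parabolic inequality by a standard perturbation, and then to rule out interior maxima of a strict subsolution; the only new difficulty is the analysis at the branch points, where Kirchhoff's law \eqref{Kirchhoff} replaces the classical second-derivative test.

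\emph{Step 1 (perturbation).} Fix $\epsilon>0$ and put $u_\epsilon(t,x):=u(t,x)-\epsilon t$. As $t\mapsto-\epsilon t$ has vanishing spatial Laplacian and $t$-derivative $-\epsilon$, the hypothesis \eqref{heat1} upgrades to the strict inequality
\[
\Delta u_\epsilon(t,x)-\tfrac{d}{dt}u_\epsilon(t,x)=\Delta u(t,x)-\tfrac{d}{dt}u(t,x)+\epsilon\ \ge\ \epsilon\ >\ 0
\]
for all $(t,x)\in I^o\times O$, and $u_\epsilon(t,\cdot)\in D_c^\infty(\overline O)$ for every $t\in I^o$.

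\emph{Step 2 (a strict subsolution has no interior maximum).} Assume for contradiction that $u_\epsilon$ (which, like $u$, we take to be continuous on the compact set $I\times\overline O$) attains its maximum over $I\times\overline O$ at a point $(t_0,x_0)$ with $t_0>0$ and $x_0\in O$. Since $t\mapsto u_\epsilon(t,x_0)$ has a maximum on $[0,T]$ at $t_0$, we have $\tfrac{d}{dt}u_\epsilon(t_0,x_0)\ge0$. If $x_0$ lies in an open edge $e^o$, then $u_\epsilon(t_0,\cdot)|_e$ has an interior local maximum at $x_0$, so $\Delta u_\epsilon(t_0,x_0)=(u_\epsilon|_e)''(t_0,x_0)\le0$, contradicting Step 1. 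If $x_0=v$ is a vertex, then for each edge $e$ incident to $v$, moving from $v$ into $e$ cannot increase $u_\epsilon(t_0,\cdot)$; in the chosen orientation this means $(u_\epsilon|_e)'(t_0,v)\le0$ when $i(e)=v$ and $(u_\epsilon|_e)'(t_0,v)\ge0$ when $t(e)=v$. Because $u_\epsilon(t_0,\cdot)\in D_c^\infty(\overline O)$, Kirchhoff's law \eqref{Kirchhoff} with $k=0$ gives $\sum_{i(e)=v}(u_\epsilon|_e)'(t_0,v)=\sum_{t(e)=v}(u_\epsilon|_e)'(t_0,v)$; the left side is $\le0$, the right side is $\ge0$, so both vanish, and being sums of terms of one sign, \emph{each} term vanishes, i.e.\ $(u_\epsilon|_e)'(t_0,v)=0$ for every $e$ incident to $v$. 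A second-order Taylor expansion of $u_\epsilon(t_0,\cdot)|_e$ at the endpoint $v$ (legitimate since $u_\epsilon(t_0,\cdot)\in D^\infty$) then forces $(u_\epsilon|_e)''(t_0,v)\le0$, hence $\Delta u_\epsilon(t_0,v)\le0$ (the even-order derivatives agree across edges by the $D^\infty$-condition), again contradicting Step 1. Thus the maximum of $u_\epsilon$ over $I\times\overline O$ is attained on $\big(I\times\partial O\big)\cup\big(\{0\}\times O\big)$.

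\emph{Step 3 (conclusion and the endpoint $t_0=T$).} Writing $\partial_P:=\big(I\times\partial O\big)\cup\big(\{0\}\times O\big)$, Step 2 gives $\max_{I\times\overline O}u_\epsilon=\max_{\partial_P}u_\epsilon\le\max_{\partial_P}u$, whence $\max_{I\times\overline O}u\le\max_{I\times\overline O}u_\epsilon+\epsilon T\le\max_{\partial_P}u+\epsilon T$; letting $\epsilon\to0$ yields $\max_{I\times\overline O}u\le\max_{\partial_P}u$, and the reverse inequality is trivial as $\partial_P\subset I\times\overline O$, which is \eqref{max}. Since the hypotheses only provide differentiability in $t$ and membership in $D_c^\infty(\overline O)$ for $t$ in the \emph{open} interval $I^o$, the time-endpoint $t_0=T$ is not covered directly by Step 2; I would handle it by running Steps 1--2 on $[0,\tau]\times\overline O$ for each $\tau\in(0,T)$ (so every interior time lies in $I^o$), obtaining $\max_{[0,\tau]\times\overline O}u\le\max_{\partial_P}u+\epsilon T$, and then letting $\tau\uparrow T$ using continuity of $u$ on $I\times\overline O$. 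The main obstacle is precisely the vertex case of Step 2: a branch point is not an interior point of a smooth curve, so one cannot test the sign of a ``second derivative'' there, and it is Kirchhoff's law --- available because $u_\epsilon(t_0,\cdot)\in D_c^\infty$ --- that converts the one-sided information on the incoming first derivatives into the bound $\Delta u_\epsilon(t_0,v)\le0$ needed to close the argument.
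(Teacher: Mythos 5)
Your proof is correct and follows essentially the same route as the paper's: subtract $\epsilon t$ to make the parabolic inequality strict, rule out an interior space--time maximum (using Kirchhoff's law at a branch vertex to upgrade the one-sided first-derivative sign constraints to exact vanishing and hence $\Delta u_\epsilon\le 0$ there), and let $\epsilon\to 0$. You also fill in two small points the paper leaves implicit --- the second-order Taylor argument justifying $\Delta u_\epsilon(t_0,v)\le 0$ at a vertex once the edge-derivatives vanish, and the treatment of the time-endpoint $t_0=T$ (the paper tests $\tfrac{d}{dt}w_\epsilon$ at $t_0\in(0,T]$ even though the hypothesis grants $t$-differentiability only on $I^o$) via truncation to $[0,\tau]$ and $\tau\uparrow T$.
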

\begin{proof} Fix $\epsilon>0$. Denote $w_\epsilon=u-\epsilon t.$ Since $w_\epsilon\in D_c^\infty(O),$ We have
 \begin{equation}\label{eq:67}
 \Delta w_\epsilon(t,x)-\frac{d}{dt}w_\epsilon(t,x)= \Delta u(t,x)-\frac{d}{dt}u(t,x)+\epsilon>0.
 \end{equation}
 Suppose that there exists a point $(t_0,x_0)\in (0,T]\times O$ such that $w_\epsilon|_e(t_0,x_0)$ is the maximum of $w_\epsilon$. If $x_0\in e^o$ for some $e$, then $\partial w_\epsilon|_e(t_0,x_0)=0,$ where as if $x_0=i(e)$ ($x_0=i(e)$, resp.) for some $e$, $\partial w_\epsilon|_e(t_0,x_0)\leq0$ ($\partial w_\epsilon(t_0,x_0)\geq0,$ resp.). Since $w_\epsilon$ satisfies the Kirchhoff's law, $\partial w_\epsilon|_e(t_0,x_0)=0$ for any $e$ containing $x_0$. We also have $\frac{d}{dt}w_\epsilon(t_0,x_0)\geq 0$ and $\Delta w_\epsilon (t_0,x_0)\leq 0.$ This contradicts to \eqref{eq:67}. This implies that if $w_\epsilon(t_0,x_0)$ is maximum, then $(t_0,x_0)\in I\times \partial O\cup \{0\}\times O.$
 Since $$\max_{I\times \overline{O}}u\leq \max_{I\times \overline{O}}(w_\epsilon+\epsilon t)\leq \max_{I\times \overline{O}}(w_\epsilon +\epsilon T)=\max_{\substack{I\times \partial O\cup\\ \{0\}\times O}}(w_\epsilon +\epsilon T)$$ and $\epsilon$ is arbitrary, we have \eqref{max}.
\end{proof}

 Since $\G$ is non-amenable and the bottom of the spectrum is non-zero by Theorem 8.5  in \cite{SW}, $\lambda_{O,i}>0$. The function $p_O(t,x,y)$ satisfies the assumption of Lemma \ref{Maximum}. 
\begin{prop}\label{Aa} For any precompact connected open set $O$, the function $p_O(t,x,y)$ satisfies the following:
\begin{enumerate}
\item[(1)] $p_O(t,x,y)=p_O(t,y,x)$ and $\displaystyle \frac{d}{dt}p_O(t,x,y) =\Delta_y p_O(t,x,y),$
\item[(2)] $\displaystyle \int_O p_O(t,x,z)p_O(s,z,y)d\mu(z)=p_O(t+s,x,y)$ for all $x,y \in O$,
\item[(3)] $p_O(t,x,y) >0$ for all $x,y \in O$ and for all $t>0$,
\item[(4)] $\displaystyle \int_O p_O(t,x,y)d\mu(y) \leq 1$ for all $t \geq 0.$ 
\end{enumerate}
\end{prop}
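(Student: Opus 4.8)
The plan is to read off all four assertions from the spectral expansion $p_O(t,x,y)=\sum_{i}e^{-\lambda_i^{O}t}p_i^{O}(x)p_i^{O}(y)$ of \eqref{A.2}, using the complete orthonormal system $\{p_i^{O}\}$ of Theorem~\ref{obasis}, the smoothness of the $p_i^{O}$ from Lemma~\ref{bwws3}, and, where convenient, the maximum principle Lemma~\ref{Maximum}. First I would check that this series, together with its termwise $t$-derivative and its termwise image under $\Delta_y$, converges absolutely and locally uniformly on $(0,\infty)\times O\times O$. This follows from the discreteness of the spectrum of $(\Delta,Dom_O(\Delta))$ (Corollary~\ref{Rell}), which forces $\lambda_i^{O}\to\infty$ with a power-law counting rate $\#\{i:\lambda_i^{O}\le\Lambda\}=O(\sqrt{\Lambda})$ for a one-dimensional complex, combined with the uniform-on-compacts bound $|p_i^{O}|\le C_K(1+\lambda_i^{O})$ coming from the one-dimensional Sobolev embedding $W^1\hookrightarrow C^0$ on a graph. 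Granting this, part~(1) is immediate: each summand is symmetric in $(x,y)$, and termwise differentiation together with $\Delta_y p_i^{O}=-\lambda_i^{O}p_i^{O}$ yields $\frac{d}{dt}p_O=\Delta_y p_O$. For part~(2) I would multiply the two expansions, interchange sum and integral (legitimate by absolute convergence for $t,s>0$), and use $\int_O p_i^{O}p_j^{O}\,d\mu=\delta_{ij}$ to collapse the double sum to $\sum_i e^{-\lambda_i^{O}(t+s)}p_i^{O}(x)p_i^{O}(y)=p_O(t+s,x,y)$.

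For part~(4) and the nonnegativity half of part~(3): since $(\cE,W_0^1(O))$ is a (Markovian) Dirichlet form, the associated semigroup $P_t^{O}f=\int_O p_O(t,\cdot,y)f(y)\,d\mu(y)$ is sub-Markovian, so $f\ge 0$ forces $P_t^{O}f\ge 0$ — whence $p_O\ge 0$ by varying $f$ over $D_c^\infty$ and invoking joint continuity — and $f\equiv 1$ gives $\int_O p_O(t,x,y)\,d\mu(y)=P_t^{O}1(x)\le 1$. Equivalently, as the remark preceding the statement indicates, one may run this through Lemma~\ref{Maximum}: the function $-P_t^{O}f$ (for $f\in D_c^\infty$, $f\ge 0$) solves the heat equation, vanishes on $\partial O$, and equals $-f\le 0$ at $t=0$, so its maximum over $I\times\overline{O}$ is $\le 0$; and $v(t,x)=\int_O p_O(t,x,y)\,d\mu(y)$ solves the heat equation, vanishes on $\partial O$, and equals $1$ at $t=0$, so $v\le 1$.

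The substantive part is strict positivity in~(3). I would first establish diagonal positivity $p_O(t,x,x)>0$: by (2) and symmetry $p_O(2t,x,x)=\|p_O(t,x,\cdot)\|_{L^2(O)}^2$, and this cannot vanish, for otherwise $p_i^{O}(x)=0$ for every $i$, which would make $P_s^{O}f(x)=0$ for all $f$ and all $s>0$, contradicting $P_s^{O}f(x)\to f(x)$ as $s\to 0^+$ for interior $x$ and $f\in C_c(O)$. By continuity, $\{z\in O:p_O(s,w,z)>0\}$ is then an open neighbourhood of $w$ for every $w\in O$ and $s>0$. Now suppose $p_O(t_0,x_0,y_0)=0$; iterating (2) gives $\int_{O^{n-1}}p_O(t_0/n,x_0,z_1)p_O(t_0/n,z_1,z_2)\cdots p_O(t_0/n,z_{n-1},y_0)\,d\mu^{n-1}=0$ with a nonnegative continuous integrand, so there is no chain $x_0\to z_1\to\cdots\to z_{n-1}\to y_0$ whose $n$ consecutive heat kernels are all positive; but the connectedness of $O$, together with the short-time neighbourhoods above, supplies such a chain once $n$ is large, a contradiction, whence $p_O>0$ on $(0,\infty)\times O\times O$.

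The main obstacle is precisely this last chaining step: making it quantitative, i.e. producing for large $n$ a positive-kernel chain of $\le n$ steps between any two interior points, which needs a uniform interior short-time lower bound for $p_O$ (available from the doubling and Poincar\'e inequalities, or by comparison with the heat kernel on a small ball inside $O$, whose diameter shrinks slower than the chain length grows). A secondary nuisance, relevant to the Lemma~\ref{Maximum} route, is checking carefully that $p_O$ and the functions $P_t^{O}f$ extend to $t=0$ and to $\partial O$ well enough to feed into that lemma; this is why I would lean on sub-Markovianity of the Dirichlet semigroup for (4) and the nonnegativity part of (3), keeping the maximum principle as the alternative argument the text alludes to.
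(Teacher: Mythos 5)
Your handling of parts (1) and (2) by termwise differentiation and orthonormality of the spectral expansion \eqref{A.2} is exactly what the paper does (``the construction of $p_O$''), and your part (4) argument is sound, though it diverges from the paper's: you invoke sub-Markovianity of the Dirichlet semigroup (or, equivalently, Lemma~\ref{Maximum} applied to $v=P_t^O 1$), whereas the paper computes $\frac{d}{dt}\int_O p_O\,d\mu$ by integration by parts, kills the interior vertex contributions by Kirchhoff's law, and reads the sign of the remaining boundary normal derivatives off the strict positivity and boundary vanishing of $p_O$. Both are valid; the paper's version uses part (3) as input, yours does not, which is structurally cleaner.

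The genuine issue is in your part (3) chaining. You correctly flag it yourself: dividing $t_0$ into $n$ equal slots forces the short-time radii $\delta(t_0/n)$ to compete against the chain length $n$, and nothing in what you have written rules out $n\,\delta(t_0/n)\to 0$. The fixes you sketch (a uniform short-time on-diagonal lower bound via doubling/Poincar\'e, or comparison with balls) would work but are heavy; the paper instead cites Dodziuk's Lemma~3.2, i.e.\ the parabolic \emph{strong} maximum principle for $p_O\geq 0$ vanishing on $\partial O$, which gives that an interior zero of $p_O$ at a positive time forces $p_O\equiv 0$ on all earlier times and contradicts the initial condition. (Note the paper's phrasing ``by Lemma~\ref{Maximum}'' is a slight undersell: Lemma~\ref{Maximum} is the weak maximum principle and gives only nonnegativity; strict positivity needs the strong version, which is what the citation to Dodziuk is carrying.) If you want to stay with chaining without quantitative estimates, the soft repair is an open–closed argument in the connected set $O$: set $W=\bigcup_{t<t_0}\{y:p_O(t,x_0,y)>0\}$, show $W$ is open (trivially), nonempty (diagonal positivity, which you established), and closed in $O$ — if $y_n\to y$ with $p_O(t_n,x_0,y_n)>0$ and $t_n<t_0/2$, then $p_O(t_0/2,y_n,y)>0$ for large $n$ by diagonal positivity and joint continuity, so $p_O(t_n+t_0/2,x_0,y)>0$ by Chapman–Kolmogorov, whence $y\in W$ — so $W=O$, and then forward propagation of positivity in $t$ (also via Chapman–Kolmogorov and diagonal positivity) gives $p_O(t_0,x_0,y_0)>0$, a contradiction. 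This bypasses any Weyl law or short-time heat kernel asymptotics, which your write-up otherwise quietly relies on for the locally uniform convergence of the series and its termwise derivatives.
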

\begin{proof}
As the proof of Lemma 3.2 in \cite{D}, the parts $(1)$, $(2)$ and $(3)$ are proved by the construction of $p_O(t,x,y)$ and Lemma \ref{Maximum}. Since $\underset{t\rightarrow 0}\lim\int_Op_O(t,x,y)d\mu(y)=1$, it remains to show that 
$\frac{d}{dt}\int_O p_O(t,x,y)d\mu(y) \leq0.$

Denote by $E_y$ the set of edges containing $y$. Since $p_O(t,x,\cdot)$ satisfies Kirchhoff's law, we obtain the following:
\begin{align*}
\displaystyle\int_O \frac{d}{dt}p_O(t,x,y)d\mu(y)&=\displaystyle \int_O \Delta_y p_O(t,x,y)d\mu(y)\\
&= \displaystyle \sum_{y\in \partial O}\left\{\sum_{\substack {e\in E_y\\i(e)\in O}} \frac {dp_O|_e}{dy}(t,x,y)- \sum_{\substack{e\in  E_y\\ t(e)\in O}} \frac {dp_O|_e}{dy}(t,x,y)\right\}.
\end{align*}
 Since for all $(t,x,y) \in (0,\infty) \times O \times O$, $p_O(t,x,y)>0$, and $p_O(t,x,y)$ vanishes on the boundary of $O$,  $\frac {dp_O|_e}{dy}(t,x,y))\leq0$ when $e\in E_y$ and $i(e)\in O$ and $\frac {dp_O|_e}{dy}(t,x,y))\geq0$ when $e\in E_y$ and $t(e)\in O$. Hence, $(4)$ holds.
\end{proof}
 \begin{defn}\label{heatk}Let $\{O_i\}_{i \geq 1}$ be an increasing sequence of precompact connected open subsets such that $\overline{O_i} \subset O_{i+1}$ for all $i$ and $\displaystyle \bigcup\limits_{i=1}^\infty \overline{O_i}= \widetilde{X}$. 
 Since $p_{O_i}(t,x,y)\leq 1$ for all $(t,x,y)$ and for all $i$,  we define the function $p(t,x,y)$ as follows:
$$p(t,x,y)=\lim_{i\rightarrow \infty}p_{O_i}(t,x,y).$$
\end{defn}
Note that  $$p(t,x,y)=\sup_{O\in \mathcal{O}} p_O(t,x,y),$$
 where the supremum is taken over the set $\mathcal{O}$ of all precompact connected open subsets of $ \widetilde{X},$
 since given a precompact connected open set $O$, there exists a precompact connected open set $O_i$ such that $O \subset O_i$.
\section*{Acknowledgements} We would like to thank F. Ledrappier for his encouragement and helpful suggestions. This work was supported by Samsung Science and Technology Foundation under Project Number SSTF-BA1601-03 and the National Research Foundation of Korea under Project Number NRF-2020R1A2C1A1011543. The first author was supported by BK21 PLUS SNU Mathematical Sciences Division.

\medskip
% The data information below will be filled by AIMS editorial staff
Received xxxx 20xx; revised xxxx 20xx.
\medskip


\begin{thebibliography}{99}
\bibitem {A} 
\newblock A. Ancona, 
\newblock {Negatively curved manifolds, elliptic operators and the Martin boundary}, 
\newblock \emph{Ann. of Math.}, \textbf{125} (1987), 495--536.
  
  \bibitem {AR} 
 \newblock S. Albeverio and M. Rockner, 
 \newblock {Classical Dirichlet forms on topological spaces}-{the construction of
an associated diffusion process,} 
\newblock \emph{Probab. Th. Rel. Fields}, \textbf{83} (1989), 405--434.

 \bibitem {Be} 
 \newblock A. F. Beardon, 
 \newblock \emph{The Geometry of Discrete Groups}, 
 \newblock Graduate Texts in Mathematics, \textbf{91}, Springer-Verlag, New York, 1995.
 
 \bibitem{Bo} 
 \newblock P. Bougerol, 
 \newblock{Th\'eor\`eme central limite local sur certains groupes de Lie},
 \newblock  \emph{Ann. Sci. \'Ecole Norm. Sup.}, \textbf{14} (1981), 403--432.

 \bibitem{BH} 
 \newblock M. Bridson and A. Haefliger, 
 \newblock \emph{Metric Spaces of Non-Positive Curvature}, 
 \newblock Fundamental Principles of Mathematical Sciences, 319. Springer-Verlag, Berlin, 1999.
 
  \bibitem{BK} 
  \newblock M. Brin and Y. Kifer, 
  \newblock {Brownian motion, harmonic functions and hyperbolicity for Euclidean complexes,}
  \newblock \emph{Math. Z.}, \textbf{237} (2001), 421-468.
  
  \bibitem{BS} 
  \newblock M. Bonk and O. Schramm, 
  \newblock {Embeddings of Gromov hyperbolic spaces}, 
  \newblock \emph{Geom. Funct. Anal.}, \textbf{10} (2000), 266--306.

 \bibitem{BSSW} 
 \newblock A. Bendikov, L. Saloff-Coste, M. Salvatori and W. Woess, 
 \newblock {The heat semigroup and Brownian motion on strip complexes}, 
 \newblock \emph{Adv. in Math.}, \textbf{226} (2011), 992--1055.
 
\bibitem{CY} 
\newblock S.Y. Cheng and S. T. Yao,
\newblock {Differential equations on Riemannian manifolds and their geometric applications}, 
\newblock \emph{Comm. on Pure Appl. Math.}, \textbf{28} (1975), 333--354.

\bibitem{D} 
\newblock J. Dodziuk, 
\newblock Maximum principle for parabolic inequalities and the heat flow on open manifolds, 
\newblock \emph{Indiana Univ. Math. J.}, \textbf{32} (1983), 703--716. 

\bibitem{EF} 
\newblock J. Eells and B. Fuglede, 
\newblock \emph{Harmonic Maps, Between Riemannian Polyhedra}, 
\newblock Cambridge Tracts in Mathematics, \textbf{142} Cambridge University Press, Cambridge, 2001.

\bibitem{F} 
\newblock M. Fukushima, 
\newblock \emph{Dirichlet Forms and Markov Processes}, 
\newblock North-Holland, Amsterdam and Tokyo,1980.

 \bibitem{FOT} 
\newblock M. Fukushima, Y. Oshima and M. Takeda, 
\newblock \emph{Dirichlet Forms and Symmetric Markov Process, Second revised and extended edition}, 
\newblock De Gruyter Studies in Mathematics, \textbf{19} Walter de Gruyter, Berlin, 2010.

 \bibitem{G} 
  \newblock S. Gou\"ezel, 
 \newblock {Local limit theorem for symmetric random walks in Gromov-hyperbolic groups,},  
 \newblock \emph{J. Amer. Math. Soc.}, \textbf{27} (2014) 893--928.
 
 \bibitem{Gr} 
 \newblock A. Grigor'yan, 
 \newblock{Heat kernels and function theory on metric measure spaces}, 
 \newblock \emph{Contemp. Math.}, \textbf{338} (2003), 143--172.
 
 \bibitem{GH} 
 \newblock \'E. Ghys and P. de la Harpe, 
 \newblock \emph{Sur les Groupes Hyperboliques d'apr$\grave{e}$s Mikhael Gromov}, 
 \newblock Progress in Mathematics, 83, Birkh\"auser Boston, Boston, MA, 1990.
 
\bibitem{GL} 
\newblock S. Gou\"ezel and P. Lalley, 
\newblock{Random walks on co-compact Fuchsian groups}, 
\newblock \emph{Ann. Sci. \'Ecole Norm. Sup.}, \textbf{46} (2013), 129--173.

\bibitem{HK} 
\newblock S. Haeseler and M. Keller, 
\newblock{Generalized solutions and spectrum for Dirichlet forms on graphs,} 
\newblock \emph{in Random walks, boundaries and spectra}, Progr. Probab., Birkh\"auser/Springer Basel AG, Basel, \textbf{64}, (2011) 181--199.

\bibitem{KL} 
\newblock M. Keller and D. Lenz, 
\newblock {Dirichlet forms and stochastic completeness of graphs and subgraphs}, 
\newblock \emph{J. Reine. Angew. Math.}, \textbf{666} (2012),189--223.

 \bibitem{KPS}
 \newblock V. Kostrykin, J. Potthofff and R. Schrader, 
 \newblock {Brownian motions on metric graphs}, 
 \newblock \emph{J. Math. Phys.}, \textbf{53} (2012) 36pp. 

\bibitem{KS} 
\newblock V. Kostrykin and R. Schrader, 
\newblock {Laplacians on metric graphs: Eigenvalues, resolvents and semigroups,} 
\newblock \emph{in Quantum Graphs and Their Applications}, (edited by G. Berkolaiko, R. Carlson, S. A. Fulling, and P. Kuchment), Contemp. Math., Amer. Math. Soc., Providence, RI, \textbf{415} (2006) 201–225.

\bibitem{LL}
\newblock F. Ledrappier and S. Lim,  
\newblock {Local limit theorem in negative curvature}, 
\newblock to appear Duke Mathematics Journal., arxiv{1503.04156}.

\bibitem{LS} 
\newblock T. Lyons and D. Sullivan, 
\newblock {Function theory, random paths and covering spaces}, 
\newblock \emph{J. Differential Geom.}, \textbf{19} (1984) 299--323.

\bibitem{LSV}
\newblock D. Lenz, P, Stollmann and I. Veseli\'c, 
\newblock {The Allegretto-Piepenbrink theorem for strongly local Dirichlet forms,} 
\newblock \emph{Documenta Math.}, \textbf{14} (2009) 167--189.

 \bibitem{Mc}
 \newblock R. McOwen, 
 \newblock \emph{Partial Differential Equations: Methods and Applications}, 
 \newblock Prentice Hall, Upper Saddle River, NJ,  1996. 
 
 \bibitem{Mu}
  \newblock J. R. Munkres, 
 \newblock \emph{Topology. Second edition}, 
 \newblock Prentice Hall, Inc., Upper Saddle River, NJ, 2000.
 
\bibitem{PS} 
\newblock M. Pivarski and L. Saloff-Coste, 
\newblock {Small time heat kernel behavior on Riemannian complexes,} 
\newblock \emph{New York J. Math.}, \textbf{14} (2008) 459--494.

 \bibitem{Sc} 
 \newblock K. Schm\"udegen, 
 \newblock \emph{Unbounded Self-adjoint Operators on Hilbert Space}, 
 \newblock Graduate Texts in Mathematics, 265, Springer, Dordrecht, 2012.
 
\bibitem{Si} 
\newblock M. L. Silverstein, 
\newblock \emph{Symmetric Markov Processes}, 
\newblock Lecture Notes in Mathematics No. \textbf{426}, Springer, Berlin, Heidelberg, New York, 1974

 \bibitem{St1} 
  \newblock K-T Sturm, 
 \newblock {Analysis on local Dirichlet spaces-I . Recurrence, conservativeness and $L^p$-Liouville properties.} 
 \newblock \emph{J. Reine Angew. Math.}, \textbf{456} (1994) 173--196.
 
 \bibitem{St2} 
 \newblock K-T Sturm, 
 \newblock {Analysis on local Dirichlet spaces-II. Upper Gaussian estimates for the fundamental solutions of parabolic equations,} \newblock \emph{Osaka J. Math.}, \textbf{32} (1995) 275--312.
 
\bibitem{St3} 
\newblock K-T Sturm, 
\newblock {Analysis on local Dirichlet spaces-III. The parabolic Harnack inequality,} 
\newblock \emph{J. Math. Pures Appl.}, \textbf{75} (1996), no. 3,  273--297.

\bibitem{St4}
\newblock K-T Sturm, 
\newblock {Metric measure spaces with variable Ricci bounds and couplings of Brownian motions,}  
\newblock in \emph{Festschrift Masatoshi Fukushima}, (edited by Z.-Q. Chen, N. Jacob, M. Takeda and T. Uemura), Interdiscip. Math. Sci, World Sci., World Sci. Publ., Hackensack, NJ, \textbf{17} (2015) 553--575.

\bibitem{Su} 
\newblock D. Sullivan, 
\newblock{Related aspects of positivity in Riemannian geometry}, 
\newblock \emph{J. Differential Geom.}, \textbf{25} (1987) 327--351. 

\bibitem{SW} 
\newblock L. Saloff-Coste and W. Woess, 
\newblock {Transition operators on co-compact G-spaces}, 
\newblock \emph{Rev. Mat. Iberoam.}, \textbf{22} (2006) 747--799.

\bibitem{SW2} 
\newblock L. Saloff-Coste and W. Woess, 
\newblock {Computations of spectral radii on $\mathcal G$-spaces}, 
\newblock in \emph{Spectral Analysis in Geometry and Number Theory} (edited by M. Kotani, H. Nalto and T. Tate), {Contemp. Math.,} \textbf{484}, (2009)195--218.

\bibitem{W} 
\newblock R.K. Wojciechowski, 
\newblock {Heat kernel and essential spectrum of infinite graphs}, 
\newblock \emph{Indiana Univ. Math. J.}, \textbf{58} (2009), no. 3, 1419--1441.

\end{thebibliography}
\end{document}